\newtheorem{theorem}{Theorem}[section]
\newtheorem{proposition}[theorem]{Proposition}
\newtheorem{lemma}[theorem]{Lemma}
\newtheorem{corollary}[theorem]{Corollary}
\theoremstyle{definition}
\newtheorem{definition}[theorem]{Definition}
\newtheorem{condition}[theorem]{Condition}
\theoremstyle{remark}
\newtheoremstyle{boldremark}  % Name of the style
  {3pt}                       % Space above
  {3pt}                       % Space below
  {}                          % Body font (normal)
  {}                          % Indent amount
  {\bfseries}                 % Theorem head font (bold)
  {.}                         % Punctuation after theorem head
  { }                         % Space after theorem head
  {}                          % Theorem head spec (empty means ‘normal’)
\theoremstyle{boldremark}
\newtheorem{remark}[theorem]{Remark}
\newtheorem{example}[theorem]{Example}
\numberwithin{equation}{section}
\numberwithin{figure}{section}
\numberwithin{table}{section}
\newcommand{\R}{\mathbb{R}}
\newcommand{\Z}{\mathbb{Z}}
\newcommand{\N}{\mathbb{N}}
\newcommand{\eps}{\varepsilon}
\newcommand{\Gb}{\mathbb {G}}
\newcommand{\Pb}{\mathbb P}
\newcommand{\Nc}{\mathcal{N}}
\newcommand{\weak}{\rightsquigarrow}
\newcommand{\e}{\mathrm{e}}
\newcommand{\Prob}{{\Pr}}    %probability
\renewcommand{\P}[1]{\Prob\left(#1\right)}
\newcommand{\caseq}{=}
\newcommand{\Exp}{\operatorname{\mathds E}}
\newcommand{\Var}{\operatorname{Var}}
\newcommand{\Cov}{\operatorname{Cov}}
\newcommand{\argmax}{\operatornamewithlimits{\arg\max}}
\newcommand{\diff}{\,\mathrm{d}}
\newcommand{\flo}[1]{{\lfloor #1 \rfloor}} 
\newcommand{\floor}[1]{{\lfloor#1\rfloor}}
\newcommand{\dbl}{{\ensuremath {\operatorname{db} }}}
\newcommand{\sbl}{{\ensuremath {\operatorname{sb} }}}
\newcommand{\abl}{{\ensuremath {\operatorname{ab} }}}
\newcommand{\mbl}{{\ensuremath {\operatorname{mb} }}}
\newcommand{\indi}{{\perp \hspace{-.185cm} \perp}}
\newcommand{\pd}{\mathrm{pd}}
\newcommand{\TT}{{{\ensuremath {\operatorname{TopTwo}}}}}
\newcommand{\RL}{\mathrm{RL}}
\pgfplotsset{compat=1.18} 
\newcommand{\G}{\mathbb{G}}
\newcommand{\z}{\boldsymbol{z}}
\newcommand{\zz}{\boldsymbol{Z}}
\newcommand{\W}{\boldsymbol{W}}
\newcommand{\y}{\boldsymbol{y}}
\newcommand{\pderiv}[2]{\partial_{#2} #1}
\newcommand{\Frechet}{\ensuremath{\text{Fr}\acute{\text{e}}\text{chet}}}
\DeclareMathOperator{\Pareto}{Pareto}
\DeclareMathOperator{\Bernoulli}{Bernoulli}
\DeclareMathOperator{\indic}{{\bm 1}}
\newcommand{\AR}{\mathrm{AR}}
\newcommand{\ARMAX}{\mathrm{ARMAX}}
\begin{document}

\title{\fontsize{16}{19} Extreme Value Analysis based on Blockwise Top-Two Order Statistics}

\author{
Axel B\"ucher\thanks{Ruhr-Universität Bochum, Fakultät für Mathematik. Email: \href{mailto:axel.buecher@rub.de}{axel.buecher@rub.de}} \orcidlink{0000-0002-1947-1617}
\and
Erik Haufs\thanks{Ruhr-Universität Bochum, Fakultät für Mathematik. Email: \href{mailto:erik.haufs@rub.de}{erik.haufs@rub.de}} \orcidlink{0009-0008-8194-7445}
}

\date{\today}

\maketitle

\begin{abstract} 
Extreme value analysis for time series is often based on the block maxima method, in particular for environmental applications. In the classical univariate case, the latter is based on fitting an extreme-value distribution to the sample of (annual) block maxima. 
Mathematically, the target parameters of the extreme-value distribution also show up in limit results for other high order statistics, which suggests estimation based on blockwise large order statistics. It is shown that a naive approach based on maximizing an independence log-likelihood yields an estimator that is inconsistent in general. A consistent, bias-corrected estimator is proposed, and is analyzed theoretically and in finite-sample simulation studies. The new estimator is shown to be more efficient than traditional counterparts, for instance for estimating large return levels or return periods.
\end{abstract}

\color{black}
\noindent\textit{Keywords.} 
Disjoint and Sliding Block Maxima;
Heavy Tails;
Pseudo Maximum Likelihood Estimation;
Time Series Analysis.

\smallskip

\noindent\textit{MSC subject classifications.} 
Primary
62G32,
62G30; 
Secondary
62E20. 

\tableofcontents

\section{Introduction}

Extreme value statistics is concerned with analyzing extreme events such as heavy rainfall, floods, or stock market crashes, based on observed time series data \cite{Bei04}. In the univariate, stationary case, common target parameters include the 100-year return level (the threshold expected to be exceeded once every 100 years) and the return period of an extreme event of interest (the expected time until an event of the same or greater magnitude occurs). Efficient methods to assess these quantities involve using the sample of yearly maxima: on the one hand, this sample can be treated approximately as an independent and identically distributed (iid) sample, and on the other hand, the marginal distribution can be well-approximated by the three-parameter generalized extreme-value (GEV) distribution \cite{Lea83}. Consequently, parametric estimates of the GEV parameters can be easily converted into promising estimates for return periods or return levels; see, e.g., Section 3.3.3 in \cite{Col01}.

The previous approach is known as the block maxima method, and recent years have witnessed a growing interest in understanding the underlying mathematical principles. Historically, statistical methods were studied under the simplifying assumption that the block maxima sample is a genuine iid sample from the GEV distribution \cite{Pre80, Hos85}, thereby ignoring that both the independence and the GEV assumption are only met asymptotically for the block size tending to infinity. Deeper theoretical insights may be gained by treating the block size as a parameter sequence that is allowed to increase with the sample size. Under such an assumption, typical estimators like the maximum likelihood estimator or the probability weighted moment estimator are still consistent and asymptotically normal, see \cite{Dombry2015, Ferreira2015, Dom19} and \cite{Bücher2014, Bücher2018-disjoint} for the serially independent and dependent case, respectively. Moreover, it has been found that estimators based on block maxima may be made more efficient by considering sliding rather than disjoint block maxima, both in the univariate \cite{Bücher2018-sliding, BucZan23} and in the multivariate case \cite{Zou21, BucSta24}, or, in the iid case, by even considering all block maxima \cite{OorZho20}.

The current paper is motivated by yet another approach that allows for improving upon the classical disjoint block maxima method. Specifically, the three GEV parameters $(\gamma, \mu, \sigma) \in \R^2 \times (0,\infty)$ not only show up in the asymptotic distribution of the block maximum, but also in that of the $m$ largest order statistics ($m$-LOS), where $m\in \{2,3,\dots\}$ is fixed \cite{Welsch_1972, Hsing1988}. For instance, in the case where the underlying observations are independent and identically distributed, the  joint limiting distribution of the affinely standardized $m$-LOS, often referred to as the $\mathrm{GEV}_m(\gamma,\mu,\sigma)$ distribution, has the Lebesgue density
\begin{align} \label{eq:m-gev}
    f_m(x_1,\dots,x_m~|~\gamma,\mu,\sigma)
    &= \nonumber
    \sigma^{-m}\exp\Big\{-\Big(1+\gamma\frac{x_m-\mu}{\sigma}\Big)^{-1/\gamma} 
    \\ & \hspace{1cm}
    -\frac{\gamma+1}{\gamma}\sum_{j=1}^m \log\Big(1+\gamma\frac{x_j-\mu}{\sigma}\Big)\Big\} \bm 1(x_1 >  \dots > x_m).
\end{align}
Fitting this distribution to the sample of blockwise $m$-LOS has been proposed by \cite{Wei78, Smi86, Taw88}; see also Section~3.5 in \cite{Col01}. Since then, only limited theoretical advances on $m$-LOS have been made; e.g. \cite{Wang1995,bader2017automated} proposed a method on how to (automatically) select $m$ in finite sample settings. 
Despite the lack of further theoretical developments, the method has been readily applied in hydrology \cite{Ram02, Aarnes2012, osti_194289, GuedesSoares2004}, climatology \cite{Nemukula2018, An2007, Busababodhin2021} and other fields \cite{Said2011, Chenavier2019, Silva2020}. The fact that underlying observations are required to be independent and identically distributed is often ignored or approached based on some ad-hoc initial filtering step.

To the best of our knowledge, the approach outlined in the previous paragraph has neither been studied mathematically, nor has it been rigorously extended to the context of time series data, where the limiting distribution of the $m$-LOS is in general different from the one in \eqref{eq:m-gev}; see, for instance, Theorem~\ref{thm:welsch} below.
This is a clear gap in the literature, given that the approach is typically applied to environmental data, which is rarely serially independent. 
Even the consistency of the maximum likelihood method applied to the model in \eqref{eq:m-gev} is unclear then, as it relies on imposing a likelihood that is demonstrably incorrect for serially dependent data. It will be one of our main results that it is, in fact, inconsistent.

To illustrate the mathematical principles, we focus below on the univariate, heavy-tailed time series case, which allows to work with the two-parametric Fréchet distribution rather than the three-parametric GEV distribution. For simplicity, we restrict attention to the two largest order statistics in each block only (i.e., $m=2$), subsequently referred to as the `top-two' (TT) approach.
Our main results are as follows: first, we show that TT estimation based on maximizing the likelihood derived under independence (e.g., using the density from \eqref{eq:m-gev}, but with $m=2$ and Fréchet rather than GEV margins) is inconsistent in general, both for disjoint and sliding blocks. Next, we propose bias-corrected versions of the TT estimators and show that they are consistent under mild conditions. 
We further demonstrate that the TT sliding blocks version exhibits a smaller asymptotic variance than both the disjoint blocks version and the block maxima-based estimators, regardless of serial dependence for shape estimation, and for scale estimation when serial dependence is not too strong. Regarding bias, the TT estimators require a different condition than the max-only estimators, and depending on the data-generating process, may exhibit smaller or larger bias.

In an extensive simulation study, we show that the TT estimators typically outperform both their max-only counterparts as well as the all block maxima estimator from \cite{OorZho20}, both for shape estimation as well as for return level estimation (in some time series models, the all block maxima estimator was found to be superior for shape estimation at the `optimal' block size at which the respective MSE curve is minimized). A small case study illustrates the usefulness of the results.

The remaining parts of this paper are organized as follows: some mathematical preliminaries on limit results for large order statistics are provided in Section~\ref{sec:preliminaries}. The limit results give rise to a pseudo maximum likelihood estimator, which is studied mathematically in Section~\ref{sec:estimation-general} for general observation schemes. The theory is then specialized to the case of block maxima extracted from a stationary time series in Section~\ref{sec:estimation-blockmaxima}, and further to an underlying iid series in Section~\ref{sec:estimation-blockmaxima-iid}. The main results of the Monte Carlo simulation study are presented in Section~\ref{sec:simulation}, and the case study is given in Section~\ref{sec:case-study}. A conclusion is provided in Section~\ref{sec:conclusion}.
All proofs are deferred to Sections~\ref{sec:proofs-estimation-general}-\ref{sec:proofs-estimation-blockmaxima-iid}. Finally, some additional results on the Fr\'echet-Welsch-distribution are collected in \ref{sec:frechet-welsch-auxiliary}, some covariance formulas are collected in Section~\ref{sec:asymptotic-covariances} and some additional simulation results are presented in Section~\ref{sec:sim-additional}. Throughout, the arrow $\rightsquigarrow$ denotes weak convergence.

\section{Mathematical Preliminaries on the Two Largest Order Statistics}
\label{sec:preliminaries}

For a real-valued stationary time series $(\xi_t)_{t\in\N}$ and block size $r \in \N$, define
\begin{align*}
   M_{r} := \xi_{(1),[1:r]}, \qquad 
   S_{r} := \xi_{(2),[1:r]},
\end{align*}
where $\xi_{(1),[1:r]} \ge \dots \ge \xi_{(r),[1:r]} $ denotes the order statistic (sorted in decreasing order) calculated from the observations $\xi_i$ with $i\in[1\!:\!r]:=\{1, \dots, r\}$.
Throughout, we assume the following heavy-tailed max-domain of attraction condition:  there exists a sequence $(\sigma_r)_r \subset (0,\infty)$ and a positive parameter $\alpha$ such that
\begin{align} \label{eq:doa2}
\lim_{r\to\infty} \Prob(M_r/\sigma_r \le x)  = \exp(-x^{-\alpha}), \quad x>0.
\end{align}
The following theorem characterizes the class of possible limit distribution of the random vector $(M_r/\sigma_r, S_r/\sigma_r)$ under the additional assumption of strong mixing \cite{Dou94}. Let 
\begin{align} 
\label{eq:C-rho-class}
    \mathcal C 
    &= \nonumber
    \big\{\rho:[0,1]\to[0,1] \text{ concave and  nonincreasing}
    \\
    &\hspace{6cm}
    \text{with } 0 \leq \rho(\eta)\leq 1-\eta \text{ for all } \eta \in [0,1]\big\}.
\end{align}

\begin{theorem}[\cite{Welsch_1972}]\label{thm:welsch}
Let $(\xi_t)_{t\in\N}$ be a stationary strong-mixing time series. If there exist sequences of constants $(a_r)_{r\in\N}\subset(0,\infty),\ (b_r)_{r\in\N}\subset\R$, such that 
\begin{align}
\label{eq:ms-conv}
    \lim_{r \to \infty}\Prob\big(M_r\leq a_rx+b_r, S_r\leq a_ry+b_r\big)= H(x,y), \qquad (x,y) \in \R^2,
\end{align}
for some bivariate limit distribution $H$ whose first marginal distribution is non-degenerate, then the first marginal cdf of $H$ is the cdf $G$ of an extreme-value distribution and there exists 
$\rho \in \mathcal C$
such that
\begin{align}\label{eq:H}
    H(x,y)=\begin{cases*}
    G(x), & $y\geq x$, \\
    G(y)\Big\{ 1-\rho\big(\eta_{G}(x,y)\big)\log G(y)\Big\}, & $y<x$,
    \end{cases*}
\end{align}
where $\eta_{G}(x,y) \in [0,1]$ is defined as \begin{align*}
     \eta_{G}(x,y)
     := \begin{cases}
         \tfrac{\log G(x)}{\log G(y)} & \text{if } G(y) \in (0,1), \\
         0 & \text{if } G(y) \in\{0,1\}. 
     \end{cases}
\end{align*}
and where we use the convention $0 \cdot \infty = 0$.
If, additionally, $(\xi_t)_t$ is an i.i.d.\ sequence, we have $\rho(\eta)=\rho_\indi(\eta):=1-\eta$.
\end{theorem}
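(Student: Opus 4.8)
\emph{Proof strategy.} I would first dispose of the marginal claim and the case $y\ge x$, and then attack the case $y<x$ by a big-block/small-block decomposition exploiting strong mixing. Since $S_r\le M_r$ and $a_r>0$, for $y\ge x$ the event in \eqref{eq:ms-conv} coincides with $\{M_r\le a_rx+b_r\}$, so $H(x,y)=\lim_r\Prob(M_r\le a_rx+b_r)$ for every such $y$; this is the (assumed non-degenerate) first marginal of $H$. A linearly normalized maximum can only converge to an extreme-value law — the extremal types theorem, which applies under strong mixing since the latter implies Leadbetter's condition $D(u_r)$ — so the first marginal is the cdf $G$ of an extreme-value distribution, and $H(x,y)=G(x)$ for $y\ge x$, which is the first line of \eqref{eq:H}.

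Now fix $y<x$ and set $v_r:=a_ry+b_r<u_r:=a_rx+b_r$ for large $r$; then $\{M_r\le u_r,\,S_r\le v_r\}$ is the event that $\xi_1,\dots,\xi_r$ has no exceedance of $u_r$ and at most one exceedance of $v_r$. I would partition $[1\!:\!r]$ into $k_r$ big blocks of length $\ell_r$ with $k_r\ell_r\sim r$, $k_r,\ell_r\to\infty$, $\ell_r/r\to0$, separated by negligibly short blocks, and use strong mixing to treat the big blocks as asymptotically independent. Calling a big block \emph{quiet} if its maximum is $\le v_r$, the event forces either all big blocks quiet, or exactly one big block carrying a single exceedance of $v_r$ that moreover lies $\le u_r$, with all others quiet. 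Together with the blocking form of the marginal convergence, $\Prob(\text{block max}\le v_r)^{k_r}\to G(y)$, this gives
\[
H(x,y)=G(y)\bigl(1+\psi(x,y)\bigr),\qquad \psi(x,y):=\lim_{r\to\infty}k_r\,p_r(x,y),
\]
where $p_r(x,y)$ is the probability that a length-$\ell_r$ block has its largest value in $(v_r,u_r]$ and its second largest $\le v_r$; the limit exists because the left-hand side converges by assumption. Using $k_r\,\Prob(\text{block max}>v_r)\to-\log G(y)$ and conditioning $p_r$ on $\{\text{block max}>v_r\}$, one obtains $\psi(x,y)=-\log G(y)\,\rho(x,y)$, where $\rho(x,y)\in[0,1]$ is the limiting probability that the exceedances of $v_r$ in a block form a singleton lying $\le u_r$, given the block exceeds $v_r$. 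This is the form \eqref{eq:H}, once $\rho(x,y)$ is shown to depend on $(x,y)$ only through $\eta_G(x,y)$.

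The crux is precisely this homogeneity: $\rho(x,y)=:\rho(\eta)$ with $\eta=\eta_G(x,y)$. I would derive it from the fact that, by \eqref{eq:doa2} (regular variation of the tail) and stationarity, the limiting shape of the exceedance cluster at level $v_r$ is scale-free — it depends only on relative heights — while the conditional probability that a $v_r$-exceedance also exceeds $u_r$ tends to $\bar F(u_r)/\bar F(v_r)\to\eta$. Then $\rho(1)=0$, and $\rho$ is non-increasing because $H$ is non-decreasing in $x$. The upper bound $\rho(\eta)\le1-\eta$ follows since a singleton cluster's value is in particular the first exceedance of $v_r$, whose limiting chance of exceeding $u_r$ is $\eta$; the lower bound $\rho(0)(1-\eta)\le\rho(\eta)$ and the concavity of $\rho$ follow from the monotone structure of the limiting cluster — the taller an exceedance, the less likely it forms a singleton — which, via $\rho(\eta)=\rho(0)\,\Prob(\text{value}\le u_r\mid\text{singleton})$, forces $\rho$ to be a concave profile lying above $\rho(0)(1-\eta)$. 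For an i.i.d.\ sequence one can argue directly: $\Prob(M_r\le u_r,\,S_r\le v_r)=(1-\bar F(v_r))^r+r\bigl(\bar F(v_r)-\bar F(u_r)\bigr)(1-\bar F(v_r))^{r-1}$, and $r\bar F(v_r)\to-\log G(y)$, $r\bar F(u_r)\to-\log G(x)$ give $H(x,y)=G(y)\bigl(1-\log G(y)+\log G(x)\bigr)=G(y)\bigl\{1-(1-\eta)\log G(y)\bigr\}$, i.e.\ $\rho\equiv\rho_\indi$. I expect the cluster analysis behind the identification of $\rho$ — the homogeneity in $\eta$ and the exact shape constraints — to be the main obstacle, as it requires controlling the limiting cluster of exceedances (which the assumed existence of $H$ makes available but does not describe explicitly), not merely the marginal tail and the extremal index.
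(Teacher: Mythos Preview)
The paper does not prove this theorem; it is stated as a preliminary result and attributed to \cite{Welsch_1972} without proof. There is therefore no ``paper's own proof'' to compare against.

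On the substance of your sketch: the overall big-block/small-block route to the representation $H(x,y)=G(y)\{1-\rho(x,y)\log G(y)\}$ for $y<x$ is sound, and your direct computation in the i.i.d.\ case is correct. Two points deserve attention, however. First, you appeal to \eqref{eq:doa2} (regular variation of the tail) to justify the homogeneity $\rho(x,y)=\rho(\eta_G(x,y))$, but the theorem as stated does not assume the Fr\'echet domain of attraction; $G$ may be any extreme-value cdf, so the argument must not rely on a specific tail form. In Welsch's original proof the $\eta$-dependence is obtained from a self-similarity constraint: the limit $H$ can be computed along $r\to\infty$ and along $kr\to\infty$ for each fixed $k$, and equating the two via the block decomposition yields a functional equation for $H$ in which the second argument enters only through $\log G(x)/\log G(y)$. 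Second, your justification of concavity and of the lower bound $\rho(0)(1-\eta)\le\rho(\eta)$ (``the taller an exceedance, the less likely it forms a singleton'') is heuristic; these properties also fall out of the functional equation and the resulting integral representation of $\rho$, and need a more careful argument than the monotonicity intuition you give. You correctly flag the identification of $\rho$ as the main obstacle --- it is.
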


Conversely, as shown by \cite{Mori_1976}, for any $\rho \in \mathcal C$, there exists a strictly stationary, strong-mixing time series such that \eqref{eq:ms-conv} is met; see also Example~\ref{ex:mori} below.

As a consequence of Theorem~\ref{thm:welsch}, if $(\xi_t)_{t\in\N}$ is strongly mixing and satisfies \eqref{eq:doa2} and if the random vector $(M_r/\sigma_r, S_r/\sigma_r)$ converges weakly,
then the limit distribution has the joint cdf $H_{\rho, \alpha, 1}$, where, for $\rho$ as in the above theorem and $\alpha, \sigma>0$, 
\begin{align}\label{eq:H_Frech}
    H_{\rho,\alpha,\sigma}(x,y)=\begin{cases*}
    \exp\Big(-\big(\frac{x}{\sigma}\big)^{-\alpha}\Big), & $y\geq x>0$, \\
    \exp\Big(-(\frac{y}{\sigma})^{-\alpha}\Big)\Big\{ 1+\rho\big(\eta_{\alpha}(x,y)\big)\big(\frac{y}{\sigma}\big)^{-\alpha}\Big\}, & $x>y>0$,
    \end{cases*}
\end{align}
and where
$\eta_{\alpha}(x,y)=(y/x)^\alpha$.
We refer to the associated distribution as the Fr\'echet-Welsch-distri\-bution; notation $\mathcal W=\mathcal W(\rho, \alpha, \sigma)$. Note that the weak limit result $(M_r/\sigma_r, S_r/\sigma_r) \weak \mathcal W(\rho, \alpha, 1)$ implies the approximate distributional equality $(M_r, S_r) \approx_d \mathcal W(\rho, \alpha, \sigma_r)$ for sufficiently large block size $r$, which will be the basis for the statistical methods proposed in later sections.

We collect some important properties of the Fr\'echet-Welsch-distribution.

\begin{remark}[The Fr\'echet-Welsch-distribution]\label{rem:welsch} ~ \newline
\noindent
[a] Marginal distributions. The first marginal distribution of $H_{\rho,\alpha,\sigma}$ is the Fr\'echet($\alpha, \sigma$)-distribution, that is, its cdf is given by
\begin{align} \label{eq:w_margCDF1}
    H^{(1)}_{\rho,\alpha,\sigma}(x):=\exp\Big(-\Big(\frac{x}{\sigma}\Big)^{-\alpha}\Big).
\end{align}
The second marginal distribution depends on $\rho$ only through $\rho_0 := \rho(0)$; its cdf is given by
\begin{align} \label{eq:w_margCDF2}
    H^{(2)}_{\rho,\alpha,\sigma}(y):=\exp\Big(-\Big(\frac{y}{\sigma}\Big)^{-\alpha}\Big)\Big(1+\rho_0\Big(\frac{y}{\sigma}\Big)^{-\alpha}\Big).
\end{align}
Note that both margins are absolutely continuous with respect to the Lebesgue measure
with respective densities given by
\begin{align}
    p^{(1)}_{\rho, \alpha,\sigma}(x)
    &:=
    \frac{\partial}{\partial x}H^{(1)}_{\rho, \alpha,\sigma}(x)=\alpha\sigma^\alpha x^{-\alpha-1}\exp\Big(-\Big(\frac{x}{\sigma}\Big)^{-\alpha}\Big) ,\notag\\
    p^{(2)}_{\rho, \alpha,\sigma}(y)
    &:=
    \frac{\partial}{\partial y} H^{(2)}_{\rho, \alpha,\sigma}(y)= \alpha \sigma^\alpha y^{-\alpha-1}\exp\Big(-\Big(\frac{y}{\sigma}\Big)^{-\alpha}\Big)\Big[1-\rho_0+\rho_0 \Big(\frac{y}{\sigma}\Big)^{-\alpha}\Big]\label{eq:general_dens}
\end{align}

\smallskip
\noindent
[b] Absolute continuity. In general, the Fr\'echet-Welsch-distribution does not have a Lebesgue density. A sufficient condition is provided in Lemma~\ref{lem:density-new3} below: if $\rho$ is twice differentiable on $[0,1]$ at all but finitely many points, then $\mathcal W(\rho, \alpha, \sigma)$ has a Lebesgue density if and only if  $\int_0^1 \rho'(z) + z \rho''(z) \diff z = -1$.

\smallskip
\noindent
[c]
Moments. Additional results concerning certain moments are given in Section~\ref{sec:frechet-welsch-auxiliary}.
\end{remark}

Parts of the remaining paper will explicitly work with the special case where $\rho(\eta) = \rho_{\indi}(\eta) = 1-\eta$, which arises for underlying iid observations. We call the associated distribution \textit{IID Fr\'echet-Welsch}.

\begin{definition}[The IID Fr\'echet-Welsch-distribution]\label{def:iidfrechetwelsch}
    The \textit{IID Fr\'echet-Welsch-dstribution}, notationally $\mathcal{W}_\indi = \mathcal{W}_\indi(\alpha, \sigma) := \mathcal W(\rho_\indi, \alpha, \sigma)$, is defined by its  cdf 
\begin{align}\label{eq:H_iid}
    H_{\alpha,\sigma}(x,y):=H_{\rho_{\indi},\alpha,\sigma}(x,y)=\begin{cases*}
     \exp\Big(-\big(\frac{x}{\sigma}\big)^{-\alpha}\Big), & $y\geq x$ \\
    \exp\Big(-\big(\frac{y}{\sigma}\big)^{-\alpha}\Big)\Big\{ 1 + \big(\frac{y}{\sigma}\big)^{-\alpha} -\big(\frac{x}{\sigma}\big)^{-\alpha}\Big\}, & $y<x$.
    \end{cases*}
\end{align}
\end{definition}
The iid Fr\'echet-Welsch-distribution is absolutely continuous with respect to the Le\-besgue measure with density
\begin{align}\label{eq:sw_density}
    p(x,y):=p_{\alpha,\sigma}(x,y) 
    := 
    \alpha^2\sigma^{2\alpha}(xy)^{-\alpha-1}\exp\Big(-\Big(\frac{y}{\sigma}\Big)^{-\alpha}\Big)\indic(x>y),
\end{align}
which is a version of the density in \eqref{eq:m-gev} with $m=2$, but with Fréchet rather than GEV margins.
Note that existence of the Lebesgue density offers the possibility of (pseudo) maximum likelihood inference.

\begin{example}[Stationary time series and models for $\rho$]\label{ex:models}
As mentioned right after Theorem~\ref{thm:welsch}, any 
$\rho \in \mathcal C$ may appear in the limit \eqref{eq:ms-conv}, for some suitable strongly mixing series (Example 1 in \cite{Mori_1976}). We briefly discuss some special cases.

    \smallskip
    \noindent
    [a] Linear functions. The function $\rho(\eta) = c(1-\eta)$ with $c\in[0,1]$ has been discussed in \cite{Novak98}, including some specific examples and sufficient (and partly necessary) conditions. For $c=1$, this reduces to $\rho=\rho_\indi$ as discussed in Definition~\ref{def:iidfrechetwelsch}. For $c<1$, 
    we have $c=-\rho'(1)\ne -1$, whence the associated Welsch-distribution does not have a Lebesgue density by Lemma~\ref{lem:density-new3}. For $c=0$, we obtain the function that is constantly equal to zero, which we denote by $\rho_\pd$ as it yields perfect monotone dependence. Remarkably, $\rho_\pd$ may arise for non-trivial time series models for instance, for $\xi_t=\max(Z_t, Z_{t-1})$ with $Z_t$ iid standard Fr\'echet \cite[Example 1]{Welsch_1972}.

    \smallskip
    \noindent
    [b] Power functions. The function $\rho(\eta) = c^{-1}(1-\eta^c)$ with $c\in(1,\infty)$ satisfies $\rho'(1)=-1$; the associated Welsch-distribution hence has a Lebesgue density. The construction in Example 1 in \cite{Mori_1976} simplifies: letting $(Z_t)_{t\in\N}$ and $(\zeta_t)_{t\in\N}$ be independent iid sequences with distribution $Z_t \sim \Pareto(\alpha)$ and $\zeta_t\sim \Pareto((c-1)\alpha)$ and defining  $\xi_t=\max\{Z_{t-1},\zeta_t^{-1}Z_t\}$,  
    we obtain that \eqref{eq:ms-conv} is met with $H=H_{\rho,\alpha,1}$, $a_r=r^{1/\alpha}$ and $b_r=0$.

    \smallskip
    \noindent
    [c] A class of kink functions. For $c \in [0,1)$, consider the function $\rho(\eta)=\min\{c,1-\eta\}$. Since $\int_0^1 \rho'(\eta)+z \rho''(\eta)\diff z=-c \ne -1$, the associated Welsch-distribution does not have a Lebesgue density. One can show that this $\rho$-function appears in the classical ARMAX(1)-model defined by the recursion $\xi_t=\max\{(1-c) \xi_{t-1},cZ_t\}$ with $(Z_t)_t$ iid standard Fr\'echet, or in the AR(1)-model defined by the recursion $\xi_t=(1-c)\xi_{t-1} + Z_t$ with $Z_t$ iid standard Cauchy. We will consider versions of these models in the simulation study.
\end{example}

A final observation: for any $\rho \in \mathcal C$ from \eqref{eq:C-rho-class}, the properties of $\rho$ imply that $c(1-\eta) \le \rho(\eta) \le \min\{c,1-\eta\}$ for all $\eta \in [0,1]$, where $c:= \rho_0:= \rho(0)$. In other words, the linear and kink functions from Example~\ref{ex:models} provide lower and upper bounds on $\rho$ that only depend on~$\rho_0$.

    \begin{figure}
        \centering
        \includegraphics[width=0.5\linewidth]{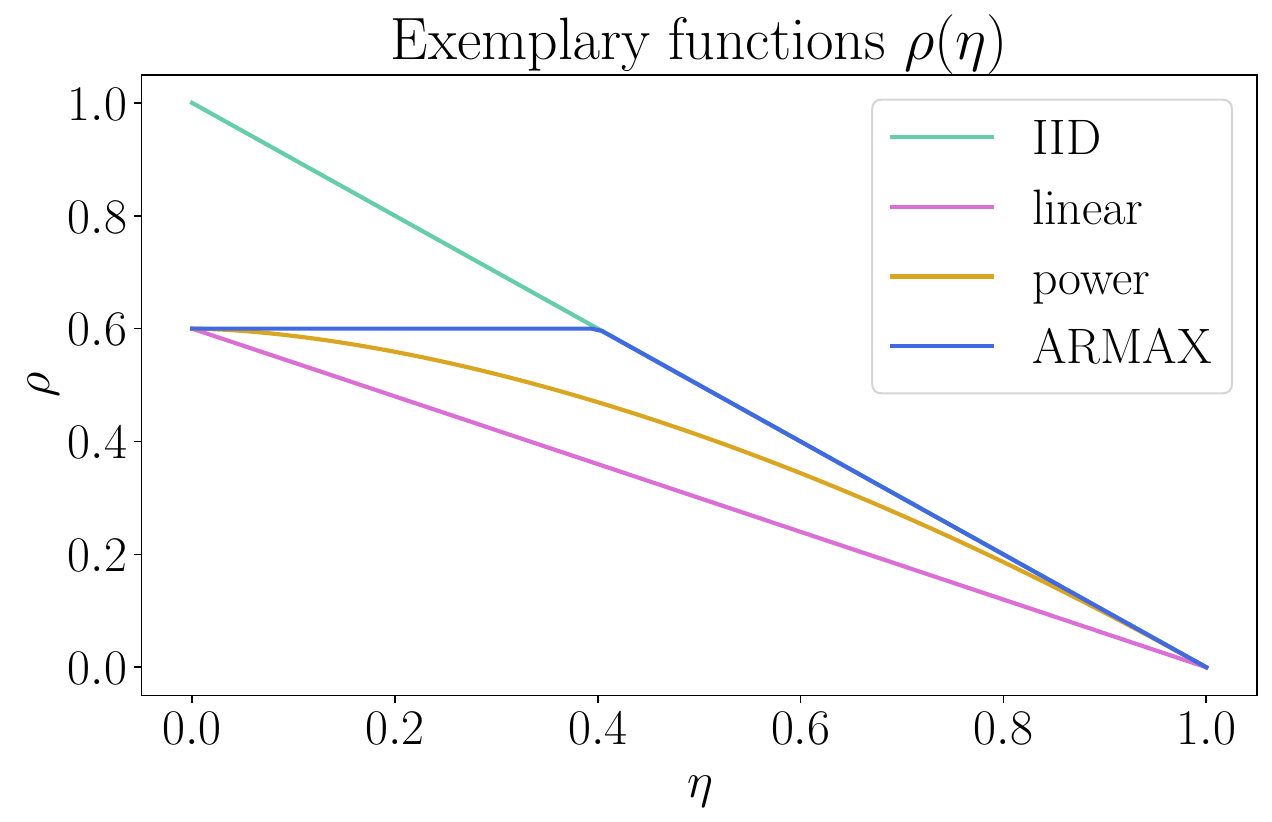}\vspace{-1em}
        \caption{Different $\rho$ functions. The examples `linear', `power' and `ARMAX' correspond to Example \ref{ex:models} [a] ($c=0.6$), [b] ($c=0.4$) and [c] ($c=0.6$), respectively.}
        \label{fig:enter-label}
    \end{figure}

\section{Pseudo Maximum-Likelihood Estimation for the Fr\'echet-Welsch distribution}
\label{sec:estimation-general}

Suppose we are given a sample $\bm z =( (x_1,y_1),\dots,(x_k,y_k) )$ of $k \ge 2$ bivariate vectors such that $0<y_i \le x_i$ 
for all~$i$; for the moment, no assumption is made on the data-generating process. As motivated in the introduction, adapting the proposal in Section 3.5 of \cite{Col01} to the case of Fréchet marginals, we are interested in fitting the iid Fr\'echet-Welsch distribution $\mathcal {W}_\indi(\alpha, \sigma)$ to $\bm z$.

In view of the fact that $\mathcal{W}_\indi$ has a Lebesgue density, see \eqref{eq:sw_density}, 
we may rely on standard maximum likelihood estimation, with the respective $\mathcal{W}_\indi$-
log-likelihood given by
\begin{align}
\label{eq:sw_logl}
    \ell(\alpha,\sigma|\z)&=2k\log\alpha +2k\alpha\log\sigma-\sum_{i=1}^k \big\{ (\alpha+1)\log (x_iy_i)+\sigma^\alpha y_i^{-\alpha} \big\},
\end{align}
see \eqref{eq:sw_density}. Define $\theta=(\alpha, \sigma)$, let $\Theta=(0,\infty)^2$ and let
\begin{align*}
    M_{-\alpha}(\y):=\Big(\frac{1}{k}\sum_{i=1}^k y_i^{-\alpha}\Big)^{-1/\alpha}
\end{align*}
denote the power mean function with exponent $-\alpha$.

\begin{lemma}[Existence and uniqueness]\label{lemm:ex_uni}
If the pairs $(x_i,y_i)$ are not all equal, then there exists a unique maximizer 
\begin{align} \label{eq:mle}
\hat\theta(\z)=\big(\hat\alpha(\z),\hat\sigma(\z)\big)=\argmax_{\theta\in\Theta} \ell(\alpha,\sigma|\z).
\end{align}
More precisely, $\hat\alpha(\z)$ is the unique root of the function
\begin{align}
    \alpha \mapsto \Psi_k(\alpha|\z):=&\ 2\alpha^{-1}+2\cdot M_{-\alpha}^\alpha(\y)\cdot\frac{1}{k}\sum_{i=1}^k y_i^{-\alpha}\log y_i-\frac{1}{k}\sum_{i=1}^k\log (x_iy_i)\label{eq:def_psik}
\end{align}
and we have $\hat\sigma(\z)= 2^{1/\hat\alpha(\bm z)}M_{-\hat\alpha(\bm z)}(\y)$.
\end{lemma}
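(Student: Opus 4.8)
The plan is to reduce the two-dimensional maximization to a one-dimensional one by profiling out $\sigma$, and then to prove that the resulting profile log-likelihood is strictly unimodal via the monotonicity of $\Psi_k$. \emph{Step 1 (profiling out $\sigma$).} Fix $\alpha>0$. From \eqref{eq:sw_logl} one gets $\partial_\sigma\ell(\alpha,\sigma|\z)=(\alpha/\sigma)\big(2k-\sigma^\alpha\sum_{i=1}^k y_i^{-\alpha}\big)$, and since $\sigma\mapsto\sigma^\alpha\sum_{i=1}^k y_i^{-\alpha}$ is a strictly increasing bijection of $(0,\infty)$ onto itself, the map $\sigma\mapsto\ell(\alpha,\sigma|\z)$ is strictly increasing and then strictly decreasing, with unique maximizer $\hat\sigma(\alpha):=\big(2k/\sum_{i=1}^k y_i^{-\alpha}\big)^{1/\alpha}=2^{1/\alpha}M_{-\alpha}(\y)$. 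Substituting this back --- using $\hat\sigma(\alpha)^\alpha\sum_i y_i^{-\alpha}=2k$ and $\alpha\log M_{-\alpha}(\y)=-\log\big(k^{-1}\sum_i y_i^{-\alpha}\big)$ --- gives the profile log-likelihood $\ell_p(\alpha):=\ell(\alpha,\hat\sigma(\alpha)|\z)$ in closed form; differentiating it (or applying the envelope theorem, exploiting that $\partial_\sigma\ell$ vanishes at $\hat\sigma(\alpha)$) yields $\ell_p'(\alpha)=k\,\Psi_k(\alpha|\z)$ with $\Psi_k$ as in \eqref{eq:def_psik}. It therefore suffices to show that $\Psi_k(\cdot|\z)$ has exactly one zero $\hat\alpha(\z)$ and that $\ell_p$ attains its global maximum there.

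\emph{Step 2 (strict monotonicity of $\Psi_k$).} Introduce the probability weights $w_i(\alpha):=y_i^{-\alpha}/\sum_{j=1}^k y_j^{-\alpha}$, so that the middle term $2M_{-\alpha}^\alpha(\y)\,k^{-1}\sum_i y_i^{-\alpha}\log y_i$ of $\Psi_k$ equals $2\sum_{i=1}^k w_i(\alpha)\log y_i$, i.e.\ twice the $w(\alpha)$-weighted mean of $\log y_1,\dots,\log y_k$. Using $w_i'(\alpha)=w_i(\alpha)\big(\sum_{j} w_j(\alpha)\log y_j-\log y_i\big)$, a short computation identifies the $\alpha$-derivative of this weighted mean as $-\Var_{w(\alpha)}(\log y)\le0$, the negative of the weighted variance. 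The remaining terms of $\Psi_k$ are $2/\alpha$, with derivative $-2/\alpha^2<0$, and a constant, so $\Psi_k'(\alpha|\z)<0$ for every $\alpha>0$: the function $\Psi_k(\cdot|\z)$ is strictly decreasing, hence has at most one zero, and $\ell_p$ is strictly unimodal, so any zero of $\Psi_k$ is automatically the unique global maximizer of $\ell_p$.

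\emph{Step 3 (existence and conclusion).} As $\alpha\downarrow0$ the weights $w_i(\alpha)$ tend to $1/k$, so the weighted mean stays bounded while $2/\alpha\to+\infty$, whence $\Psi_k(\alpha|\z)\to+\infty$. As $\alpha\to\infty$ the weights concentrate on the indices attaining $y_{\min}:=\min_i y_i$, so $\Psi_k(\alpha|\z)\to2\log y_{\min}-k^{-1}\sum_{i=1}^k\log(x_iy_i)$; since $x_i\ge y_i\ge y_{\min}$ for all $i$, this limit is $\le0$, with equality only if $x_i=y_i=y_{\min}$ for all $i$ --- that is, only if all pairs coincide, which is excluded --- so it is strictly negative. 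By continuity, $\Psi_k(\cdot|\z)$ has a zero, unique by Step~2; call it $\hat\alpha(\z)$. Finally, for any $(\alpha,\sigma)\in\Theta$ one has $\ell(\alpha,\sigma|\z)\le\ell_p(\alpha)\le\ell_p(\hat\alpha(\z))=\ell\big(\hat\alpha(\z),\hat\sigma(\hat\alpha(\z))|\z\big)$, the first inequality being strict unless $\sigma=\hat\sigma(\alpha)$ (Step~1) and the second strict unless $\alpha=\hat\alpha(\z)$ (Step~2); this yields both the global optimality and the uniqueness of $\hat\theta(\z)=\big(\hat\alpha(\z),\,2^{1/\hat\alpha(\z)}M_{-\hat\alpha(\z)}(\y)\big)$.

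The computations are elementary throughout; the one point that deserves care is the identity expressing the $\alpha$-derivative of the $w(\alpha)$-weighted mean of $\log y$ as the negative of a weighted variance, which is the mechanism forcing strict monotonicity of $\Psi_k$, together with the evaluation of the two boundary limits of $\Psi_k$ --- where the hypothesis that the pairs are not all equal is used exactly once, to make the limit at $\alpha=\infty$ strictly negative.
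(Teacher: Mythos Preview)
Your proof is correct and follows essentially the same route as the paper: profile out $\sigma$, show that $\Psi_k$ is strictly decreasing (your weighted-variance identity is just a repackaging of the paper's Cauchy--Schwarz step), and check the boundary limits at $\alpha\to0$ and $\alpha\to\infty$. Your write-up is in fact more self-contained, since the paper delegates the boundary analysis to an earlier reference while you carry it out explicitly and pinpoint where the ``not all equal'' hypothesis enters.
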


Note that $\Psi_k(\alpha | c \bm z) = \Psi_k(\alpha | \bm z)$ for any $c>0$, which implies that $\hat \alpha(c \bm z) = \hat \alpha (\bm z)$ and $\hat \sigma(c \bm z)=c \hat \sigma(\bm z)$.

\subsection{On the (lack of) consistency of the ML Estimator}

In the remaining parts of this section, we suppose to be given, for each positive integer $n$, a random array of observations
\begin{align}\label{eq:Z}
    \zz_n= \begin{pmatrix}
    \begin{pmatrix}
        X_{n,1}\\Y_{n,1}
    \end{pmatrix},&\cdots,&
    \begin{pmatrix}
        X_{n,k_n}\\Y_{n,k_n}
    \end{pmatrix}
    \end{pmatrix}
\end{align}
taking values in $(0,\infty)^{2\times k_n}$, where $k_n\geq 2$ is a positive integer sequence such that $k_n\to\infty$ as $n\to\infty$. It is instructive (but not necessary) to think of $Z_{n,i} = (X_{n,i}, Y_{n,i})$ as the largest two order statistics in a block of subsequent observations taken from an underlying stationary time series $(\xi_t)_t$ for which Theorem~\ref{thm:welsch} applies. As such, the random variables $(X_{n,i}, Y_{n,i})$ will be assumed to (approximately) follow the Fr\'echet-Welsch distribution $\mathcal{W}(\rho,\alpha_0,\sigma_n)$ for some 
$\rho \in \mathcal C$,
some $\alpha_0>0$ and some sequence of scale parameters $\sigma_n>0$; the assumption will be made precise in Condition~\ref{cond:1} below.

We are interested in estimating the parameters $(\alpha_0, \sigma_n) \in (0,\infty)^2$, treating $\rho$ as a nuisance parameter. Since the general Fréchet-Welsch family lacks a $\sigma$-finite dominating measure, it seems reasonable to apply the (pseudo) MLE $\hat \theta(\bm Z_n)$ from~\eqref{eq:mle} instead; note that a Pseudo MLE based on an incorrect likelihood may or may not be consistent in general \cite[Page 4]{White1982}. In fact, this approach is implicitly taken when applying the traditional top-two method to time series data.

We start by studying the first-order asymptotic behavior under minimal assumptions on the data-generating process. In view of the fact that the estimator is based on specific empirical moments (see Lemma~\ref{lemm:ex_uni}), it seems natural to assume that these moments converge to the respective moments of the Fréchet-Welsch distribution; this becomes our first Condition~\ref{cond:1} below.

The condition implies that $\Psi_{k_n}(\alpha|\bm Z_n)$ from \eqref{eq:def_psik} has the weak limit

\begin{align} \label{eq:psi}
\Psi_\infty^{(\rho, \alpha_0)}(\alpha)
    =
    \frac{2}{\alpha}+2\frac{\int_0^\infty y^{-\alpha}\log y\diff H^{(2)}(y)}{\int_0^\infty y^{-\alpha} \diff H^{(2)}(y)}-\int_0^\infty \log y\diff H^{(2)}(y)-\int_0^\infty \log x\diff H^{(1)}(x)
\end{align}
for $n\to\infty$, where $H^{(1)}=H^{(1)}_{\rho,\alpha_0,1}$ and $H^{(2)}=H^{(2)}_{\rho,\alpha_0,1}$ are the marginal cdfs of the $\mathcal{W}(\rho,\alpha_0,1)$-distribution from \eqref{eq:w_margCDF1} and \eqref{eq:w_margCDF2}, respectively. We start by stating some properties of this tentative limit.
Recall the gamma function $\Gamma(x)=\int_0^\infty t^{x-1}e^{-t}\diff t$ and the Euler-Mascheroni constant $\gamma\approx 0.5772$.

\begin{lemma}\label{lem:digamma2}
    For each fixed $\rho \in \mathcal C$
    and $\alpha_0\in(0,\infty)$, we have $\Psi_\infty^{(\rho, \alpha_0)}(\alpha) = (2/\alpha_0) \cdot \Pi_{\rho_0}( \alpha/\alpha_0)$, where $\rho_0:=\rho(0)$ and
    \begin{align} \label{eq:Pi_rho}
    \Pi_{\rho_0}(y):=\frac{1}{y}-\frac{\Upsilon_{\rho_0}'(y)}{\Upsilon_{\rho_0}(y)}+\frac{\rho_0}{2}-\gamma \qquad (y > 0),
\end{align}
    with 
    \begin{align} \label{eq:upsilon_NEW}
    \Upsilon_{\rho_0}(x):=\rho_0\Gamma(x+2)+(1-\rho_0)\Gamma(x+1).
    \end{align}
    Moreover, for each $\rho_0\in[0,1]$, the function $y\mapsto\Pi_{\rho_0}(y)$ is a continuous decreasing bijection from $(0,\infty)$ to $\R$ with $\Pi_{\rho_0}(1) \le 0$, which allows to define 
\begin{align} \label{eq:varpi}
    \varpi_{\rho_0} 
    := 
    \mathrm{UniqueZero} (y \mapsto \Pi_{\rho_0}(y)) \in (0,1];
\end{align} 
    see Figure~\ref{fig:varpi} for the graph of $\rho_0 \mapsto \varpi_{\rho_0}$.
We have $\varpi_{\rho_0} = 1$ if and only if $\rho \in\{\rho_\indi, \rho_\pd\}$ with $\rho_\pd \equiv 0$ introduced in Example~\ref{ex:models} [a]. Additionally, the map $\rho_0 \mapsto \varpi_{\rho_0}$ is Lipschitz continuous on $[0,1]$ and continuously differentiable on $(0,1)$ with a bounded derivative.
\end{lemma}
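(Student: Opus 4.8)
\emph{Step 1: the algebraic identity.} The plan is to evaluate the four integrals in \eqref{eq:psi} explicitly. Using the marginal densities $p^{(1)}_{\rho,\alpha_0,1}$ and $p^{(2)}_{\rho,\alpha_0,1}$ from \eqref{eq:general_dens} and the substitution $u=y^{-\alpha_0}$ (so that $\log y=-\alpha_0^{-1}\log u$ and $\alpha_0 y^{-\alpha_0-1}\diff y = -\diff u$), every integral turns into a Gamma-type integral. In particular, with $t:=\alpha/\alpha_0$, one gets $\int_0^\infty y^{-\alpha}\diff H^{(2)}(y)=\int_0^\infty u^{t}e^{-u}(1-\rho_0+\rho_0 u)\diff u=(1-\rho_0)\Gamma(t+1)+\rho_0\Gamma(t+2)=\Upsilon_{\rho_0}(t)$, matching \eqref{eq:upsilon_NEW}. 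Differentiating under the integral sign with respect to $t$ (justified by local domination) gives $\int_0^\infty y^{-\alpha}\log y\,\diff H^{(2)}(y)=-\alpha_0^{-1}\Upsilon_{\rho_0}'(t)$, while the two remaining integrals evaluate, via $\Gamma'(1)=-\gamma$ and $\Gamma'(2)=1-\gamma$, to $\int_0^\infty\log y\,\diff H^{(2)}(y)=(\gamma-\rho_0)/\alpha_0$ and $\int_0^\infty\log x\,\diff H^{(1)}(x)=\gamma/\alpha_0$. Collecting the terms and using $2/\alpha=2/(\alpha_0 t)$ yields $\Psi_{\rho,\alpha_0}(\alpha)=(2/\alpha_0)\bigl[\tfrac1t-\Upsilon_{\rho_0}'(t)/\Upsilon_{\rho_0}(t)+\tfrac{\rho_0}{2}-\gamma\bigr]=(2/\alpha_0)\,\Pi_{\rho_0}(\alpha/\alpha_0)$, as claimed in \eqref{eq:Pi_rho}.

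\emph{Step 2: monotonicity and bijectivity of $\Pi_{\rho_0}$.} From the representation $\Upsilon_{\rho_0}(y)=\int_0^\infty u^y e^{-u}(1-\rho_0+\rho_0 u)\diff u$ against a positive measure on $(0,\infty)$ (positivity uses $\rho_0\in[0,1]$), we get $\Upsilon_{\rho_0}>0$ and real-analytic on $(0,\infty)$, hence $\Pi_{\rho_0}$ is smooth there. Cauchy--Schwarz applied to this integral gives $\Upsilon_{\rho_0}''\Upsilon_{\rho_0}\ge(\Upsilon_{\rho_0}')^2$, i.e. $(\log\Upsilon_{\rho_0})''\ge 0$, so $\Pi_{\rho_0}'(y)=-y^{-2}-(\log\Upsilon_{\rho_0})''(y)<0$ and $\Pi_{\rho_0}$ is strictly decreasing. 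Writing $\Upsilon_{\rho_0}(y)=\Gamma(y+1)(1+\rho_0 y)$ via $\Gamma(y+2)=(y+1)\Gamma(y+1)$ shows $(\log\Upsilon_{\rho_0})'(y)=\psi(y+1)+\rho_0/(1+\rho_0 y)$ with $\psi=\Gamma'/\Gamma$; hence $\Pi_{\rho_0}(y)\to+\infty$ as $y\downarrow0$ (from the $1/y$ term, the rest staying bounded) and $\Pi_{\rho_0}(y)\to-\infty$ as $y\to\infty$ (from $-\psi(y+1)\to-\infty$). Therefore $\Pi_{\rho_0}$ is a continuous decreasing bijection $(0,\infty)\to\R$, and $\varpi_{\rho_0}$ in \eqref{eq:varpi} is well defined.

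\emph{Step 3: the value at $1$, the equivalence, and regularity in $\rho_0$.} A direct computation using $\Gamma(3)=2$, $\psi(2)=1-\gamma$, $\psi(3)=\tfrac32-\gamma$ gives $\Pi_{\rho_0}(1)=-\rho_0(1-\rho_0)/\bigl(2(1+\rho_0)\bigr)\le0$, with equality exactly for $\rho_0\in\{0,1\}$; by monotonicity this gives $\varpi_{\rho_0}\le 1$ and $\varpi_{\rho_0}=1\iff\Pi_{\rho_0}(1)=0\iff\rho_0\in\{0,1\}$. The constraints on $\rho$ in Theorem~\ref{thm:welsch} (concave, non-increasing, $[0,1]$-valued, $\rho(0)(1-\eta)\le\rho(\eta)\le1-\eta$) force $\rho\equiv\rho_\indi$ if $\rho_0=1$ and $\rho\equiv0$ if $\rho_0=0$, so $\varpi_{\rho_0}=1$ iff $\rho\in\{\rho_\indi,0\}$. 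For the regularity claim, I would first show $\varpi_{\rho_0}$ is bounded away from $0$ uniformly in $\rho_0\in[0,1]$: on $(0,1]$ one has $-\rho_0/(1+\rho_0 y)+\rho_0/2\ge-\rho_0/2\ge-1/2$, hence $\Pi_{\rho_0}(y)\ge 1/y-\psi(y+1)-\gamma-1/2$, whose right-hand side is positive for all $y$ below some universal $y_*>0$, so $\varpi_{\rho_0}\ge y_*$. On an open neighbourhood of $[0,1]\times[y_*,1]$ the map $(\rho_0,y)\mapsto\Pi_{\rho_0}(y)$ is jointly smooth with $\partial_y\Pi_{\rho_0}(y)<0$, so the implicit function theorem makes $\rho_0\mapsto\varpi_{\rho_0}$ of class $C^\infty$ there, with $\varpi_{\rho_0}'=-\partial_{\rho_0}\Pi_{\rho_0}(\varpi_{\rho_0})/\partial_y\Pi_{\rho_0}(\varpi_{\rho_0})$; both factors are continuous on the compact set $[0,1]\times[y_*,1]$ and the denominator is bounded away from $0$, so $\varpi'$ is bounded, and Lipschitz continuity on $[0,1]$ follows from the mean value theorem.

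\emph{Anticipated obstacle.} The conceptual heart, strict monotonicity of $\Pi_{\rho_0}$, becomes essentially automatic once $\Upsilon_{\rho_0}$ is recognised as a moment-generating-function-type object and $\log$-convexity is invoked; so the genuinely delicate points are bookkeeping in the integral evaluations of Step 1, and, for the final regularity statement, the uniform lower bound $\inf_{\rho_0\in[0,1]}\varpi_{\rho_0}>0$, which is what allows the implicit function theorem to deliver a \emph{bounded} derivative rather than merely a locally defined smooth branch.
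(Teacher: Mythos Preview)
Your proof is correct and follows essentially the same route as the paper: evaluate the moment integrals to obtain the $\Pi_{\rho_0}$ representation, show strict monotonicity (your log-convexity via Cauchy--Schwarz is exactly the ``standard curve sketching'' the paper invokes without detail), compute $\Pi_{\rho_0}(1)=-\rho_0(1-\rho_0)/(2(1+\rho_0))$, and apply the implicit function theorem for regularity in $\rho_0$.

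The one place you proceed differently is the Lipschitz/endpoint argument. The paper applies the implicit function theorem only on the open interval $(0,1)$, asserts boundedness of the derivative by reference to the numerical plot, and then establishes continuity at $\rho_0\in\{0,1\}$ via a separate contradiction argument (if $\varpi_{a_n}<1-\eps$ along $a_n\to 0$, then $0=\Pi_{a_n}(\varpi_{a_n})>\Pi_{a_n}(1-\eps)\to\Pi_0(1-\eps)>0$). Your approach of first securing a uniform lower bound $\varpi_{\rho_0}\ge y_*>0$ and then running the implicit function theorem on a neighbourhood of the compact rectangle $[0,1]\times[y_*,1]$ is cleaner and more self-contained: it delivers smoothness up to the boundary and boundedness of the derivative in one stroke via compactness, rather than relying on a picture for boundedness and treating the endpoints ad hoc.
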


\begin{figure}
    \centering
    \includegraphics[height=4cm]{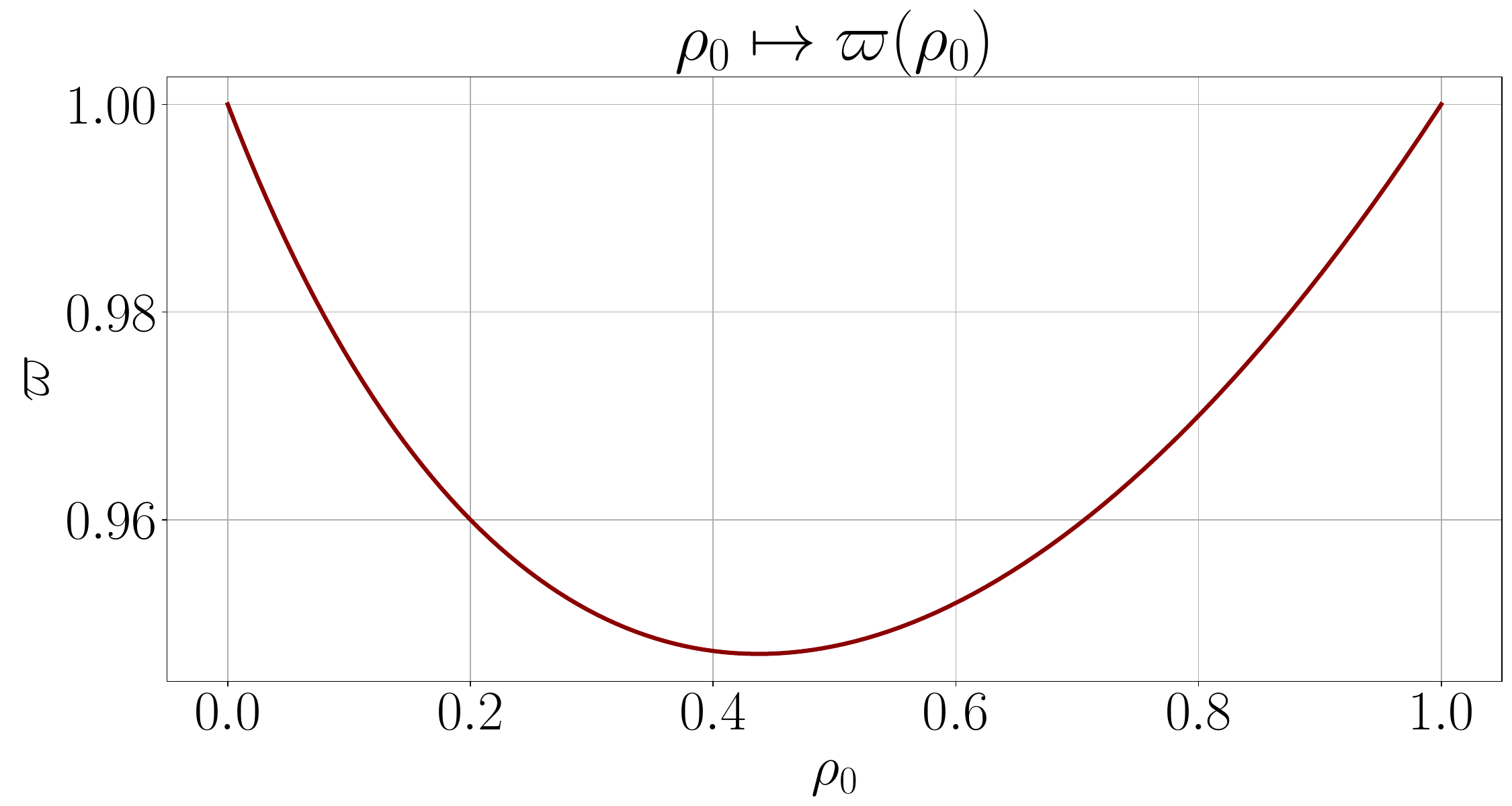}
    \includegraphics[height=4cm]{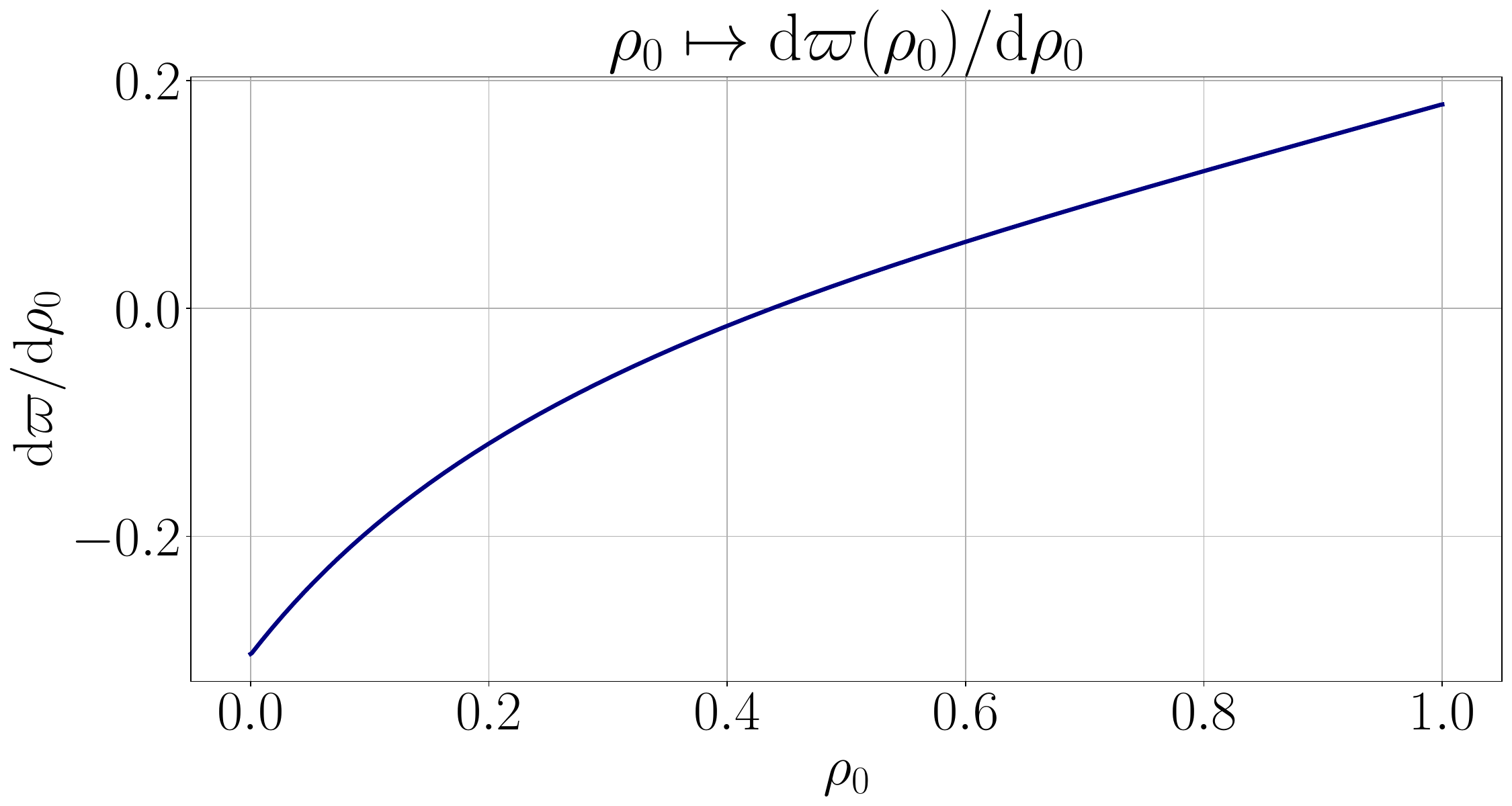}
    \caption{Left: graph of $\rho_0\mapsto \varpi_{\rho_0}$. Right: graph of its derivative. }
    \label{fig:varpi}
\end{figure}

As a consequence of Lemma \ref{lem:digamma2}, $\alpha \mapsto \Psi_\infty^{(\rho, \alpha_0)}(\alpha)$ has the unique root
\begin{align} \label{eq:alpha1_NEW}
    \alpha_1 := \alpha_1(\alpha_0, \rho) := \varpi_{\rho_0}\cdot\alpha_0,
\end{align}
with $\alpha_1 = \alpha_0$ if and only if $\rho \in\{\rho_\indi, \rho_\pd\}$.
It will turn out that the ML estimator for $\alpha_0$ converges to $\alpha_1$ in probability; it is hence inconsistent unless $\rho \in\{\rho_\indi, \rho_\pd\}$.

We now make the required convergence of empirical moments more precise. For $0<\alpha_-<\alpha_+<\infty$, consider the class of functions from $(0,\infty)^2$ into $\R$ defined as
\begin{multline}
    \mathcal{F}_1(\alpha_-,\alpha_+)
    :=
    \{(x,y) \mapsto \log x\}
    \cup
    \{(x,y) \mapsto \log y\}
    \cup
    \{(x,y) \mapsto y^{-\alpha}:\alpha_-<\alpha<\alpha_+\} \\
    \cup
    \{(x,y) \mapsto y^{-\alpha}\log y:\alpha_-<\alpha<\alpha_+\}. \label{eq:F1}
\end{multline}

\begin{condition}\label{cond:1}
There exists $\rho \in \mathcal C$,
$\alpha_0>0$ and a sequence $(\sigma_n)_n \subset (0,\infty)$ such that
\begin{align} \label{eq:moments_convergence}
    \frac{1}{k_n}\sum_{i=1}^{k_n} f\Big(\frac{X_{n,i}}{\sigma_n}, \frac{Y_{n,i}}{\sigma_n}\Big)\rightsquigarrow \int_{(0,\infty)^2} f(x,y)\diff H_{\rho, \alpha_0,1}(x,y), \qquad n\to\infty,
\end{align}
for all $f\in\mathcal F_1(\alpha_-,\alpha_+)$, where $\alpha_-, \alpha_+$ are some constants such that $0<\alpha_-<\alpha_1<\alpha_+<\infty$, with $\alpha_1=\alpha_1(\rho, \alpha_0)$ from \eqref{eq:alpha1_NEW}.
\end{condition}

Note that Condition~\ref{cond:1} can be regarded as a mathematical quantification of the above heuristics that `$(X_{n,i}, Y_{n,i})$ is (approximately) Fr\'echet-Welsch distributed'.

In the subsequent sections we show that it naturally follows from a  domain of attraction condition and integrability assumptions in case the $(X_{n,i}, Y_{n,i})$ correspond to blockwise top two order statistics extracted from a stationary time series. Finally, note that we require convergence of the empirical moments involving $y \mapsto y^{-\alpha}$ and $y \mapsto y^{-\alpha} \log y$ in a neighborhood of the root $\alpha_1$ of $\Psi_\infty^{(\rho, \alpha_0)}$; an assumption that is natural when studying the asymptotic behavior of estimators that arise as a root of an estimation equation \cite{vdVaart}.

On the event where not all $Z_{n,i}$ are equal, Lemma~\ref{lemm:ex_uni} shows that the MLE $\hat \theta_n := (\hat\alpha_n,\hat\sigma_n) := \hat \theta (\bm Z_n)$ from \eqref{eq:mle} exists and is unique. For definiteness, we define $\hat\alpha_n=\infty$ and $\hat\sigma_n=Y_{n,1}$ on the event $\{Z_{n,1}=\cdots=Z_{n,k_n}\}$.

\begin{theorem}[(Lack of) consistency]
\label{thm:consist}
Let $\bm Z_n$ be a triangular array of random variables as in \eqref{eq:Z} with $k_n\to\infty$ that satisfies Condition \ref{cond:1} and 
\begin{align}\label{eq:neglect}
    \lim_{n\to\infty} \Prob\big(Z_{n,1}=\cdots=Z_{n,k_n}\big)=0.
\end{align}
Recall $\Upsilon_{\rho_0}$ from \eqref{eq:upsilon_NEW}, $\varpi_{\rho_0}$ from \eqref{eq:varpi} and $\alpha_1$ from \eqref{eq:alpha1_NEW} and define
\begin{align}
\label{eq:s1}
s_1 = s_1(\rho,\alpha_0) = 
\Big( \frac{2}{\Upsilon_{\rho_0}(\varpi_{\rho_0})}\Big)^{1/{\alpha_1}}
\end{align}
Then, as $n\to\infty$, $
    (\hat\alpha_n,\hat\sigma_n/\sigma_n)\rightsquigarrow (\alpha_1,s_1)$.
Moreover, the limit $(\alpha_1, s_1)$ is equal to $(\alpha_0,1)$
if and only if $\rho=\rho_\indi$ in Condition \ref{cond:1}. If $\rho=\rho_\pd$, we have $(\alpha_1, s_1)=(\alpha_0, 2^{1/\alpha_0})$.  

\end{theorem}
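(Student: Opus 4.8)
The plan is to exploit the structure provided by Lemma~\ref{lemm:ex_uni}, which characterizes $\hat\alpha_n$ as the unique root of $\alpha \mapsto \Psi_{k_n}(\alpha \mid \bm Z_n)$, together with Lemma~\ref{lem:digamma2}, which identifies the limiting root $\alpha_1 = \varpi_{\rho_0}\alpha_0$ of the deterministic limit $\Psi_{\rho,\alpha_0}$. First I would record that $\Psi_{k_n}$ is \emph{scale-invariant} in the data: replacing $(X_{n,i},Y_{n,i})$ by $(X_{n,i}/\sigma_n, Y_{n,i}/\sigma_n)$ changes the term $\tfrac1{k_n}\sum \log(x_i y_i)$ by the additive constant $2\log\sigma_n$ and the ratio $M_{-\alpha}^\alpha(\bm y)\cdot\tfrac1{k_n}\sum y_i^{-\alpha}\log y_i$ by the same additive constant (since $\log(y_i/\sigma_n) = \log y_i - \log\sigma_n$ and the $y_i^{-\alpha}$-weights are normalized), so $\Psi_{k_n}(\alpha\mid \bm Z_n) = \Psi_{k_n}(\alpha \mid \bm Z_n/\sigma_n)$. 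Hence we may and do assume $\sigma_n \equiv 1$ when analyzing $\hat\alpha_n$. Then Condition~\ref{cond:1} gives, via the continuous mapping theorem applied to the four averages over $f\in\mathcal F_1(\alpha_-,\alpha_+)$, that $\Psi_{k_n}(\alpha\mid\bm Z_n)\rightsquigarrow \Psi_{\rho,\alpha_0}(\alpha)$ for each fixed $\alpha\in(\alpha_-,\alpha_+)$; a standard stochastic-equicontinuity / monotonicity argument upgrades this to uniform convergence on compact subsets of $(\alpha_-,\alpha_+)$, and combined with the strict monotonicity of the limit (Lemma~\ref{lem:digamma2}) and $\Psi_{\rho,\alpha_0}(\alpha_1)=0$ with $\alpha_1\in(\alpha_-,\alpha_+)$, the classical argmin/argzero consistency argument yields $\hat\alpha_n \pto \alpha_1$.

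For the scale parameter, I would use the explicit formula $\hat\sigma_n(\bm Z_n) = 2^{1/\hat\alpha_n} M_{-\hat\alpha_n}(\bm Y_n)$ from Lemma~\ref{lemm:ex_uni}. Since $M_{-\alpha}(\bm y)^{-\alpha} = \tfrac1{k_n}\sum y_i^{-\alpha}$, scaling by $\sigma_n$ gives $\hat\sigma_n/\sigma_n = 2^{1/\hat\alpha_n}\big(\tfrac1{k_n}\sum (Y_{n,i}/\sigma_n)^{-\hat\alpha_n}\big)^{-1/\hat\alpha_n}$. Now $\hat\alpha_n\pto\alpha_1$, and by Condition~\ref{cond:1} the average $\tfrac1{k_n}\sum (Y_{n,i}/\sigma_n)^{-\alpha}$ converges in probability to $\int_0^\infty y^{-\alpha}\diff H^{(2)}_{\rho,\alpha_0,1}(y)$; one needs to let the exponent $\hat\alpha_n$ be random rather than fixed, which is handled by the same uniform (in $\alpha$ over a neighborhood of $\alpha_1$) law of large numbers already used above, so that $\tfrac1{k_n}\sum (Y_{n,i}/\sigma_n)^{-\hat\alpha_n}\pto \int_0^\infty y^{-\alpha_1}\diff H^{(2)}(y)$. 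Using the moment formula for the second Fréchet--Welsch margin (from the auxiliary section, equivalently computable from \eqref{eq:w_margCDF2}) one identifies this integral as $\Upsilon_{\rho_0}(\varpi_{\rho_0})\big/2 \cdot$ (a power of $1$), giving $\hat\sigma_n/\sigma_n \pto 2^{1/\alpha_1}\big(\Upsilon_{\rho_0}(\varpi_{\rho_0})/2\big)^{1/\alpha_1} = s_1$. Joint convergence of $(\hat\alpha_n,\hat\sigma_n/\sigma_n)$ to the constant $(\alpha_1,s_1)$ is then automatic.

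The final "if and only if" is essentially a restatement of facts already in Lemma~\ref{lem:digamma2}: $\alpha_1=\varpi_{\rho_0}\alpha_0=\alpha_0$ iff $\varpi_{\rho_0}=1$ iff $\rho\in\{\rho_\indi,0\}$; and when $\varpi_{\rho_0}=1$ one checks directly that $\Upsilon_{\rho_0}(1) = \rho_0\Gamma(3)+(1-\rho_0)\Gamma(2) = 2\rho_0 + (1-\rho_0) = 1+\rho_0$, and since for $\rho\in\{\rho_\indi,0\}$ we have $\rho_0\in\{1,0\}$, so $\Upsilon_{\rho_0}(1)\in\{2,1\}$ — wait, this needs care: for $\rho=0$ we get $\rho_0=0$ and $\Upsilon_0(1)=1$, so $s_1 = (2/1)^{1/\alpha_0} = 2^{1/\alpha_0}\ne 1$ unless... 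Here I would instead note that the case $\rho=0$ corresponds to $S_r = M_r$ asymptotically (perfect dependence), so the relevant integral degenerates and the statement $s_1=1$ must be re-derived directly from the definition of $\mathcal W(0,\alpha_0,1)$, which is concentrated on the diagonal; alternatively the clean route is to observe that for $\rho_0\in\{0,1\}$ the value $\Upsilon_{\rho_0}(\varpi_{\rho_0}) = \Upsilon_{\rho_0}(1)$ must equal $2$ by a direct check of \eqref{eq:s1} against the $\mathcal{SW}$ likelihood computation in the iid case, and the $\rho=0$ case follows by continuity of $\rho_0\mapsto s_1(\rho,\alpha_0)$ established via Lemma~\ref{lem:digamma2}. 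I expect the main obstacle to be precisely this: handling the degenerate boundary case $\rho\equiv 0$ cleanly, and more generally making the "random-exponent" law of large numbers rigorous (the uniform-in-$\alpha$ convergence of the power-mean averages), since Condition~\ref{cond:1} only supplies marginal convergence for each fixed $f$; the standard fix is a bracketing/monotonicity argument exploiting that $\alpha\mapsto y^{-\alpha}$ is monotone, reducing uniform control to finitely many fixed exponents covered by the condition.
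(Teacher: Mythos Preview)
Your main argument matches the paper's: scale-invariance of $\Psi_{k_n}$, pointwise convergence $\Psi_n(\alpha)\rightsquigarrow\Psi_{\rho,\alpha_0}(\alpha)$ from Condition~\ref{cond:1}, and the explicit formula for $\hat\sigma_n$ from Lemma~\ref{lemm:ex_uni}. The paper avoids your uniform-convergence upgrade by exploiting that $\alpha\mapsto\Psi_n(\alpha)$ is strictly decreasing (proof of Lemma~\ref{lemm:ex_uni}) with $\Psi_n(\hat\alpha_n)=0$: then $\{\hat\alpha_n\le\alpha_1-\delta\}\subset\{\Psi_n(\alpha_1-\delta)\le 0\}$, and pointwise convergence at the single point $\alpha_1-\delta$ to a positive limit already gives $\Prob(\hat\alpha_n\le\alpha_1-\delta)\to 0$. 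The random-exponent issue for $\hat\sigma_n$ is handled the same way, via monotonicity of $\alpha\mapsto\big(\tfrac1{k_n}\sum (Y_{n,i}/\sigma_n)^{-\alpha}\big)^{-1/\alpha}$ and of its deterministic limit. Your bracketing route is correct but heavier than needed. There is also a small algebraic slip: by Lemma~\ref{lem:mom} the integral $\int y^{-\alpha_1}\diff H^{(2)}_{\rho,\alpha_0,1}$ equals $\Upsilon_{\rho_0}(\varpi_{\rho_0})$, not $\Upsilon_{\rho_0}(\varpi_{\rho_0})/2$, and the power mean carries exponent $-1/\alpha_1$; the named limit $s_1$ is nonetheless the right one.

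On the final biconditional: your computation $\Upsilon_0(1)=\Gamma(2)=1$ and hence $s_1=2^{1/\alpha_0}\ne 1$ for $\rho\equiv 0$ is \emph{correct}, and neither of your proposed rescues (degeneracy of the integral, continuity in $\rho_0$) works --- the formula for $s_1$ is perfectly well-defined at $\rho_0=0$ and really does give $2^{1/\alpha_0}$. The paper's own proof of this clause simply defers to Lemma~\ref{lem:digamma2}, which only yields $\alpha_1=\alpha_0\Leftrightarrow\rho\in\{\rho_\indi,0\}$ and says nothing about $s_1$. So you have spotted a genuine gap: the ``if'' direction of $(\alpha_1,s_1)=(\alpha_0,1)\Leftrightarrow\rho\in\{\rho_\indi,0\}$ fails at $\rho\equiv 0$. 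What survives is $(\alpha_1,s_1)=(\alpha_0,1)\Leftrightarrow\rho=\rho_\indi$, while the weaker $\alpha_1=\alpha_0\Leftrightarrow\rho\in\{\rho_\indi,0\}$ is what Lemma~\ref{lem:digamma2} actually delivers.
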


\begin{remark}[An alternative pseudo-maximum likelihood estimator]
\label{rem:alternative_pseudoll}
The obtained inconsistency of $(\hat \alpha_n, \hat \sigma_n)$ is a nuisance which we will correct in Section~\ref{sec:bias-corr} by estimating~$\rho_0$. As an alternative to what we propose there, it also seems natural to fit a more flexible parametric class of Fr\'echet-Welsch distributions. A likelihood-based approach would be feasible in case each distribution in the class has a Lebesgue density. This is for instance the case for the one-parametric power function class in Example~\ref{ex:models}[b], that is, for $\rho(\eta)=c^{-1}(1-\eta^c)$ for some $c>0$. The respective density of the associated Fr\'echet-Welsch distribution is then given by
\[
p_{c,\alpha, \sigma}(x,y) =
\alpha^2 \sigma^{\alpha}\exp\Big( - \Big(\frac{y}\sigma\Big)^{-\alpha} \Big) x^{- c \alpha-1} y^{ - (1-c)\alpha - 1} \Big\{ c-1 + \Big(\frac{y}{\sigma}\Big)^{-\alpha} \Big\} \bm 1(x>y>0);
\]
note that $c=1$ results in the $\mathcal{W}_\indi$-density from \eqref{eq:sw_density}. The properties of the respective pseudo maximum likelihood estimator were investigated in a small simulation study using the models described in Section~\ref{sec:simulation}. It was found that the estimator did not perform better than the bias-corrected version of $(\hat \alpha_n, \hat \sigma_n)$ proposed in Section~\ref{sec:bias-corr}. We are therefore not pursuing this any further.
\end{remark}

\begin{remark}[Maximizing a wrong likelihood -- an odd idea?]\label{rem:normal_example}
    So far we have maximized a likelihood that does not correspond to the true data-generating process (iid \Frechet-Welsch vs.\ general \Frechet-Welsch). Therefore, the lack of consistency established in Theorem~\ref{thm:consist} may not be surprising. However, it is not uncommon that maximizing a misspecified likelihood still yields consistent parameter estimates, see, for instance,  \cite[Page 4]{White1982}. To illustrate this point, consider the following example.

Suppose $n$ iid observations are drawn from the bivariate normal distribution $X_i\sim \Nc_2(\mu,\Sigma)$ with covariance entries $\Sigma_{11}=\sigma_1^2,~\Sigma_{12}=\varrho \sigma_1\sigma_2$ and $\Sigma_{22}=\sigma_2^2$
and consider estimation of the parameter $\theta=(\mu_1,\mu_2,\sigma_1,\sigma_2)$, with $\varrho$ known. If  $L_\varrho(\theta)$ denote the likelihood of $\theta$ for fixed $\varrho$, then both $\hat\theta=\arg\max_\theta L_\varrho(\theta)$ and
$\tilde\theta=\arg\max_\theta \tilde L_0(\theta)$ are consistent for $\theta$, although it can be shown that $\hat\theta$ is more efficient than $\tilde\theta$ if $\varrho \ne 0$.
\end{remark}

\subsection{Asymptotic Distribution of the ML Estimator}
We formulate conditions under which $(\hat\alpha_n, \hat \sigma_n/\sigma_n)$, after proper affine standardization, converges weakly to a normal distribution. For $0<\alpha_{-} <\alpha_{+} <\infty$ define
\begin{align}
\label{eq:F2}  \mathcal{F}_2(\alpha_-,\alpha_+)
:=
\mathcal{F}_1(\alpha_-,\alpha_+)
&\cup\{(x,y)\mapsto y^{-\alpha}(\log y)^2:\alpha_-<\alpha<\alpha_+\}
\nonumber\\&\cup
\{(x,y)\mapsto (\log y)^2\},
\end{align}
with $\mathcal F_1(\alpha_{-} , \alpha_{+})$ from \eqref{eq:F1}.

\begin{condition}\label{cond:2}
There exists 
$\rho \in \mathcal C$,
$\alpha_0>0$ and a sequence $(\sigma_n)_n \subset (0,\infty)$ such that
\eqref{eq:moments_convergence} from Condition \ref{cond:1} holds for all $f \in \mathcal{F}_2(\alpha_{-} , \alpha_{+})$, where $\alpha_-, \alpha_+$ are some constants such that $0<\alpha_-<\alpha_1<\alpha_+<\infty$, with $\alpha_1=\alpha_1(\rho, \alpha_0)$ from \eqref{eq:alpha1_NEW}. Moreover, there exists a sequence $0<v_n \to \infty$ and a random vector $\W=(W_1,W_2,W_3,W_4)^\top$ such that 
\begin{align} \label{eq:w}
    \W_n := \big(\G_nf_1,\G_nf_2,\G_nf_3,\G_nf_4\big)^\top\rightsquigarrow \W, \qquad  n\to\infty,
\end{align}
where
\begin{align}\label{eq:fct_H}
    (f_1,f_2,f_3,f_4)= \big((x,y)\mapsto y^{-\alpha_1}\log y,(x,y)\mapsto y^{-\alpha_1},(x,y)\mapsto \log y, (x,y) \mapsto \log x\big)
\end{align}
and where
\begin{align}\label{eq:cond2}
    \G_nf = v_n\bigg\{ \frac{1}{k_n}\sum_{i=1}^{k_n}f\Big(\frac{X_{n,i}}{\sigma_n}, \frac{Y_{n,i}}{\sigma_n}\Big) - \int_{(0,\infty)^2} f(x,y)\diff H_{\rho,\alpha_0,1}(x,y)\bigg\}.
\end{align}
\end{condition}

In view of the above discussion of Condition \ref{cond:1} and standard results for second-order asymptotics of M- and Z-estimators \cite{vdVaart}, the convergence in \eqref{eq:fct_H} is a natural condition; see also \cite{Bücher2018-disjoint}. The extension of the function class from $\mathcal{F}_1$ to $\mathcal{F}_2$ arises from the fact that second-order asymptotics also require convergence of empirical moments that show up in the gradient of $\alpha \mapsto \Psi_{k_n}(\alpha | \bm Z_n)$.

\begin{theorem}[Asymptotic Distribution]\label{thm:asymptotic}
Let $\zz_n$ be a triangular array of random variables as in \eqref{eq:Z} with $k_n\to\infty$ that satisfies  \eqref{eq:neglect} and Condition \ref{cond:2}. Then, with $\alpha_1$ from \eqref{eq:alpha1_NEW} and $s_1$ from \eqref{eq:s1},
as $n\to\infty$, 
\begin{align}\label{eq:thmasymp}
        v_n\begin{pmatrix}
            \hat \alpha_n-\alpha_1\\
            \hat \sigma_n/\sigma_n-s_1
        \end{pmatrix}
        =M_{\rho_0}(\alpha_0)
        \W_n
        +o_{\mathbb P}(1)\rightsquigarrow M_{\rho_0}(\alpha_0)\W,
\end{align}
where $\W_n$ and $\W$ are as in Condition \ref{cond:2} and where $M_{\rho_0}(\alpha_0)\in\R^{2\times 4}$ is a matrix explicitly given in the proof, see Equations \eqref{eq:mrho1} and \eqref{eq:mrho2}. If $\rho_0 =1$ (that is, $\rho=\rho_\indi$ and $\alpha_1=\alpha_0$), we have 
\begin{align}
\label{eq:malpha}
         M_1(\alpha_0)=\frac{6}{2\pi^2-3}\begin{pmatrix}
             \alpha_0^2 & \frac{3-2\gamma}{2}\alpha_0 & -\alpha_0^2 & -\alpha_0^2 \\
             \frac{2\gamma-3}{2} &\frac{3-2\pi^2-3(3-2\gamma)^2}{12\alpha_0} & \frac{3-2\gamma}{2}& \frac{3-2\gamma}{2}
         \end{pmatrix}.
\end{align}
\end{theorem}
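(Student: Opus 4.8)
The plan is to treat $\hat\alpha_n$ as a $Z$-estimator solving the estimating equation $\Psi_{k_n}(\hat\alpha_n\mid\zz_n)=0$ of Lemma~\ref{lemm:ex_uni}, to linearize it around the probability limit $\alpha_1$ from Theorem~\ref{thm:consist}, and then to propagate the expansion through the closed-form identity $\hat\sigma_n=2^{1/\hat\alpha_n}M_{-\hat\alpha_n}(\y)$. A preliminary observation that removes the (possibly divergent) scale sequence is that $\hat\alpha$ is scale invariant and $\hat\sigma$ scale equivariant, i.e.\ $\hat\theta(\z/s)=(\hat\alpha(\z),\hat\sigma(\z)/s)$; this is immediate from \eqref{eq:sw_logl}, where a rescaling alters $\ell$ only by an additive constant free of $(\alpha,\sigma)$. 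Hence we may and will replace $\zz_n$ by the rescaled array $(Z_{n,i}/\sigma_n)_i$, so that $\hat\alpha_n$ and $\hat\sigma_n/\sigma_n$ are unchanged while Condition~\ref{cond:2} directly describes the fluctuations of the empirical means entering $\Psi_{k_n}$.

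\textbf{Step 1: linearizing $\hat\alpha_n$.} By \eqref{eq:def_psik} we may write $\Psi_{k_n}(\alpha\mid\cdot)=g_\alpha(A_n(\alpha),B_n(\alpha),C_n,D_n)$, where $A_n(\alpha),B_n(\alpha),C_n,D_n$ denote the empirical means (over the rescaled array) of $y^{-\alpha}\log y$, $y^{-\alpha}$, $\log y$ and $\log x$, and $g_\alpha(A,B,C,D)=2/\alpha+2A/B-C-D$; at $\alpha=\alpha_1$ the four functions are exactly $f_1,\dots,f_4$ in \eqref{eq:fct_H}. Since $\Psi_{k_n}(\hat\alpha_n\mid\cdot)=0$ and $\hat\alpha_n\pto\alpha_1$ (Theorem~\ref{thm:consist}), a mean-value expansion yields
\[
 v_n(\hat\alpha_n-\alpha_1)=-\,\frac{v_n\,\Psi_{k_n}(\alpha_1\mid\cdot)}{\partial_\alpha\Psi_{k_n}(\bar\alpha_n\mid\cdot)}
\]
for an intermediate $\bar\alpha_n$ between $\hat\alpha_n$ and $\alpha_1$. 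In the denominator $\partial_\alpha\Psi_{k_n}$ involves the empirical mean of $y^{-\alpha}(\log y)^2$, and a local-uniform law of large numbers near $\alpha_1$ — for which the enlarged class $\mathcal F_2(\alpha_-,\alpha_+)$ in \eqref{eq:F2}, ranging over an open interval around $\alpha_1$, is designed — gives $\partial_\alpha\Psi_{k_n}(\bar\alpha_n\mid\cdot)\pto\Psi_{\rho,\alpha_0}'(\alpha_1)=(2/\alpha_0^2)\Pi_{\rho_0}'(\varpi_{\rho_0})$, which is strictly negative because $\Pi_{\rho_0}$ is a strictly decreasing bijection (Lemma~\ref{lem:digamma2}). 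In the numerator, $\Psi_{\rho,\alpha_0}(\alpha_1)=0$ and $\int y^{-\alpha_1}\diff H^{(2)}=\Upsilon_{\rho_0}(\varpi_{\rho_0})>0$, so the multivariate delta method applied to $g_{\alpha_1}$ at $(\int f_1\diff H,\dots,\int f_4\diff H)$ turns $v_n\Psi_{k_n}(\alpha_1\mid\cdot)$ into the explicit linear combination $\nabla g_{\alpha_1}^\top(\G_nf_1,\dots,\G_nf_4)^\top+o_\Prob(1)$. Dividing gives the first row of the matrix.

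\textbf{Step 2: linearizing $\hat\sigma_n/\sigma_n$, assembling, and the case $\rho_0=1$.} Write $\hat\sigma_n/\sigma_n=\exp(q_n(\hat\alpha_n))$ with $q_n(\alpha)=\alpha^{-1}(\log2-\log B_n(\alpha))$; expand $q_n(\hat\alpha_n)$ around $q_n(\alpha_1)$, replace $q_n'$ near $\alpha_1$ by its deterministic limit $q'(\alpha_1)$ (again a local-uniform law of large numbers via $\mathcal F_2$), treat $v_n(q_n(\alpha_1)-q(\alpha_1))$ by the delta method applied to $\log B_n(\alpha_1)$, and use $\exp(q(\alpha_1))=(2/\Upsilon_{\rho_0}(\varpi_{\rho_0}))^{1/\alpha_1}=s_1$ to obtain
\[
 v_n\!\Big(\frac{\hat\sigma_n}{\sigma_n}-s_1\Big)=s_1\Big[q'(\alpha_1)\,v_n(\hat\alpha_n-\alpha_1)-\frac{\G_nf_2}{\alpha_1\,\Upsilon_{\rho_0}(\varpi_{\rho_0})}\Big]+o_\Prob(1).
\]
Inserting the Step~1 expansion and collecting coefficients produces the stated linear representation with an explicit $M_{\rho_0}(\alpha_0)\in\R^{2\times4}$, and weak convergence to $M_{\rho_0}(\alpha_0)\W$ follows from Condition~\ref{cond:2} and the continuous mapping theorem. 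For $\rho_0=1$ one has $\varpi_{\rho_0}=1$, $\alpha_1=\alpha_0$, $s_1=1$, and the $\mathcal{SW}(\alpha_0,1)$-law is explicit: on the exponential scale $Y^{-\alpha_0}$ (resp.\ $X^{-\alpha_0}$) is a Gamma$(2)$ (resp.\ an exponential) variable, so the integrals $\int f_j\diff H$, the slope $\Psi_{\rho,\alpha_0}'(\alpha_0)$ and $q'(\alpha_0)$ reduce to expressions in $\gamma$, $\psi(2)=1-\gamma$, $\psi(3)=\tfrac32-\gamma$, $\Gamma'(3)=3-2\gamma$ and $\psi'(1)=\pi^2/6$ (digamma/trigamma values); substituting these into the representation above gives \eqref{eq:malpha} after elementary simplification.

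The main obstacle is Step~1's denominator: upgrading the pointwise weak law of large numbers of Condition~\ref{cond:2} to a statement allowing evaluation of $\partial_\alpha\Psi_{k_n}$ (and $q_n'$) at the data-dependent point $\bar\alpha_n$, i.e.\ a stochastic equicontinuity property of $\alpha\mapsto\frac1{k_n}\sum(Y_{n,i}/\sigma_n)^{-\alpha}(\log(Y_{n,i}/\sigma_n))^{j}$, $j\in\{0,1,2\}$, on a neighborhood of $\alpha_1$. The clean route is to dominate these functions, for $\alpha$ in a compact subinterval of $(\alpha_-,\alpha_+)$, by finite sums of members of $\mathcal F_2(\alpha_-,\alpha_+)$ — using that $\alpha\mapsto y^{-\alpha}$ is monotone on $\{y<1\}$ and on $\{y>1\}$ separately — and then combine pointwise convergence with this envelope; this is precisely why Condition~\ref{cond:2} is posed over an open interval containing $\alpha_1$ rather than only at $\alpha_1$. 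All remaining steps are bookkeeping with the delta method and, in the $\rho_0=1$ case, standard $\Gamma$-function identities.
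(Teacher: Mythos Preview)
Your proposal is correct and follows essentially the same approach as the paper: linearize $\hat\alpha_n$ as a $Z$-estimator via a mean-value expansion of $\Psi_{k_n}$, handle the denominator by a local-uniform law of large numbers over $\mathcal F_2(\alpha_-,\alpha_+)$ (the paper packages this as a separate ``slope'' lemma, citing Lemma~A.2 in \cite{Bücher2018-disjoint}), treat the numerator by the delta method applied to $(A,B,C,D)\mapsto 2/\alpha_1+2A/B-C-D$, and then propagate through the explicit formula for $\hat\sigma_n$. The only cosmetic difference is in Step~2: you expand $\hat\sigma_n/\sigma_n=\exp(q_n(\hat\alpha_n))$ on the log scale, whereas the paper works with $Z_n=(\hat\sigma_n/\sigma_n)^{-\hat\alpha_n}=\tfrac12\mathbb P_n[y^{-\hat\alpha_n}]$ and applies the mean value theorem to $z\mapsto z^{-1/\hat\alpha_n}$; both routes yield the same second row of $M_{\rho_0}(\alpha_0)$ after substituting the Step~1 expansion.
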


\subsection{A consistent bias-corrected estimator} \label{sec:bias-corr}

Recall that the limit of $(\hat \alpha_n, \hat \sigma_n/\sigma_n)$ in Theorem~\ref{thm:consist} depends on $\rho$ only via $\rho_0$.
Hence, if we had an estimator $\hat\rho_{0,n}$ of $\rho_0$ taking values in $[0,1]$, 
we could define a plug-in bias-corrected estimator $(\widetilde \alpha_n, \widetilde \sigma_n)$ for $(\alpha_0,\sigma_n)$ 
by
\begin{align} \label{eq:bias-corrected-estimator}
\widetilde\alpha_n:=\hat\alpha_n/\hat\varpi_n,
\qquad
\widetilde \sigma_n =\hat \sigma_n \Big( \frac{\Upsilon_{\hat \rho_{0,n}}(\hat \varpi_n)}2 \Big)^{1/\hat \alpha_n},
\end{align}
where $\hat\varpi_n=\varpi_{\hat\rho_{0,n}}$ denotes the unique root of $y \mapsto \Pi_{\hat\rho_{0,n}}(y)$; see Lemma~\ref{lem:digamma2}.
Note that $(\widetilde \alpha_n, \widetilde \sigma_n)$ is a function of $(\hat \alpha_n, \hat \sigma_n, \hat \rho_{0,n})$ only.
A specific example how to estimate $\rho_0$ will be given in Section \ref{sec:bias-corr-block-maxima} below. For the next result we require $\hat\rho_{0,n}$ to be consistent for $\rho_0$.

\begin{theorem}[Consistency of the bias-corrected estimator]
\label{thm:bias-corrected-consist}
Suppose that the conditions of Theorem~\ref{thm:consist} are met, and that
$\hat \rho_{0,n} \rightsquigarrow \rho_0$ as $n\to \infty$. Then, 
\begin{align*}
    (\widetilde\alpha_n,\widetilde\sigma_n/\sigma_n)\rightsquigarrow (\alpha_0,1), \qquad n \to \infty.
\end{align*}
\end{theorem}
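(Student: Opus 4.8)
The plan is a routine continuous-mapping argument: by \eqref{eq:bias-corrected-estimator} the pair $(\widetilde\alpha_n,\widetilde\sigma_n/\sigma_n)$ is an explicit function of the triple $(\hat\alpha_n,\hat\sigma_n/\sigma_n,\hat\rho_{0,n})$, whose joint weak limit is already known, and this function is continuous at the relevant limit point. All the analytic work has been carried out in the preceding lemmas, so only bookkeeping remains.

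First I would upgrade the two convergences in hand to a joint one. Theorem~\ref{thm:consist} gives $(\hat\alpha_n,\hat\sigma_n/\sigma_n)\rightsquigarrow(\alpha_1,s_1)$; since the limit is deterministic, this is equivalent to $(\hat\alpha_n,\hat\sigma_n/\sigma_n)\pto(\alpha_1,s_1)$, and likewise the hypothesis $\hat\rho_{0,n}\rightsquigarrow\rho_0$ is equivalent to $\hat\rho_{0,n}\pto\rho_0$. Convergence in probability of each coordinate to a constant implies joint convergence in probability (bound the norm of the difference by the sum of the coordinatewise differences), so $(\hat\alpha_n,\hat\sigma_n/\sigma_n,\hat\rho_{0,n})\rightsquigarrow(\alpha_1,s_1,\rho_0)$. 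On the event $\{Z_{n,1}=\cdots=Z_{n,k_n}\}$, where we set $\hat\alpha_n=\infty$, the estimator is not of the functional form below, but by \eqref{eq:neglect} this event has probability tending to $0$ and is therefore asymptotically negligible; it can be discarded without affecting the weak limit.

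Next, define $g:(0,\infty)^2\times[0,1]\to(0,\infty)^2$ by $g(\alpha,s,r):=\bigl(\alpha/\varpi_r,\ s\,(\Upsilon_r(\varpi_r)/2)^{1/\alpha}\bigr)$, where $\varpi_r$ is the unique zero of $y\mapsto\Pi_r(y)$ from Lemma~\ref{lem:digamma2} and $\Upsilon_r$ is from \eqref{eq:upsilon_NEW}. On the non-negligible event $\{\hat\alpha_n<\infty\}$ we have $(\widetilde\alpha_n,\widetilde\sigma_n/\sigma_n)=g(\hat\alpha_n,\hat\sigma_n/\sigma_n,\hat\rho_{0,n})$ by \eqref{eq:bias-corrected-estimator}. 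The map $g$ is continuous on its whole domain: $r\mapsto\varpi_r$ is Lipschitz on $[0,1]$ by Lemma~\ref{lem:digamma2} and $\varpi_r\in(0,1]$, the map $(r,x)\mapsto\Upsilon_r(x)=r\Gamma(x+2)+(1-r)\Gamma(x+1)$ is jointly continuous and strictly positive for $x>0$, and the remaining operations (division, taking a positive power) are continuous where the arguments are positive. In particular $g$ is continuous at $(\alpha_1,s_1,\rho_0)$.

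Finally I would evaluate $g$ at the limit point. Using $\alpha_1=\varpi_{\rho_0}\alpha_0$ from \eqref{eq:alpha1_NEW}, the first coordinate of $g(\alpha_1,s_1,\rho_0)$ equals $\alpha_1/\varpi_{\rho_0}=\alpha_0$; using $s_1=(2/\Upsilon_{\rho_0}(\varpi_{\rho_0}))^{1/\alpha_1}$ from \eqref{eq:s1}, the second coordinate equals $s_1\,(\Upsilon_{\rho_0}(\varpi_{\rho_0})/2)^{1/\alpha_1}=1$. Hence $g(\alpha_1,s_1,\rho_0)=(\alpha_0,1)$, and the continuous mapping theorem gives $(\widetilde\alpha_n,\widetilde\sigma_n/\sigma_n)\rightsquigarrow(\alpha_0,1)$. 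There is no genuine obstacle: the only points that require a word of care are the passage to joint convergence (trivial because all limits are constants) and the asymptotic negligibility of the degenerate event; the substantive inputs — the regularity of $r\mapsto\varpi_r$ and the first-order limit of Theorem~\ref{thm:consist} — are already available.
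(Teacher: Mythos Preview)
Your proposal is correct and follows essentially the same approach as the paper: the paper's proof is a one-sentence appeal to Theorem~\ref{thm:consist}, the assumption on $\hat\rho_{0,n}$, and the continuous mapping theorem, noting continuity of $\rho_0\mapsto\varpi_{\rho_0}$ and $(\rho_0,\alpha)\mapsto\{\Upsilon_{\rho_0}(\varpi_{\rho_0})/2\}^{1/\alpha}$. You have simply spelled out the bookkeeping (passage to joint convergence via constant limits, explicit definition of $g$, evaluation at the limit point, negligibility of the tied event) more carefully than the paper does.
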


\begin{proof}
This is an immediate consequence of Theorem~\ref{thm:consist},  the assumption on $\hat \rho_{0,n}$ and the continuous mapping theorem, observing that both $\rho_0 \mapsto \varpi_{\rho_0}$ and $(\rho_0, \alpha) \mapsto \{ \Upsilon_{\rho_0}(\varpi_{\rho_0})/2\}^{1/\alpha}$ are continuous.
\end{proof}

Asymptotic normality of the bias-corrected estimator may be deduced from joint asymptotic normality of $(\hat \alpha_n, \hat \sigma_n, \hat \rho_{0,n})$ via the functional delta method. For simplicity, we restrict attention to the case where $\hat \rho_{0,n} = \rho_0 + o_{\mathbb P}(v_n^{-1})$ with $v_n$ from Condition~\ref{cond:2}. In that case, under the conditions of Theorem~\ref{thm:asymptotic}, $\hat \rho_{0,n}$ converges at a faster rate than $(\hat \alpha_n, \hat \sigma_n/\sigma_n)$.

\begin{theorem}[Asymptotic distribution of the bias-corrected estimator]
\label{thm:bias-corrected-asymp}
Suppose that the conditions of Theorem~\ref{thm:asymptotic} are met, and that
$\hat \rho_{0,n} = \rho_0 + o_{\mathbb P}(v_n^{-1})$ as $n\to \infty$.  Then, as $n\to\infty$, 
\begin{align}\label{eq:bias-corrected-asymp}
        v_n\begin{pmatrix}
            \widetilde \alpha_n-\alpha_0\\
            \widetilde \sigma_n/\sigma_n-1
        \end{pmatrix}
        =
        M_{\rho_0}^{\mathrm{bc}}(\alpha_0)
        \W_n
        +o_{\mathbb P }(1)\rightsquigarrow M_{\rho_0}^{\mathrm{bc}}(\alpha_0)\W,
\end{align}
where, recalling $M_{\rho_0}(\alpha_0)$ from Theorem~\ref{thm:asymptotic} and $s_1$ from \eqref{eq:s1},
\begin{align} \label{eq:malpha-bc}
M_{\rho_0}^{\mathrm{bc}}(\alpha_0)
=
        \begin{pmatrix}
            1/\varpi_{\rho_0} & 0 \\
            (\alpha_1)^{-1}\log(s_1)  & 1/s_1
        \end{pmatrix}
        M_{\rho_0}(\alpha_0) \in \R^{2 \times 4}.
\end{align}
If $\rho_0=1$ (i.e., $\rho=\rho_\indi$), we have $M_{1}^{\mathrm{bc}}(\alpha_0)=M_1(\alpha_0)$ as in \eqref{eq:malpha}. 
\end{theorem}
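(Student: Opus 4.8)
The plan is to realize $(\widetilde\alpha_n,\widetilde\sigma_n/\sigma_n)$ as the image of the triple $(\hat\alpha_n,\hat\sigma_n/\sigma_n,\hat\rho_{0,n})$ under the deterministic map
\[
\Phi(a,u,r):=\Big( a/\varpi_r,\ u\,\big(\Upsilon_r(\varpi_r)/2\big)^{1/a}\Big),
\]
and to obtain \eqref{eq:bias-corrected-asymp} from a first-order expansion of $\Phi$ about $(\alpha_1,s_1,\rho_0)$. A preliminary observation is that $\Phi(\alpha_1,s_1,\rho_0)=(\alpha_0,1)$: the first coordinate is immediate from $\alpha_1=\varpi_{\rho_0}\alpha_0$, and the second from the identity $\Upsilon_{\rho_0}(\varpi_{\rho_0})/2=s_1^{-\alpha_1}$, which is just a rearrangement of the definition \eqref{eq:s1} of $s_1$ and which reappears in the Jacobian computation below. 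Theorem~\ref{thm:asymptotic} supplies the linearization
\[
v_n\big(\hat\alpha_n-\alpha_1,\ \hat\sigma_n/\sigma_n-s_1\big)^\top=M_{\rho_0}(\alpha_0)\,(\G_nf_1,\G_nf_2,\G_nf_3,\G_nf_4)^\top+o_\Prob(1),
\]
which, since $v_n\to\infty$ and the right-hand side is $O_\Prob(1)$ by \eqref{eq:w}, also yields the consistency $(\hat\alpha_n,\hat\sigma_n/\sigma_n)\pto(\alpha_1,s_1)$; by hypothesis $\hat\rho_{0,n}=\rho_0+o_\Prob(v_n^{-1})$, so the third coordinate is perturbed at a strictly faster rate than the first two. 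As in the earlier proofs, the event $\{Z_{n,1}=\cdots=Z_{n,k_n}\}$, on which the estimators are defined by convention, is negligible by \eqref{eq:neglect} and may be ignored throughout.

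Next I would argue that $\Phi$ is regular enough near $(\alpha_1,s_1,\rho_0)$ to carry out the expansion, treating the $(a,u)$- and $r$-directions separately. In the $(a,u)$-variables, $\Phi$ is $C^\infty$ on a neighbourhood of $(\alpha_1,s_1)$, since $\varpi_{\rho_0}\in(0,1]$ and $\Upsilon_{\rho_0}(\varpi_{\rho_0})>0$, so the base of the power in the second coordinate stays positive. In the $r$-variable, $r\mapsto\varpi_r$ is Lipschitz on $[0,1]$ by Lemma~\ref{lem:digamma2}, and $r\mapsto\Upsilon_r(\varpi_r)$ is Lipschitz as a composition of a smooth map with a Lipschitz one; hence $r\mapsto\Phi(a,u,r)$ is Lipschitz, locally uniformly in $(a,u)$. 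Combining these facts with the consistency of $(\hat\alpha_n,\hat\sigma_n/\sigma_n)$ gives
\[
\Phi\big(\hat\alpha_n,\hat\sigma_n/\sigma_n,\hat\rho_{0,n}\big)-\Phi\big(\hat\alpha_n,\hat\sigma_n/\sigma_n,\rho_0\big)=O_\Prob\big(|\hat\rho_{0,n}-\rho_0|\big)=o_\Prob(v_n^{-1}),
\]
so the $r$-perturbation drops out, and a one-term Taylor expansion of the $C^1$ map $(a,u)\mapsto\Phi(a,u,\rho_0)$ at $(\alpha_1,s_1)$ combined with the linearization above gives
\[
v_n\begin{pmatrix}\widetilde\alpha_n-\alpha_0\\\widetilde\sigma_n/\sigma_n-1\end{pmatrix}=D_{(a,u)}\Phi(\alpha_1,s_1,\rho_0)\,M_{\rho_0}(\alpha_0)\,(\G_nf_1,\G_nf_2,\G_nf_3,\G_nf_4)^\top+o_\Prob(1).
\]
The asserted weak limit $M_{\rho_0}^{\mathrm{bc}}(\alpha_0)\W$ then follows from \eqref{eq:w} and the continuous mapping theorem applied to the linear map, with $M_{\rho_0}^{\mathrm{bc}}(\alpha_0)=D_{(a,u)}\Phi(\alpha_1,s_1,\rho_0)\,M_{\rho_0}(\alpha_0)$.

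It remains to identify the $2\times2$ Jacobian $D_{(a,u)}\Phi(\alpha_1,s_1,\rho_0)$. The first row is immediate: $\partial_a(a/\varpi_{\rho_0})=1/\varpi_{\rho_0}$ and the $u$-derivative vanishes. For the second coordinate $F(a,u):=u\,(\Upsilon_{\rho_0}(\varpi_{\rho_0})/2)^{1/a}$ one has $\partial_uF=(\Upsilon_{\rho_0}(\varpi_{\rho_0})/2)^{1/a}$ and $\partial_aF=-a^{-2}\log\!\big(\Upsilon_{\rho_0}(\varpi_{\rho_0})/2\big)F$; substituting $\Upsilon_{\rho_0}(\varpi_{\rho_0})/2=s_1^{-\alpha_1}$ together with $F(\alpha_1,s_1)=1$, a short computation then identifies these with the entries of the $2\times2$ prefactor matrix displayed in \eqref{eq:malpha-bc}, which completes the proof of \eqref{eq:bias-corrected-asymp}. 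The special case $\rho_0=1$ is then immediate: there $\varpi_{\rho_0}=1$ and $s_1=1$, hence $\log s_1=0$, the prefactor is the identity matrix, and $M_1^{\mathrm{bc}}(\alpha_0)=M_1(\alpha_0)$. I expect the only genuinely delicate point to be the boundary behaviour at $\rho_0\in\{0,1\}$, where $r\mapsto\varpi_r$ is known only to be Lipschitz rather than differentiable; this is exactly what the faster-rate hypothesis on $\hat\rho_{0,n}$ is designed to circumvent, since it relegates the $r$-perturbation to the $o_\Prob(v_n^{-1})$ remainder, for which Lipschitz continuity suffices.
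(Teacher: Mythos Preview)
Your proposal is correct and essentially identical to the paper's proof. The paper argues coordinate-wise with explicit add-and-subtract decompositions---freezing $\hat\rho_{0,n}$ at $\rho_0$ via Lipschitz continuity of $\rho_0\mapsto\varpi_{\rho_0}$ and $\rho_0\mapsto\Upsilon_{\rho_0}(\varpi_{\rho_0})$, then applying the mean value theorem in $\alpha$---whereas you package the same steps as a single delta-method application to the map $\Phi$; the key ingredients (Lipschitz control in the $\rho_0$-direction to absorb that perturbation into $o_\Prob(v_n^{-1})$, then a first-order expansion in $(\alpha,\sigma)$ combined with Theorem~\ref{thm:asymptotic}) and the final Jacobian computation are the same.
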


\section{Top-Two Order Statistics Extracted from a Stationary Time Series}
\label{sec:estimation-blockmaxima}

Throughout this section, we suppose to observe a finite stretch of observations $\xi_1, \dots, \xi_n$ taken from a time series that satisfies the following condition inspired by Theorem~\ref{thm:welsch}.

\begin{condition}[Domain of attraction]
\label{cond:doa}
    The time series $(\xi_t)_{t\in\Z}$ is strictly stationary with a continuous marginal cdf $F$. Moreover, there exists a function $\rho \in \mathcal C$,
    a positive number $\alpha_0$, and a sequence $(\sigma_r)_{r\in\N}$ of positive numbers with $\sigma_r\to\infty$ for $r\to\infty$ such that
    \begin{align}\label{eq:doa}
        \begin{pmatrix}
            M_r/\sigma_r\\
            S_r/\sigma_r
        \end{pmatrix}
        \rightsquigarrow \mathcal{W}(\rho,\alpha_0, 1), \qquad r\to\infty.
    \end{align}
    Finally, the sequence $(\sigma_r)_{r\in\N}$ is regularly varying with index $1/\alpha_0$.
\end{condition}

Note that the condition is a natural extension of Condition 2.1 in \cite{Bücher2018-sliding} to the largest two observed values within a block of size $r$; see also Condition 3.1 in \cite{Bücher2018-disjoint}.
As in those papers, we are interested in estimating the unknown parameters $\alpha_0$ and $\sigma_r$, for some large block size parameter $r\in \{1, \dots, n\}$, based on the observed stretch of observations.

\subsection{Disjoint blocks}
\label{subsec:djb}

We start by discussing estimators that are based on the largest two order statistics calculated within successive disjoint blocks of size $r$.
For that purpose, let $k=\lfloor n/r \rfloor$ denote the number of such blocks that fit into the sampling period $\{1, \dots, n\}$. For integer $i\in \{1, \dots, k\}$, let
\begin{align}
    M_{r,i}:=\xi_{(1) , I_i}, 
    \qquad
    S_{r,i}:=\xi_{(2) , I_i}
    \label{eq:MS}
\end{align}
denote the two largest observations in the $i$th disjoint block of observations; here, $I_i = \{(i-1)r+1, \dots, ir\}$.
In view of Condition~\ref{cond:doa}, each vector $(M_{r,i}, S_{r,i})$ approximately follows the $\mathcal W(\rho, \alpha_0, \sigma_r)$-distribution, for sufficiently large block size $r$. This suggests to use the estimator $\hat \theta$ from \eqref{eq:mle}, applied to the sample $((M_{r,1}, S_{r,1}), \dots, (M_{r,k}, S_{r,k}))$.
It is the main goal of this section to show (in)consistency and asymptotic normality of $\hat \theta$ in an appropriate asymptotic framework. The framework, as well as the conditions are largely inspired by Section 3 in \cite{Bücher2018-disjoint}.

Formally, for the approximation $(M_{r,i}, S_{r,i}) \approx_d \mathcal W(\rho, \alpha_0, \sigma_r)$ to be accurate in the limit, we require the block size to increase to infinity, that is, $r=r_n\to\infty$ for $n\to\infty$. Moreover, consistency can only be achieved when the information increases, that is, when the number of blocks, $k_n= \lfloor n/r_n\rfloor$, goes to infinity as well. Finally, for technical reasons, the theory will developed for the estimator 
\begin{align}
\label{eq:thetan-bm}
\hat \theta_n^{(\dbl)} := (\hat \alpha_n^{(\dbl)}, \hat \sigma_{n}^{(\dbl)}) := \hat \theta\big( (M_{r_n,1} \vee c, S_{r_n,1} \vee c), \dots, (M_{r_n,k} \vee c, S_{r_n,k} \vee c)\big)
\end{align}
with $\hat \theta$ from \eqref{eq:mle},
where $c$ denotes some arbitrary small positive truncation constant. The truncation by $c$ guarantees that all observations are positive, as required for the likelihood in \eqref{eq:sw_logl} to be well-defined. Further note that Condition~\ref{cond:doa} implies that
\begin{align*}
    \Prob(M_{r_n,i} \le c,S_{r_n,i}\le c)
    \leq
    \Prob(M_{r_n,i}\le c)
    =
    \Prob(M_{r_n,i}/\sigma_{r_n} \le c/\sigma_{r_n})\to 0, \qquad n\to\infty,
\end{align*}
for any $c>0$, which shows that $(M_{r_n,i} \vee c, S_{r_n,i} \vee c)= (M_{r_n,i}, S_{r_n,i})$ with probability converging to one. Still, the smallest $S_{r_n,i}$ may be smaller than $c$, which we will prevent from happening with the following condition. As shown in Lemma~\ref{lem:no_ties}, the condition, together with the max-domain of attraction condition, will also imply the no-tie condition in Lemma~\ref{lemm:ex_uni}.

\begin{condition}[All second largest order statistics diverge]\label{cond:all_diverge} 
For every $c\in (0,\infty)$, we have
\begin{align*}
    \lim_{n\to\infty} \Prob\big(\min\{S_{r_n,1},\dots,S_{r_n,k_n}\}\leq c\big)=0.
\end{align*}
\end{condition}

The condition can often be shown using the union bound, $\Prob\big(\min\{S_{r_n,1},\dots,S_{r_n,k_n}\}\leq c\big) \le k_n \Prob(S_{r_n,1} \le c)$, suitable bounds on the cdf of $S_{r_n,1}$ and a condition relating $k_n$ and $r_n$; see, for instance, Example~\ref{ex:mori}.

Next, the serial dependence within the time series will be controlled using Rosenblatt's alpha-mixing coefficients, which need to decay sufficiently fast. For a positive integer $\ell$, put
\begin{align*}
    \alpha(\ell) = \sup\Big\{\big|\Prob(A\cap B)-\Prob(A)\Prob(B)\big|:A\in\sigma(\xi_t:t\leq 0),B\in\sigma(\xi_t:t\geq \ell)\Big\},
\end{align*}
where $\sigma(\cdot)$ denotes the $\sigma$-field generated by its argument.

\begin{condition}[$\alpha$-mixing rate]\label{cond:alphamixing}
We have $\lim_{\ell\to\infty}\alpha(\ell)=0$. Moreover, there exists $\omega>0$ such that 
\begin{align}\label{eq:alphamixing}
  \lim_{n\to\infty} (n/r_n)^{1+\omega}\alpha(r_n)=0.
\end{align}
Finally, there exists a sequence $(\ell_n)_n$ of integers such that $\ell_n\to\infty, \ell_n=o(r_n)$, $(n/r_n) \alpha(\ell_n)=o(1)$ and $(r_n/\ell_n)\alpha(\ell_n)=o(1)$.
\end{condition}

Note that Condition \ref{cond:alphamixing} can be interpreted as requiring the block sizes $r_n$ to be sufficiently large. 
The condition is not quite restrictive, and allows for long-range dependence in the sense that alpha-mixing coefficients may be non-summable. For instance, if $\alpha(\ell) = O(\ell^{-\beta})$ for $\ell\to\infty$ and some $\beta>0$, a simple calculation shows that \eqref{eq:alphamixing} is met  for any sequence $r_n$ that is of larger order than $n^{(1+\eps)/(1+\beta)}$ for some $\eps\in(0,\beta)$. Moreover, if we then choose $\ell_n=\lceil r_n^{1-\delta} \rceil$ for some $0<\delta <\min(\eps/\beta, \beta/(1+\beta))$, all four conditions on $\ell_n$ from Condition \ref{cond:alphamixing} can be shown to hold.

Within the proofs, we need the convergence of certain expectations involving $M_{r}$ or $S_{r}$ from \eqref{eq:doa}. That convergence is a consequence of uniform integrability, which in turn follows from the following condition on negative power moments of $S_r$ in the left tail and on logarithmic moments of $S_r$ in the right tail. 

\begin{condition}[Integrability]\label{cond:mom}
There exists some $\nu>1/\omega$ with $\omega$ from Condition \ref{cond:alphamixing}, such that 

\begin{align} 
\label{eq:intfinite}
\limsup_{r\to\infty} \Exp\!\big[h_{\nu}\big((M_r\vee 1)/\sigma_r\big)\big]<\infty,
\qquad
\limsup_{r\to\infty} \Exp\!\big[h_{\nu,\alpha_1}\big((S_r\vee 1)/\sigma_r\big)\big]&<\infty,
\end{align}
where 
$h_\nu(x)= \big(\log x\indic(x>\mathrm e)\big)^{2+\nu}$ and 
$h_{\nu,\alpha_1}(x)=\big(x^{-\alpha_1}\indic(x\leq \mathrm e)\big)^{2+\nu}$
with $\alpha_1=\alpha_1(\rho, \alpha_0)$ as in \eqref{eq:alpha1_NEW}.
\end{condition}

Note that the condition provides control on the right tail of $M_r$ and on the left tail of $S_r$. In view of $S_r \le M_r$, we then have control on both tails of both $M_r$ and $S_r$. 
We refer to \cite{Bücher2018-disjoint} for further discussions. 
Finally, we impose the following bias condition.

\begin{condition}[Bias]\label{cond:bias}
There exists $c_0>0$ such that, for every function $f=f_j$ from \eqref{eq:fct_H} with $j\in\{1, 2, 3,4 \}$ and with $\alpha_1=\alpha_1(\rho, \alpha_0)$ as in \eqref{eq:alpha1_NEW}, the limit $B(f) := \lim_{n\to\infty} B_n(f)$ exists, where
\begin{align*}
     B_n(f)
     =
    \sqrt {n/r_n}\bigg(\!\!\Exp\!\big[f\big( (M_{r_n}\vee c_0) / \sigma_{r_n}, (S_{r_n}\vee c_0)/\sigma_{r_n}\big)\big]-\int_{(0,\infty)^2} f(x,y)\diff H_{\rho,\alpha_0,1}(x,y)\!\bigg).
\end{align*}
\end{condition}

\begin{remark}
Akin to the mixing condition in Condition~\ref{cond:alphamixing}, the bias condition can be regarded as a high-level condition on the block size $r_n$. Indeed, as argued below, if Conditions~\ref{cond:doa} and \ref{cond:mom} are met, the bias condition is always met with $B(f) = 0$ if we choose $r_n$ sufficiently large. Non-trivial limits can be obtained in specific examples, see Example~\ref{ex:mori} for a time series model and Section~\ref{sec:estimation-blockmaxima-iid} for the iid case. 

We now prove the above claim that $B(f)=0$ is always possible by choosing $r_n$ sufficiently large. Write $H_r$ for the joint cdf of $((M_r\vee 1)/\sigma_r,(S_r \vee 1)/\sigma_r)$. By Conditions \ref{cond:doa} and \ref{cond:mom}, we have 
\begin{align*}
    \delta_r:= \max_{j=1,\dots ,4} \Big|\int_{(0,1)^2} f_j\diff (H_r-H_{\rho,\alpha_0,1})\Big| =o(1), \qquad (r\to\infty).
\end{align*}
For $n\in\N$ let $r_n = \min\{ r \in \N_{\ge \sqrt n} \mid m(r) \ge n\}$, where $m(r) := r / \delta_r$. Then $|B_n(f_j)| \le \sqrt{n/r_n} \delta_{r_n}=\sqrt{nr_n} (\delta_{r_n}/r_n)=\sqrt{nr_n} / m(r_n) \le \sqrt{r_n/n}$, and it remains to show that $r_n \in [n], r_n \to \infty$ and $r_n=o(n)$ for $n\to\infty$. First, $r_n\in[n]$ is met for all sufficiently large $n$, namely at least for those $n$ for which $\delta_n \le 1$. Second, $r_n \to \infty$ is a consequence of the fact that $r_n \ge \sqrt n$. Finally, to see that $r_n=o(n)$, introduce, for $r\in\N$, the nondecreasing function $L(r):= \inf_{s\geq r} (1/{\delta_s})$, which satisfies $L(r) \to \infty$ for $r \to \infty$. By definition, we have $m(r)\ge rL(r)$ for all $r \in \N$, which implies $(r_n-1)L(r_n-1) \le m(r_n-1) < n$, where the second inequality follows by definition of $r_n$. Rearranging the inequality yields $r_n/n < 1/L(r_n-1) + 1/n$ which converges to zero for $n \to \infty$.
We conclude $r_n=o(n)$, as asserted.
\end{remark}

Subsequently, we fix an arbitrary $c>0$  and let $\G_n^{(\dbl)} = \G_n$ denote the empirical process from \eqref{eq:cond2} with $v_n=\sqrt {n/r_n}$, $\sigma_n = \sigma_{r_n}$  
and with
\begin{align}\label{eq:trunc_toptwo}
        Z_{n,i} = (X_{n,i}, Y_{n,i}) = 
        (M_{r_n,i}\vee c,
        S_{r_n,i}\vee c)
    , \qquad i \in \{1,\dots,k_n\}.
\end{align}
We then have the following result.

\begin{theorem}\label{thm:blocks}
Suppose that Conditions \ref{cond:doa}, \ref{cond:all_diverge}, \ref{cond:alphamixing}, \ref{cond:mom} and \ref{cond:bias} are satisfied. Then, for any $c>0$, with probability tending to one, the estimator $\hat \theta_n^{(\dbl)}$ from \eqref{eq:thetan-bm} is well-defined and unique, and we have, as $n\to\infty$,
\begin{align}
    \sqrt{n/r_n}\begin{pmatrix}
        \hat\alpha_n^{(\dbl)}-\alpha_1\\
        \hat\sigma_n^{(\dbl)}/\sigma_{r_n}-s_1
    \end{pmatrix}
    &= 
   M_{\rho_0}(\alpha_0)
    \W_n^{(\dbl)}+o_{\mathbb P}(1)\notag
    \\&\rightsquigarrow M_{\rho_0}(\alpha_0)  \Nc_4(\bm B, \Sigma^{(\dbl)}_{\rho, \alpha_0})
\end{align}
with  $\alpha_1$ from \eqref{eq:alpha1_NEW} and $s_1$ from \eqref{eq:s1}. Here, $M_{\rho_0}(\alpha_0) \in \R^{2 \times 4}$ is as in Theorem~\ref{thm:asymptotic}, 
\begin{align*}
\bm W_n^{(\dbl)} = (\G_n^{(\dbl)} f_1, \G_n^{(\dbl)} f_2, \G_n^{(\dbl)} f_3, \G_n^{(\dbl)} f_4)^\top, \quad
\bm B = (B(f_1), B(f_2), B(f_3), B(f_4))^\top,
\end{align*}
with $f_j$ from \eqref{eq:fct_H}, and $\Sigma^{(\dbl)}_{\rho, \alpha_0} = (\sigma_{ij}^{(\dbl)})_{i,j=1}^4$ has entries 
\[
\sigma_{ij}^{(\dbl)}= \Cov_{(X,Y) \sim \mathcal W(\rho, \alpha_0,1)}(f_i(X,Y),f_j(X,Y)).
\]
Explicit formulas for $\Sigma^{(\dbl)}_{\rho, \alpha_0}$ are provided in Lemma~\ref{lem:cov_disjoint_general}; remarkably, the matrix depends on~$\rho$ only via $\rho_0=\rho(0)$ and $\rho_1 = \int_0^1z^{-1}[\rho_0 - \rho(z)]\diff z \ge 0$.

\end{theorem}

A careful look at the proof shows that regular variation of $(\sigma_r)_r$ from Condition~\ref{cond:doa} is only needed to deduce that $\sigma_{m_r}/\sigma_r \to 1$ for a certain integer sequence $(m_r)_{r\in\N}$ such that $m_r/r\to 1$ as $r\to\infty$.

\subsection{Sliding Blocks}
\label{subsec:slb}

Inspired by the results in \cite{Bücher2018-sliding}, we next consider a sliding blocks version of the estimators from the previous subsection.
For integers $s$ and $t$ with $1 \le s < t \le n$, define
\begin{align} \label{eq:sliding_ms}
    M_{s:t} := \xi_{(1), \{s, \dots, t\}}
    \qquad
    S_{s:t}:= \xi_{(2), \{s, \dots, t\}}
\end{align}
as the two largest order statistics among the observations $\xi_i$ with $i\in \{s, \dots, t\}$.
Note that the disjoint blocks versions from \eqref{eq:MS} can be written as $(M_{r,i}, S_{r,i}) = (M_{(i-1)r+1:ir}, S_{(i-1)r+1:ir})$ for $i\in\{1, \dots, \flo{n/r}\}$.
In view of Condition~\ref{cond:doa}, each vector $(M_{s:s+r-1}, S_{s:s+r-1})$ constructed from a block of successive observations of size $r$, with $s\in\{1, \dots, n-r+1\}$, approximately follows the $\mathcal W(\rho, \alpha_0, \sigma_r)$-distribution, for sufficiently large block size $r$. 
Following the argumentation in the previous section, this motivates the estimator
\begin{align}
\label{eq:thetan-bm-sliding}
\hat \theta_n^{(\sbl)} 
:= 
(\hat \alpha_n^{(\sbl)}, \hat \sigma_{n}^{(\sbl)}) 
:= 
\hat \theta\big( (M_{1:r} \vee c, S_{1:r} \vee c), \dots, (M_{n-r+1:n} \vee c, S_{n-r+1:n} \vee c) \big)
\end{align}
with $\hat \theta$ from \eqref{eq:mle}, where $c$ denotes a positive truncation constant and where we require $r=r_n\to \infty$ with $r_n=o(n)$ as $n\to\infty$. 
As in the previous section, we need to guarantee that the no-tie condition in Lemma~\ref{lemm:ex_uni} is satisfied with probability converging to one, and that the truncation by $c$ does not matter asymptotically. The next condition, which is a slight adaptation of Condition \ref{cond:all_diverge}, is sufficient; see also Condition 2.2 in \cite{Bücher2018-sliding} for a similar assumption.

\begin{condition}[All second largest order statistics of size $\flo{r_n/2}$ diverge]
\label{cond:all_diverge2} 
For every $c\in (0,\infty)$, the event that all second largest order statistics calculated from disjoint blocks of size $\tilde r_n=\flo{r_n/2}$ are larger than $c$ converges to one; i.e., 
\begin{align*}
    \lim_{n\to\infty} \Prob\Big(\min\big\{S_{1:\tilde r_n},\dots,S_{(\tilde k_n-1)\tilde r_n+1:\tilde r_n \tilde k_n}\big\}\leq c\Big)=0
\end{align*}
where $\tilde k_n = \flo{n/\tilde r_n}$ denotes the number of disjoint blocks of size $\tilde r_n$ that fit into the sampling period $\{1, \dots, n\}$.
\end{condition}

Subsequently, let $\G_n^{(\sbl)}=\G_n$ denote the empirical process from \eqref{eq:cond2} with $k_n = n-r+1$, $v_n=\sqrt {n/r_n}, \sigma_n = \sigma_{r_n}$ and with
\begin{align}
\label{eq:trunc_toptwo2}
        Z_{n,i} = (X_{n,i}, Y_{n,i}) = 
        (M_{i:i+r_n-1}\vee c,
        S_{i:i+r_n-1}\vee c)
    , \qquad i \in \{1,\dots,k_n\}.
\end{align}

\begin{theorem}\label{thm:sl_asy}
Suppose that Conditions \ref{cond:doa}, \ref{cond:alphamixing}, \ref{cond:mom}, \ref{cond:bias} and \ref{cond:all_diverge2} are met. 
Then, for any $c>0$ and with probability tending to one, the estimator $\hat \theta_n^{(\sbl)}$ from \eqref{eq:thetan-bm-sliding} is well-defined and unique and we have, as $n\to\infty$,
\begin{align*}
    \sqrt{n/r_n}\begin{pmatrix}
        \hat\alpha_n^{(\sbl)}-\alpha_1\\
        \hat\sigma_n^{(\sbl)}/\sigma_{r_n}- s_1
    \end{pmatrix}
    &= 
    M_{\rho_0}(\alpha_0)
    \W_n^{(\sbl)}+o_{\mathbb P}(1)\notag
    \\&\rightsquigarrow M_{\rho_0}(\alpha_0)  \Nc_4(\bm B, \Sigma^{(\sbl)}_{\rho, \alpha_0})
\end{align*}
with  $\alpha_1$ from \eqref{eq:alpha1_NEW} and $s_1$ from \eqref{eq:s1}. Here, $M_{\rho_0}(\alpha_0) \in \R^{2 \times 4}$ is as in Theorem~\ref{thm:asymptotic}, 
\begin{align*}
\bm W_n^{(\sbl)} = (\G_n^{(\sbl)} f_1, \G_n^{(\sbl)} f_2, \G_n^{(\sbl)} f_3, \G_n^{(\sbl)} f_4)^\top, \quad
\bm B = (B(f_1), B(f_2), B(f_3), B(f_4))^\top,
\end{align*}
with $f_j$ from \eqref{eq:fct_H}, and $\Sigma^{(\sbl)}_{\rho, \alpha_0} = (\sigma_{ij}^{(\sbl)})_{i,j=1}^4$ has entries 
\begin{align*}
    \sigma_{ij}^{(\sbl)} = 2 \int_0^1 \Cov
    \big(f_i(X,Y), f_j(\tilde X, \tilde Y)\big) \diff\zeta,
\end{align*}
where $(X, Y, \tilde X, \tilde Y)$ is a random vector whose bivariate cdfs needed for evaluating the covariance are given by $K_{\rho,\alpha_0,\zeta}$ from \eqref{eq:sl_distr}.
If $\rho=\rho_\indi$, we have $\alpha_1=\alpha_0, s_1=1$, $\sigma_{ij}^{(\dbl)}=2 s_{ij}(\alpha_0)$ with $s_{ij}(\alpha)$ from Lemma~\ref{lem:cov_sl}, and $M_{\rho_0}(\alpha_0)=M_1(\alpha_0)$ is explicitly given in \eqref{eq:malpha}.
\end{theorem}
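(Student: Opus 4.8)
\textbf{Proof proposal for Theorem~\ref{thm:sl_asy}.}
The plan is to reduce the sliding-blocks statement to the general asymptotic-normality result of Theorem~\ref{thm:asymptotic}, so that the only genuinely new work is establishing Condition~\ref{cond:2} for the sliding-blocks array $\bm Z_n$ from \eqref{eq:trunc_toptwo2}, together with the well-definedness/uniqueness of $\hat\theta_n^{(\sbl)}$. The latter is handled exactly as in the disjoint case: Condition~\ref{cond:all_diverge2} implies that the minimum of the second-largest order statistics over the $\tilde k_n$ disjoint blocks of size $\tilde r_n = \flo{r_n/2}$ exceeds $c$ with probability tending to one; since every sliding block of size $r_n$ contains at least one such disjoint block of size $\tilde r_n$, all of the $S_{i:i+r_n-1}$ exceed $c$ simultaneously w.p.\ tending to one, hence $(M_{i:i+r_n-1}\vee c, S_{i:i+r_n-1}\vee c) = (M_{i:i+r_n-1}, S_{i:i+r_n-1})$ for all $i$, the truncation is asymptotically irrelevant, and via Lemma~\ref{lem:no_ties} the no-tie condition of Lemma~\ref{lemm:ex_uni} holds w.p.\ tending to one. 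On that event the algebraic representation \eqref{eq:thmasymp} from Theorem~\ref{thm:asymptotic} applies verbatim with $v_n = \sqrt{n/r_n}$ and $\sigma_n = \sigma_{r_n}$.

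It therefore remains to verify the two parts of Condition~\ref{cond:2} for the sliding array. For the law of large numbers \eqref{eq:moments_convergence} over $f\in\mathcal F_2(\alpha_-,\alpha_+)$: for each fixed sliding block of size $r_n$, Condition~\ref{cond:doa} gives the weak limit $(M_{i:i+r_n-1}/\sigma_{r_n}, S_{i:i+r_n-1}/\sigma_{r_n}) \weak \mathcal W(\rho,\alpha_0,1)$, and Conditions~\ref{cond:mom} and~\ref{cond:bias} provide the uniform-integrability bounds needed to upgrade this to convergence of $\Exp[f(\cdot)]$ to $\int f \diff H_{\rho,\alpha_0,1}$ for every $f\in\mathcal F_2$ (the argument is identical to the disjoint case, since a single sliding block is distributionally the same as a single disjoint block). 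The average $\tfrac1{k_n}\sum_{i=1}^{k_n} f(\cdot)$ then has the same expectation in the limit, and its variance is $o(1)$ by a blocking/$\alpha$-mixing argument: split $\{1,\dots,k_n\}$ into big blocks of length $\asymp r_n$ separated by small blocks of length $\ell_n = o(r_n)$, use Condition~\ref{cond:alphamixing} to decouple big blocks, and bound the within-big-block sums crudely; the overlap structure of sliding blocks only affects the constants, not the rate. This gives \eqref{eq:moments_convergence} in probability, hence in distribution.

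For the process convergence \eqref{eq:w}, this is where the serious work lies and where I expect the main obstacle. One has to show that the four-dimensional empirical process $\G_n^{(\sbl)} = (\G_n^{(\sbl)} f_1,\dots,\G_n^{(\sbl)} f_4)$ built from overlapping blocks converges to a centred Gaussian vector with the stated covariance $\sigma_{ij}^{(\sbl)} = 2\int_0^1 \Cov(f_i(X,Y), f_j(\tilde X,\tilde Y))\diff\zeta$. The strategy follows \cite{Bücher2018-sliding} closely: approximate each sliding-block maximum pair by the corresponding pair computed from the block with the last $\ell_n$ observations deleted (negligible in $L^2$ thanks to the moment conditions in Condition~\ref{cond:mom} and the fact that a max over $r_n - \ell_n$ observations and over $r_n$ observations differ only on an event of probability $O(\ell_n/r_n)$), so that consecutive modified summands more than $\ell_n$ apart become independent up to an $\alpha$-mixing error controlled by Condition~\ref{cond:alphamixing}. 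One then writes $\sqrt{n/r_n}\,(\tfrac1{k_n}\sum \cdot - \Exp)$ as a normalized sum over big blocks of length $\asymp r_n$, applies a CLT for row-wise independent triangular arrays (Lyapunov, with the $(2+\nu)$-moments from Condition~\ref{cond:mom} supplying the Lyapunov ratio), and identifies the limiting covariance. The covariance computation is the delicate bookkeeping step: within one big block of length $\asymp r_n$ one has $\asymp r_n$ overlapping sliding windows, and a pair of windows offset by $\flo{\zeta r_n}$ for $\zeta\in(0,1)$ has a joint max-pair law converging to $K_{\rho,\alpha_0,\zeta}$ from \eqref{eq:sl_distr}; summing these pairwise covariances and rescaling by $1/r_n$ turns the sum into the Riemann integral $\int_0^1 \Cov(f_i(X,Y),f_j(\tilde X,\tilde Y))\diff\zeta$, with the prefactor $2$ coming from counting each ordered pair of offsets $\pm\flo{\zeta r_n}$. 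Care is needed to show that pairs of windows straddling a big-block boundary contribute negligibly (their number is $O(\ell_n)$ per boundary, hence $o(r_n)$ of the $O(r_n)$ relevant pairs) and that the $\zeta$-integrand is integrable near the endpoints, which again follows from the uniform moment control in Condition~\ref{cond:mom}. Finally, when $\rho = \rho_\indi$ the general formulas specialize: $\varpi_{\rho_0} = 1$ so $\alpha_1 = \alpha_0$ and $s_1 = 1$, $M_{\rho_0}(\alpha_0)$ reduces to $M_1(\alpha_0)$ in \eqref{eq:malpha} by Theorem~\ref{thm:asymptotic}, and the covariance entries reduce to $2 s_{ij}(\alpha_0)$ with $s_{ij}$ from Lemma~\ref{lem:cov_sl} by a direct evaluation of the iid joint-maxima laws; this requires no new argument beyond plugging the iid form of $K_{\rho,\alpha_0,\zeta}$ into the integral.
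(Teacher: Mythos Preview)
Your proposal is correct and follows essentially the same approach as the paper: reduce to Theorem~\ref{thm:asymptotic} by verifying Condition~\ref{cond:2} for the sliding-blocks array, handle the no-tie condition via Lemma~\ref{lem:no_ties} and Condition~\ref{cond:all_diverge2}, split $\G_n^{(\sbl)} = \widetilde{\G}_n + B_n$ with $B_n$ treated by Condition~\ref{cond:bias}, and establish the Gaussian limit of $\widetilde{\G}_n$ with covariance $2\int_0^1\Cov_{K_{\rho,\alpha_0,\zeta}}(\cdot,\cdot)\diff\zeta$ via the big-block/small-block and clipping machinery of \cite{Bücher2018-sliding}. The paper packages the covariance convergence into Lemmas~\ref{lem:JoiWeaKon}--\ref{lem:covs_emp_proc} and then defers the remaining CLT argument to \cite{Bücher2018-sliding}, whereas you sketch those steps explicitly; the substance is the same.
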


\subsection{Bias-corrected estimation}
\label{sec:bias-corr-block-maxima}

The inconsistency of the disjoint and sliding blocks MLE can be resolved by the bias-correction approach from Section~\ref{sec:bias-corr}. For that purpose, we need an estimator for $\rho_0$ that converges sufficiently quickly to $\rho_0$. Note that, under suitable regularity conditions, we have $\rho_0 = \pi(1)$, where $\pi=(\pi(m))_{m\in\N}$ denotes the cluster size distribution of the time series $(\xi_t)_{t\in\Z}$; see \cite{Bei04}, Section 10, or \cite{Hsing1988}, Theorem 3.3.

Estimators for $\pi$ can be found in \cite{Hsi91, Fer03, Rob09b, Rob09, Jennessen2022}. Throughout the simulation study, we choose to work with the disjoint blocks estimator from Formula (2.6) in \cite{Jennessen2022}: for a block size $r'=r_n'\to\infty$ (typically smaller than $r=r_n$ used in the previous sections), the estimator is defined as
\begin{align} \label{eq:pihat}
\hat \pi_n(1) = \frac4{k'(k'-1)} \sum_{i \ne j} \bm 1\Big\{ \sum_{s \in I_j} \bm 1 \big[ \xi_s > \max(\xi_t: t \in I_i) \big] = 1 \Big\},
\end{align}
where $k'=\lfloor n/r' \rfloor$, where the summation is over all indexes $i,j \in \{1, \dots, k'\}$ with $i\ne j$ and where $I_i=\{ (i-1)r'+1, \dots, ir'\}$ denotes the $i$th disjoint block of indexes of size $r'$. Under suitable regularity conditions, $\sqrt{k'}(\hat \pi_n(1) - \pi(1))$ is asymptotically normal for $n\to\infty$, see Theorem 4.1 in \cite{Jennessen2022}. 
As a consequence, if we choose $r'=r'_n$ such that $r'_n=o(r_n)$ for $n\to\infty$ with $r_n$ as in Sections~\ref{subsec:djb} and \ref{subsec:slb}, we have $\sqrt{k}(\hat \pi_n(1)  - \pi(1)) = o_{\mathbb P}(1)$. 
The same is then true for the $[0,1]$-valued estimator $\hat \rho_{0,n}:= \min(\hat \pi_n(1),1)$,  that is, $\sqrt{k}(\hat \rho_{0,n}  - \rho_0) = o_{\mathbb P}(1)$, as required for an application of the results in Section~\ref{sec:bias-corr}. 
Hence, defining $\hat\varpi_n = \varpi_{\hat \rho_{0,n}}$ and 
\begin{align} \label{eq:bias-correction-bm}
\widetilde\alpha_n^{(\mbl)} := \hat\alpha_n^{(\mbl)} / \hat\varpi_n,
\qquad
\widetilde\sigma_n^{(\dbl)} =\hat\sigma_n^{(\mbl)} \Big\{ \frac{\Upsilon_{\hat \rho_{0,n}}(\hat \varpi_n)}2 \Big\}^{1/\hat \alpha_n^{(\mbl)}}
\end{align}
for $\mbl \in \{ \dbl, \sbl\}$, we obtain the following result.

\begin{corollary} \label{cor:blocks-bias-correction}
Suppose $\hat \rho_{0,n}=\rho_0+o_{\mathbb P}(k_n^{-1/2})$. 
Then, under the notations and conditions of Theorem~\ref{thm:blocks} (for $\mbl=\dbl$) or Theorem~\ref{thm:sl_asy} (for $\mbl=\sbl$), we have
\begin{align} \label{eq:blocks-bias-correction}
    \sqrt{n/r_n}\begin{pmatrix}
        \widetilde\alpha_n^{(\mbl)}-\alpha_0\\
        \widetilde\sigma_n^{(\mbl)}/\sigma_{r_n}-1
    \end{pmatrix}
    &= 
   M_{\rho_0}^{\mathrm{bc}}(\alpha_0)
    \W_n^{(\mbl)}+o_{\mathbb P}(1) \rightsquigarrow 
    M_{\rho_0}^{\mathrm{bc}}(\alpha_0)  \Nc_4(\bm B, \Sigma^{(\mbl)}_{\rho, \alpha_0}),
\end{align}
with $M_{\rho_0}^{\mathrm{bc}}(\alpha_0)$ as defined in \eqref{eq:malpha-bc}.
\end{corollary}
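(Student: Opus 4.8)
The plan is to obtain this corollary as the block-maxima analogue of Theorem~\ref{thm:bias-corrected-asymp}, by repeating the latter's proof with the generic expansion from Theorem~\ref{thm:asymptotic} replaced by the expansion from Theorem~\ref{thm:blocks} (for $\mbl=\dbl$) or Theorem~\ref{thm:sl_asy} (for $\mbl=\sbl$). First I would record that, by the construction in \eqref{eq:bias-correction-bm}, the triple $(\hat\alpha_n^{(\mbl)},\hat\sigma_n^{(\mbl)}/\sigma_{r_n},\hat\rho_{0,n})$ is mapped to $(\widetilde\alpha_n^{(\mbl)},\widetilde\sigma_n^{(\mbl)}/\sigma_{r_n})$ by exactly the fixed map
\[
g:(\alpha,u,\rho_0)\longmapsto\Big(\alpha/\varpi_{\rho_0},\ u\,\{\Upsilon_{\rho_0}(\varpi_{\rho_0})/2\}^{1/\alpha}\Big)
\]
underlying \eqref{eq:bias-corrected-estimator}, where $\varpi_{\rho_0}$ is the unique zero of $y\mapsto\Pi_{\rho_0}(y)$ from Lemma~\ref{lem:digamma2}. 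By the definition of $\alpha_1$ in \eqref{eq:alpha1_NEW} and of $s_1$ in \eqref{eq:s1} one has $g(\alpha_1,s_1,\rho_0)=(\alpha_0,1)$ — this is precisely the identity behind Theorem~\ref{thm:bias-corrected-consist} — and Lemma~\ref{lem:digamma2} shows that $\rho_0\mapsto\varpi_{\rho_0}$, hence also $g$, is continuously differentiable in a neighbourhood of $(\alpha_1,s_1,\rho_0)$ when $\rho_0\in(0,1)$, while the boundary cases $\rho_0\in\{0,1\}$ reduce $g$ to the identity in the first argument.

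Next I would match stochastic orders. Since $k_n=\lfloor n/r_n\rfloor$ and $v_n=\sqrt{n/r_n}$ in Theorems~\ref{thm:blocks} and \ref{thm:sl_asy}, we have $k_n^{-1/2}\asymp v_n^{-1}$, so the hypothesis $\hat\rho_{0,n}=\rho_0+o_\Prob(k_n^{-1/2})$ is exactly the rate condition $\hat\rho_{0,n}=\rho_0+o_\Prob(v_n^{-1})$ imposed in Theorem~\ref{thm:bias-corrected-asymp}; as explained after \eqref{eq:pihat}, it is met by $\hat\rho_{0,n}=\min(\hat\pi_n(1),1)$ whenever the auxiliary block length obeys $r_n'=o(r_n)$. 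A first-order Taylor expansion of $g$ about $(\alpha_1,s_1,\rho_0)$ then gives, on the high-probability event on which $\hat\theta_n^{(\mbl)}$ is well-defined,
\begin{align*}
v_n\begin{pmatrix}\widetilde\alpha_n^{(\mbl)}-\alpha_0\\ \widetilde\sigma_n^{(\mbl)}/\sigma_{r_n}-1\end{pmatrix}
&= Dg_{(\alpha,u)}(\alpha_1,s_1,\rho_0)\, v_n\begin{pmatrix}\hat\alpha_n^{(\mbl)}-\alpha_1\\ \hat\sigma_n^{(\mbl)}/\sigma_{r_n}-s_1\end{pmatrix} \\
&\quad + \partial_{\rho_0}g(\alpha_1,s_1,\rho_0)\, v_n(\hat\rho_{0,n}-\rho_0) + o_\Prob(1),
\end{align*}
where the second summand is $o_\Prob(1)$ by the rate matching and where $Dg_{(\alpha,u)}(\alpha_1,s_1,\rho_0)$ is the $2\times2$ Jacobian appearing as the left factor in \eqref{eq:malpha-bc}; this Jacobian evaluation is the same one carried out in the proof of Theorem~\ref{thm:bias-corrected-asymp}, so I would simply invoke it rather than redo it.

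Finally, substituting the stochastic expansion $v_n(\hat\alpha_n^{(\mbl)}-\alpha_1,\hat\sigma_n^{(\mbl)}/\sigma_{r_n}-s_1)^\top=M_{\rho_0}(\alpha_0)\W_n^{(\mbl)}+o_\Prob(1)$ from Theorem~\ref{thm:blocks}/\ref{thm:sl_asy} and recalling $M_{\rho_0}^{\mathrm{bc}}(\alpha_0)=Dg_{(\alpha,u)}(\alpha_1,s_1,\rho_0)\,M_{\rho_0}(\alpha_0)$ from \eqref{eq:malpha-bc} yields the stochastic expansion in \eqref{eq:blocks-bias-correction}; the weak limit then follows from the continuous mapping theorem applied to $\W_n^{(\mbl)}\rightsquigarrow\Nc_4(\bm B,\Sigma^{(\mbl)}_{\rho,\alpha_0})$ and linearity, and the identity $M_1^{\mathrm{bc}}(\alpha_0)=M_1(\alpha_0)$ in the case $\rho_0=1$ (where $\varpi_{\rho_0}=s_1=1$) is inherited verbatim from Theorem~\ref{thm:bias-corrected-asymp}. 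I do not expect a genuine obstacle here: the argument is a routine repetition of the proof of Theorem~\ref{thm:bias-corrected-asymp}, and the only points needing a moment's care are the order identification $k_n^{-1/2}\asymp v_n^{-1}$ and the remark that, since $\hat\theta_n^{(\mbl)}$ is well-defined and unique only with probability tending to one (Theorems~\ref{thm:blocks} and \ref{thm:sl_asy}), so is $(\widetilde\alpha_n^{(\mbl)},\widetilde\sigma_n^{(\mbl)})$, allowing everything above to be read on that event without loss of generality.
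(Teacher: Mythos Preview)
Your proposal is correct and follows essentially the same approach as the paper. The paper's proof is simply a one-line invocation of Theorem~\ref{thm:bias-corrected-asymp}, noting that the required Condition~\ref{cond:2} (the hypothesis of Theorem~\ref{thm:asymptotic}) is verified in the course of proving Theorems~\ref{thm:blocks} and~\ref{thm:sl_asy}; you instead unfold that invocation into an explicit Taylor expansion of the map $g$, but the substance is identical, including the key observation that $k_n^{-1/2}\asymp v_n^{-1}$ so that the rate hypothesis on $\hat\rho_{0,n}$ matches.
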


\begin{proof}
The result follows from an application of Theorem \ref{thm:bias-corrected-asymp}. The required conditions of Theorem~\ref{thm:asymptotic} are established in the proofs of Theorem~\ref{thm:blocks} (for $\mbl=\dbl$) and \ref{thm:sl_asy} (for $\mbl=\sbl$).
\end{proof}

It is important to stress again that, for $\rho=\rho_\indi$, the limit distribution in \eqref{eq:blocks-bias-correction} is the same as for $(\hat\alpha_n^{(\mbl)},  \hat\sigma_n^{(\mbl)})$. Hence, in the case where the original estimator was already consistent, there is no price to be paid for additionally estimating $\rho_0=1$.

\begin{remark}[On the asymptotic variance]
\label{rem:comparing_covs_general}
The asymptotic distribution in \eqref{eq:blocks-bias-correction} can be rewritten as
\[
M_{\rho_0}^{\mathrm{bc}}(\alpha_0) \mathcal{N}_4\big(\boldsymbol B,\Sigma^{(\mbl)}_{\rho, \alpha_0}\big)
=
\mathcal{N}_2\big(\boldsymbol B_{\TT},\Sigma^{(\mbl)}_{\TT}(\alpha_0,\rho)\big),
\]
where
\begin{align} 
\label{eq:sigma-disjoint-general}
\Sigma^{(\mbl)}_{\TT}(\alpha_0,\rho) = M_{\rho_0}^{\mathrm{bc}}(\alpha_0)\Sigma^{(\mbl)}_{\rho, \alpha_0} M_{\rho_0}^{\mathrm{bc}}(\alpha_0)^\top \in \R^{2 \times 2}.
\end{align}
Explicit values of $\Sigma^{(\mbl)}_{\rho, \alpha_0}$ are derived in Lemma \ref{lem:cov_disjoint_general} and Lemma \ref{lem:cov_sl}, which also allow for explicit evaluation of the matrix product in the previous display. 
Notably, $\Sigma^{(\dbl)}_{\rho, \alpha_0}$ depends on $\rho$ only via $\rho_0$ and  $\rho_1 = \int_0^1z^{-1}[\rho_0 - \rho(z)]\diff z \ge 0$, while a more complicated dependence arises for $\Sigma^{(\sbl)}_{\rho, \alpha_0}$. The diagonal elements, i.e., the asymptotic variances of the shape and scale estimators, are depicted in Figure \ref{fig:stationary_cov} as a function of $\rho_0$ (and for $\alpha_0=1$), for the three parametric classes provided in Example~\ref{ex:models}. As a benchmark, we also add horizontal lines that correspond to the asymptotic variances of the plain disjoint and sliding block maxima MLE from \cite{Bücher2018-disjoint} and \cite{Bücher2018-sliding}, respectively, which are are explicitly stated in \eqref{eq:max_dbm_sbm_matrix} below.
\begin{figure}[t]
    \centering
    \includegraphics[width=.9\linewidth]{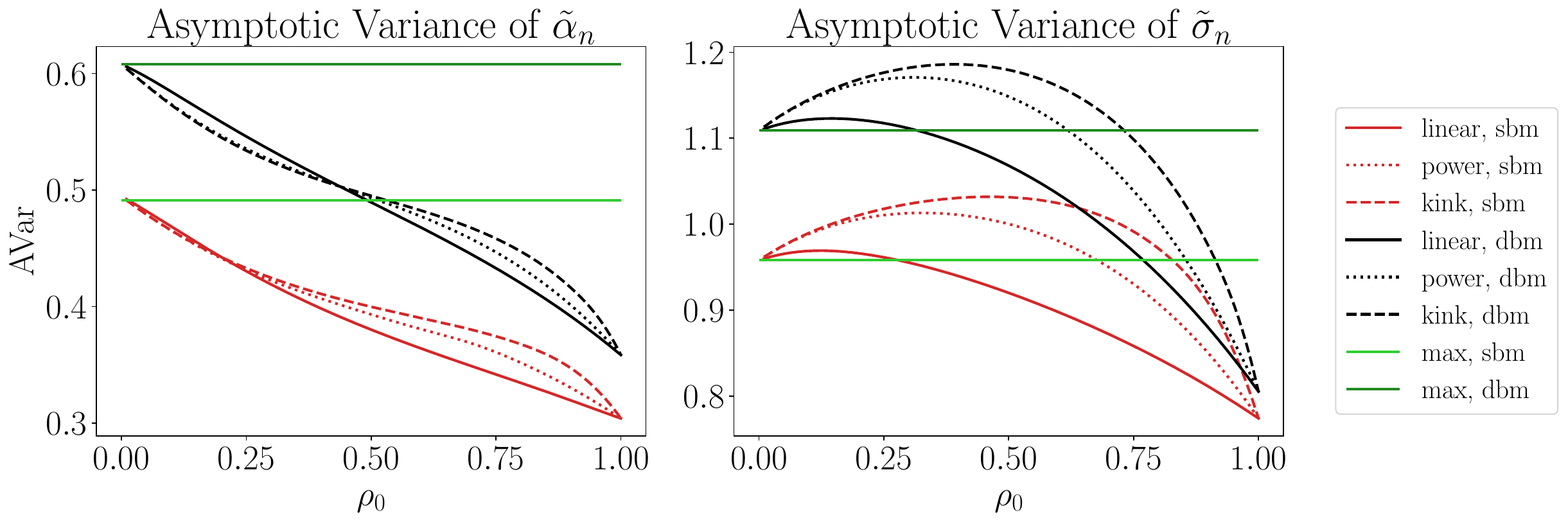}
    \caption{Standardized asymptotic variance of shape (left) and scale (right) estimators, that is, the diagonal entries of the asymptotic covariance matrices
    $\Sigma^{(\mbl)}_{\TT}(1, \rho)$ and $\Sigma^{(\mbl)}_\mathrm{max}(1)$ from \eqref{eq:sigma-disjoint-general} and \eqref{eq:max_dbm_sbm_matrix}, respectively, 
    at $\alpha_0=1$ and as a function of $\rho_0$.
    The examples ``linear", ``power" and ``kink" correspond to Example \ref{ex:models} [a], [b] and [c], respectively. For the disjoint blocks version, the respective curves for an arbitrary $\rho \in \mathcal C$ lie between the `linear" and ``kink" curves. 
    } 
    
    \label{fig:stationary_cov}
\end{figure}

We find that the top-two shape estimators exhibit a smaller asymptotic variance than their block maxima counterparts, uniformly over all considered $\rho$-functions. In fact, for the disjoint blocks version, the bounds derived in Lemma \ref{lem:cov_disjoint_general} show that the depicted curves correspond to `best and worst cases', that is, all possible variance curves (over $\rho \in  \mathcal C$) lie between the curves corresponding to the linear and the kink model.

The findings are more complicated for the scale estimator: it is only for values of $\rho_0$ in a neighborhood around 1 (i.e., close to independence) that the top-two scale estimators exhibit a smaller variance than their block maxima counterparts. The specific neighborhood depends on the model: it is quite large for the linear model (approximately $[0.31,1]$ for disjoint and $[0.28,1]$ for sliding) and quite small for the kink model (approximately $[0.73,1]$ for disjoint and $[0.83,1]$ for sliding).

Together, these findings indicate that the top-two estimator should be used for estimating the shape $\alpha$, while the block maxima MLE may be preferable for estimating $\sigma$ in situations exhibiting moderately strong serial dependence. Thus, when interested in target parameters depending on both the shape and the scale (such as return levels), it may be beneficial to mix both estimators; we refer to Section \ref{sec:simulation} and Equation \eqref{eq:botw} below for details. 

\end{remark}

We next provide an explicit example where all conditions of Theorem~\ref{thm:blocks} are met. In particular, we provide explicit formulas for the bias terms in Corollary~\ref{cor:blocks-bias-correction}, which allows for a theoretical comparison with max-only estimators in terms of their asymptotic bias and MSE.

\begin{example}[A version of {\cite[Example 1]{Mori_1976}}]
\label{ex:mori}
Let $\rho \in \mathcal C$ be arbitrary. By concavity, $\rho'$ exists and is continuous everywhere except at countably many points, see Theorem 25.3 in \cite{Roc97}. Let $F(x) = 0$ for $x<0$, $F(x)=1$ for $x \ge 1$, and $F$ the right-continuous extension of $-\rho'$ on $[0,1)$; this defines a probability distribution $P_\rho$ with support $[0,1]$. For $\rho(\eta)=\rho_\indi(\eta)=1-\eta$, we have $P_\rho=\delta_0$, and for $\rho(\eta) \equiv 0$, we have $P_\rho = \delta_1$.

Let $(Z_t)_t$ be iid standard Pareto, and let $\zeta_t\sim P_\rho$ be iid and independent of $(Z_t)_t$. Define
\[
\xi_t = \max(Z_{t-1}, \zeta_t Z_t)^{1/\alpha}, \qquad t \in \Z.
\]
Apparently, $(\xi_t)_t$ is strictly stationary and 1-dependent. If $r=r_n\in[n]$ is such that $r_n\to\infty, r_n=o(n)$ and such that 
$\lambda_1:=\lim_{n\to\infty} \sqrt{n/r_n^3} =\lim_{n\to\infty} \sqrt{k_n} / r_n \in [0,\infty)$
exists; see also \eqref{eq:lambda1} in Theorem~\ref{thm:IID} below; 
then Conditions \ref{cond:doa}, \ref{cond:all_diverge}, \ref{cond:alphamixing}, \ref{cond:mom} and \ref{cond:bias} are met with $\alpha_0=\alpha$ and $\sigma_r=r^{1/\alpha}$, with the bias $B(f_j)$ from Condition~\ref{cond:bias} explicitly given in \eqref{eq:bias-mori} below. Remarkably, the bias depends on $\rho$ only via $\rho_0$, and  if $\lambda_1=0$, we have $B(f_j)=0$.

The bias for the top-two shape estimators (note that it is the same for the disjoint and sliding blocks version), that is
$\boldsymbol B_{\TT}(\lambda_1) 
    = 
    \lambda_1 M_{\rho_0}^{\mathrm{bc}}(\alpha_0)
    \boldsymbol B'$
with $M_{\rho_0}^{\mathrm{bc}}(\alpha_0)$ as defined in \eqref{eq:malpha-bc} and $\bm B' = (B'(f_j))_{j=1, \dots, 4}$ as defined just before \eqref{eq:bias-mori},
is depicted in Figure~\ref{fig:example_bias}, for the case where $\lambda_1=1$ and $\alpha_0=1$ and as a function of $\rho_0$. As a benchmark, we also added a respective curve for the block maxima estimators, whose asymptotic bias is 
\begin{align} \label{eq:block-maxima-mle-bias-mori}
\boldsymbol B_{\max}(\lambda_1) = 
    \lambda_1\frac{6}{\pi^2} 
    \begin{pmatrix}
        \alpha_0\\ \{ \pi ^2 (2-\rho_0)+6 \gamma  (5-2 \rho_0)-6\}/(6 \alpha_0)
    \end{pmatrix} 
\end{align}
as shown in Section~\ref{subsec:proof-of-mori-example}. We observe that the bias of the top-two and the max-only approaches are of comparable magnitude, with some slight advantages for the former.

\begin{figure}[t]
    \centering
    \includegraphics[width=0.7\linewidth]{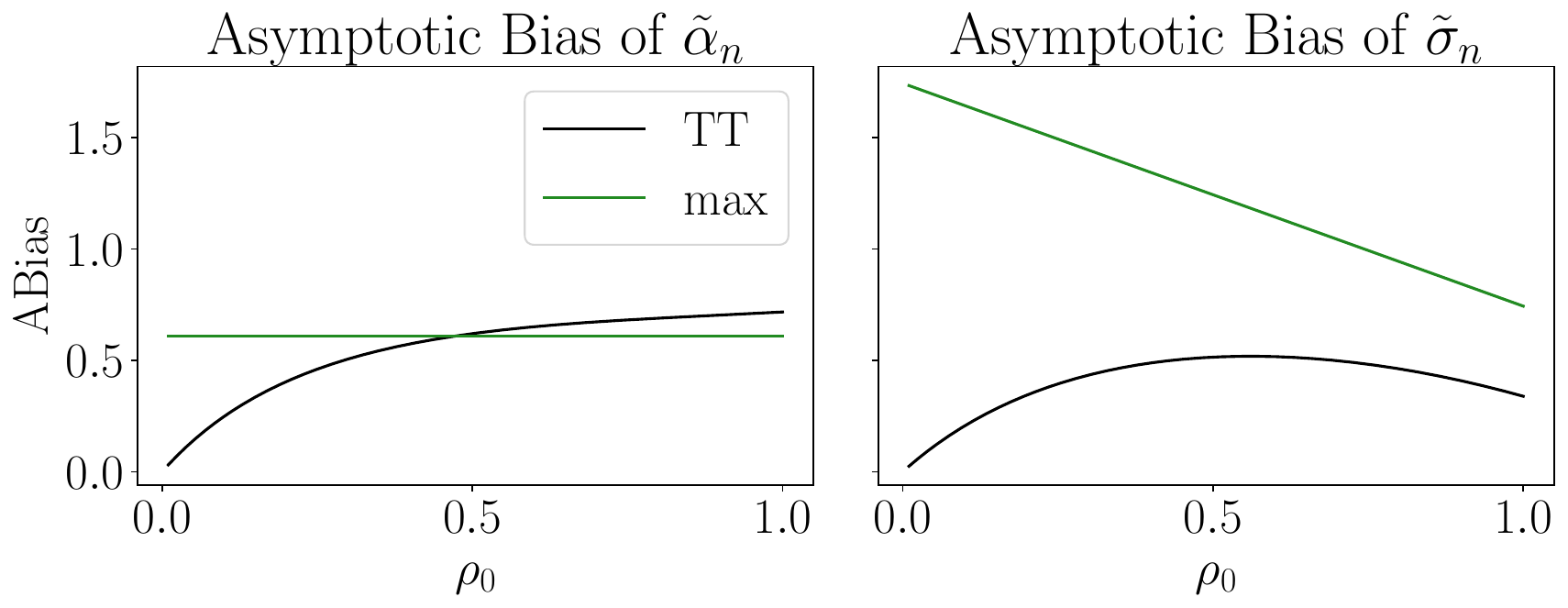}
    \caption{Standardized asymptotic bias of shape (left) and scale (right) estimators as a function of $\rho_0$, for $\alpha_0=1$ and $\lambda_1=1$. More precisely, the depicted values correspond to the mean of the asymptotic distributions of $\sqrt k_n(\widetilde \alpha_n - \alpha_0)$ and $\sqrt k_n(\widetilde \sigma_n/\sigma_n - 1)$, respectively, under the assumption that $\sqrt{k_n}/r_n = \lambda_1+o(1)$ for $n \to \infty$.}
    \label{fig:example_bias}
\end{figure}

Together with the derivations in Remark~\ref{rem:comparing_covs_general}, the different methods may be compared in terms of asymptotic expansions of their mean squared error at finite block size $r_n$, formally defined as
\[
\mathrm{AMSE}(\widetilde \alpha_\TT^{(\mbl)}) 
=
\frac{r_n}{n}\big(\Sigma^{(\mbl)}_{\TT}(\alpha_0,\rho)\big)_{11}
    +(\boldsymbol B_{\TT}(1/r_n))_1^2,
\]
and likewise for the scale and block maxima estimators.
This is partly illustrated in Figure~\ref{fig:mori-mse} for the case of fixed sample size $n=1000$ and for $\rho(\eta)=c\cdot(1-\eta)$, $c=\rho_0 \in\{0.2, 0.5, 0.9\}$. We observe the typical bias-variance tradeoff, with the top-two methods outperforming the max-only methods for most block sizes. Similar results were obtained for the scale estimation, see Section~\ref{subsec:bias-variance-expansions-mori}.

\begin{figure}[t]
    \centering
    \includegraphics[width=0.98\linewidth]{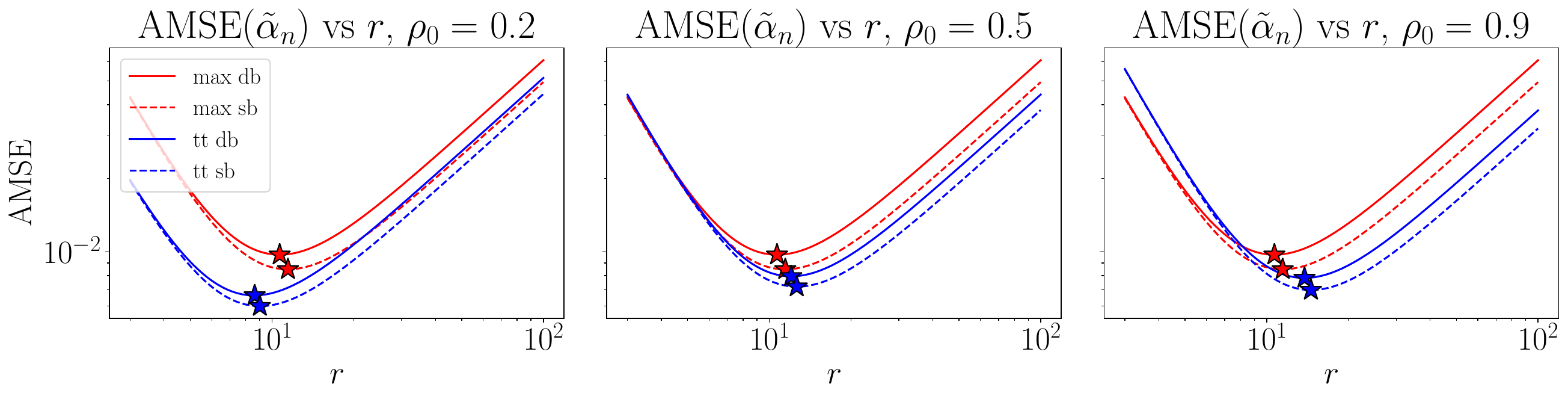}
    \caption{
    Asymptotic MSE of $\widetilde \alpha_n^{(\mbl)}$ as a function of the block size $r$, for fixed $\alpha_0=1$, $n = 1000$ and three choices of $\rho(\eta)=c\cdot(1-\eta)$, $c\in\{0.2, 0.5, 0.9\}$.}
    \label{fig:mori-mse}
\end{figure}

\end{example}

\section{Top-Two Order Statistics Extracted from an iid Sample}
\label{sec:estimation-blockmaxima-iid}

In this section, we specialize the results from the previous section to the case where $\xi_1,\xi_2,\dots$ are iid random variables with common distribution function $F$. In this setting, fitting extreme-value distribution based on block maxima has also been considered in \cite{Dombry2015,Ferreira2015,OorZho20}.

Because of the serial dependence, the conditions from the previous section can be simplified considerably. For instance, weak convergence of the two largest order statistics as required in Condition~\ref{cond:doa} is already a consequence of weak convergence of the largest order statistic only \cite[Theorem 3.5]{Col01}. In addition, the mean vector of the asymptotic normal distributions in Section~\ref{sec:estimation-blockmaxima} can be made explicit provided a standard second order condition on the weak convergence of affinely standardized maxima is met.

More specifically, recall that $F$ is in the maximum domain of attraction of the $\Frechet$ distribution family with shape parameter $\alpha_0\in(0,\infty)$ if there exists a positive scalar sequence $(a_r)_{r\in\N}$ such that, for every $x\in (0,\infty)$,
\begin{align}
    \lim_{r\to\infty} F^r(a_rx)= \exp\big(-x^{-\alpha_0}\big), \label{eq:iid_doa}
\end{align}
which corresponds to weak convergence of the first marginal distribution in \eqref{eq:doa}.
Note that \eqref{eq:iid_doa} is equivalent to regular variation of $-\log F$ at infinity with index $-\alpha_0$: we have $F(x)<\infty$ for all $x\in\R$ and 
\begin{align}
\label{eq:regvar}
    \lim_{u\to\infty} \frac{- \log F(u x)}{- \log F(u)} = x^{-\alpha_0}
\end{align}
for all $x\in(0,\infty)$ \cite{Gnedenko1943}. Moreover, for \eqref{eq:iid_doa} to be satisfied, the sequence $(a_r)_{r\in\N}$ may be chosen as any sequence satisfying
\begin{align} \label{eq:aniid}
    \lim_{r\to\infty} -r\log F(a_r)=1,
\end{align}
and it is necessarily regularly varying of index $1/\alpha_0$.

For the results to follow, the only condition needed is a second-order refinement of the convergence in \eqref{eq:regvar}, see \cite[Section 3.6]{Bingham1987} for details on second-order regular variation.
For $\tau \in\R$, define $h_\tau: (0, \infty)\to\R$ by
\begin{align*}
h_\tau(x)=\int_1^x y^{\tau-1}\diff y=
\begin{dcases*}
    \frac{x^\tau-1}{\tau}, & if $\tau\neq 0$, \\
    \log x, & if $\tau=0$.
    \end{dcases*}
\end{align*}

\begin{condition}[Second-Order Condition]\label{cond:sorv}
There exists $\alpha_0\in(0,\infty)$, $\tau\in (-\infty,0]$ and a real function $A:(0,\infty)\to\R$ of constant, non-zero sign such that $\lim_{u\to\infty}A(u)=0$ and such that, for all $x\in (0,\infty)$,
\begin{align}
    \lim_{u\to\infty} \frac{1}{A(u)}\Big(\frac{-\log F(ux)}{-\log F(u)}-x^{-\alpha_0}\Big) = x^{-\alpha_0} h_\tau(x).\label{eq:secorderrv}
\end{align}
\end{condition}

The function $A$ can be regarded as capturing the speed of convergence in \eqref{eq:regvar}. The form of the limit function in \eqref{eq:secorderrv} arises naturally, as explained in \cite[Remark 4.3]{Bücher2018-disjoint}.

Note that the estimator $\hat \sigma^{(\mbl)}_n$ may be  considered as an estimator for each $a_{r_n}$ for which $(a_r)_{r\in\N}$ satisfies \eqref{eq:aniid}. The mean of the asymptotic distribution of $\sqrt{k_n}(\hat \sigma^{(\mbl)}_n / a_{r_n}-1)$ will turn out to depend on the specific choice of $(a_r)_{r \in \N}$. The most canonical choice is the sequence $(a_r)_{r\in \N}$ defined by $-r\log F(a_r)=1$; in fact, for the max-only estimators, \cite{Bücher2018-disjoint} only provide results for that choice. For more general sequences, the effect on the asymptotic distribution will be captured below by assuming existence of the limit in \eqref{eq:lambda3}.

\begin{theorem} \label{thm:IID}
Let $\xi_1,\xi_2,\dots$ be independent random variables with continuous distribution function $F$ satisfying Condition \ref{cond:sorv}. Let $(a_r)_{r\in\N}$ be a sequence satisfying \eqref{eq:aniid}, let the block sizes $(r_n)_{n\in\N}$ be such that $r_n\to\infty$ and $k_n=\lfloor n/r_n\rfloor\to\infty$ as $n\to\infty$ and assume that the following three limits exist:
\begin{align} \label{eq:lambda1}
    \lambda_1 &:= \lim_{n\to\infty} \frac{\sqrt{k_n}}{r_n} \in[0,\infty),
    \\ 
    \lambda_2 &:=\label{eq:lambda2}
    \lim_{n\to\infty} \sqrt{k_n}A(a_{r_n})\in\R,
    \\\lambda_3 &:=\label{eq:lambda3}
    \lim_{n\to\infty} \sqrt{k_n}\big(-r_n\log F(a_{r_n})-1\big)\in\R.
\end{align}

Then, for any $c>0$ and 
with probability tending to one, the estimators $\hat \theta_n^{(\dbl)}$ from \eqref{eq:thetan-bm} and $\hat \theta_n^{(\sbl)}$ from \eqref{eq:thetan-bm-sliding} are well-defined and unique, and we have, as $n \to \infty$,
\begin{align}
\label{eq:iid-asymp}
    &\sqrt{n/r_n} \begin{pmatrix}
    \hat\alpha_n^{(\mbl)}-\alpha_0\\
    \hat\sigma_n^{(\mbl)}/a_{r_n}-1
    \end{pmatrix}
    \rightsquigarrow  M_1(\alpha_0) \mathcal{N}_4\big( B(\alpha_0,\tau),\Sigma^{(\mbl)}_{\rho_{\indi}, \alpha_0}\big), 
\end{align}
with $M_1(\alpha_0)$ from \eqref{eq:malpha}, with $\Sigma^{(\dbl)}_{\rho_{\indi}, \alpha_0}$ having entries $\sigma_{ij}^{(\dbl)}$ from Lemma~\ref{lem:cov_disjoint_general}, with $\Sigma^{(\sbl)}_{\rho_{\indi}, \alpha_0}$ having entries $\sigma_{ij}^{(\sbl)} = 2s_{ij}(\alpha_0)$ from
Lemma~\ref{lem:cov_sl},
and with
\[   
B(\alpha_0,\tau) = 
    \frac{\lambda_1}{\alpha_0}
     \Lambda_1(\alpha_0)
    +
    \frac{\lambda_2}{\alpha_0^2} \Lambda_2(\alpha_0, \bar \tau)
    + 
    \frac{\lambda_3}{\alpha_0} \Lambda_3(\alpha_0),
\]
where, for $\bar \tau := |\tau|/\alpha_0$,
\begin{align}
\label{eq:bias-iid}
    \Lambda_1(\alpha_0) &:= \nonumber 
    \begin{pmatrix}
        \gamma-5/2\\
        \alpha_0 \\
        -1/2\\0
    \end{pmatrix}, \quad
    \Lambda_2(\alpha_0, \bar \tau) := \frac1{\bar \tau}
    \begin{pmatrix}
                5-2\gamma-\Gamma(3+\bar{\tau})-\Gamma'(3+\bar{\tau})\\
                \alpha_0\big\{\Gamma(3+\bar{\tau})-2\big\} \\
                 1-\Gamma(2+\bar{\tau}) \\
                1-\Gamma(1+\bar{\tau})
            \end{pmatrix}, \quad \\
    \Lambda_3(\alpha_0) &:=
    \begin{pmatrix}
        5-2\gamma\\
        -2\alpha_0 \\
        1\\ 1
    \end{pmatrix}
\end{align}
for $\bar \tau >0$ and $\Lambda_2(\alpha_0, 0)$ defined by continuity.
Moreover, if $\hat \rho_{0,n}=1+o_{\mathbb P}(k_n^{-1/2})$, the results from \eqref{eq:iid-asymp} also hold if $(\hat\alpha_n^{(\mbl)},\hat\sigma_n^{(\mbl)})$ is replaced by the bias-corrected estimators $(\widetilde\alpha_n^{(\mbl)},\widetilde\sigma_n^{(\mbl)})$ from \eqref{eq:bias-correction-bm}.
\end{theorem}

\begin{remark}[On the asymptotic bias]
\label{rem:iid-bias}
The asymptotic distribution in \eqref{eq:iid-asymp} crucially depends on the three limit relations in \eqref{eq:lambda1}--\eqref{eq:lambda3}. Remarkably, only the condition in \eqref{eq:lambda2} was required in \cite{Bücher2018-disjoint} and \cite{Bücher2018-sliding} to derive bias formulas for the plain disjoint and sliding block maxima estimators. This discrepancy can partly be explained by an error in their statement that was discovered when working on the above theorem: during their proof of Theorem 4.2, \cite{Bücher2018-disjoint} impose the condition that $-r_n\log F(a_{r_n})=1$ (middle of page 1457), which immediately implies that $\lambda_3=0$ and restricts the claimed generality of their results. More precisely, if $\lambda_3 \ne 0$, different bias formulas arise in their theorem that are explicitly given in Lemma~\ref{lem:correction-bias} in the supplement for completeness. As such, it is only the first convergence in \eqref{eq:lambda1} that is
inherent to the top-two estimator: it results from a Taylor expansion of the logarithm that is needed within the proofs when dealing with empirical means of the second largest order statistics. If $A(a_r) =o(1/r)$, the second condition with $\lambda_2 \ne 0$ implies the first convergence with $\lambda_1=0$. For $A(a_r)$ of the exact order $1/r$, $\lambda_2 \ne 0$ will typically be equivalent to $\lambda_1 \ne 0$. If $A(a_r)$ is of faster order than $1/r$, then the first convergence with $\lambda_1>0$ will imply the second with $\lambda_2=0$. The phenomenon is illustrated in more detail in Section~\ref{subsec:bias-variance-expansions}.

Finally, note that the first row of the bivariate bias vector $M_1(\alpha_0)B(\alpha_0, \tau)$ does not depend on $\lambda_3$; indeed, 
$M_1(\alpha_0) \big(5-2\gamma, -2\alpha_0, 1, 1)^\top=(0,1)^\top$. This is not surprising in view of the fact that $\hat \alpha_n^{(\mbl)}$ is scale-invariant, which means that we can restrict attention to the case $-r \log F(a_r)=1$ (i.e., $\lambda_3=0$) for deriving its asymptotic distribution.
\end{remark}

\begin{remark}[On the asymptotic variance]\label{rem:comparing_covs}
Recall the asymptotic covariance matrices $\Sigma^{(\mbl)}_{\TT}(\alpha_0,\rho)$ from \eqref{eq:sigma-disjoint-general}. For $\rho=\rho_\indi$, we obtain that $\Sigma^{(\mbl)}_{\TT}(\alpha_0):=\Sigma^{(\mbl)}_{\TT}(\alpha_0,\rho_\indi)$ simplifies to
\[
\Sigma^{(\dbl)}_{\TT}(\alpha_0)
\approx
\begin{pmatrix}
        0.358\alpha_0^2 & -0.331 \\
        -0.331 & 0.805/\alpha_0^2
\end{pmatrix},
\qquad
\Sigma^{(\sbl)}_{\TT}(\alpha_0)
\approx
\begin{pmatrix}
    0.304\alpha_0^2 & -0.338 \\ -0.338 & 0.774\alpha_0^2
\end{pmatrix}
\]
These matrices shall be compared with the asymptotic covariance matrices for the disjoint and sliding block maxima MLE \cite{Bücher2018-disjoint, Bücher2018-sliding}, respectively,
which are given by
\begin{align}\label{eq:max_dbm_sbm_matrix}
    \Sigma^{(\dbl)}_\mathrm{max}(\alpha_0)
    \approx
    \begin{pmatrix}
            0.608\alpha_0^2& -0.257 \\
            -0.257 & 1.109/\alpha_0^2
        \end{pmatrix},
        \qquad
    \Sigma^{(\sbl)}_\mathrm{max}(\alpha_0)
    \approx 
    \begin{pmatrix}
            0.495\alpha_0^2& -0.324 \\
            -0.324 & 0.958/\alpha_0^2
        \end{pmatrix}.
\end{align}
Further, the asymptotic covariance matrix of the all block maxima estimator from \cite{OorZho20} is given by
\[
\Sigma^{(\mathrm{ab})}_\mathrm{max}(\alpha_0)
\approx
\begin{pmatrix}
        0.3927\alpha_0^2& -0.3767 \\
        -0.3767 & 0.7483/\alpha_0^2
    \end{pmatrix}.
\]
Comparing the five matrices, we observe that
\[
\Sigma^{(\sbl)}_\TT(\alpha_0)
<_\mathbb L 
\begin{dcases}
    \Sigma^{(\mathrm{ab})}_\mathrm{max}(\alpha_0)\\
    \Sigma^{(\dbl)}_\TT(\alpha_0)
\end{dcases}\Bigg\}
<_\mathbb L 
\Sigma^{(\sbl)}_\mathrm{max}(\alpha_0)
<_\mathbb L 
\Sigma^{(\dbl)}_\mathrm{max}(\alpha_0),
\]
where $<_\mathbb L$ denotes the Loewner-ordering between symmetric matrices. Note that $\Sigma^{(\mathrm{ab})}_\mathrm{max}$ and $\Sigma^{(\dbl)}_\TT$ cannot be ordered: the former exhibits a larger asymptotic variance for estimating the shape and a smaller for estimating the scale.
Remarkably, the asymptotic variance of the top-two sliding shape estimator is about 22\% smaller than the respective variance of the all block-maxima estimator,  and even about 50\% smaller than that of the classical disjoint block maxima MLE.
\end{remark}

\begin{remark}[On the Asymptotic MSE] \label{rem:iid-mse}
Having explicit formulas both for the bias and the variance, we may compare the estimators in terms of their asymptotic MSE. For the sake of brevity, we limit our discussion to the estimation of the shape parameter. In that case, as explained in Remark~\ref{rem:iid-bias}, we may and will assume that $a_{r_n}$ satisfies  $-r_n \log F(a_{r_n})=1$, which implies that $\lambda_3 =0$. The asymptotic expansion for the MSE of $\hat \alpha_{\TT}^{(\mbl)}$ at finite block size $r_n$ is hence given by
\begin{align*}
    \mathrm{AMSE}(\hat\alpha_\TT^{(\mbl)}) 
    &= \frac{r_n}{n}\big(\Sigma^{(\mbl)}_{\TT}(\alpha_0,\rho_\indi)\big)_{11}
    +
    \mathrm{ABias}^2(\hat\alpha_\TT^{(\mbl)})
\end{align*}
where, using the notation from (\ref*{eq:bias-iid}),
\begin{align*}
    \mathrm{ABias}(\hat\alpha_\TT^{(\mbl)})
    &=
    \begin{pmatrix}
    1 &
    0
    \end{pmatrix}
    M_1(\alpha_0) \Big( 
    \frac{1}{r_n\alpha_0}
    \Lambda_1(\alpha_0)
+\frac{A(a_{r_n})}{\alpha_0^2} \Lambda_2(\alpha_0, \bar \tau) \Big).
\end{align*}
In  view of Lemma~\ref{lem:correction-bias}, similar formulas can be derived for $\hat \alpha_{\max}^{(\mbl)}$. The all block maxima estimator from \cite{OorZho20} is excluded from the subsequent discussion, as its asymptotic bias has not been derived explicitly in that paper.

In view of Condition~\ref{cond:sorv} and standard results on regular variation, the function $r \mapsto A(a_r)$ is regularly varying with index $-\bar{\tau}$, where $\bar \tau = |\tau|/\alpha_0$. Subsequently, we assume that it is of the form $A(r)=c \cdot \alpha_0\cdot r^{-\bar\tau}$ for some $c \ne 0$; an assumption that for instance applies if $\xi_t$ is Pareto($\alpha_0$)-distributed, with $c=-1/2$ and $\bar{\tau} = 1$ (see Section~\ref{subsec:bias-variance-expansions-iid}). Under this assumption, $\mathrm{AMSE}(\hat\alpha_n^{(\mbl)})$ is a function of $n, r_n, \alpha_0, c$ and $\bar \tau$, and we study its dependence on each of these parameters in Figure~\ref{fig:iid-mse-all-in-1}.

We start by discussing the top row of Figure~\ref{fig:iid-mse-all-in-1}, where we study $\mathrm{AMSE}(\hat\alpha_n^{(\mbl)})$ as a function of the block size $r$, keeping the other parameters fixed. More specifically, we fix $\alpha_0 = 5$ (a common tail index in environmental extremes), $c=-1$, $\bar \tau = 1$ and consider three sample sizes, $n \in \{10^3, 10^4, 10^5\}$. We observe that the maxima-only estimators outperform the top-two-estimators for small block sizes, and vice versa for large block sizes. The minimal values (over $r$) are obtained for the sliding top-two-Estimator, with its minimal AMSE being about 75\% of the minimal AMSE of the classical disjoint block maxima estimator.

The dependence of the minimal values over $r$ as a function of $c$, $\bar \tau$ and $\alpha_0$ is depicted in in the middle row of Figure~\ref{fig:iid-mse-all-in-1}, where we fix $n=1000$ and vary one of the parameters in each of the three plots, keeping the others fixed at $c=-1$, $\bar \tau = 1$ and $\alpha_0=5$. We observe that it is only for small absolute values of $c$ that the max-only estimators outperform the top-two-estimators. 

Finally, in in the bottom row of Figure~\ref{fig:iid-mse-all-in-1}, we study $\mathrm{AMSE}(\hat\alpha_n^{(\mbl)})$ as a function of the number of blocks $k$, keeping $\alpha_0 = 5$, $c=-1$ and $\bar \tau = 1$ and $r=30$ (left), $r=90$ (middle) and $r=365$ (right) fixed. Note that these choices of $r$ correspond to commonly used 
block sizes in environmental extremes (a month, a season, or a year of daily data). For block size $r=365$, the top-two estimators uniformly outperform the block maxima estimators over the considered range of $k \in \{10, \dots, 10\,000\}$. For $r=90$ and $r=30$, the top-two estimators are better for $k$ up to about 2000 and 200, respectively. Note that record lengths of observational data in environmental extremes are typically small; most often smaller than $k=100$ years or seasons.

\begin{figure}[!thp]
    \centering
    \includegraphics[width=0.98\linewidth]{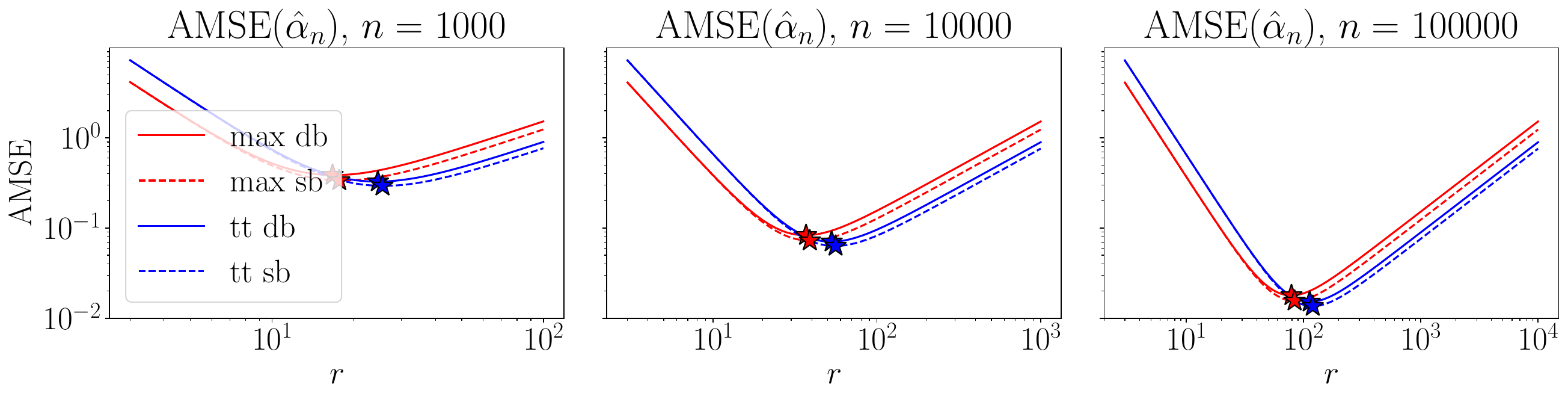}
    \includegraphics[width=0.98\linewidth]{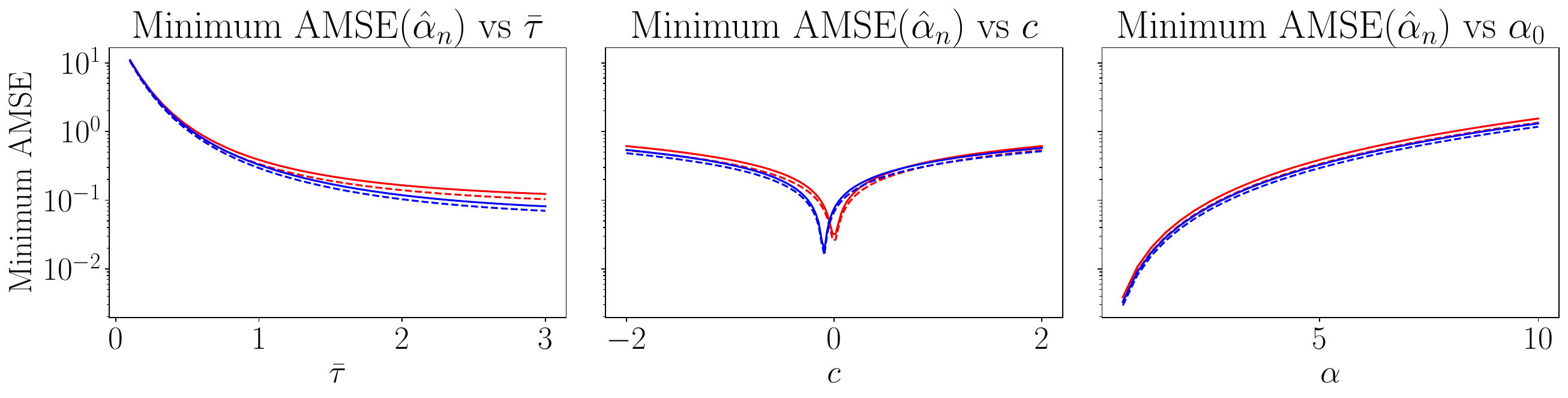}
    \includegraphics[width=0.98\linewidth]{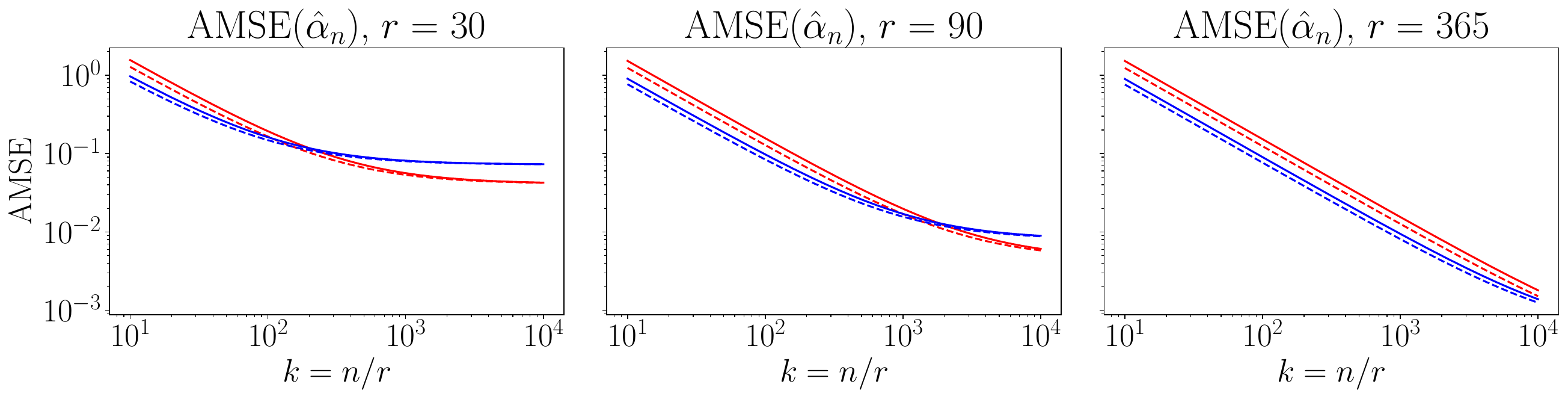}
    \caption{Asymptotic expansions in the IID case.
    Top row: $\mathrm{AMSE}(\hat\alpha_n^{(\mbl)})$ as a function  of the block size $r$, for fixed $\alpha_0 = 5$, $c=-1$, $\bar \tau =1$, and three sample sizes. \\
    Middle row: $\min_{r} \mathrm{AMSE}(\hat\alpha_n^{(\mbl)})$  for fixed $n=1000$ and as a function of $\bar \tau$ (left), $c$ (middle) and $\alpha_0$ (right), keeping the other parameters fixed at 
    $\alpha_0 = 5$, $c=-1$, $\bar \tau =1$ where applicable. \\
    Bottom row: $\mathrm{AMSE}(\hat\alpha_n^{(\mbl)})$ as a function  of the number of blocks $k$, for fixed $\alpha_0 = 5$, $c=-1$, $\bar \tau =1$, and three block sizes. }
    \label{fig:iid-mse-all-in-1}
\end{figure}

\end{remark}

\section{Monte Carlo Simulation Study}\label{sec:simulation}

A large scale Monte Carlo simulation study was performed to investigate the finite-sample properties of the proposed estimators, with a particular focus on a comparison to recent and traditional competitors from the literature. The results are partly summarized in this section, while a more comprehensive overview is provided in Section~\ref{sec:sim-additional}. All empirical performance measures are based on $N=1\,000$ simulation runs. Implementations are publicly available in \cite{haufs_xtremes_24}.

We concentrate on five different estimators: the disjoint and sliding blocks version of the bias-corrected top-two estimator $\hat \theta_{\TT}^{(\mbl)} := (\widetilde \alpha_n^{(\mbl)}, \widetilde \sigma_n^{(\mbl)})$ from \eqref{eq:bias-correction-bm} with $\mbl\in\{\dbl,\sbl\}$ (results on the uncorrected estimators can be found in Section~\ref{sec:sim-additional}), the disjoint and sliding blocks maxima estimator from \cite{Bücher2018-disjoint} and \cite{Bücher2018-sliding}, denoted by $\hat \theta_{\max}^{(\mbl)}$ with $\mbl \in \{\dbl,\sbl\}$, and a rescaled oracle version of the ABM estimator from \cite{OorZho20}, denoted by $\hat \theta_{\max}^{(\abl)}$. More specifically, rescaling the ABM estimator has been suggested by \cite{OorZho20} (``a proper transformation involving the extremal index is needed") and is motivated by the fact that, in contrast to the other estimators, the plain ABM estimator, say $\tilde \theta_{\max}^{(\abl)}$, should rather be regarded as an estimator for $(\alpha, \tilde{\sigma}_{r_n})$ than for $(\alpha, \sigma_{r_n})$, where $\tilde{\sigma}_{r}$ denotes the normalizing sequence associated with $r$ maxima of i.i.d.\ variables from $F$. As explained in \cite[Section~10.2.3]{Bei04}, the two scaling sequences are related via $\sigma_{r} = \tilde{\sigma}_r \, \theta^{1/\alpha}$, where $\theta$ is the extremal index of $(\xi_t)_t$. The oracle version of the ABM estimator is hence defined as $\hat \theta_{\max}^{(\mbl)} = f_\theta(\tilde \theta_{\max}^{(\abl)})$, where $f_\theta(\alpha, \sigma) = (\alpha, \sigma \theta^{1/\alpha})$. Since \cite{OorZho20} do not suggest any specific method for estimating the extremal index, we choose to work with the unknown true value as a benchmark; see below for details.

Throughout, we consider three different time series models: 
\begin{compactenum}[(1)]
    \item The iid-$\Pareto$-model: $(\xi_t)_t$ is an iid sequence from the generalized Pareto distribution with cdf 
    $F_\alpha(x) = ( 1 - x^{-\alpha}) \bm 1(x\ge1)$, where $\alpha>0$. 
    Condition~\ref{cond:doa} is met with $\rho=\rho_\indi$ and $\alpha_0=\alpha$. 
    \item The $\ARMAX$-$\Pareto$-model: for $\beta \in (0,1]$, let $\tilde \xi_t$ be a stationary solution of the recursion $\tilde \xi_t=\max(\beta \tilde \xi_{t-1}, (1-\beta) Z_t)$, where $(Z_t)_t$ is iid standard Fr\'echet, and let $\xi_t = F_\alpha^{-1}(-1/\log \tilde \xi_t)$. It can be shown that $\xi_t$ has cdf $F_\alpha$, and that Condition~\ref{cond:doa} is met with $\rho(\eta)=\min(1-\beta,1-\eta)$ and $\alpha_0=\alpha$; see also Example \ref{ex:models}~[c].
    \item The $\AR$-$\Pareto$-model: for $\beta\in(0,1]$, let $\tilde \xi_t$ be a stationary solution of the recursion $\tilde \xi_t= \beta \tilde \xi_{t-1}+ Z_t$, where $(Z_t)_t$ is iid standard Cauchy distributed, and let $\xi_t = F_\alpha^{-1}(F_{\tilde \xi_t}(\tilde \xi_t))$. It can be shown that $\xi_t$ has cdf $F_\alpha$, and that Condition~\ref{cond:doa} is met with $\rho(\eta)=\min(1-\beta,1-\eta)$ and $\alpha_0=\alpha$; see also Example \ref{ex:models}~[c].
\end{compactenum}
The parameter $\beta$ controlling the temporal dependence is chosen from the set $\{0.2,0.5,0.8\}$, while $\alpha$ is fixed to $\alpha=1$. Note that the extremal index is $\theta=1$  for Model (1) and $\theta = 1 - \beta$ for both Model (2) and (3), see \cite{BucZan23}.
In this section, we only report results for the iid model and the AR model with $\beta=0.5$; the remaining results can be found in Section~\ref{sec:sim-additional}, where we also present some results for the model from Example~\ref{ex:mori}.

We consider two target parameters: the tail index $\alpha_0$ itself, and the $(T,r)$-return level; a central object of interest in environmental extremes. Formally, the latter is defined, for a given block size $r$ and parameter $T\in\N$ of interest, as
\begin{align*}
    \RL(T,r):=F_r^{\leftarrow}(1-1/T)=\inf\{x\in\R:F_r(x)\geq 1-1/T\},
\end{align*}
where $F_r(x):=\Prob(M_r\leq x)$. As the true value of the return level is not known explicitly for the $\AR$-$\Pareto$-model, we approximate it by an initial Monte Carlo simulation based on a sample of $10^6$ simulated block maxima.

Under Condition \ref{cond:doa} and in view of \eqref{eq:w_margCDF1}, $F_r(x)$ may be approximated by $H_{\alpha_0,\sigma_r}(x):=\exp(-(x/\sigma_r)^{-\alpha_0})$, the cdf of the $\Frechet$ distribution with shape parameter $\alpha_0$ and scale $\sigma_r$. Since the quantile function of the $\Frechet$ family is  $H^{\leftarrow}_{\alpha,\sigma}(p)=\sigma(-\log p)^{-1/\alpha}$, a reasonable plug-in estimator for $\RL(T,r)$ is given by
\begin{align*}
    \widehat\RL_{\mathrm{method}}^{(\mbl)}(T,r)
    :=
    \widehat\RL(T,r) \big( \hat \theta_{\mathrm{method}}^{(\mbl)} \big)
    :=
    \hat\sigma_{\mathrm{method}}^{(\mbl)} b_T^{-1/\hat\alpha_{\mathrm{method}}^{(\mbl)}}, 
\end{align*}
where $b_T=-\log(1-1/T)$, $\mbl \in\{\dbl,\sbl,\abl\}$ and $\mathrm{method} \in \{\max, \TT\}$. Consistency and asymptotic normality of the estimator follows straightforwardly from the delta-method; we refer to Section 3 in \cite{Bücher2018-sliding} for details. For reasons that become clear later, we also consider a mixed $\max$-$\TT$-estimator
\begin{align}\label{eq:botw}
\widehat\RL_{\mathrm{botw}}(T,r)
=
\widehat\RL(T,r) \big( \hat \alpha_{\TT}^{(\sbl)}, \hat \sigma_{\max}^{(\sbl)}\big),
\end{align}
where the index $\mathrm{botw}$ stands for `best of two worlds'. In this section, we only report results for $T=100$; respective results for $T\in\{50,200\}$ can be found in Section~\ref{sec:sim-additional}.

\subsection{Fixed block size}
\label{sec:sim-fixed-r}

In many applications of the block maxima method, block sizes are chosen according to standard fixed time periods, often for convenience or due to established protocols such as those used in extreme event attribution studies by the World Weather Attribution initiative \cite{philip2020protocol}.
Typical choices are $r=365$ for yearly maxima of daily data, or $r=90$ for summer-season maxima. Although such fixed choices are not necessarily optimal from a statistical perspective (particularly in terms of the bias–variance trade-off), we believe that studying this setting remains important for assessing the performance of block maxima methods in commonly encountered practical scenarios.

In the current section, we fix $r=100$; additional results for $r\in\{50,200\}$ can be found in the supplement. The estimators' performance is measured by the mean-squared error; a more detailed decomposition into the squared bias and the variance does not provide any additional insights as the bias turns out to be of much smaller order than the variance. Regarding the block size parameter needed for the estimation of $\rho_0$ in the bias correction from Section~\ref{sec:bias-corr-block-maxima}; see in particular \eqref{eq:pihat}, we chose to fix $r'=50$.

We start by considering the estimation of the shape parameter. The respective simulation results are summarized in Figure~\ref{fig:fixed_bs_shape_and_rl} (left half), and provide the following insights: first, the sliding blocks top-two estimator is the best estimator in all scenarios under consideration. Second, each of the sliding blocks versions consistently outperforms its disjoint blocks counterpart. Third, the top-two estimators are consistently better than their max-only counterparts.  
Finally, the all block maxima method ranks third for the iid case, but is by far the worst estimator in the serially dependent case. All these findings are consistent with the theoretical results; this connection is further illustrated in Section~\ref{subsec:bias-variance-expansions}.

\begin{figure}[t]
    \centering
    \includegraphics[width=.45\textwidth]{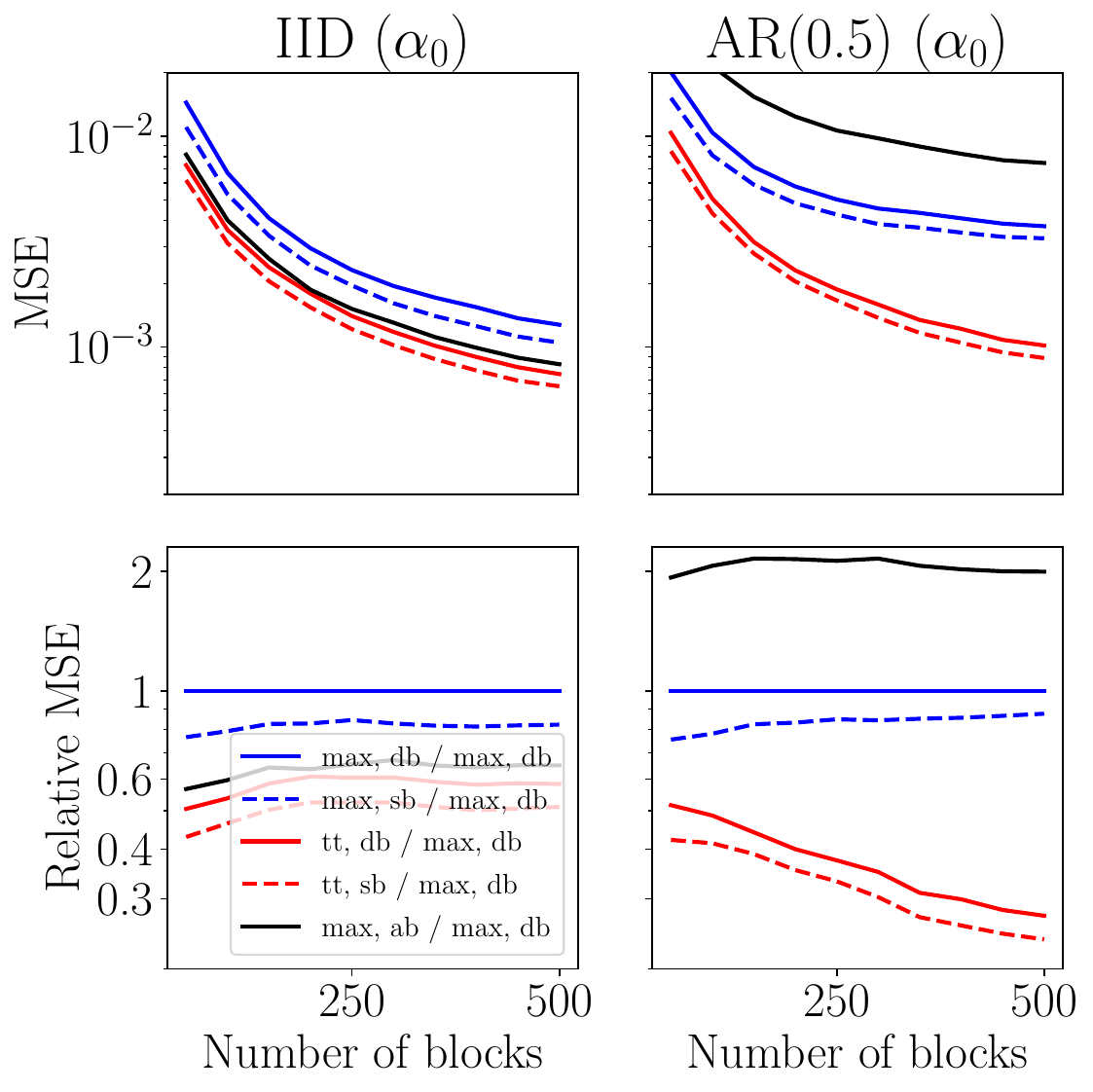}
    \includegraphics[width=.45\textwidth]{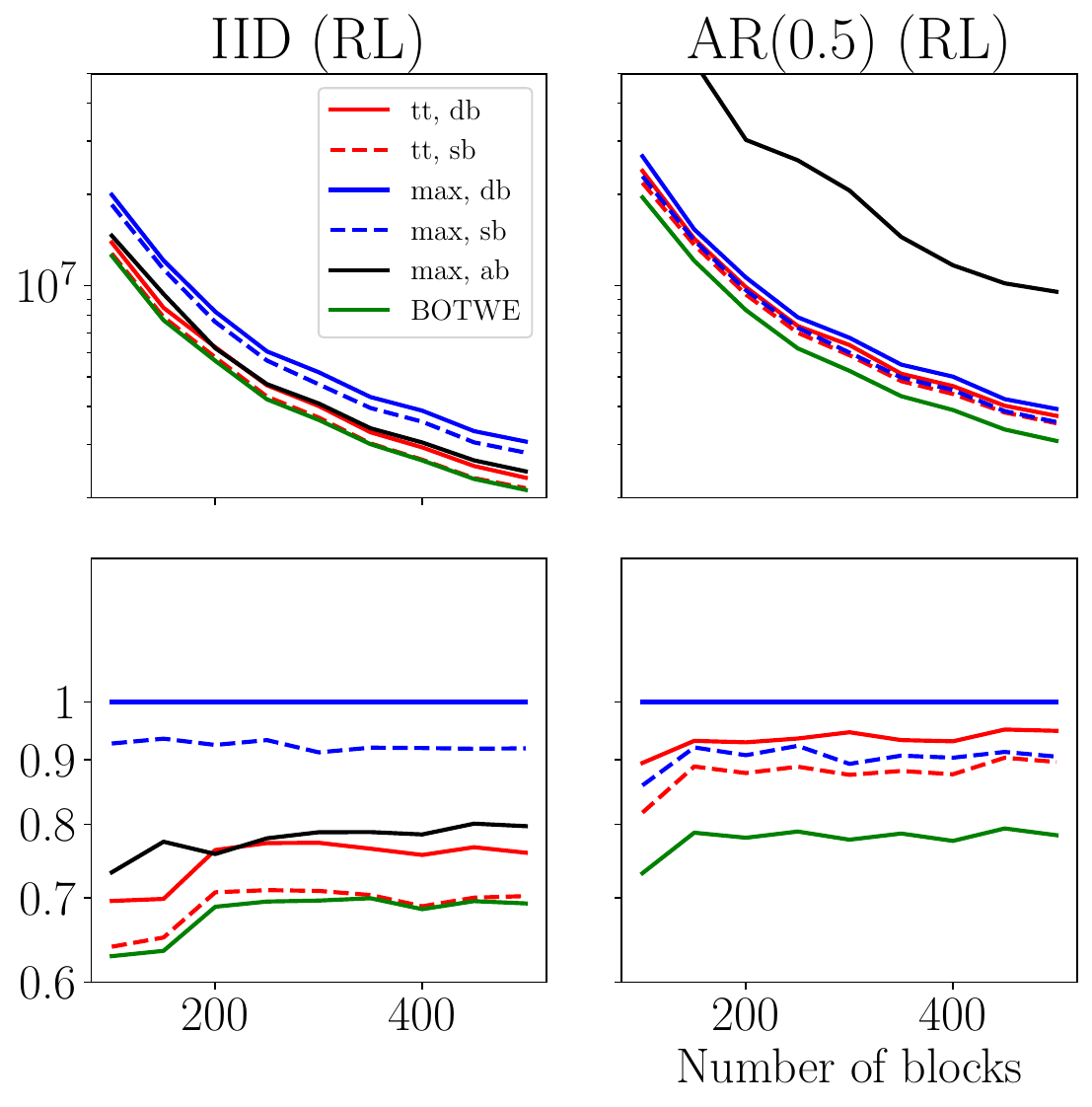}
    \caption{Estimation of the shape parameter $\alpha_0$ (left half) and of $\RL(100,100)$ (right half) for fixed block size
$r=100$. Top row: mean squared error. Bottom row: relative mean squared error with respect to the disjoint block
maxima estimator, $\mathrm{MSE}(\,\hat\cdot\,)/\mathrm{MSE}(\,\hat \cdot\,^{(\dbl)}_{\max})$.}
    \label{fig:fixed_bs_shape_and_rl}
\end{figure}

We next consider the estimation of the (100,100)-return level, with the respective simulation results summarized in Figure~\ref{fig:fixed_bs_shape_and_rl} (right half). The findings for the ABM estimator are similar as for shape estimation: it performs slightly worse than the top-two estimators in the iid case, but by far worst in the time series case, despite the explicit use of the true extremal index.

Interestingly and in contrast to the shape estimation, the top-two estimators do not clearly outperform the sliding max-only estimator in the serially dependent case. In view of their better performance for shape estimation, this must be due to a worse performance for scale estimation, which can in fact be explained by the theoretical findings in Remark~\ref{rem:comparing_covs_general}. This observation motivates the botw-estimator from \eqref{eq:botw}, where we use the top-two approach for shape estimation and the max-only approach for scale estimation. Perhaps unsurprisingly, the botw-estimator outperforms all other estimators in most scenarios (unless the serial dependence is very strong; see Section~\ref{sec:sim-additional}).

\subsection{Fixed total sample size}
\label{subsec:simulation-fixed-sample-size}

In this section, we consider the setting where the sample size $n$ is fixed and the target parameter does not depend on the block size $r$. In this case, $r$ can be viewed as a tuning parameter, to be selected so as to balance estimation bias and variance. To study this choice in a finite-sample context, we focus on the estimation of the shape parameter $\alpha_0$.

For simplicity, we restrict attention to $n=10\,0000$, and consider block sizes $r$ ranging from $r=5$ to $r=100$. 

The results are summarized in Figure~\ref{fig:fixed_n}. We again observe that the sliding blocks versions outperform their disjoint blocks counterparts, in particular for larger block sizes. The max-only estimators are mostly better than their top-two counterparts for smaller block sizes, and vice versa for larger block sizes. 
No estimator is universally best for all block sizes.
The minimum of the respective curves tends to be attained at smaller values of $r$ for the max-only estimators than for the top-two estimators. The overall minimal value is attained by the sliding top-two estimator (iid case) or by the all block maxima estimator (AR case). This latter observation is not universal, however, as illustrated by additional simulation results in Section~\ref{sec:sim-additional} where the ABM estimator is inferior in a different time series model. In particular, the apparent superiority of the ABM estimator in Figure~\ref{fig:fixed_n} can be explained by the fact that its bias crosses zero at relatively small block sizes, thereby compensating for its larger variance; see Figure~\ref{fig:fixed_n_decomp} for further details.

\begin{figure}[t]
    \centering
    \includegraphics[width = 0.85 \textwidth]{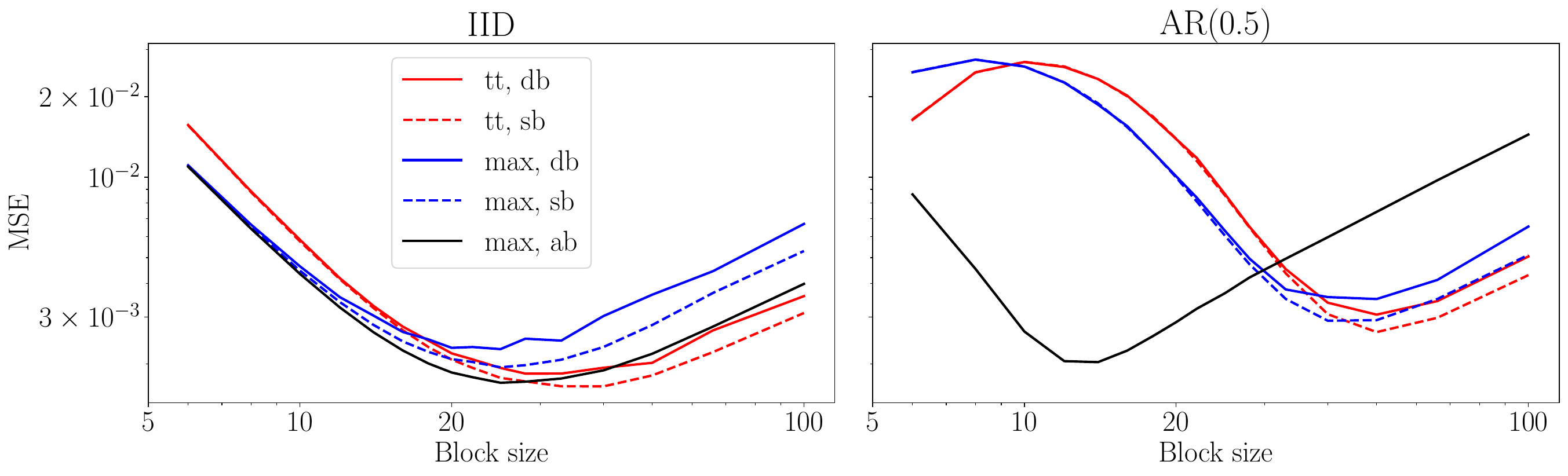}
    \caption{Estimation of $\alpha_0$ for fixed $n =10\,000$.}
    \label{fig:fixed_n}
\end{figure}

\subsection{Bootstrap approximations for the top-two estimator}
\label{subsec:simulation-bootstrap}

In practical applications, an estimator must typically be provided with an estimate of the uncertainty, for instance in the form of a confidence interval. In principle, the bootstrap offers a universal solution. As recently shown by \cite{bucher2025bootstrapping}, bootstrapping estimators based on disjoint block maxima is straightforward: one may just resample with replacement from the disjoint blocks. The situation is more complicated for sliding block maxima, where the simple disjoint blocks solution is inconsistent but where a certain `circular block bootstrap' can be shown to be consistent \cite{bucher2025bootstrapping}. In this section, we apply that circular block bootstrap to our sliding top-two estimators and provide some indication of its validity. 
Unfortunately, a mathematical proof of its validity is beyond the scope of this paper and must be postponed to future research.

We only present results for the AR(0.5)-$\Pareto$-model with $\alpha=3$ and with $k=r=100$. Specifically, we proceed as follows: we first assess the shape estimators' error distribution, i.e., the distribution of $\hat \alpha_{\TT}^{(\sbl)}-3$, based on $3\,000$ simulation runs and visualize it empirically using histograms (see Figure~\ref{fig:simu_bootstrap}). Then, for 100 runs, we employ the circular block bootstrap approach to assess the bootstrap error distribution, i.e., the distribution of $\hat \alpha_{\TT}^{(\sbl), *}- \hat \alpha_{\TT}^{(\sbl)}$, based on $500$ bootstrap estimates $\hat \alpha_{\TT}^{(\sbl), *}$ for each run. We also visualize that distribution using histograms. We repeat the same for return level estimation with $r=T=100$ and the botw-estimator, which was found to be best among all competitors in Section~\ref{sec:sim-fixed-r}.

The results in Figure \ref{fig:simu_bootstrap} provide empirical evidence that the bootstrap approach works as intended: the histograms of the estimators' error distribution closely resemble the histograms of the bootstrap estimation error, both for shape and for return level estimation. Overall, we consider these results to be sufficiently convincing to also use the circular block bootstrap in the following case study.

\begin{figure}[t]
    \centering
    \includegraphics[width = 0.85 \textwidth]{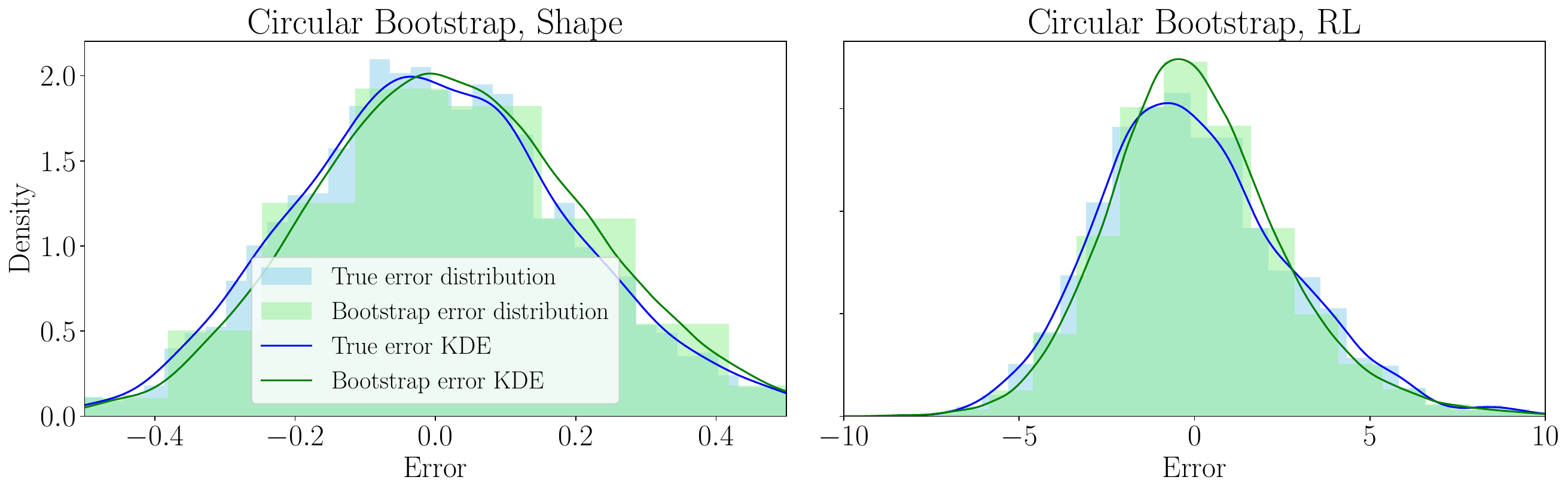}
    \caption{Histograms of estimation error (blue) and (circular block) bootstrap estimation errors (green) together with associated kernel density estimates. Left: shape estimation. Right: RL(100,100)-estimation.}
    \label{fig:simu_bootstrap}
\end{figure}

\section{Case Study}
\label{sec:case-study}

We provide a small case study to illustrate the usefulness of the new methods in a typical practical application from climate science. Our starting point is the recent extreme precipitation event that caused the heavy flooding in Ahrtal in June 2021; see \cite{tradowsky2023attribution} for a respective extreme event attribution study. Among the 2000 DWD weather stations in Germany, the largest daily cumulative precipitation amount in June 2021 was observed on June~14 in Köln-Stammheim (154mm). We hence choose to work with the respective univariate time series of daily precipitation at that station, for which the DWD provides data since 1945. The respective annual top two observations are illustrated in Figure~\ref{fig:sample}. 

Fitting the Fr\'echet distribution to the annual maxima using the botw-method, we obtain estimates of $\hat \alpha=3.3093$ and $\hat \sigma_{365}=27.9754$, which results in an estimate for the 100-year return level $\RL(365, 100)$ of about 112mm. Respective results for the max-only and the top-two estimators can be found in Table~\ref{tab:return_level_bootstrap}, alongside with $95\%$-basic bootstrap confidence intervals \cite{DavHin97} based on the circular block bootstrap from Section~\ref{subsec:simulation-bootstrap}. It can be seen that all five estimators yield similar point estimates, but that the confidence interval for the botw-estimator is the smallest among the five methods under consideration.  The results for the botw estimator are further illustrated in Figure~\ref{fig:sample}, where we depict the function that maps $T$ to the respective estimated $T$-year return level. Note that the preimage of that function at a given threshold corresponds to the return period of observing an event larger than that threshold. For the Ahrtal-event, the estimated return period is 280. The confidence region in Figure~\ref{fig:sample} is defined as $C=\{(T,c):T\in(0,\infty),c\in C(T)\}$ with 
\begin{align*}
    C(T)=[2\widehat\RL_{\mathrm{botw}}(T,100)-\widehat\RL{\,\!}^*_{\mathrm{botw}}(T,100)_{0.975}, 2\widehat\RL_{\mathrm{botw}}(T,100)-\widehat\RL{\,\!}^*_{\mathrm{botw}}(T,100)_{0.025}],
\end{align*}
where $\widehat\RL_{\mathrm{botw}}^*(T,100)_{q}$ denotes the empirical $q$-quantile of the bootstrap sample.

\begin{table}[t]
\label{tab:return_level_bootstrap}
\begin{center}
\begin{tabular}{lrrrrr}
\toprule
 & Return Level & Lower CI & Upper CI & CI Width & Relative CI Width \\
\midrule
max,dbm & 119.93 & 77.08 & 151.37 & 74.29 & 1.00 \\
max,sbm & 116.73 & 86.77 & 147.07 & 60.30 & 0.81 \\
tt,dbm & 113.93 & 84.53 & 134.94 & 50.41 & 0.68 \\
tt,sbm & 113.35 & 88.90 & 132.78 & 43.88 & 0.59 \\
botwe & 112.32 & 88.06 & 130.38 & 42.32 & 0.57 \\
\bottomrule
\end{tabular}
\caption{Estimated 100-year return level at Köln-Stammheim with $95\%$-basic bootstrap confidence intervals.}
\end{center}
\end{table}

\begin{figure}[t]
    \centering
    \includegraphics[width=0.9\linewidth]{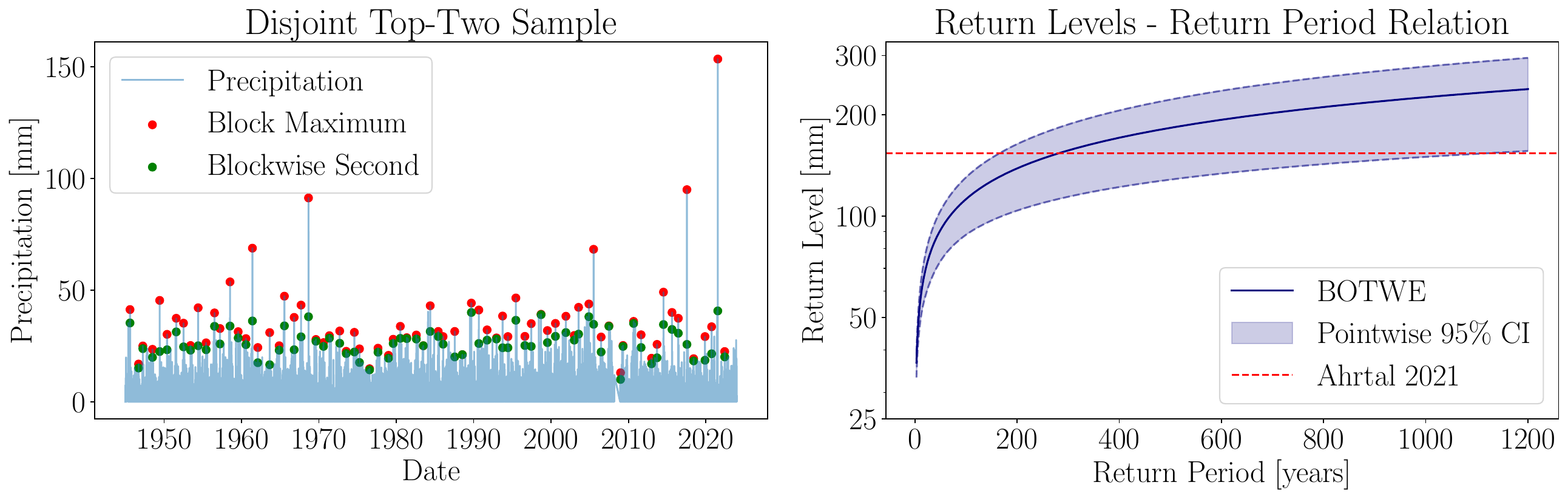}
    \caption{Left: annual top-two sample of daily precipitation amounts at Köln-Stammheim. Right: The estimated mapping $T\mapsto\widehat\RL_{\mathrm{botw}}(365, T)$ together with its bootstrap confidence region.}
    \label{fig:sample}
\end{figure}

\section{Conclusion}
\label{sec:conclusion}
Asymptotic theory for fitting models to a block maximum distribution has concentrated so far on the sample of block-wise maxima. This paper exploits existing mathematical theory for the two largest order statistics of a heavy-tailed stationary time series to develop a pseudo-maximum likelihood estimator based on the block-wise top-two order statistics. It is found that this approach typically outperforms existing methods based on just the block-wise maxima, both in terms of mathematical theory and in finite sample simulation experiments. Furthermore, it is demonstrated that taking into account overlapping `sliding' blocks leads to even more efficient estimators. As the estimator's asymptotic variance is unknown in practice, the adaptation of a circular bootstrap approach is proposed to access estimation uncertainty.

Several topics for future research emerge from the results of this work:
\begin{compactenum}[(1)]
\item It would be interesting to generalize the presented findings to the sample of block-wise top-$m$ order statistics with $m \ge 3$, and to provide a data-adaptive criterion for the choice of $m$.

\item The developed theory is so far limited to the two-parametric $\Frechet$ case. For more flexibility, it would be worthwhile to additionally include a location parameter $\mu$, or to even  fit the three-parametric GEV distribution to allow for non-positive shape parameters.  A particular challenge would then be to derive a suitable bias correction.

\item Asymptotic theory for the circular block bootstrap approach has only been studied for block maxima so far \cite{BucSta24}. The generalization of their results to high order statistics would mathematically legitimize its use in the present work.
\end{compactenum}

\subsection*{Acknowledgements}

The authors are grateful to two unknown referees and an associate
editor for their constructive comments that helped to improve the presentation substantially.
\subsection*{Funding}

This work has been supported by the integrated project ``Climate Change and Extreme Events  -- ClimXtreme Module B Statistics Phase II'' (project B3.3, grant number 01LP2323L) funded by the German Federal Ministry of Education and Research (BMFTR).
Erik Haufs is grateful for support by the Studienstiftung des deutschen Volkes.
This work used resources of the Deutsches Klimarechenzentrum (DKRZ) granted by its Scientific Steering Committee (WLA) under project ID bb1152.

\appendix

\section{Proofs for Section \ref{sec:estimation-general}}
\label{sec:proofs-estimation-general}

\begin{proof}[Proof of Lemma~\ref{lemm:ex_uni}]
To obtain the maximum of the log-likelihood, one needs the root of the score functions
\begin{align*}
s_\alpha(\alpha,\sigma|\z)
&:=
\pderiv{\ell(\alpha,\sigma|\z)}{\alpha}= \frac{2k}{\alpha} +2k\log \sigma -\sum_{i=1}^k \Big\{ \log (x_iy_i)-\sigma^\alpha y_i^{-\alpha}\log\frac{y_i}{\sigma} \Big\} \\
s_\sigma(\alpha,\sigma|\z)
&:=
\pderiv{\ell(\alpha,\sigma|\z)}{\sigma}= \frac{2k\alpha }{\sigma}-\alpha\sum_{i=1}^k y_i^{-\alpha}\sigma^{\alpha-1}
= \frac{k\alpha}{\sigma} \big( 2 - \sigma^\alpha M_{-\alpha}^{-\alpha} (\y)\big).
\end{align*}
For fixed $\alpha$, the function $\sigma \mapsto s_\sigma(\alpha,\sigma|\z)$ changes its sign exactly once at its zero $\hat\sigma(\alpha| \z) =  2^{1/\alpha}M_{-\alpha}(\y)$. As a consequence, $ \sigma \mapsto \ell(\alpha, \sigma|\z)$ is maximized at $\sigma=\hat\sigma(\alpha| \z)$. It is therefore sufficient to maximize $\alpha \mapsto \ell_\alpha(\alpha,\hat\sigma(\alpha| \z)|\z)$ with respect to $\alpha$. We find that
\begin{align*}
    \pderiv{\ell(\alpha,\hat\sigma(\alpha| \z)|\z)}{\alpha} 
    &= \pderiv{\ell(\alpha,\sigma|\z)}{\alpha}\Big|_{\sigma=\hat\sigma(\alpha|\z)}+ \pderiv{\ell(\alpha,\sigma|\z)}{\sigma}\Big|_{\sigma=\hat\sigma(\alpha|\z)}\cdot \pderiv{\hat\sigma(\alpha|\z)}{\alpha}.
\end{align*}
The second summand evaluates to 0 by definition of $\hat\sigma(\alpha|\z)$, 
whence, recalling the definition of $\Psi_k$ from \eqref{eq:def_psik},
\[
\pderiv{\ell(\alpha,\hat\sigma(\alpha| \z)|\z)}{\alpha} 
=
s_\alpha(\alpha,\hat\sigma(\alpha|\z)|\z) = k \Psi_k(\alpha|\z),
\]
where the last equation follows from a straightforward calculation.
Differentiating once more gives
\begin{align}\label{eq:llderiv}
    \pderiv{\!\!^2 \ell(\alpha,\hat\sigma(\alpha|\z)|\z)}{\alpha}=-\frac{2k}{\alpha^{2}}-2 M_{-\alpha}^{2\alpha} (\y)\bigg\{M_{-\alpha}^{-\alpha} (\y)\sum_{i=1}^k y_i^{-\alpha}\log^2 y_i-\Big(\sum_{i=1}^k y_i^{-\alpha}\log y_i\Big)^2\bigg\}.
\end{align}
The term in curly brackets is non-negative by the Cauchy-Schwarz inequality, such that 
\begin{align*}
    \pderiv{\!\!^2 \ell(\alpha,\hat\sigma(\alpha|\z)|\z)}{\alpha}\leq -2k\alpha^{-2}<0,
\end{align*}
whence $\alpha \mapsto \Psi_k(\alpha|\z)$ is strictly decreasing.
Discussing the cases $\alpha\to 0$ and $\alpha\to\infty$ in analogy to \cite{Bücher2018-disjoint} shows this function has a unique zero, which then is the global maximum of  $\alpha \mapsto \ell_\alpha(\alpha,\hat\sigma(\alpha| \z)|\z)$. This allows to conclude.
\end{proof}

\begin{proof}[Proof of Lemma~\ref{lem:digamma2}]
Using Lemma \ref{lem:mom}, we immediately get the first claim. Standard curve sketching shows that $\Pi_{\rho_0}$ is a continuous decreasing bijection from $(0,\infty)$ to $\R$ that satisfies $\Pi_{\rho_0}(1) = - \rho_0(1-\rho_0) / \{2(1+\rho_0)\} \le 0$ by a straightforward calculation. 
This expression is strictly smaller than 0 iff $\rho_0\notin\{0,1\}$. 
As a consequence, $\varpi_{\rho_0}=1$ if and only if $\rho_0\in\{0,1\}$, which in turn is equivalent to $\rho \in \{\rho_\indi,0\}$ by the properties of $\rho$.

Finally, regarding the claim about the smoothness of $\rho_0 \mapsto \varpi_{\omega_0}$,
consider the function $F(\rho_0, y) := \Pi_{\rho_0}(y)$, defined on $[0,1] \times (0,\infty)$. Clearly, $F$ is continuously differentiable on $(0,1) \times (0, \infty)$ with $F(\rho_0, \varpi_{\rho_0})=0$ for all $\rho_0 \in (0,1)$. Since $\partial_{y} F(\rho_0, y) < 0$ for all $\rho_0 \in (0,1)$, the implicit function theorem implies that $\rho_0 \mapsto \varpi_{\rho_0}$ is continuously differentiable on $(0,1)$ with derivative $-\partial_{\rho_0} F(\rho_0, \varpi_{\rho_0}) /  \partial_{y} F(\rho_0, \varpi_{\rho_0})$, which can be shown to be bounded; see Figure~\ref{fig:varpi}.

Suppose $\rho_0 \mapsto \varpi_{\rho_0}$ was not continuous at $0$. Then there exists a sequence of positive numbers $a_n$ converging to zero such that $\liminf_{n\to\infty} \varpi_{a_n} < \varpi_0 = 1$. In particular, for some $\eps\in(0,1)$, we have $\varpi_{a_{n}(k)}<1-\eps$ along a subsequence $a_n(k)$, for all $k\in \N$.
Hence, by monotonicity of $\Pi_{a_{n}(k)}$ and continuity of $\rho_0 \mapsto \Pi_{\rho_0}(1-\eps)$,
\[
0 = \Pi_{a_n(k)}(\varpi_{a_n(k)}) > \Pi_{a_n(k)}(1-\eps) \to \Pi_0(1-\eps)>0 \qquad(k \to\infty),
\]
which is a contradiction. A similar argument shows continuity at 1. Finally, since the derivative of $\rho_0 \mapsto \varpi_{\rho_0}$ was found to be bounded on $(0,1)$, the function must be Lipschitz continuous on $[0,1]$ by the mean-value theorem.
\end{proof}

\begin{proof}[Proof of Theorem \ref{thm:consist}] 
Define a random function $\Psi_n$ on $(0,\infty)$ by 
\begin{align}\label{eq:psin}
    \Psi_n(\alpha)=\Psi_{k_n}(\alpha|\bm Z_n)=\Psi_{k_n}(\alpha|\bm Z_n/\sigma_n),
\end{align}
with $\Psi_k(\cdot|\cdot)$ as in \eqref{eq:def_psik} being scale-invariant in the second component. Condition \ref{cond:1} implies that, for each $\alpha\in (\alpha_-, \alpha_+)$ and as $n\to\infty$, $\Psi_n(\alpha)\rightsquigarrow \Psi_\infty^{(\rho, \alpha_0)}(\alpha)$
with $\Psi_\infty^{(\rho, \alpha_0)}$ from \eqref{eq:psi}.
By Lemma \ref{lem:digamma2}, the limit $\Psi_\infty^{(\rho, \alpha_0)}(\alpha)$ is positive, zero or negative according to whether $\alpha$ is smaller, equal to, or greater than $\alpha_1$. Moreover, Lemma \ref{lemm:ex_uni} and its proof implies that the function $\Psi_n$ is decreasing with $\Psi_n(\hat\alpha_n)=0$.

Fix $\delta>0$ such that $\alpha_-<\alpha_1-\delta<\alpha_1+\delta<\alpha_+$. Since $\Psi_n(\alpha_1-\delta)\rightsquigarrow \Psi(\alpha_1-\delta)>0$ as $n\to\infty$, we find that
\begin{align*}
    \Prob\big(\hat\alpha_n\leq \alpha_1-\delta\big)\leq \Prob\big(\Psi_n(\alpha_1-\delta)\leq 0\big) = o(1), \qquad n\to\infty. 
\end{align*}
Similarly, $\Prob\big(\hat\alpha_n\geq \alpha_1+\delta\big)=o(1)$ as $n\to\infty$. Since $\delta$ was arbitrary, we can conclude that $\hat\alpha_n\rightsquigarrow\alpha_1$ as $n\to\infty$. 

It remains to show weak convergence of $\hat\sigma_n/\sigma_n$. Condition \ref{cond:1} implies that, for each $\alpha\in (\alpha_-,\alpha_+)$ and as $n\to\infty$,
\begin{align*}
    \frac{1}{\sigma_n}\bigg(\frac{1}{k_n}\sum_{i=1}^{k_n}Y_{n,i}^{-\alpha}\bigg)^{-1/\alpha}&=\bigg(\frac{1}{k_n}\sum_{i=1}^{k_n}(Y_{n,i}/\sigma_n)^{-\alpha}\bigg)^{-1/\alpha}\\
    &\rightsquigarrow \bigg(\int_0^\infty x^{-\alpha}\diff H^{(2)}_{\rho,\alpha_0,1}(x)\bigg)^{-1/\alpha}=\Upsilon_{\rho_0}\big(\alpha/\alpha_0\big)^{-1/\alpha}
\end{align*}
where we used Lemma \ref{lem:mom} for the last identity. Both the left-hand and right-hand sides are continuous, non-increasing functions of $\alpha$. Since $\hat\alpha_n\rightsquigarrow\alpha_1$ as $n\to\infty$, a standard argument then yields, as $n\to\infty$,
\begin{align*}
    \frac{\hat\sigma_n}{\sigma_n}=2^{1/\hat\alpha_n}\frac{1}{\sigma_n}\bigg(\frac{1}{k_n}\sum_{i=1}^{k_n}Y_{n,i}^{-\hat\alpha_n}\bigg)^{-1/\hat\alpha_n}\rightsquigarrow 2^{1/\alpha_1}\cdot\Upsilon_{\rho_0}\big(\alpha_1/\alpha_0\big)^{-1/\alpha_1}
\end{align*}

Finally, the last assertions about $\rho \in \{\rho_\indi, \rho_\pd\}$ are immediate consequences of Lemma~\ref{lem:digamma2} and straightforward calculations.
\end{proof}

The proof of Theorem \ref{thm:asymptotic} is decomposed into a sequence of lemmas. Recall $\Psi_n$ and $\Psi_\infty^{(\rho, \alpha_0)}$ in Equations \eqref{eq:psin} and \eqref{eq:psi}, respectively, and define $\Dot\Psi_n(\alpha)=\partial_\alpha\Psi_n(\alpha)$ and $\Dot\Psi_\infty^{(\rho, \alpha_0)}(\alpha)=\partial_\alpha\Psi_\infty^{(\rho, \alpha_0)}(\alpha)$. 
For $f:(0,\infty)^2 \to \R$, write
\begin{align*}
    \mathbb{P}_n f:= \frac1{k_n} \sum_{i=1}^{k_n}f\Big(\frac{X_{n,i}}{\sigma_n}, \frac{Y_{n,i}}{\sigma_n}\Big),
\end{align*}
and note that
\begin{align*}
    \Dot\Psi_n(\alpha)
    &=
    -\frac{2}{\alpha^2}-2\frac{\mathbb{P}_n[(x,y) \mapsto y^{-\alpha}\log^2 y]\mathbb{P}_n[(x,y) \mapsto y^{-\alpha}]-\{ \mathbb{P}_n[(x,y) \mapsto y^{-\alpha}\log y] \}^2}{ \{ \mathbb{P}_n[(x,y) \mapsto y^{-\alpha}] \} ^2}.
\end{align*} 
by \eqref{eq:llderiv}.
It turns out that the asymptotic distribution of $v_n\big(\hat\alpha_n-\alpha_1\big)$ can be derived from the asymptotic behavior of $\Dot\Psi_n$
and $v_n\Psi_n$, which will be discussed in the next two lemmas, respectively.

\begin{lemma}[Slope]\label{lem:slope}
Suppose that the conditions of Theorem \ref{thm:asymptotic} are met. If $\tilde \alpha_n$ is a random sequence in $(0,\infty)$ such that $\tilde \alpha_n\rightsquigarrow \alpha_1$ as $n\to\infty$, then
\begin{align*}
    \Dot\Psi_n(\tilde\alpha_n)
    \rightsquigarrow 
    \Dot \Psi_\infty^{(\rho, \alpha_0)}(\alpha_1) = -\frac{2}{\alpha_1^2}-2\frac{\Upsilon_{\rho_0}''(\varpi_{\rho_0})\Upsilon_{\rho_0}(\varpi_{\rho_0})-\Upsilon_{\rho_0}'(\varpi_{\rho_0})^2}{\alpha_0^2 \Upsilon_{\rho_0}(\varpi_{\rho_0})^2}
\end{align*}
as $n\to\infty$, where $\Upsilon_{\rho_0}$ is defined in \eqref{eq:upsilon_NEW}.
\end{lemma}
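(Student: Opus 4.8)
The plan is to establish a local uniform (in $\alpha$) law of large numbers for the three empirical means entering the explicit formula~\eqref{eq:llderiv} for $\dot\Psi_n$, and then to conclude by the continuous mapping theorem together with an identification of the resulting deterministic limit via the moment formulas for $H^{(2)}_{\rho,\alpha_0,1}$. Write $g^{(j)}_\alpha(x,y):=y^{-\alpha}(\log y)^j$ for $j\in\{0,1,2\}$ and $\phi_j(\alpha):=\int_{(0,\infty)^2} g^{(j)}_\alpha\diff H_{\rho,\alpha_0,1}$. Since $\tilde\alpha_n\rightsquigarrow\alpha_1$ with $\alpha_1$ deterministic, $\tilde\alpha_n\pto\alpha_1$; fix $\delta>0$ so small that $K_\delta:=[\alpha_1-\delta,\alpha_1+\delta]\subset(\alpha_-,\alpha_+)$, so that $\Prob(\tilde\alpha_n\in K_\delta)\to1$. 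By~\eqref{eq:llderiv}, on the event that the $Z_{n,i}$ are not all equal (probability tending to one by~\eqref{eq:neglect}),
\[
\dot\Psi_n(\alpha)=-\frac{2}{\alpha^2}-2\,\frac{\mathbb P_n g^{(2)}_\alpha\cdot\mathbb P_n g^{(0)}_\alpha-\bigl(\mathbb P_n g^{(1)}_\alpha\bigr)^2}{\bigl(\mathbb P_n g^{(0)}_\alpha\bigr)^2},
\]
an explicit continuous function of $\alpha$ and of $\mathbb P_n g^{(0)}_\alpha,\mathbb P_n g^{(1)}_\alpha,\mathbb P_n g^{(2)}_\alpha$. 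Hence it suffices to prove $\sup_{\alpha\in K_\delta}|\mathbb P_n g^{(j)}_\alpha-\phi_j(\alpha)|\pto0$ for $j\in\{0,1,2\}$: this keeps $\inf_{\alpha\in K_\delta}\mathbb P_n g^{(0)}_\alpha$ bounded away from $0$ (as $\phi_0$ is continuous and strictly positive on $K_\delta$) and, combined with $\tilde\alpha_n\pto\alpha_1$ and the continuity of the $\phi_j$, gives $\mathbb P_n g^{(j)}_{\tilde\alpha_n}\pto\phi_j(\alpha_1)$, whence $\dot\Psi_n(\tilde\alpha_n)\pto-2/\alpha_1^2-2\{\phi_2(\alpha_1)\phi_0(\alpha_1)-\phi_1(\alpha_1)^2\}/\phi_0(\alpha_1)^2$ by Slutsky's lemma.

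For the uniformization, the pointwise-in-$\alpha$ convergence $\mathbb P_n g^{(j)}_\alpha\pto\phi_j(\alpha)$ is part of Condition~\ref{cond:2}, since $\mathcal F_2(\alpha_-,\alpha_+)$ contains all three families $\{g^{(j)}_\alpha:\alpha_-<\alpha<\alpha_+\}$. To upgrade this to local uniform convergence I would exploit that $\alpha\mapsto\mathbb P_n g^{(0)}_\alpha=\mathbb P_n[y^{-\alpha}]$ is convex, its second derivative being $\mathbb P_n g^{(2)}_\alpha\ge0$: a sequence of convex functions converging pointwise in probability to the continuous, convex limit $\phi_0$ converges uniformly on compact subsets of the interior, by the classical convexity lemma. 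Its derivative $\alpha\mapsto-\mathbb P_n g^{(1)}_\alpha$ is then non-decreasing and converges pointwise to the continuous limit $-\phi_1=\phi_0'$, hence converges locally uniformly by P\'olya's monotone-convergence argument. For $j=2$ I would split $\mathbb P_n g^{(2)}_\alpha=\mathbb P_n[y^{-\alpha}(\log y)^2\indic(y\le1)]+\mathbb P_n[y^{-\alpha}(\log y)^2\indic(y>1)]$, the first summand being non-decreasing and the second non-increasing in $\alpha$; each summand is moreover convex in $\alpha$ and tight uniformly over $K_\delta$, being dominated on $K_\delta$ by a function covered by Condition~\ref{cond:2} (e.g.\ $y^{-\alpha}(\log y)^2\indic(y>1)\le C_\beta\,y^{-\beta}$ for fixed $\beta\in(\alpha_-,\alpha_1-\delta)$). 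A monotonicity-and-tightness argument, together with the already-known limit of the sum, then yields local uniform convergence of $\mathbb P_n g^{(2)}_\alpha$ as well.

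Finally I would identify the constants $\phi_j(\alpha_1)$. By Lemma~\ref{lem:mom} (as already used in the proof of Theorem~\ref{thm:consist}), $\phi_0(\alpha)=\int_0^\infty y^{-\alpha}\diff H^{(2)}_{\rho,\alpha_0,1}(y)=\Upsilon_{\rho_0}(\alpha/\alpha_0)$; differentiating under the integral sign gives $\phi_1(\alpha)=-\phi_0'(\alpha)=-\Upsilon_{\rho_0}'(\alpha/\alpha_0)/\alpha_0$ and $\phi_2(\alpha)=\phi_0''(\alpha)=\Upsilon_{\rho_0}''(\alpha/\alpha_0)/\alpha_0^2$. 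Substituting $\alpha_1=\varpi_{\rho_0}\alpha_0$, so that $\alpha_1/\alpha_0=\varpi_{\rho_0}$, into the limit obtained above and simplifying yields exactly $-2/\alpha_1^2-2\{\Upsilon_{\rho_0}''(\varpi_{\rho_0})\Upsilon_{\rho_0}(\varpi_{\rho_0})-\Upsilon_{\rho_0}'(\varpi_{\rho_0})^2\}/\{\alpha_0^2\Upsilon_{\rho_0}(\varpi_{\rho_0})^2\}$, as claimed.

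The main obstacle is the uniformization step: Condition~\ref{cond:2} only provides convergence for each fixed $\alpha$, and one has to exploit the convexity of $\alpha\mapsto\mathbb P_n[y^{-\alpha}]$, the monotonicity of its first derivative, and the splitting of the second-derivative integrand at $y=1$ in order to obtain the local uniform law needed to accommodate the random evaluation point $\tilde\alpha_n$. The remaining steps — the reduction via~\eqref{eq:llderiv}, the continuous-mapping argument, and the identification of the limit through $\Upsilon_{\rho_0}$ — are routine.
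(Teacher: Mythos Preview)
Your proposal is correct and follows essentially the same route as the paper: both reduce to a local uniform law of large numbers for $\mathbb P_n[y^{-\alpha}(\log y)^m]$, $m\in\{0,1,2\}$, then invoke Slutsky and the moment identities from Lemma~\ref{lem:mom}. The paper simply defers the uniformization step to ``analogously to Lemma~A.2 in \cite{Bücher2018-disjoint}'', whereas you spell out the convexity/monotonicity mechanism explicitly. One minor simplification: for $j=2$ the split at $y=1$ is unnecessary, since $\alpha\mapsto\mathbb P_n[y^{-\alpha}(\log y)^2]$ is itself convex (second derivative $\mathbb P_n[y^{-\alpha}(\log y)^4]\ge0$), so the same convexity lemma you used for $j=0$ applies directly and avoids the somewhat vague ``monotonicity-and-tightness'' step.
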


\begin{proof}
The claimed equality in the limit follows directly from Lemma~\ref{lem:digamma2}, whence we only need to show the weak convergence.
For $\alpha\in(0,\infty)$ and $m\in\{0,1,2\}$ define 
\begin{align*}
    f_{m,\alpha}(x,y) :=y^{-\alpha}(\log y)^m, \qquad (x,y) \in(0,\infty)^2.
\end{align*}
It can be shown analogously to Lemma A.2 in \cite{Bücher2018-disjoint} that, for $m\in\{0,1,2\}$ and some $\varepsilon>0$, 
\begin{align*}
    \sup_{\alpha:|\alpha-\alpha_0|<\varepsilon}\bigg|\mathbb{P}_n\big[f_{m,\alpha}\big]-\int_{(0,\infty)^2} f_{m,\alpha}(x,y)\diff H_{\rho,\alpha_0,1}(x,y)\bigg|\rightsquigarrow 0, \qquad n\to\infty.
\end{align*}
It then follows from weak convergence of $\tilde \alpha_n$ to $\alpha_1$, Slutsky's lemma and Lemma \ref{lem:mom} that
\begin{align*}
    \Dot\Psi_n(\tilde\alpha_n)\rightsquigarrow -\frac{2}{\alpha_1^2}-2\frac{\Upsilon_{\rho_0}''(\varpi_{\rho_0})\Upsilon_{\rho_0}(\varpi_{\rho_0})-\Upsilon_{\rho_0}'(\varpi_{\rho_0})^2}{\alpha_0^2 \Upsilon_{\rho_0}(\varpi_{\rho_0})^2}
\end{align*}
as $n\to\infty$.
\end{proof}

\begin{lemma}[Asymptotics of $v_n\Psi_n$]\label{lem:psin_asy}
    Assume Condition \ref{cond:2}. Then, as $n\to\infty$,
    \begin{align*}
        v_n\Psi_n(\alpha_1) 
        = 
        \frac{2}{\Upsilon_{\rho_0}(\varpi_{\rho_0})}\G_n f_1 
        +\frac{2\Upsilon_{\rho_0}'(\varpi_{\rho_0})}{\alpha_0\Upsilon_{\rho_0}(\varpi_{\rho_0})^2}\G_n f_2 
        -\G_nf_3-\G_nf_4 + o_{\mathbb P}(1),
    \end{align*}
    with $f_j$ as defined in  \eqref{eq:fct_H}.
    The expression on the right converges weakly to
    \begin{align*}
        W=\frac{2}{\Upsilon_{\rho_0}(\varpi_{\rho_0})} W_1+\frac{2\Upsilon_{\rho_0}'(\varpi_{\rho_0})}{\alpha_0\Upsilon_{\rho_0}(\varpi_{\rho_0})^2}W_2-W_3-W_4
    \end{align*}
\end{lemma}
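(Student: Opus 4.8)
The plan is to derive the identity for $v_n\Psi_n(\alpha_1)$ by a first-order expansion of $\Psi_n$ around its population analog at $\alpha_1$, treating the three ``blocks'' that make up $\Psi_k$ in \eqref{eq:def_psik} separately: the term $2/\alpha$ (deterministic, contributes nothing to $v_n\Psi_n$ once we evaluate at the fixed point $\alpha_1$), the ratio $M_{-\alpha}^{\alpha}(\y)\cdot\frac1k\sum y_i^{-\alpha}\log y_i = \big(\frac1k\sum y_i^{-\alpha}\log y_i\big)\big/\big(\frac1k\sum y_i^{-\alpha}\big)$, and the linear term $\frac1k\sum\log(x_iy_i)$. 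Because $\Psi_k$ is scale-invariant in its second argument (as noted in \eqref{eq:psin}), I may work throughout with the normalized observations $(X_{n,i}/\sigma_n,Y_{n,i}/\sigma_n)$, so that the empirical means are exactly the $\mathbb P_n$-averages appearing in Condition~\ref{cond:2}, and $\sqrt{}$-scaled fluctuations are the $\G_n f_j$.

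First I would evaluate $\Psi_n(\alpha_1)$ directly and subtract $\Psi_{\rho,\alpha_0}(\alpha_1)=0$ (which holds by definition of $\alpha_1$, Lemma~\ref{lem:digamma2} and the displayed root \eqref{eq:alpha1_NEW}). Writing $P_n^{(j)}:=\mathbb P_n f_j$ for $j=1,2,3,4$ with the $f_j$ from \eqref{eq:fct_H}, and $p_j:=\int f_j\,\mathrm dH_{\rho,\alpha_0,1}$ their limits, Condition~\ref{cond:2} gives $P_n^{(j)}=p_j+v_n^{-1}\G_nf_j$. By Lemma~\ref{lem:mom} the limits are $p_1=-\alpha_0^{-1}\Upsilon_{\rho_0}'(\varpi_{\rho_0})$, $p_2=\Upsilon_{\rho_0}(\varpi_{\rho_0})$ (using $\alpha_1/\alpha_0=\varpi_{\rho_0}$), together with the explicit values of $p_3,p_4$; in particular $p_2>0$. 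Then
\[
\Psi_n(\alpha_1)=\frac{2}{\alpha_1}+2\,\frac{P_n^{(1)}}{P_n^{(2)}}-P_n^{(3)}-P_n^{(4)}.
\]
A one-term Taylor expansion of the map $(a,b)\mapsto a/b$ around $(p_1,p_2)$, valid because $p_2>0$ and $P_n^{(2)}\pto p_2$, yields $P_n^{(1)}/P_n^{(2)}=p_1/p_2+p_2^{-1}v_n^{-1}\G_nf_1-(p_1/p_2^2)v_n^{-1}\G_nf_2+o_{\Prob}(v_n^{-1})$. Multiplying through by $v_n$ and using that the deterministic part $2/\alpha_1+2p_1/p_2-p_3-p_4=\Psi_{\rho,\alpha_0}(\alpha_1)=0$ cancels, I obtain
\[
v_n\Psi_n(\alpha_1)=\frac{2}{p_2}\G_nf_1-\frac{2p_1}{p_2^{2}}\G_nf_2-\G_nf_3-\G_nf_4+o_{\Prob}(1),
\]
and substituting $p_1=-\alpha_0^{-1}\Upsilon_{\rho_0}'(\varpi_{\rho_0})$, $p_2=\Upsilon_{\rho_0}(\varpi_{\rho_0})$ gives exactly the claimed coefficients $2/\Upsilon_{\rho_0}(\varpi_{\rho_0})$ and $2\Upsilon_{\rho_0}'(\varpi_{\rho_0})/(\alpha_0\Upsilon_{\rho_0}(\varpi_{\rho_0})^2)$.

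Finally, the weak-convergence statement is immediate: by Condition~\ref{cond:2} the vector $(\G_nf_1,\G_nf_2,\G_nf_3,\G_nf_4)^\top\weak\W=(W_1,W_2,W_3,W_4)^\top$, so the continuous mapping theorem applied to the fixed linear functional above, combined with Slutsky to absorb the $o_{\Prob}(1)$, yields $v_n\Psi_n(\alpha_1)\weak W$ with $W$ as displayed. The only genuinely non-routine points are (i) justifying the Taylor/Slutsky step for the ratio, which needs $P_n^{(2)}\pto p_2>0$ — this follows from Condition~\ref{cond:2} (or already Condition~\ref{cond:1}) and the strict positivity of $\Upsilon_{\rho_0}$ — and (ii) confirming via Lemma~\ref{lem:mom} that the population means $p_1,p_2$ evaluate to the stated $\Upsilon$-expressions at $\alpha=\alpha_1$; I expect the latter bookkeeping (differentiating $\Upsilon_{\rho_0}$ and matching indices $\alpha_1/\alpha_0=\varpi_{\rho_0}$) to be the main place where care is required, though it is still essentially a calculation rather than a conceptual obstacle.
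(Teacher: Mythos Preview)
Your proof is correct and follows essentially the same approach as the paper: the paper defines the map $\phi(\bm w)=2/\alpha_1+2w_1/w_2-w_3-w_4$, writes $\Psi_n(\alpha_1)=\phi(\mathbb P_nf_1,\dots,\mathbb P_nf_4)$, notes $\phi(\bm v)=\Psi_{\rho,\alpha_0}(\alpha_1)=0$ with $\bm v$ the vector of population means from Lemma~\ref{lem:mom}, and applies the delta method to obtain the same linear expansion with coefficients $\dot\phi_j(\bm v)$. Your Taylor expansion of the ratio $(a,b)\mapsto a/b$ is precisely the nonlinear part of that delta-method step, and your identification of $p_1,p_2$ via Lemma~\ref{lem:mom} matches the paper's computation of $\bm v$.
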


\begin{proof}
Recall that, from the definition of $\Psi_k$ in \eqref{eq:def_psik}, 
\begin{align*}
    \Psi_n(\alpha_1)
    =
    \Psi_{k_n}(\alpha_1|\zz_n/\sigma_n)
    =
    \frac{2}{\alpha_1}+2\,\frac{\mathbb{P}_nf_1 }{\mathbb{P}_nf_2}-\mathbb{P}_n f_3-\mathbb{P}_n f_4.
\end{align*}
Define $\phi:\R\times(0,\infty)\times\R\times\R\to\R$ by
\begin{align*}
    \phi(\bm w):= \frac{2}{\alpha_1}+2\frac{w_1}{w_2}-w_3-w_4, \qquad \bm w = (w_1,w_2,w_3,w_4),
\end{align*}
which allows to write
$\Psi_n(\alpha_1)=\phi\big(\mathbb{P}_nf_1, \mathbb{P}_nf_2, \mathbb{P}_nf_3, \mathbb{P}_nf_4\big)$.
Next, define
\begin{align*}
    \bm v = (v_1, v_2, v_3, v_4) = \Big(\frac{-\Upsilon_{\rho_0}'(\varpi_{\rho_0})}{\alpha_0},\Upsilon_{\rho_0}(\varpi_{\rho_0}),\frac{\gamma-\rho_0}{\alpha_0},\frac{\gamma}{\alpha_0}\Big) 
\end{align*}
and note that $v_j = \Exp[f_j(X,Y)]$ for $(X,Y) \sim \mathcal W(\rho, \alpha_0,1)$ and $j\in\{1,2,3,4\}$ by Lemma~\ref{lem:mom}.
Further, by the representation of $\Psi_\infty^{(\rho, \alpha_0)}$ in Lemma~\ref{lem:digamma2} and the definition of $\alpha_1$ in \eqref{eq:alpha1_NEW}, we have $\phi(\bm v) = \Psi_\infty^{(\rho, \alpha_0)}(\alpha_1) = 0$.
As a consequence, 
\begin{align*}
    v_n\Psi_n(\alpha_1)=v_n\Big\{ \phi\big(\mathbb{P}_nf_1, \mathbb{P}_nf_2, \mathbb{P}_nf_3, \mathbb{P}_nf_4\big)-\phi(\bm v)\Big\} .
\end{align*}
In view of Condition \ref{cond:2} and the delta method, we hence obtain that
\begin{align*}
    v_n\Psi_n(\alpha_1) 
    = 
    \Dot\phi_1(\bm v)\G_nf_1 +
    \Dot\phi_2(\bm v)\G_n f_2+
    \Dot\phi_3(\bm v)\G_n f_3+
    \Dot\phi_4(\bm v)\G_nf_4 + o_{\mathbb P}(1)
\end{align*}
as $n\to\infty$, where $\Dot\phi_j$ denotes the $j$th first-order partial derivative of $\phi$. Evaluating these partial derivatives at $\bm v$ gives
\begin{align*}
    \Dot\phi_1(\bm v)=\frac{2}{\Upsilon_{\rho_0}(\varpi_{\rho_0})}, \qquad
    \Dot\phi_2(\bm v)=\frac{2\Upsilon_{\rho_0}'(\varpi_{\rho_0})}{\alpha_0\Upsilon_{\rho_0}(\varpi_{\rho_0})^2}, \qquad
    \Dot\phi_3(\bm v) = \Dot\phi_4(\bm v) = -1.
\end{align*}
This implies the assertions.
\end{proof}

\begin{proposition}[Asymptotic expansion for the shape parameter]\label{prop:asy}
Assume that the conditions of Theorem \ref{thm:asymptotic} are met. Then, for $n\to\infty$ and with $W$ as defined in Lemma \ref{lem:psin_asy} and $\Dot\Psi(\alpha_1)$ as in Lemma \ref{lem:slope}, 
\begin{align*}
    v_n(\hat \alpha_n-\alpha_1)
    &=
    -\frac{1}{\Dot \Psi_\infty^{(\rho, \alpha_0)}(\alpha_1)}v_n\Psi_n(\alpha_1) + o_{\mathbb P}(1)
    \rightsquigarrow 
    -\frac{1}{\Dot \Psi_\infty^{(\rho, \alpha_0)}}(\alpha_1)W.
\end{align*}
\end{proposition}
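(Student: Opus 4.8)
The plan is to use the classical mean-value expansion for $Z$-estimators, with Lemmas~\ref{lem:slope} and~\ref{lem:psin_asy} and the consistency statement of Theorem~\ref{thm:consist} doing the heavy lifting. First I would record the estimating equation: on the event $\{Z_{n,1}=\dots=Z_{n,k_n}\}^{c}$, which has probability tending to one by \eqref{eq:neglect}, Lemma~\ref{lemm:ex_uni} and its proof guarantee that $\alpha\mapsto\Psi_n(\alpha)$ is a smooth, strictly decreasing function on $(0,\infty)$ and that $\hat\alpha_n$ is its unique root, so $\Psi_n(\hat\alpha_n)=0$ there; on the other hand, $\Psi_{\rho,\alpha_0}(\alpha_1)=0$ by Lemma~\ref{lem:digamma2} and the definition \eqref{eq:alpha1_NEW} of $\alpha_1$.

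Next, since $\Psi_n\in C^1((0,\infty))$, I would apply the mean value theorem on this event to obtain a (measurably chosen) intermediate point $\bar\alpha_n$ between $\hat\alpha_n$ and $\alpha_1$ with $0=\Psi_n(\hat\alpha_n)=\Psi_n(\alpha_1)+\Dot\Psi_n(\bar\alpha_n)(\hat\alpha_n-\alpha_1)$. By Theorem~\ref{thm:consist}, $\hat\alpha_n\rightsquigarrow\alpha_1$, hence $\bar\alpha_n\rightsquigarrow\alpha_1$ as well, being squeezed between $\hat\alpha_n$ and the constant $\alpha_1$; in particular $\bar\alpha_n\in(\alpha_-,\alpha_+)$ with probability tending to one. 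Lemma~\ref{lem:slope} then yields $\Dot\Psi_n(\bar\alpha_n)\rightsquigarrow\Dot\Psi_{\rho,\alpha_0}(\alpha_1)$, and this limit is strictly negative since $\Dot\Psi_{\rho,\alpha_0}(\alpha_1)\le -2/\alpha_1^{2}<0$, because the bracketed term in \eqref{eq:llderiv} is nonnegative by the Cauchy--Schwarz inequality (as already used in the proof of Lemma~\ref{lemm:ex_uni}). Consequently $\Dot\Psi_n(\bar\alpha_n)$ is bounded away from $0$ with probability tending to one, and by the continuous mapping theorem $\Dot\Psi_{\rho,\alpha_0}(\alpha_1)/\Dot\Psi_n(\bar\alpha_n)=1+o_\Prob(1)$.

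Then I would solve for $v_n(\hat\alpha_n-\alpha_1)=-v_n\Psi_n(\alpha_1)/\Dot\Psi_n(\bar\alpha_n)$ on the event where $\Dot\Psi_n(\bar\alpha_n)\ne0$ and combine this with Lemma~\ref{lem:psin_asy}, which gives $v_n\Psi_n(\alpha_1)\rightsquigarrow W$ and in particular $v_n\Psi_n(\alpha_1)=O_\Prob(1)$. Writing
\[
v_n(\hat\alpha_n-\alpha_1)+\frac{v_n\Psi_n(\alpha_1)}{\Dot\Psi_{\rho,\alpha_0}(\alpha_1)}
= v_n\Psi_n(\alpha_1)\Big(\frac1{\Dot\Psi_{\rho,\alpha_0}(\alpha_1)}-\frac1{\Dot\Psi_n(\bar\alpha_n)}\Big)
= O_\Prob(1)\cdot o_\Prob(1) = o_\Prob(1)
\]
yields the asserted asymptotic linear expansion, and the continuous mapping theorem (multiplication by the nonzero constant $-1/\Dot\Psi_{\rho,\alpha_0}(\alpha_1)$) together with Slutsky's lemma upgrades it to $v_n(\hat\alpha_n-\alpha_1)\rightsquigarrow-W/\Dot\Psi_{\rho,\alpha_0}(\alpha_1)$.

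I do not expect a serious obstacle here — the real content sits in Lemmas~\ref{lem:slope} and~\ref{lem:psin_asy} — but the step needing the most care is the bookkeeping of null events: on $\{Z_{n,1}=\dots=Z_{n,k_n}\}$ the estimator was set to $\hat\alpha_n=\infty$, and $\bar\alpha_n$ need not a priori lie in $(\alpha_-,\alpha_+)$, so both the $o_\Prob(1)$ and the weak-convergence statements must be formulated on events of probability tending to one, using \eqref{eq:neglect} and Theorem~\ref{thm:consist}. One should also check that the intermediate point $\bar\alpha_n$ can be chosen measurably, which is routine given the continuity of $\Psi_n$.
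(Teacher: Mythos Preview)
Your proposal is correct and follows essentially the same approach as the paper, which simply notes that the result follows from Lemmas~\ref{lem:slope} and~\ref{lem:psin_asy} in total analogy to the proof of Proposition~A.4 in \cite{Bücher2018-disjoint}. Your mean-value expansion for the $Z$-estimator, combined with consistency from Theorem~\ref{thm:consist}, is precisely that argument spelled out in detail.
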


\begin{proof}
The result follows from Lemmas \ref{lem:slope} and \ref{lem:psin_asy} in total analogy to the proof of Proposition A.4 in \cite{Bücher2018-disjoint}.
\end{proof}

\begin{proof}[Proof of Theorem \ref{thm:asymptotic}]
Combining Lemma \ref{lem:psin_asy} and Proposition \ref{prop:asy} yields
\begin{multline} \label{eq:Gn1}
    G_{n1} := 
    v_n(\hat\alpha_n-\alpha_1)
    = 
    -\frac{1}{\Dot \Psi_\infty^{(\rho, \alpha_0)}(\alpha_1)}\Big(\frac{2}{\Upsilon_{\rho_0}(\varpi_{\rho_0})}\G_n f_1+\frac{2\Upsilon_{\rho_0}'(\varpi_{\rho_0})}{\alpha_0\Upsilon_{\rho_0}(\varpi_{\rho_0})^2}\G_n f_2 
    \\
    -\G_n f_3-\G_n f_4 \Big)+o_{\mathbb P}(1)
\end{multline}
as $n\to\infty$. The first row of $M_{\rho_0}(\alpha_0)=(\beta_{jk})_{j=1,2, k=1,2,3,4} \in \R^{2 \times 4}$ is hence given by
\begin{align}
\label{eq:mrho1}
(\beta_{11}, \beta_{12}, \beta_{13}, \beta_{14} ) = \frac{1}{\Dot \Psi_\infty^{(\rho, \alpha_0)}(\alpha_1)}
\Big( 
- \frac{2}{\Upsilon_{\rho_0}(\varpi_{\rho_0})},  
-\frac{2\Upsilon_{\rho_0}'(\varpi_{\rho_0})}{\alpha_0\Upsilon_{\rho_0}(\varpi_{\rho_0})^2},
1, 
1
\Big).
\end{align}

Next, define $Z_n=(\hat\sigma_n/\sigma_n)^{-\hat\alpha_n}$ and $z_0 =\frac{1}{2}\Upsilon_{\rho_0}(\varpi_{\rho_0})=s_1^{-\alpha_1}$.
The mean value theorem then allows to write 
\begin{align*} 
    G_{n2} := 
    v_n\Big( \frac{\hat\sigma_n}{\sigma_n} - s_1\Big)
    &= \nonumber
    v_n\big(Z_n^{-1/\hat\alpha_n}-z_0^{-1/\alpha_1}\big)
    \\&=\nonumber
    v_n\big(Z_n^{-1/\hat\alpha_n}-(z_0^{\hat\alpha_n/\alpha_1})^{-1/\hat \alpha_n}\big)
    \\&=
    v_n\big(Z_n-z_0^{\hat\alpha_n/\alpha_1}\big)(-1/\hat\alpha_n)\tilde{Z}_n^{-1/\hat\alpha_n-1},
\end{align*}
where $\tilde Z_n$ is a convex combination of $Z_n$ and $z_0^{\hat\alpha_n/\alpha_1}$. We will show below that $Z_n=z_0+o_{\mathbb P}(1)$. Hence, since $z_0^{\hat\alpha_n/\alpha_1}=z_0 + o_{\mathbb P}(1)$ by Theorem~\ref{thm:consist}, we also have $\tilde{Z}_n=z_0 + o_{\mathbb P}(1)$. Therefore,
\begin{align} \label{eq:Gn2}
    G_{n2} = - \frac1{\alpha_1} z_0^{-1/\alpha_1-1}
    v_n\big(Z_n-z_0^{\hat\alpha_n/\alpha_1}\big)+o_{\mathbb P}(1).
\end{align}
Next,
\begin{align}
    v_n \big(Z_n - z_0^{\hat\alpha_n/\alpha_1}\big)
    &= \nonumber
    v_n \big(Z_n - z_0\big)+v_n \big(z_0 - z_0^{\hat\alpha_n/\alpha_1}\big) \\
    &= \label{eq:znz0alpha}
    v_n \big(Z_n - z_0\big) - z_0 v_n \big(
    z_0^{\hat\alpha_n/\alpha_1-1} - 1 \big).
\end{align}
We discuss both terms on the right-hand side separately. First, 
by the representation of $\hat\sigma_n$ from Lemma~\ref{lemm:ex_uni}, we have
\begin{align*}
    Z_n = \Big(\frac{\hat\sigma_n}{\sigma_n}\Big)^{-\hat\alpha_n}
    =
    \frac{1}{2}\mathbb{P}_n\big[(x,y) \mapsto y^{-\hat\alpha_n}\big] =: \frac{1}{2}\mathbb{P}_n\big[y^{-\hat\alpha_n}\big].
\end{align*}
We may thus write the first expression on the right-hand side of \eqref{eq:znz0alpha} as
\begin{align}
v_n(Z_n - z_0) 
    &= \nonumber
    \frac{v_n}{2}\Big\{ \mathbb{P}_n\big[y^{-\hat\alpha_n}\big]-\mathbb{P}_n\big[y^{-\alpha_1}\big]\Big \} 
    +\frac{1}{2}v_n\Big\{ \mathbb{P}_n\big[y^{-\alpha_1}\big]-\Upsilon_{\rho_0}(\varpi_{\rho_0})\Big\}
    \\& \label{eq:sn1sn2}
    \equiv \frac12(S_{n1} + S_{n2}).
\end{align}
In view of Lemma~\ref{lem:mom}, we may write $S_{n2} = \G_n[y^{-\alpha_1}] =  \G_nf_2$. Regarding $S_{n1}$, 
by the mean value theorem, there exists a convex combination $\bar\alpha_n$ of $\hat\alpha_n$ and $\alpha_1$ such that
\begin{align*}
    S_{n1} = v_n \big\{ \mathbb{P}_n\big[y^{-\hat\alpha_n}\big]-\mathbb{P}_n\big[y^{-\alpha_1}\big] \big\}
    =-v_n(\hat\alpha_n-\alpha_1)\mathbb{P}_n\big[y^{-\bar\alpha_n}\log y\big].
\end{align*}
Similar to the proof of Lemma \ref{lem:slope}, arguing as in the proof of Lemma A.2 in  \cite{Bücher2018-disjoint}, we have
\begin{align*}
    \mathbb{P}_n\big[y^{-\bar\alpha_n}\log y\big]\rightsquigarrow 
    \int_{(0,\infty)} y^{-\alpha_1} \log y \diff H_{\rho, \alpha_0}^{(2)}(y) = - \frac{\Upsilon_{\rho_0}'(\varpi_{\rho_0})}{\alpha_0},   \qquad n\to\infty,
\end{align*}
where the last equality follows from Lemma~\ref{lem:mom}.
Hence, by the previous two displays, Proposition \ref{prop:asy} and Lemma \ref{lem:psin_asy}, it follows that, as $n\to\infty$, 
\begin{align*}
S_{n1}
&= 
v_n(\hat\alpha_n-\alpha_1)\frac{\Upsilon_{\rho_0}'(\varpi_{\rho_0})}{\alpha_0}+o_{\mathbb P}(1)
\\&= 
\frac{-\Upsilon_{\rho_0}'(\varpi_{\rho_0})}{\alpha_0\Dot \Psi_\infty^{(\rho, \alpha_0)}(\alpha_1)}v_n\Psi_n(\alpha_1)+o_{\mathbb P}(1)
\\&= 
\frac{-\Upsilon_{\rho_0}'(\varpi_{\rho_0})}{\alpha_0\Dot \Psi_\infty^{(\rho, \alpha_0)}(\alpha_1)}\bigg\{ \frac{2}{\Upsilon_{\rho_0}(\varpi_{\rho_0})}\G_nf_1+\frac{2\Upsilon_{\rho_0}'(\varpi_{\rho_0})}{\alpha_0\Upsilon_{\rho_0}(\varpi_{\rho_0})^2}\G_nf_2-\G_nf_3-\G_nf_4\bigg\} 
+o_{\mathbb P}(1).
\\&= 
\frac{-\Upsilon_{\rho_0}'(\varpi_{\rho_0})}{\alpha_0\Dot \Psi_\infty^{(\rho, \alpha_0)}(\alpha_1)}\bigg\{ \frac{1}{z_0}\G_nf_1+\frac{\Upsilon_{\rho_0}'(\varpi_{\rho_0})}{2\alpha_0z_0^2}\G_nf_2-\G_nf_3-\G_nf_4\bigg\} 
+o_{\mathbb P}(1),
\end{align*}
where we used $z_0 =\frac{1}{2}\Upsilon_{\rho_0}(\varpi_{\rho_0})$ at the last equality.
Combining the expansions for $S_{n1}$ and $S_{n2}$ with \eqref{eq:sn1sn2}, we obtain that
\begin{align}
\label{eq:sigma_asy}
v_n(Z_n - z_0) 
& =  \nonumber
\frac{-\Upsilon_{\rho_0}'(\varpi_{\rho_0})}{2\alpha_0\Dot \Psi_\infty^{(\rho, \alpha_0)}(\alpha_1)}
\bigg\{ \frac{1}{z_0} \G_nf_1+\frac{\Upsilon_{\rho_0}'(\varpi_{\rho_0})}{2\alpha_0z_0^2}\G_nf_2 
\\&\hspace{5.2cm}
-\G_nf_3-\G_nf_4\bigg\} +
\frac{1}{2}\G_n f_2 +o_{\mathbb P}(1).
\end{align}
Note that this implies $Z_n=z_0+o_{\mathbb P}(1)$ as required earlier.

Next, regarding the second expression on the right-hand side of \eqref{eq:znz0alpha}, note that the delta method implies that, for suitable random $T_n$, deterministic $\theta$ and continuously differentiable $g$ with $g'(\theta) \ne 0$,
\begin{align*}
    v_n(T_n-\theta) = v_n\frac{g(T_n)-g(\theta)}{g'(\theta)}+o_{\mathbb P}(1).
\end{align*}
Applying this with $g\equiv \log, T_n=z_0^{\hat\alpha_n/\alpha_1-1},\theta=1,g'(1)=1$, we obtain
\begin{align}
\label{eq:sigma_asy2}
    z_0 v_n \big(z_0^{\hat\alpha_n/\alpha_1-1} - 1\big) &=  v_n\log(z_0)\big(\hat\alpha_n/\alpha_1-1\big)+o_{\mathbb P}(1) \nonumber \\
    &=  \frac{z_0 \log(z_0)}{\alpha_1}v_n\big(\hat\alpha_n-\alpha_1\big)+o_{\mathbb P}(1) \nonumber \\
    &= 
    -\frac{z_0 \log(z_0)}{\alpha_1 \Dot \Psi_\infty^{(\rho, \alpha_0)}(\alpha_1)}\Big(\frac{1}{z_0}\G_n f_1+\frac{\Upsilon_{\rho_0}'(\varpi_{\rho_0})}{2\alpha_0z_0^2}\G_n f_2 
    \nonumber \\ &\hspace{5.5cm}
    -\G_n f_3-\G_n f_4 \Big)+o_{\mathbb P}(1),
\end{align}
where we have used \eqref{eq:Gn1} and $z_0 =\frac{1}{2}\Upsilon_{\rho_0}(\varpi_{\rho_0})$ at the last equality.

Overall, combining \eqref{eq:sigma_asy} and \eqref{eq:sigma_asy2} with \eqref{eq:znz0alpha} and then \eqref{eq:Gn2}, we obtain that
\[
G_{n2} = \sum_{k=1}^4 \beta_{2k} \G_n f_k + o_{\mathbb P}(1),
\]
where, recalling $z_0 =\frac{1}{2}\Upsilon_{\rho_0}(\varpi_{\rho_0})$,
\begin{align}
\label{eq:mrho2}
\beta_{21} 
&=\nonumber
\frac{z_0^{-1/\alpha_1}}{\alpha_1 \Dot \Psi_\infty^{(\rho, \alpha_0)}(\alpha_1)} \Big\{ \frac{\Upsilon_{\rho_0}'(\varpi_{\rho_0})}{2\alpha_0 z_0^2}   -  \frac{\log z_0}{\alpha_1 z_0} \Big\},
\\
\beta_{22} 
&= 
\frac{z_0^{-1/\alpha_1}}{\alpha_1 \Dot \Psi_\infty^{(\rho, \alpha_0)}(\alpha_1)} \Big\{ \frac{\Upsilon_{\rho_0}'(\varpi_{\rho_0})^2}{4\alpha_0^2 z_0^3}  - \frac{\log z_0\Upsilon_{\rho_0}'(\varpi_{\rho_0})}{2\alpha_0\alpha_1z_0^2} \Big\} - \frac{z_0^{-1/\alpha_1-1}}{2\alpha_1},
\\
\beta_{23} =\beta_{24} 
&= \nonumber
\frac{z_0^{-1/\alpha_1}}{\alpha_1 \Dot \Psi_\infty^{(\rho, \alpha_0)}(\alpha_1)} 
\Big\{ \frac{\log z_0}{\alpha_1} - \frac{\Upsilon_{\rho_0}'(\varpi_{\rho_0})}{2\alpha_0 z_0} \Big\}.
\end{align}
This proves the claimed expansion in \eqref{eq:thmasymp}, and the weak convergence follows immediately from Condition~\ref{cond:2}. 

If $\rho_0=1$, we have $\rho_0=1, z_0=1, \varpi_{\rho_0}=1$ and $\alpha_1=\alpha_0$. Hence, since $\Upsilon_1(1)=\Gamma(3)=2$, $\Upsilon_1'(1)=\Gamma'(3)=3-2\gamma$ and $\Upsilon_1''(1)=\Gamma''(3)=2-6\gamma+2\gamma^2+\pi^2/3$, we obtain that
\begin{align*}
    \Dot\Psi_{1,\alpha_0}(\alpha_0) 
    &=-\frac{2}{\alpha_0^2}-2\frac{\Upsilon_1''(1)\Upsilon_1(1)-\Upsilon_1'(1)^2}{\alpha_0^2\Upsilon_1(1)^2}
    \\& 
    = -\frac{2}{\alpha_0^2}-\frac{2-6\gamma+2\gamma^2+\pi^2/3 -(9-12\gamma+4\gamma^2)/2}{\alpha_0^2} 
    = \frac{3-2\pi^2}{6\alpha_0^2} , 
\end{align*}
which implies \eqref{eq:malpha} by plugging the previous expressions into \eqref{eq:mrho1} and \eqref{eq:mrho2}.
\end{proof}

\begin{proof}[Proof of Theorem~\ref{thm:bias-corrected-asymp}]
We can prove \eqref{eq:bias-corrected-asymp} coordinate-wise. 
First, since $\alpha_1 = \varpi_{\rho_0} \alpha_0$,
\begin{align*}
v_n(\widetilde \alpha_n-\alpha_0) 
&=
v_n(\hat \alpha_n-\alpha_1) \hat \varpi_n^{-1} + v_n (\hat \varpi_n^{-1} - \varpi_{\rho_0}^{-1}) \alpha_1
\\&=
v_n(\hat \alpha_n-\alpha_1) \varpi_{\rho_0}^{-1} + o_{\mathbb P}(1)
= \varpi_{\rho_0}^{-1} (M_{\rho_0}(\alpha_0))_1 (\G_n f_1, \dots, \G_nf_4)^\top + o_{\mathbb P}(1)
\end{align*}
by Slutsky's Lemma and Theorem \ref{thm:asymptotic}; here, $(M_{\rho_0}(\alpha_0))_1$ denotes the first row of $M_{\rho_0}(\alpha_0)$.

Next, for $(\varrho,\alpha) \in [0,1] \times (0,\infty)$, define $\varphi(\varrho, \alpha) = \{ \Upsilon_{\varrho}(\varpi_{\varrho})/2\}^{1/\alpha}$, and note that $\varphi(\rho_0, \alpha_1) = z_0^{1/\alpha_1}=1/s_1$. Then
    \begin{align} \label{eq:decomp-bc}
    v_n\Big( \frac{\widetilde \sigma_n}{\sigma_n}-1\Big)
    &= \nonumber
    v_n\Big( \frac{\hat \sigma_n}{\sigma_n}  \varphi (\hat \rho_{0,n}, \hat \alpha_n)-1\Big) \\
    &=
   \varphi(\rho_0, \alpha_1) v_n\Big( \frac{\hat \sigma_n}{\sigma_n} - \varphi (\rho_0, \alpha_1)^{-1} \Big) 
   +  
   \frac{\hat \sigma_n}{\sigma_n} v_n( \varphi (\hat \rho_{0,n}, \hat \alpha_n) -  \varphi (\rho_{0}, \alpha_1)).
    \end{align}
By Theorem~\ref{thm:asymptotic}, the first summand on the right can be written as 
\[
z_0^{1/\alpha_1}   (M_{\rho_0}(\alpha_0))_2 (\G_n f_1, \dots, \G_nf_4)^\top + o_{\mathbb P}(1).
\]
For the second summand on the right-hand side of \eqref{eq:decomp-bc}, note that 
$\hat \sigma_n/\sigma_n
= z_0^{-1/\alpha_1} + o_{\mathbb P}(1)$, 
and write 
\[
v_n \{ \varphi( \hat \rho_{0,n}, \hat \alpha_n) -  \varphi (\rho_{0}, \alpha_1) \}
=
v_n\{ \varphi (\hat \rho_{0,n}, \hat \alpha_n) -  \varphi(\hat \rho_{0,n}, \alpha_1) \}
+
v_n\{ \varphi (\hat \rho_{0,n}, \alpha_1) -  \varphi (\rho_{0}, \alpha_1) \}
\]
By Lipschitz continuity of $\varrho \mapsto \varphi(\varrho, \alpha_1)$, the second summand on the right is of the order $O_{\mathbb P}(v_n(\hat \rho_{0,n} - \rho_0))=o_{\mathbb P}(1)$. Regarding the first summand, the mean value theorem allows to write
\[
v_n\{ \varphi (\hat \rho_{0,n}, \hat \alpha_n) -  \varphi(\hat \rho_{0,n}, \alpha_1) \}
=
\partial_{\alpha}  \varphi(\hat \rho_{0,n}, \xi_n) v_n (\hat \alpha_n - \alpha_1)
\]
for some intermediate value $\xi_n$ between $\hat \alpha_n$ and $\alpha_1$.
A straightforward calculation shows that 
    \[
    \partial_\alpha \varphi(\varrho, \alpha) = - \frac1{\alpha^2} \Big( \frac{\Upsilon_{\varrho}(\varpi_\varrho)}{2}\Big)^{1/\alpha} \log\Big( \frac{\Upsilon_{\varrho}(\varpi_\varrho)}{2}\Big).
    \]
    This function is clearly continuous in $(\varrho, \alpha)$, so that $\hat \xi_n = \alpha_1 + o_{\mathbb P}(1)$ (a consequence of the fact that $\xi_n$ is an intermediate value between $\hat \alpha_n$ and $\alpha_1$ and that $\hat \alpha_n=\alpha_1+o_{\mathbb P}(1)$) and $\hat\rho_{0,n}= \rho_0 + o_{\mathbb P}(1)$ (by assumption) together with the continuous mapping theorem implies that 
    \[
    \partial_\alpha \varphi(\hat\rho_{0,n}, \xi_n)  = \partial_\alpha \varphi(\rho_{0}, \alpha_1) + o_{\mathbb P}(1).
    \]
    Consequently,
\begin{align*}
v_n\{ \varphi (\hat \rho_{0,n}, \hat \alpha_n) -  \varphi(\hat \rho_{0,n}, \alpha_1) \}
&=
\big(\partial_{\alpha} \varphi(\rho_0, \alpha_1) + o_{\mathbb P}(1)\big)v_n(\hat \alpha_n - \alpha_1) + o_{\mathbb P}(1)
\\ 
&=
\partial_{\alpha} \varphi(\rho_0, \alpha_1) v_n(\hat \alpha_n - \alpha_1) + o_{\mathbb P}(1)
\\ &=
-\alpha_1^{-2}z_0^{1/{\alpha_1}} \log(z_0) 
(M_{\rho_0}(\alpha_0))_1 (\G_n f_1, \dots, \G_nf_4)^\top + o_{\mathbb P}(1),
\end{align*}
where we used Theorem~\ref{thm:asymptotic} again. Assembling terms, observing that $-\alpha_1^{-2}z_0^{1/{\alpha_1}} \log(z_0) = \alpha_1^{-1}s_1^{-1} \log (s_1)$, yields \eqref{eq:bias-corrected-asymp}. 

If $\rho=\rho_\indi$, we have $\varpi_{1}=1$ and $z_0=1$ and hence the matrix in front of $M_{\rho_0}(\alpha_0)$ in \eqref{eq:malpha-bc} is the identity the matrix.
\end{proof}

\section{Proofs for Section \ref{sec:estimation-blockmaxima}}
\label{sec:proofs-estimation-blockmaxima}

\subsection{Disjoint Blocks: Proof of Theorem \ref{thm:blocks}}

The proof of Theorem \ref{thm:blocks} needs some lemmas as preparation.

\begin{lemma}[Largest two order statistics rarely show ties]\label{lem:no_ties}
Under Conditions \ref{cond:doa} and \ref{cond:alphamixing}, for every $c\in (0,\infty)$, we have
\begin{align*}
    \lim_{n\to\infty} 
    \Prob\big((M_{r_n,1} \vee c,S_{r_n,1} \vee c) = (M_{r_n,3} \vee c,S_{r_n,3} \vee c)\big) 
    = 0. 
\end{align*}
\end{lemma}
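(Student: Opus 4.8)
\medskip
\noindent\emph{Proof plan.}
The plan is to reduce the diagonal event to $\{M_{r_n,1}=M_{r_n,3}\}$ by discarding the truncation, then to decouple blocks~$1$ and~$3$ through the $\alpha$-mixing coefficient after a discretisation, and finally to exploit that $M_{r_n}/\sigma_{r_n}$ converges to the \emph{continuous} Fr\'echet law, so that $M_{r_n}$ carries no concentrated probability mass. First I would dispose of the truncation by $c$: since $S_{r_n,i}\le M_{r_n,i}$ and, by Condition~\ref{cond:doa} together with $\sigma_{r_n}\to\infty$, the limit law of $S_{r_n,i}/\sigma_{r_n}$ puts no mass on $(-\infty,0]$ and is continuous there, one gets $\Prob(S_{r_n,i}\le c)=\Prob(S_{r_n,i}/\sigma_{r_n}\le c/\sigma_{r_n})\to 0$ for $i\in\{1,3\}$. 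On the complement of $\{S_{r_n,1}\le c\}\cup\{S_{r_n,3}\le c\}$ none of the $\vee c$ operations is active, so
\[
\Prob\big((M_{r_n,1}\vee c,S_{r_n,1}\vee c)=(M_{r_n,3}\vee c,S_{r_n,3}\vee c)\big)\le \Prob\big(M_{r_n,1}=M_{r_n,3}\big)+o(1),
\]
and it remains to show $\Prob(M_{r_n,1}=M_{r_n,3})\to 0$.

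For this, the key step is a discretisation-plus-decoupling argument. Blocks $1$ and $3$ are separated by the $r_n$ indices of block $2$, hence $M_{r_n,1}$ is $\sigma(\xi_t:t\le r_n)$-measurable and $M_{r_n,3}$ is $\sigma(\xi_t:t\ge 2r_n+1)$-measurable; by stationarity and monotonicity of $\ell\mapsto\alpha(\ell)$, any two events $\{M_{r_n,1}\in A\}$ and $\{M_{r_n,3}\in B\}$ decorrelate up to an additive error $\alpha(r_n+1)\le\alpha(r_n)$. Fixing a scale $\delta_n>0$ and a level $T_n>0$, I would partition $[0,T_n]$ into $N_n=\lceil T_n/\delta_n\rceil$ intervals $I_1,\dots,I_{N_n}$ of length at most $\delta_n$ and use
\[
\{M_{r_n,1}=M_{r_n,3}\}\subseteq\Big(\bigcup_{j=1}^{N_n}\{M_{r_n,1}\in I_j,\ M_{r_n,3}\in I_j\}\Big)\cup\{M_{r_n,1}>T_n,\ M_{r_n,3}>T_n\}.
\]
Applying the mixing bound to each of these $N_n+1$ rectangular events and summing yields
\[
\Prob\big(M_{r_n,1}=M_{r_n,3}\big)\le \max_{1\le j\le N_n}\Prob\big(M_{r_n,3}\in I_j\big)+\Prob\big(M_{r_n,3}>T_n\big)+(N_n+1)\,\alpha(r_n).
\]

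It then remains to calibrate $T_n$ and $\delta_n$. Writing $T_n=\sigma_{r_n}t_n$ and $\delta_n=\sigma_{r_n}\delta_n'$, the middle term equals $\Prob(M_{r_n,3}/\sigma_{r_n}>t_n)$, which tends to $0$ for any $t_n\to\infty$ by Condition~\ref{cond:doa} and tightness. Since $M_{r_n,3}/\sigma_{r_n}$ converges in distribution to a continuous limit, the distribution functions converge uniformly, so, the Fr\'echet cdf being uniformly continuous, $\max_j\Prob(M_{r_n,3}\in I_j)\to 0$ as soon as $\delta_n'\to 0$. Finally $(N_n+1)\alpha(r_n)$ is of order $(t_n/\delta_n')\,\alpha(r_n)$, which I would make $o(1)$ by choosing, for instance, $a_n:=\alpha(r_n)\vee n^{-1}\to 0$, $t_n:=a_n^{-1/4}\to\infty$ and $\delta_n':=a_n^{1/4}\to 0$, so that $(t_n/\delta_n')\,\alpha(r_n)\le a_n^{1/2}\to 0$. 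Collecting the three bounds then gives $\Prob(M_{r_n,1}=M_{r_n,3})\to 0$, which completes the argument.

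The main obstacle is the decoupling step itself: the diagonal $\{M_{r_n,1}=M_{r_n,3}\}$ is not a product event, whereas the $\alpha$-mixing coefficient only controls rectangular events, so one is forced to approximate the diagonal by finitely many thin strips and then argue that their number grows slowly enough relative to the decay of $\alpha(r_n)$; balancing the discretisation width $\delta_n'$, the tail cutoff $t_n$, and $\alpha(r_n)$ is where some care is needed. Note that only $\alpha(r_n)\to 0$ (together with $r_n\to\infty$) enters, so Condition~\ref{cond:alphamixing} is in fact much stronger than required here.
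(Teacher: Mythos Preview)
Your argument is correct and, in spirit, matches the paper's: both reduce the bivariate tie event to the univariate event $\{M_{r_n,1}\vee c = M_{r_n,3}\vee c\}$ (or, after removing the truncation, $\{M_{r_n,1}=M_{r_n,3}\}$). The difference is that the paper simply invokes Lemma~A.5 of \cite{Bücher2018-disjoint} for this last step, while you supply a self-contained discretisation-plus-mixing argument. Your version thus spells out what the cited lemma is doing, and your closing remark that only $\alpha(r_n)\to 0$ is actually needed is a correct and useful observation.

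Two small remarks. First, the paper's reduction is a touch cleaner: since the target event is contained in $\{M_{r_n,1}\vee c=M_{r_n,3}\vee c\}$, there is no need to argue separately that $\Prob(S_{r_n,i}\le c)\to 0$; one can work with $M_{r_n,i}\vee c$ throughout. Second, your partition of $[0,T_n]$ formally misses the case $M_{r_n,1}=M_{r_n,3}<0$. This is harmless, since on the complement of $\{S_{r_n,1}\le c\}\cup\{S_{r_n,3}\le c\}$ you already have $M_{r_n,i}\ge S_{r_n,i}>c>0$; alternatively, add the event $\{M_{r_n,1}\le 0\}$ whose probability tends to zero by Condition~\ref{cond:doa}. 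With that cosmetic fix the proof is complete.
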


\begin{proof}
Since the event in question is contained in the event $\{M_{r_n,1}\vee c=M_{r_n,3} \vee c\}$, the result is an immediate consequence of Lemma~A.5 in \cite{Bücher2018-disjoint}.
\end{proof}

\begin{lemma}[Moment convergence]\label{lem:conv_of_mom_of_block}
Under Conditions \ref{cond:doa} and \ref{cond:mom}, we have, for every $c\in(0,\infty)$
\begin{align*}
&\lim_{r\to\infty} \Exp\!\big[f\big((M_{r}\vee c)/\sigma_{r}\big)\big]=\int_0^\infty f(x)\diff H^{(1)}_{\rho,\alpha_0,1}(x),
\\
&\lim_{r\to\infty} \Exp\!\big[f\big((S_{r}\vee c)/\sigma_{r}\big)\big]=\int_0^\infty f(y)\diff H^{(2)}_{\rho,\alpha_0,1}(y) ,
\end{align*}
for every measurable function $f:(0,\infty)\to \R$ which is continuous almost everywhere and for which there exist $0<\kappa<\nu$ such that $|f(x)|\leq g_{\kappa,\alpha_1}(x)$, where
\begin{align} 
\label{eq:g_kappa}
g_{\kappa,\alpha_1}(x) = \big(x^{-\alpha_1}\indic(x\leq \mathrm e)+\log x\indic(x>\mathrm e)\big)^{2+\nu}.
\end{align}
\end{lemma}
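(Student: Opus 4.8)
The plan is to combine the weak convergence supplied by Condition~\ref{cond:doa} with a uniform integrability estimate coming from Condition~\ref{cond:mom}, and then to invoke the classical fact that weak convergence together with uniform integrability entails convergence of expectations. The two displayed limits are proved by the same argument, so I only treat the one involving $S_r$. For the weak convergence step, Condition~\ref{cond:doa} gives $S_r/\sigma_r\rightsquigarrow W$ with $W\sim H^{(2)}_{\rho,\alpha_0,1}$; since $0\le (S_r\vee c)/\sigma_r-S_r/\sigma_r\le c/\sigma_r\to 0$, Slutsky's lemma yields $(S_r\vee c)/\sigma_r\rightsquigarrow W$ as well. Because $H^{(2)}_{\rho,\alpha_0,1}$ is absolutely continuous (cf.\ \eqref{eq:general_dens}), the discontinuity set of $f$ --- a Lebesgue null set by hypothesis --- is null for the law of $W$, so the continuous mapping theorem gives $f\big((S_r\vee c)/\sigma_r\big)\rightsquigarrow f(W)$.

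Next, the uniform integrability step. Since $|f|\le g_{\kappa,\alpha_1}$, it suffices that $\{g_{\kappa,\alpha_1}\big((S_r\vee c)/\sigma_r\big)\}_r$ be uniformly integrable. I would first reduce the truncation level from $c$ to the level $1$ used in Condition~\ref{cond:mom}: since $\sigma_r\to\infty$, for all large $r$ both $c/\sigma_r$ and $1/\sigma_r$ lie in $(0,\mathrm e)$, and an elementary case distinction according to whether $S_r/\sigma_r\le\mathrm e$ shows that $g_{\kappa,\alpha_1}\big((S_r\vee c)/\sigma_r\big)\le C\big(1+g_{\kappa,\alpha_1}\big((S_r\vee 1)/\sigma_r\big)\big)$ for a constant $C=C(c,\alpha_1,\kappa)$ and $r$ large --- on $\{S_r/\sigma_r\le\mathrm e\}$ the monotone map $x\mapsto x^{-\alpha_1}$ changes only by a bounded factor when $c$ is replaced by $1$, while on $\{S_r/\sigma_r>\mathrm e\}$ the truncation is irrelevant. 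Then, using $\kappa<\nu$, one raises $g_{\kappa,\alpha_1}$ to the power $p:=(2+\nu)/(2+\kappa)>1$: because the two indicator supports in \eqref{eq:g_kappa} are disjoint, the $p$-th power of their sum equals the sum of the $p$-th powers, so $g_{\kappa,\alpha_1}^{\,p}$ coincides (up to a constant) with $h_{\nu,\alpha_1}+h_\nu$ from Condition~\ref{cond:mom}. Hence $\limsup_r\Exp\big[g_{\kappa,\alpha_1}\big((S_r\vee 1)/\sigma_r\big)^p\big]<\infty$ by \eqref{eq:intfinite} (for the $h_\nu$-part one also uses $S_r\le M_r$ and monotonicity of $h_\nu$ on $(\mathrm e,\infty)$), and therefore, via the reduction above, $\limsup_r\Exp\big[g_{\kappa,\alpha_1}\big((S_r\vee c)/\sigma_r\big)^p\big]<\infty$ with $p>1$; the de la Vall\'ee--Poussin criterion then gives the desired uniform integrability, a fortiori for $\{f\big((S_r\vee c)/\sigma_r\big)\}_r$.

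Combining the two steps yields $\Exp\big[f\big((S_r\vee c)/\sigma_r\big)\big]\to\int_0^\infty f(y)\diff H^{(2)}_{\rho,\alpha_0,1}(y)$, with the limit finite. The $M_r$-statement is obtained identically, replacing $H^{(2)}_{\rho,\alpha_0,1}$ by $H^{(1)}_{\rho,\alpha_0,1}$; the only extra point is that Condition~\ref{cond:mom} controls the right tail of $M_r$ directly, while the left-tail part of $g_{\kappa,\alpha_1}\big((M_r\vee c)/\sigma_r\big)$ is dominated by the corresponding $S_r$-quantity because $S_r\le M_r$ and $x\mapsto x^{-\alpha_1}$ is decreasing --- so both tails of both statistics are controlled, exactly as noted after Condition~\ref{cond:mom}. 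There is no conceptual obstacle here; the only slightly delicate part is the tail-and-truncation bookkeeping in the uniform integrability step, and the whole argument closely parallels the moment-convergence lemmas in \cite{Bücher2018-disjoint}.
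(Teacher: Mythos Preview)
Your proposal is correct and follows essentially the same approach as the paper: establish weak convergence of $(S_r\vee c)/\sigma_r$ and $(M_r\vee c)/\sigma_r$ via Condition~\ref{cond:doa} and $c/\sigma_r\to 0$, then upgrade to convergence of expectations using the uniform integrability furnished by Condition~\ref{cond:mom}. The paper's proof is more terse---it simply invokes Example~2.21 in \cite{vdVaart} and remarks that the constant $1$ may be replaced by $c$ and $h_\nu,h_{\nu,\alpha_1}$ by $g_{\nu,\alpha_1}$ (using $S_r\le M_r$)---whereas you spell out the Slutsky step, the de~la~Vall\'ee--Poussin argument via the exponent $p=(2+\nu)/(2+\kappa)>1$, and the truncation bookkeeping explicitly; the underlying mechanism is identical.
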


\begin{proof}
Since $c/\sigma_r\to 0$ as $r\to\infty$, the sequence $(M_r\vee c, S_r\vee c)/\sigma_r$ converges weakly to the $\mathcal{W}(\rho,\alpha_0,1)$ distribution in view of Condition \ref{cond:doa}. In particular, $(S_r\vee c)/\sigma_n$ and $(M_r\vee c)/\sigma_r$ converge to the required marginal distributions. The result then follows from Example 2.21 in \cite{vdVaart}, observing that we may replace the constant $1$ by $c$ and both $h_\nu$ and $h_{\nu, \alpha_1}$ by $g_{\nu, \alpha_1}$ in the bounds in \eqref{eq:intfinite} (since $S_r \le M_r$).
\end{proof}

A clipping technique is applied to show that the two largest observations from consecutive blocks are approximately independent. 
For integer $1< \ell < r$, define
\begin{align}
    M_{r,i}^{[\ell]}&=\max\{\xi_t:(i-1)r+1\leq t\leq ir-\ell+1\}\label{eq:M_trunc}\\
    S_{r,i}^{[\ell]}&=\max\big(\{\xi_t:(i-1)r+1\leq t\leq ir-\ell+1\}\setminus\{M_{r,i}^{[\ell]}\}\big)\label{eq:S_trunc}.
\end{align}
Clearly, $M_{r,i}\geq M_{r,i}^{[\ell]}$ and $S_{r,i}\geq S_{r,i}^{[\ell]}$. With the next three lemmas, we show that the probability that the largest two observations over a block of size $r$ are likely attained within the subblock of the first $r-\ell$ observations.

\begin{lemma}[Revisiting Lemma 7.1 from \cite{Bücher2014}]
\label{lem:revis}
Assume Condition~\ref{cond:doa}. Let $F_r$ be the cumulative distribution function of $S_r$. If $\ell_n=o(r_n)$ and $(r_n/\ell_n)\alpha(\ell_n)\to 0$, then, for every $u>0$,
\begin{align*}
    \Prob(F_{r_n}(S_{\ell_n})>u) = O(\ell_n/r_n), \qquad n\to\infty.
\end{align*}
\end{lemma}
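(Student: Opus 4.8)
The plan is to adapt the proof of Lemma~7.1 in \cite{Bücher2014}, replacing ``at least one exceedance of a high level'' by ``at least two exceedances'' throughout. Fix $u>0$; if $u\ge1$ the asserted bound is trivial because $F_{r_n}\le1$, so assume $u\in(0,1)$. First I would record that $F_{r_n}$ is continuous: since $F$ is continuous (Condition~\ref{cond:doa}) and the second largest order statistic always coincides with one of the observations, $\Prob(S_{r_n}=x)\le\sum_{i=1}^{r_n}\Prob(\xi_i=x)=0$ for every $x$. Hence, by the intermediate value theorem, there is a threshold $q_n$ with $F_{r_n}(q_n)=u$. Writing $N_I:=\#\{t\in I:\xi_t>q_n\}$, monotonicity of $F_{r_n}$ gives the inclusion $\{F_{r_n}(S_{\ell_n})>u\}\subseteq\{S_{\ell_n}>q_n\}=\{N_{\{1,\dots,\ell_n\}}\ge2\}$, while $\{S_{r_n}>q_n\}=\{N_{\{1,\dots,r_n\}}\ge2\}$ has probability $1-u$. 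So the task reduces to showing $a_n:=\Prob(N_{\{1,\dots,\ell_n\}}\ge2)=O(\ell_n/r_n)$.

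For that I would block the index set as follows: put $m_n:=\lfloor r_n/(2\ell_n)\rfloor$ (which tends to infinity since $\ell_n=o(r_n)$) and, for $j\in\{1,\dots,m_n\}$, let $B_j$ be the $j$th ``kept'' block of length $\ell_n$ starting at position $(j-1)2\ell_n+1$, so that $B_1=\{1,\dots,\ell_n\}$, $\bigcup_jB_j\subseteq\{1,\dots,r_n\}$, and consecutive kept blocks are separated by buffers of at least $\ell_n$ indices. Since the event that some kept block contains at least two exceedances is contained in $\{N_{\{1,\dots,r_n\}}\ge2\}$, its complement has probability at least $u$; by stationarity $\Prob(N_{B_j}\le1)=1-a_n$ for all $j$, and the standard $\alpha$-mixing decoupling inequality for blocks separated by gaps of length at least $\ell_n$ gives
\begin{align*}
u\le\Prob\Big(\bigcap_{j=1}^{m_n}\{N_{B_j}\le1\}\Big)\le(1-a_n)^{m_n}+(m_n-1)\,\alpha(\ell_n).
\end{align*}
Because $m_n\,\alpha(\ell_n)\le\tfrac12(r_n/\ell_n)\alpha(\ell_n)\to0$, the last term is eventually at most $u/2$, so $(1-a_n)^{m_n}\ge u/2$; taking logarithms and using $\log(1-a_n)\le-a_n$ then yields $m_na_n\le\log(2/u)$, hence $a_n\le\log(2/u)/m_n=O(\ell_n/r_n)$, which completes the argument since $\Prob(F_{r_n}(S_{\ell_n})>u)\le a_n$.

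The argument is essentially routine once the reduction to exceedance counts is in place; no part of Condition~\ref{cond:doa} beyond stationarity and continuity of $F$ is needed, and in particular the weak-convergence statement is not used here. The only point requiring a little care is the spacing of the blocks: the hypotheses only control $\alpha(\ell_n)$ (via $(r_n/\ell_n)\alpha(\ell_n)\to0$) and say nothing about $\alpha$ at shorter lags, so the buffers between successive kept blocks must have length at least $\ell_n$, which forces the blocks of length $\ell_n$ to be spaced $2\ell_n$ apart. This costs only a constant factor in $m_n$ and hence does not affect the $O(\ell_n/r_n)$ rate.
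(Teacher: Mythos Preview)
Your proof is correct and follows essentially the same approach as the paper: both split $\{1,\dots,r_n\}$ into sub-blocks of length $\ell_n$ separated by gaps of length $\ell_n$, use the $\alpha$-mixing decoupling inequality to bound the joint probability that no sub-block witnesses the event by a product, and then extract the $O(\ell_n/r_n)$ rate from the resulting inequality $(1-a_n)^{m_n}\ge u+o(1)$. The only cosmetic difference is that you translate to exceedance counts via an explicit threshold $q_n$, whereas the paper works directly with $F_{r_n}(S_{\ell_n,i})$ and the inclusion $\{F_{r_n}(S_{r_n})\le u\}\subseteq\{\max_i F_{r_n}(S_{\ell_n,i})\le u\}$ (which relies on $S_{\ell_n,i}\le S_{r_n}$ a.s.).
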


\begin{proof}
Throughout, we write $r=r_n$ and $\ell=\ell_n$; all convergences are for $n\to\infty$.
Decompose the block of length $r$ into $\lfloor r/l\rfloor$ successive blocks of length $\ell$, and let $S_{\ell,1}, \dots, S_{\ell,\lfloor r/l\rfloor}$ denote the respective second-largest values in each sub-block.
Of these sub-blocks, only keep those with an odd index. Since the distribution of $S_r$ is continuous by assumption,
we find, for $u\in(0,1)$,
\begin{align*}
    0<u
    =
    \Prob\big(F_r(S_{r})\leq u\big)
    \leq 
    \Prob\Big({\max_{\substack{1\leq i\leq \lfloor r/\ell\rfloor \\\text{$i$ is odd}}}}F_r(S_{\ell,i})\leq u\Big).
\end{align*}
Observing that the odd blocks are separated by a lag $\ell$ we obtain, by induction,
\begin{align*}
    \Big|\Prob\Big({\max_{\substack{1\leq i\leq \lfloor r/\ell\rfloor \\\text{$i$ is odd}}}}F_r(S_{\ell,i})\leq u\Big)
    -
    \prod_{\substack{1\leq i\leq \lfloor r/\ell\rfloor \\\text{$i$ is odd}}} \Prob\big(F_r(S_{\ell,i})\leq u\big)\Big|\leq \frac{r}{\ell}\alpha(\ell) = o(1).
\end{align*}
Since the number of indices $i$ in the product is at least $\lfloor r/\ell\rfloor/2$,  we obtain
\begin{align*}
    \Big\{ 1-\Prob\big(F_r(S_{\ell,1})>u\big)\Big\}^{\lfloor r/\ell\rfloor/2}\geq u+o(1), \qquad n\to\infty.
\end{align*}
But $r/\ell \to\infty$, and thus
\begin{align*}
    \limsup_{n\to\infty}\frac{r}{\ell}\Prob\big(F_r(S_{\ell,1}>u)\big)< \infty,
\end{align*}
as required.
\end{proof}

\begin{lemma}[Short blocks are small]\label{lem:shortsmall}
Assume Condition \ref{cond:doa}. If $\ell_n=o(r_n)$ and if $\alpha(\ell_n)=o(\ell_n/r_n)$ as $n\to\infty$, then, for all $\eps>0$,
\begin{align*}
    \Prob\big(S_{\ell_n}\geq \eps\sigma_{r_n}\big)=O(\ell_n/r_n), \qquad  n\to\infty.
\end{align*}
\end{lemma}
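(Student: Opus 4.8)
The plan is to reduce everything to the largest order statistic. Since $S_{\ell_n}\le M_{\ell_n}$ by definition, we have $\{S_{\ell_n}\ge\eps\sigma_{r_n}\}\subseteq\{M_{\ell_n}\ge\eps\sigma_{r_n}\}$, and (using that $F$ is continuous) it suffices to show that $q_n:=\Prob(M_{\ell_n}>\eps\sigma_{r_n})=O(\ell_n/r_n)$.

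First I would set $m_n:=\lfloor r_n/\ell_n\rfloor$, which tends to infinity because $\ell_n=o(r_n)$, partition $\{1,\dots,r_n\}$ into $m_n$ disjoint consecutive blocks of length $\ell_n$ (discarding at most $\ell_n-1$ trailing indices), and let $M_{\ell_n,i}$ denote the maximum of the $\xi_t$ over the $i$-th such block. Retaining only the odd-indexed blocks, of which there are $N_n$ with $m_n/2\le N_n\le m_n$, produces a collection of block maxima based on index sets that are mutually separated by gaps of at least $\ell_n$ observations. Since $M_{r_n}\ge\max_{i\text{ odd}}M_{\ell_n,i}$, the inclusion $\{M_{r_n}\le\eps\sigma_{r_n}\}\subseteq\{\max_{i\text{ odd}}M_{\ell_n,i}\le\eps\sigma_{r_n}\}$ gives
\begin{align*}
\Prob\Big(\max_{i\text{ odd}}M_{\ell_n,i}\le\eps\sigma_{r_n}\Big)\ \ge\ \Prob(M_{r_n}\le\eps\sigma_{r_n})\ \longrightarrow\ \exp(-\eps^{-\alpha_0})>0,
\end{align*}
where the limit uses Condition~\ref{cond:doa} — the first marginal of $\mathcal W(\rho,\alpha_0,1)$ is the $\Frechet(\alpha_0,1)$ law — together with continuity of the limiting cdf at $\eps$.

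Next I would decouple the odd-indexed block maxima exactly as in the proof of Lemma~\ref{lem:revis}: by the standard inductive $\alpha$-mixing bound, and since the retained blocks are separated by lags $\ge\ell_n$,
\begin{align*}
\Big|\Prob\Big(\max_{i\text{ odd}}M_{\ell_n,i}\le\eps\sigma_{r_n}\Big)-(1-q_n)^{N_n}\Big|\ \le\ (N_n-1)\,\alpha(\ell_n)\ \le\ \frac{r_n}{\ell_n}\,\alpha(\ell_n)\ =\ o(1),
\end{align*}
using stationarity (so that $\Prob(M_{\ell_n,i}\le\eps\sigma_{r_n})=1-q_n$) and the hypothesis $\alpha(\ell_n)=o(\ell_n/r_n)$. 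Combining the two displays, $(1-q_n)^{N_n}\ge c:=\tfrac12\exp(-\eps^{-\alpha_0})>0$ for all large $n$; in particular $q_n<1$. Taking logarithms and using $\log(1-x)\le -x$ on $[0,1)$ yields $N_n q_n\le -N_n\log(1-q_n)\le-\log c$, and since $N_n\ge m_n/2\ge r_n/(4\ell_n)$ eventually, this gives $q_n\le 4|\log c|\,\ell_n/r_n=O(\ell_n/r_n)$, as required.

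The only point requiring care is the logical direction: it is the \emph{lower} bound $\Prob(M_{r_n}\le\eps\sigma_{r_n})\ge c>0$ that must be carried through the mixing approximation, so that $(1-q_n)^{N_n}$ stays bounded away from $0$ and hence $q_n$ is forced to be at most of order $1/N_n=O(\ell_n/r_n)$; a naive union bound on $\{M_{r_n}>\eps\sigma_{r_n}\}$ would only produce a lower bound on $q_n$. The remaining steps are routine block bookkeeping and the elementary inequality $\log(1-x)\le -x$.
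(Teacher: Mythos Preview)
Your proof is correct and takes a somewhat different, more direct route than the paper. The paper first proves a separate result (Lemma~\ref{lem:revis}) showing $\Prob(F_{r_n}(S_{\ell_n})>u)=O(\ell_n/r_n)$ by running the same odd-block decoupling argument on the \emph{second} order statistics $S_{\ell,i}$ together with the cdf $F_{r_n}$ of $S_{r_n}$, and then deduces Lemma~\ref{lem:shortsmall} by observing that $F_{r_n}(\eps\sigma_{r_n})$ converges to the positive limit $\exp(-\eps^{-\alpha_0})(1+\rho_0\eps^{-\alpha_0})$. You instead discard $S$ at the outset via $S_{\ell_n}\le M_{\ell_n}$ and run the odd-block argument for block maxima only, using the Fr\'echet limit of $\Prob(M_{r_n}\le\eps\sigma_{r_n})$ directly. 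Your version is slightly more elementary---it needs only the first-marginal convergence in Condition~\ref{cond:doa}, not the second, and avoids the cdf-composition step---while the paper's route is more modular: Lemma~\ref{lem:revis} is a standalone statement about $S_{\ell_n}$ that does not coarsen to $M_{\ell_n}$ and is reused verbatim in the proof of Lemma~\ref{lem:clip}.
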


\begin{proof}
Throughout, we write $r=r_n$ and $\ell=\ell_n$; all convergences are for $n\to\infty$.
Fix $\eps>0$ and let $F_r$ be the cumulative distribution function of $S_r$. By assumption and \eqref{eq:w_margCDF2}, we have
\begin{align*}
    \lim_{n\to\infty} F_{r}(\eps\sigma_{r}) = \exp\big(-\eps^{-\alpha_0}\big)\big(1+\rho_0 \eps^{-\alpha_0}\big).
\end{align*}
For sufficiently large $n$, we have
\begin{align*}
    \Prob\big(S_{\ell}\geq \eps\sigma_r\big)
    \leq 
    \Prob\big(F_{r}(S_{\ell})\geq F_{r}(\eps\sigma_r)\big)\leq \Prob\Big(F_{r}(S_{\ell})\geq \exp\big(-\eps^{-\alpha_0}\big)\big(1+\rho_0 \eps^{-\alpha_0}\big)/2\Big).
\end{align*}
Now apply Lemma \ref{lem:revis} for $u= \exp\big(-\eps^{-\alpha_0}\big)\big(1+\rho_0 \eps^{-\alpha_0}\big)/2$ to arrive at the claim.
\end{proof}

\begin{lemma}[Clipping doesn't hurt]\label{lem:clip}
Assume Condition \ref{cond:doa}. If $\ell_n=o(r_n)$ and if $\alpha(\ell_n)=o(\ell_n/r_n)$ as $n\to\infty$, then
\begin{align*}
    \Prob\big( \{ M_{r_n}>M_{r_n-\ell_n} \} \cup \{ S_{r_n}>S_{r_n-\ell_n} \} \big)\to 0, 
    \qquad n\to\infty.
\end{align*}
\end{lemma}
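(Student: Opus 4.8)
The plan is to bound the event in question, pointwise, by a single comparison between the largest of the last $\ell_n$ observations of the block and the second largest of the first $r_n-\ell_n$ observations, and then to control that comparison using only the regular variation of $(\sigma_r)_r$ and the weak convergence in Condition~\ref{cond:doa}. (Throughout I take $r_n\to\infty$, as is implicit in the setting; the case where $\ell_n$ stays bounded is trivial and the argument below covers $\ell_n\to\infty$ as well.)

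First I would establish a deterministic set inclusion. Writing $\bar M_n := \max\{\xi_{r_n-\ell_n+1},\dots,\xi_{r_n}\}$ for the largest observation among the trailing $\ell_n$ positions of the block, the claim is that
\begin{align*}
\{M_{r_n} > M_{r_n-\ell_n}\} \cup \{S_{r_n} > S_{r_n-\ell_n}\} \subseteq \{\bar M_n \ge S_{r_n-\ell_n}\}.
\end{align*}
Indeed, order statistics are monotone in the sample, so $M_{r_n}\ge M_{r_n-\ell_n}$ and $S_{r_n}\ge S_{r_n-\ell_n}$ hold identically; on the complementary event $\{\bar M_n < S_{r_n-\ell_n}\}$, every one of the $\ell_n$ trailing observations lies strictly below the second largest of the first $r_n-\ell_n$ observations (which is itself $\le M_{r_n-\ell_n}$), so the two largest order statistics of the full block of size $r_n$ coincide with those of the first $r_n-\ell_n$ observations, giving $M_{r_n}=M_{r_n-\ell_n}$ and $S_{r_n}=S_{r_n-\ell_n}$. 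Passing to complements yields the inclusion.

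Next I would estimate the right-hand probability. By strict stationarity, $\bar M_n \stackrel{d}{=} M_{\ell_n}$, so for any fixed $\eps>0$,
\begin{align*}
\Prob\big(\bar M_n \ge S_{r_n-\ell_n}\big) \le \Prob\big(M_{\ell_n} \ge \eps \sigma_{r_n}\big) + \Prob\big(S_{r_n-\ell_n} \le \eps \sigma_{r_n}\big).
\end{align*}
Since $(\sigma_r)_r$ is regularly varying with index $1/\alpha_0>0$ and $\ell_n=o(r_n)$, we have $\sigma_{r_n}/\sigma_{\ell_n}\to\infty$; combined with $M_{\ell_n}/\sigma_{\ell_n}\rightsquigarrow \Frechet(\alpha_0,1)$ from Condition~\ref{cond:doa}, this gives $M_{\ell_n}/\sigma_{r_n}\pto 0$, so the first term tends to $0$ for every fixed $\eps$. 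For the second term, $(r_n-\ell_n)/r_n\to1$ forces $\sigma_{r_n-\ell_n}/\sigma_{r_n}\to1$ (again by regular variation), while $S_{r_n-\ell_n}/\sigma_{r_n-\ell_n}\rightsquigarrow H^{(2)}_{\rho,\alpha_0,1}$ by Condition~\ref{cond:doa}; hence $S_{r_n-\ell_n}/\sigma_{r_n}\rightsquigarrow H^{(2)}_{\rho,\alpha_0,1}$, and by continuity of this limiting cdf, $\limsup_{n\to\infty}\Prob(S_{r_n-\ell_n}\le\eps\sigma_{r_n})\le H^{(2)}_{\rho,\alpha_0,1}(\eps)$. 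Combining with the inclusion of the previous step,
\begin{align*}
\limsup_{n\to\infty}\Prob\big(\{M_{r_n} > M_{r_n-\ell_n}\} \cup \{S_{r_n} > S_{r_n-\ell_n}\}\big) \le H^{(2)}_{\rho,\alpha_0,1}(\eps)
\end{align*}
for every $\eps>0$; letting $\eps\downarrow 0$ and using $H^{(2)}_{\rho,\alpha_0,1}(0+)=0$ (see~\eqref{eq:w_margCDF2}) concludes the proof.

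The main obstacle is getting the deterministic inclusion of the first step exactly right — in particular verifying that once all $\ell_n$ trailing observations fall below $S_{r_n-\ell_n}$ they can change neither the block maximum nor the block second-largest — together with the elementary but essential book-keeping that $\sigma_{r_n}/\sigma_{\ell_n}\to\infty$ while $\sigma_{r_n-\ell_n}/\sigma_{r_n}\to 1$, which is precisely where the regular-variation part of Condition~\ref{cond:doa} enters. I note in passing that the mixing hypothesis $\alpha(\ell_n)=o(\ell_n/r_n)$ is not actually needed for this particular statement; it is kept in the formulation only for uniformity with Lemmas~\ref{lem:revis} and~\ref{lem:shortsmall}.
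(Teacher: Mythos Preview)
Your proof is correct and takes a genuinely different, cleaner route than the paper's.

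The paper handles the two events separately: it cites an external lemma (Lemma~A.8 in \cite{Bücher2018-disjoint}) for $\Prob(M_{r_n}>M_{r_n-\ell_n})\to 0$, and for the $S$-part it invokes Lemma~\ref{lem:shortsmall}, which in turn rests on Lemma~\ref{lem:revis} and genuinely uses the mixing hypothesis $\alpha(\ell_n)=o(\ell_n/r_n)$. Your argument replaces all of this by the single deterministic inclusion
\[
\{M_{r_n}>M_{r_n-\ell_n}\}\cup\{S_{r_n}>S_{r_n-\ell_n}\}\subseteq\{\bar M_n\ge S_{r_n-\ell_n}\},
\]
and then controls the right-hand side using only the marginal weak convergences in Condition~\ref{cond:doa} together with regular variation of $(\sigma_r)_r$ (to get $\sigma_{\ell_n}/\sigma_{r_n}\to 0$ and $\sigma_{r_n-\ell_n}/\sigma_{r_n}\to 1$). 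This is more elementary, self-contained, and --- as you correctly point out --- shows that the mixing assumption is superfluous for this particular lemma. The paper's route has the compensating feature that Lemmas~\ref{lem:revis} and~\ref{lem:shortsmall} are reused elsewhere (in the proof of Lemma~\ref{lem:JoiWeaKon}), so in the global economy of the paper they are not wasted; but for the present statement your approach is strictly simpler. One small point worth making explicit in a final write-up: the step $\sigma_{\ell_n}/\sigma_{r_n}\to 0$ when $\ell_n/r_n\to 0$ is not quite the definition of regular variation (which concerns fixed ratios) and formally relies on Potter-type bounds; this is routine, but a one-line reference would make the argument airtight.
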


\begin{proof}
Throughout all convergences are for $n\to\infty$.
Since $\Prob\big(M_{r_n}>M_{r_n-\ell_n}\big)=o(1)$ by Lemma A.8 in \cite{Bücher2018-disjoint}, it is sufficient to show that
$\Prob\big(S_{r_n}>S_{r_n-\ell_n}\big) = o(1)$.
For that purpose, we have, by Lemma \ref{lem:shortsmall} and stationarity, for every $\eps>0$,
\begin{align*}
    \Prob\big( S_{r_n}>S_{r_n-\ell_n}\big)\leq \Prob\big(S_{r_n-\ell_n}\leq \eps\sigma_{r_n}\big)+\Prob\big( S_{r_n}>\eps\sigma_{r_n}\big).
\end{align*}
Since $\sigma_{r_n-\ell_n}/\sigma_{r_n}\to1$ as a consequence of Condition \ref{cond:doa} and the fact that $\ell_n = o(r_n)$, the first term converges to $\exp(-\eps^{-\alpha_0} )$ as $n\to\infty$, whereas the second one converges to $0$ by Lemma \ref{lem:shortsmall}. Since $\eps >0$ was arbitrary, the claim follows.
\end{proof}

\begin{proof}[Proof of Theorem \ref{thm:blocks}]
Throughout, we omit the upper index $\dbl$.
The result follows from an application of Theorem~\ref{thm:asymptotic}.
Recall $Z_{n,i}$ from \eqref{eq:trunc_toptwo}. Subsequently, we may fix $c=c_0$ with $c_0$ from Condition~\ref{cond:bias}. Indeed, as a consequence of Condition~\ref{cond:all_diverge}, this redefinition of $c$ does not change the estimator on a sequence of events whose probability converges to one. Hence, the asymptotic distribution does not change either.

Now, Lemma \ref{lem:no_ties} implies that, with probability tending to one, not all $Z_{n,i}$ are equal (and hence $\hat \theta_n$ is well-defined and unique by Lemma~\ref{lemm:ex_uni}); this is \eqref{eq:neglect}. 
It remains to check Condition~\ref{cond:2}, with the weak limit $\bm W$ from \eqref{eq:w} being $\mathcal{N}_4(\bm B,\Sigma)$-distributed.
As in \cite{Bücher2018-disjoint}, proof of Theorem 4.2, the proof is based on Bernstein's big-block-small-block method in combination with the Lindeberg central limit theorem. 

Recall the sequence $\ell_n$ from Condition \ref{cond:alphamixing}.
Define clipped versions of $Z_{n,i}$ from \eqref{eq:trunc_toptwo} by
\[
Z_{n,i}^{[\ell_n]} := \big( M_{r_n,i}^{[\ell_n]}\vee c_0, S_{r_n,i}^{[\ell_n]}\vee c_0 \big)
\]
with $M_{r,i}^{[\ell]}$ and $S_{r,i}^{[\ell]}$ from \eqref{eq:M_trunc} and \eqref{eq:S_trunc}, respectively.  Next, define
\begin{align}
   \mathbb P_n f &= \frac1{k_n} \sum_{i=1}^{k_n} f(Z_{n,i} / \sigma_{r_n}),
   &  
   P_n f &= \Exp\big[ f(Z_{n,i} / \sigma_{r_n}) \big], \label{eq:Pn}\\ \notag
   \mathbb P_n^{[\ell_n]} f &= \frac1{k_n} \sum_{i=1}^{k_n} f(Z_{n,i}^{[\ell_n]} / \sigma_{r_n}),
   &  
   P_n^{[\ell_n]} f &= \Exp\big[ f(Z_{n,i}^{[\ell_n]} / \sigma_{r_n}) \big], 
\end{align}
and write $P=\mathcal {W}(\rho, \alpha_0, 1)$ for the limit distribution of $Z_{n,i} / \sigma_{r_n}$. Define empirical processes
\begin{align}\label{eq:Gn}
    \Gb_n = \sqrt{k_n} (\mathbb P_n - P), \qquad 
    \tilde \Gb_n = \sqrt{k_n} (\mathbb P_n - P_n), \qquad 
    \tilde \Gb_n^{[\ell_n]} = \sqrt{k_n} (\mathbb P_n^{[\ell_n]} - P_n^{[\ell_n]})
\end{align}
and let $B_n = \sqrt{k_n} (P_n - P)$.

We need to check the assumptions of Condition~\ref{cond:2}, and we start by proving that there exist $0<\alpha_-<\alpha_1 < \alpha_+<\infty$ such that \eqref{eq:moments_convergence} from Condition~\ref{eq:F1} is met for any $f\in\mathcal F_2(\alpha_-, \alpha_+)$ from \eqref{eq:F2}. For that purpose, choose $\eta \in (2/\omega,\nu)$ and $0<\alpha_-<\alpha_1<\alpha_+$ (further constraints on $\alpha_+$ will imposed below), and let $f \in \mathcal{F}_2(\alpha_-, \alpha_+)$. We need to show that $\mathbb P_n f = Pf + o_{\mathbb P}(1)$, for $n\to\infty$. Observing that $|f|$ is bounded by a multiple of $g_{0,\alpha_1}$ from \eqref{eq:g_kappa} if $\alpha_+<2\alpha_1$, we obtain from Lemma \ref{lem:conv_of_mom_of_block} that
\begin{align*}
     \Exp\big[ \mathbb P_n f\big] = P_n f \to Pf, \qquad n\to\infty.
\end{align*}
Below we will show that
\begin{align}
\label{eq:tildegn}
    \tilde \Gb_n f = \tilde \Gb_n^{[\ell_n]} f + o_{\mathbb P}(1) = O_{\mathbb P}(1)+o_{\mathbb P}(1) = O_{\mathbb P}(1), \qquad n\to\infty,
\end{align}
which implies 
\begin{align*}
    \mathbb P_n f = k_n^{-1/2} \tilde \Gb_n f + P_n f = Pf + o_{\mathbb P}(1), \qquad  n\to\infty
\end{align*}
as required.

It remains to show the weak convergence in \eqref{eq:w} with $\bm W \sim \mathcal N_4(\bm B, \Sigma)$ as specified in Theorem~\ref{thm:blocks}. For that purpose write $\Gb_n = \tilde \Gb_n + B_n$, and note that $B_n f_j = Bf_j + o(1)$ by Condition \ref{cond:bias}, for $j\in\{1,2,3,4\}$. It hence remains to treat $\tilde \Gb_n f_j$, and for that purpose, we will in fact show that the first equality in \eqref{eq:tildegn} is met for any $f\in \mathcal F_2 := \mathcal F_2(\alpha_-, \alpha_+)$ and that the finite-dimensional distributions of $(\tilde \Gb_n^{[\ell_n]} f)_{f \in\mathcal F_2}$ converge weakly to the finite-dimensional distributions of $(\Gb f)_{f \in\mathcal F_2}$, where $\Gb$ is a $P$-Brownian  bridge; that is, a zero-mean Gaussian process with covariance function
\[
\Cov(\Gb f, \Gb g) 
= 
\Cov_{(X,Y) \sim \mathcal W(\rho, \alpha_0,1)}
\big(f(X,Y),g(X,Y)\big), 
\qquad 
f,g \in \mathcal F_2
\]

We start by showing that the first equality in \eqref{eq:tildegn} holds for any $f\in \mathcal F_2$. Write $\Delta_n =  \tilde \Gb_n - \tilde \Gb_n^{[\ell_n]}$, and note that 
\begin{align*}
    \Exp\big[(\Delta_n f)^2\big]=\Var(\Delta_n f) = \frac{1}{k_n}\Var\Big(\sum_{i=1}^{k_n} \Delta_{n,i}^{[\ell_n]}f\Big),
\end{align*}
where $ \Delta_{n,i}^{[\ell_n]}f = f(Z_{n,i} / \sigma_{r_n})-f(Z_{n,i}^{[\ell_n]} / \sigma_{r_n})$. By stationarity and the Cauchy-Schwarz inequality, we have
\begin{align}
     \Exp\!\big[(\Delta_n f)^2\big]
     &=\notag
     \Var\big(\Delta_{n,1}^{[\ell_n]} f\big)+\frac{2}{k_n}\sum_{h=1}^{k_n-1}(k_n-h)\Cov\Big(\Delta_{n,1}^{[\ell_n]} f, \Delta_{n,1+h}^{[\ell_n]}f\Big)
     \\ &\leq 
     3\Var\big(\Delta_{n,1}^{[\ell_n]} f\big) +2\sum_{h=2}^{k_n-1}\Big|\Cov\Big(\Delta_{n,1}^{[\ell_n]} f, \Delta_{n,1+h}^{[\ell_n]}f\Big)\Big|.
\label{eq:cov_terms}
\end{align}
Since $\ell_n=o(r_n)$ as $n\to\infty$ by Condition~\ref{cond:alphamixing}, we have ${\sigma_{r_n-\ell_n+1}}/{\sigma_{r_n}}\to 1$ as $n\to\infty$ by Condition~\ref{cond:doa}. 
The asymptotic moment bound in Condition \ref{cond:mom} then ensures that we may choose $\delta\in (2/\omega,\nu)$ and $\alpha_+>\alpha_1$, such that, for every $f\in\mathcal{F}_2(\alpha_-,\alpha_+)$, by Lemma \ref{lem:conv_of_mom_of_block}, 
\begin{align} \label{eq:deltan1mom}
    \limsup_{n\to\infty}\Exp\!\Big[\big|\Delta_{n,1}^{[\ell_n]} f\big|^{2+\delta}\Big]<\infty.
\end{align}
Further, on the event that $(M_{r_n,1},S_{r_n,1})=(M_{r_n-\ell_n+1,1}, S_{r_n-\ell_n+1})$, we have $\Delta_{n,1}^{[\ell_n]} f=0$, whence $\Delta_{n,1}^{[\ell_n]} f=o_{\mathbb P}(1)$ by Lemma \ref{lem:clip}. Hence, by \eqref{eq:deltan1mom},
\begin{align*}
    \lim_{n\to\infty}\Exp\!\Big[\big|\Delta_{n,1}^{[\ell_n]} f\big|^{2+\delta}\Big]=0, 
    \qquad 
    f\in \mathcal{F}_2(\alpha_-,\alpha_+).
\end{align*}
Finally, recall Lemma 3.11 in \cite{Dehling2002}: for random variables $\xi$ and $\eta$ and for numbers $p,q\in [1,\infty]$ such that $1/p+1/q<1$,
\begin{align*}
    \big|\Cov(\xi,\eta)\big|\leq 10 \|\xi\|_p\|\eta\|_q\big\{ \alpha(\sigma(\xi),\sigma(\nu))\big\}^{1-1/p-1/q},
\end{align*}
where $\alpha(\mathcal{A}_1,\mathcal{A}_2)$ denotes the strong mixing coefficient between two sigma-fields $\mathcal{A}_1$ and $\mathcal{A}_2$. Using this inequality with $p=q=2+\delta$ for the covariance terms in \eqref{eq:cov_terms} yields
\begin{align*}
    \Exp\!\big[(\Delta_n f)^2\big]\leq 3\big\|\Delta_{n,1}^{[\ell_n]} f\big\|_2^2+20k_n\big\|\Delta_{n,1}^{[\ell_n]} f\big\|_{2+\delta}^2(\alpha(r_n))^{\delta/(2+\delta)}.
\end{align*}
The expression on the right-hand side converges to $0$ by Condition \ref{cond:alphamixing} and \eqref{eq:deltan1mom}, observing that $\omega<2/\delta$. The proof of the first equality in \eqref{eq:tildegn} is hence finished.

It remains to show fidi-convergence of $\tilde \Gb_n^{[\ell_n]}$. By the Cram\'er-Wold device, it suffices to show that $\tilde \Gb_n^{[\ell_n]}g \rightsquigarrow \Gb g$, where $g$ is an arbitrary linear combination of functions $f\in \mathcal{F}_2(\alpha_-, \alpha_+)$. A standard argument involving characteristic functions, using that $k_n\alpha(\ell_n) = o(1)$ as a consequence of Condition \ref{cond:alphamixing}, shows that we may assume that the $Z_{n,i}^{[\ell_n]}$ are independent (see, for instance, the argumentation on the bottom of page 1453 in \cite{Bücher2018-disjoint}). Moreover, by similar (but easier) arguments that lead to the first equality in \eqref{eq:tildegn}, we may then pass back to the process $\tilde \Gb_n$, but with $Z_{n,i}$ independent over $i$. 
Hence, in view of Ljapunov's central limit theorem, it is sufficient to show that
\begin{align}
\label{eq:varconv}
    \Var\big(g(Z_{n,i}/\sigma_{r_n})\big) = P_ng^2-\big( P_ng\big)^2  = \Var(\Gb g) + o(1),
    \qquad
    n \to \infty,
\end{align}
and that Lyapunov's Condition is satisfied: 
\begin{align}
\label{eq:lyapunov}
    \lim_{n\to\infty} \frac{1}{k_n^{1+\delta/2}}
    \sum_{i=1}^{k_n}\Exp\!\Big[\big|g\big(Z_{n,i}/\sigma_{r_n}\big)- P_ng\big|^{2+\delta}\Big] = 0
\end{align}
for some $\delta>0$. First, \eqref{eq:varconv} follows immediately from Lemma \ref{lem:conv_of_mom_of_block}. Next, \eqref{eq:lyapunov} follows from Lemma \ref{lem:conv_of_mom_of_block} as well, observing that $|g|^{2+\delta}$ can be bounded by a multiple of $g_{\nu/2,\alpha_1}$ from \eqref{eq:g_kappa} if $\delta$ and $\alpha_+$ are chosen sufficiently small.
\end{proof}

\subsection{Sliding Blocks: Proof of Theorem \ref{thm:sl_asy}}

For $c\ge 0$ and integers $s,t$ such that $1 \le s \le t \le n$, define
\[
(X_{s:t}, Y_{s:t}) 
=
(X^{(n, c)}_{s:t}, Y^{(n, c)}_{s:t})
=
\Big(\frac{M_{s:t} \vee c}{\sigma_{r_n}}, \frac{S_{s:t} \vee c}{\sigma_{r_n}} \Big).
\]
For $\zeta \in[0,1]$, define
\begin{align*}
F_{n,\zeta, c} (x,y,\tilde x, \tilde y)
=
\Prob\Big( 
    X_{1:r_n}^{(n,c)} \le x,
    Y_{1:r_n}^{(n,c)} \le y, 
    X_{\flo{r_n\zeta}+1: \flo{r_n\zeta} + r_n}^{(n,c)} \le \tilde x,
    Y_{\flo{r_n\zeta}+1: \flo{r_n\zeta} + r_n}^{(n,c)} \le \tilde y 
\Big).
\end{align*}
We are interested in weak convergence of the bivariate margins. For that purpose, define 
\begin{align}
F_{\alpha, \zeta}(x, \tilde x) := \exp\big(-\zeta x^{-\alpha}-(1-\zeta)(x\wedge \tilde x)^{-\alpha}-\zeta \tilde x^{-\alpha}\big),
\end{align}
which appeared in Lemma~5.1 in \cite{Bücher2018-sliding} as the limit of $F_{n,\zeta,c}(x, \infty, \tilde x, \infty)$.

\begin{lemma}[Joint weak convergence of sliding block Top-Two]\label{lem:JoiWeaKon}
Suppose that Condition~\ref{cond:doa} is met and that there exists an integer sequence $(\ell_n)_n$ such that $\ell_n=o(r_n)$ and $\alpha(\ell_n)=o(\ell_n/r_n)$ as $n\to\infty$. Write $\alpha=\alpha_0$ for brevity. Then, for any $\zeta \in [0,1]$ and any $c\ge 0$, 
the limit 
\begin{align}\label{eq:sl_distr}
    K_{\rho,\alpha,\zeta}(x,y,\tilde x,\tilde y):=\lim_{n\to\infty}
    F_{n,\zeta, c}(x, y, \tilde x, \tilde y),
\end{align}
exists for all $(x,y,\tilde x, \tilde y)\in(0,\infty]^4$ such that at least one of $x,y$ and one of $\tilde x, \tilde y$ is infinite. Specifically, we have
\begin{align*}
\mathrm{[a]} & \
    K_{\rho,\alpha,\zeta}(x, \infty, \tilde x, \infty)
    = 
    F_{\alpha, \zeta}(x, \tilde x)
\\
\mathrm{[b]} & \
    K_{\rho,\alpha,\zeta}(\infty, y, \tilde x, \infty)
     =
    \begin{cases}
        F_{\alpha, \zeta}(y, \tilde x)
        \big\{ 1+ \zeta \rho_0 y^{-\alpha} + (1-\zeta) y^{-\alpha}\rho\big((y/\tilde x)^\alpha\big) \big\}
         ,& \tilde x\geq y \\
        F_{\alpha, \zeta}(y, \tilde x)\big(1+ \zeta \rho_0 y^{-\alpha}\big), & y\geq \tilde x
    \end{cases}
\\
\mathrm{[c]} & \
    K_{\rho,\alpha,\zeta}(x, \infty, \infty, \tilde y)
    =
    \begin{cases}
       F_{\alpha, \zeta}(x, \tilde y)
        \big\{ 1+ \zeta \rho_0 \tilde y^{-\alpha} + (1-\zeta) \tilde y^{-\alpha}\rho\big((\tilde y/x)^\alpha \big) \big\}
         ,& x\geq \tilde y \\
        F_{\alpha, \zeta}(x, \tilde y)
        \big(1+ \zeta \rho_0 \tilde y^{-\alpha}\big), & \tilde y\geq x
    \end{cases}
\\
\mathrm{[d]} & \
    K_{\rho,\alpha,\zeta}(\infty, y, \infty, \tilde y)
    =
    F_{\alpha, \zeta}(y, \tilde y)  \cdot \Big\{1+\zeta\rho_0 y^{-\alpha}+\zeta\rho_0\tilde y^{-\alpha}+(1-\zeta)\rho_0(y\wedge \tilde y)^{-\alpha}
    \\&\hspace{6.5cm}
   +\zeta\rho_0 y^{-\alpha}\tilde y^{-\alpha}\Big[\zeta\rho_0+(1-\zeta)\rho\Big(\big( \frac{y \wedge \tilde y}{y\vee \tilde y}\big)^\alpha\Big)\Big]\Big\}.
\end{align*}
\end{lemma}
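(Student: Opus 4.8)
The plan is to combine three ingredients: (i) the big-block/small-block clipping machinery already developed in Lemmas~\ref{lem:revis}--\ref{lem:clip}, which lets us replace each block by a version that ends $\ell_n$ steps early and hence makes two sub-blocks (at relative lag $\zeta$) asymptotically independent up to a term of order $r_n/\ell_n \cdot \alpha(\ell_n) = o(1)$; (ii) the known univariate/bivariate-maxima limit $F_{\alpha,\zeta}$ from Lemma~5.1 in \cite{Bücher2018-sliding}; and (iii) the single-block convergence of the top-two vector to $\mathcal W(\rho,\alpha_0,1)$ supplied by Condition~\ref{cond:doa}, together with the explicit form \eqref{eq:H_Frech} of its cdf. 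First I would note that, since $c/\sigma_{r_n}\to 0$, the truncation at $c$ is immaterial in the limit, so we may set $c=0$ throughout. Then, for a fixed relative lag $\zeta$, write the two overlapping blocks $B_1 = \{1,\dots,r_n\}$ and $B_2 = \{\flo{r_n\zeta}+1,\dots,\flo{r_n\zeta}+r_n\}$, with overlap $O = B_1\cap B_2$ of relative length $1-\zeta$ and the two ``private'' parts $B_1\setminus O$, $B_2\setminus O$ each of relative length $\zeta$.

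The key structural step is to decompose each of the four events $\{X_{B_j}\le \cdot\}$, $\{Y_{B_j}\le \cdot\}$ in terms of the maximum and second-maximum over the three disjoint pieces $B_1\setminus O$, $O$, $B_2\setminus O$. For the pure-maxima coordinates this is exactly what produces $F_{\alpha,\zeta}$; the new work is handling the second-largest coordinate. Here I would use the elementary combinatorial identity that, for a block which is the disjoint union of two sub-blocks $A$ and $A'$, the event $\{S_{A\cup A'}\le y\}$ equals $\{S_A\le y, M_{A'}\le y\} \cup \{M_A\le y, S_{A'}\le y\}$ minus the double-counted $\{M_A\le y, M_{A'}\le y\}$, i.e. $\Prob(M_{A\cup A'}\le x, S_{A\cup A'}\le y)$ can be written via inclusion--exclusion in terms of joint max/second-max laws of the pieces. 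Applying clipping so that the three pieces become asymptotically independent, each factor is then governed either by the marginal Fréchet limit $\exp(-(\cdot)^{-\alpha_0})$ (for a max over a piece of relative length proportional to $\zeta$ or $1-\zeta$, using regular variation of $(\sigma_r)_r$) or, for the second-largest over the overlap piece $O$ of relative length $1-\zeta$, by the top-two limit, which after rescaling contributes the factor $1+\rho(\cdot)(\cdot)^{-\alpha_0}$ or $1+\rho_0(\cdot)^{-\alpha_0}$ depending on whether the corresponding max is taken over the same piece. Carrying out this bookkeeping for each of the four cases [a]--[d] (which differ only in which of $x,y$ and which of $\tilde x,\tilde y$ is set to $\infty$, i.e.\ which coordinates are max-constraints versus second-max constraints) produces the stated formulas; case [a] recovers Lemma~5.1 of \cite{Bücher2018-sliding}, cases [b],[c] are symmetric, and [d] is the fully second-largest case where all four ``$1+\rho$''-type corrections appear and their products give the bracketed expression.

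Concretely, the steps in order: (1) reduce to $c=0$ and fix $\zeta$; (2) introduce the clipped blocks and invoke Lemmas~\ref{lem:shortsmall} and \ref{lem:clip} to show the clipped and unclipped top-two vectors agree with probability $\to 1$; (3) invoke the mixing bound (Lemma~3.11 in \cite{Dehling2002} type, or directly the characteristic-function argument used in the proof of Theorem~\ref{thm:blocks}) to replace the three disjoint pieces by independent copies up to $o(1)$; (4) write each target cdf via inclusion--exclusion over the three pieces in terms of max and second-max events; (5) pass to the limit in each factor using Condition~\ref{cond:doa}, formula \eqref{eq:H_Frech}, regular variation of $(\sigma_r)_r$ (to get $\sigma_{\flo{r\zeta}}/\sigma_r \to \zeta^{1/\alpha_0}$ and similar), and Lemma~\ref{lem:conv_of_mom_of_block} where expectations are needed; (6) simplify the resulting products into the four displayed expressions. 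The main obstacle will be step (4)--(6): the inclusion--exclusion for the second-largest order statistic over a union of three pieces has genuinely more terms than for the maximum, and keeping track of which $\rho$-argument (namely $\eta_{\alpha_0}$ evaluated at the relevant ratio of rescaled thresholds) attaches to which overlap factor — especially in case [d], where one must correctly separate the ``both maxima fall in the overlap'' contribution (giving $\rho((\,\cdot\,)^{\alpha_0})$) from the ``maxima fall in distinct pieces'' contributions (giving $\rho_0$ factors) — is where the calculation is delicate and error-prone. A useful sanity check along the way is that setting $\tilde x=\tilde y=\infty$ must collapse [b]--[d] to the single-block law $H_{\rho,\alpha_0,1}$ evaluated appropriately, and setting $\zeta=1$ must give the product of two independent copies.
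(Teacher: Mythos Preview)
Your proposal is correct and follows essentially the same route as the paper: reduce to $c=0$, use the clipping Lemmas~\ref{lem:shortsmall}--\ref{lem:clip} to obtain asymptotic independence of the three pieces $B_1\setminus O$, $O$, $B_2\setminus O$, then pass to the limit via Condition~\ref{cond:doa} and regular variation of $(\sigma_r)_r$. The only organizational difference is that the paper decomposes $\{Y_{B_j}\le y\}$ as $\{X_{B_j}\le y\}\cup\{Y_{B_j}\le y<X_{B_j}\}$ and then tracks in which sub-block the unique exceedance falls, rather than writing out the full three-piece inclusion--exclusion for the second maximum; the two bookkeepings are equivalent and yield the same limiting expressions.
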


\begin{proof} 
Throughout the proof, we write $r=r_n$ and $\ell=\ell_n$ for brevity, and all convergences are for $n\to\infty$. Since $c/\sigma_r = o(1)$, it is sufficient to consider the case $c=0$. The upper index $(n,c)=(n,0)$ will be suppressed.

Part [a] is Lemma 5.1 in \cite{Bücher2018-sliding}. Concerning [b], note that
\begin{align}
&\phantom{{}={}} \nonumber
F_{n,\zeta, c}(\infty, y, \tilde x, \infty)
\\ &= \nonumber
\Prob \big( Y_{1:r}\leq y,X_{\floor{r\zeta}+1:\floor{r\zeta}+r}\leq \tilde  x \big)
\\&= \label{eq:5.5a}
\Prob \big( X_{1:r}\leq y, X_{\floor{r\zeta}+1:\floor{r\zeta}+r}\leq \tilde x \big)
+ 
\Prob \big(Y_{1:r}\leq y < X_{1:r}, X_{\floor{r\zeta}+1:\floor{r\zeta}+r}\leq \tilde x \big)
\end{align}
The first probability on the right is equal to $F_{n,\zeta,c}(y, \infty, \tilde x, \infty)$, whose convergence has been treated in [a]. Regarding the second, we have
\begin{align}
    \Prob \big( Y_{1:r}\leq y < X_{1:r}, X_{\floor{r\zeta}+1:\floor{r\zeta}+r}\leq \tilde x \big)
&=\nonumber
    \Prob\big( Y_{1:r}\leq y < X_{1:r}, X_{\floor{r\zeta}+1:r}\leq \tilde x, X_{r+1:r+\floor{r\zeta}}\leq \tilde x \big)
\\&= \label{eq:5.5b}
A_{n,\zeta}(y, \tilde x)
    \cdot 
    \Prob\big(X_{r+1:r+\floor{r\zeta}}\leq \tilde x \big) + o(1),
\end{align}
where
\begin{align} \label{eq:anzeta}
A_{n,\zeta}(y, \tilde x)
\equiv 
\Prob\big( Y_{1:r}\leq y < X_{1:r}, X_{\floor{r\zeta}+1:r}\leq \tilde x \big) 
\end{align}
and
where we used asymptotic independence at the last equality, following the arguments in the proof of Lemma 5.1 in 
\cite{Bücher2018-sliding}. More precisely, we have
\begin{align*}
&\phantom{{}={}}
\Prob\big( Y_{1:r}\leq y < X_{1:r}, X_{\floor{r\zeta}+1:r}\leq \tilde x, X_{r+1:r+\floor{r\zeta}}\leq \tilde x \big)\\
&=
\Prob\big( Y_{1:r-\ell}\leq y < X_{1:r-\ell}, X_{\floor{r\zeta}+1:r-\ell}\leq \tilde x, X_{r+1:r+\floor{r\zeta}}\leq \tilde x \big)+o(1)\\
&=
\Prob\big( Y_{1:r-\ell}\leq y < X_{1:r-\ell}, X_{\floor{r\zeta}+1:r-\ell}\leq \tilde x\big)\Prob\big( X_{r+1:r+\floor{r\zeta}}\leq \tilde x \big)+o(1)\\
&=
\Prob\big( Y_{1:r}\leq y < X_{1:r}, X_{\floor{r\zeta}+1:r}\leq \tilde x\big)\Prob\big( X_{r+1:r+\floor{r\zeta}}\leq \tilde x \big)+o(1)\\
&=
A_{n,\zeta}(y, \tilde x)
    \cdot 
    \Prob\big(X_{r+1:r+\floor{r\zeta}}\leq \tilde x \big) + o(1),
\end{align*}
where we applied Lemma \ref{lem:clip} at the first and third equality, and $\alpha(\ell)=o(1)$ at the second equality.

Now, in \eqref{eq:5.5b}, the second factor on the right-hand side can be written as
\begin{align}
     \Prob\big(X_{r+1:r+\floor{r\zeta}}\leq \tilde x \big) 
     = 
     \Prob \big( X_{1:\floor{\zeta r}}\leq \tilde x \big)
     \label{eq:5.5c}
\end{align}
where we have used stationarity.
It remains to look at $A_{n,\zeta}(y, \tilde x)$, for which we split up the set $\{1,\dots,r\}$ at $\floor{\zeta r}$ to obtain that
\begin{align}
&\phantom{{}={}} \nonumber
A_{n,\zeta}(y, \tilde x)
\\&= \nonumber
    \Prob\big( 
    \big[ 
    X_{1:\flo{r\zeta}}>y, Y_{1:r}\leq y, X_{\floor{r\zeta}+1:r}\leq \tilde x
    \big] 
    \cup
    \big[ 
    X_{\flo{r\zeta}+1:r}>y, Y_{1:r}\leq y, X_{\floor{r\zeta}+1:r}\leq \tilde x
    \big] 
    \big)
\\&=\nonumber
\Prob\big( 
    \big[ 
    X_{1:\flo{r\zeta}}>y, Y_{1:\flo{r\zeta}}\leq y, X_{\floor{r\zeta}+1:r}\leq \tilde x \wedge y
    \big] 
\\&\hspace{5cm}\label{eq:5.5d}
    \cup
    \big[ 
    \tilde x \ge X_{\flo{r\zeta}+1:r}>y, Y_{\flo{r\zeta}+1:r}\leq y, X_{1:\floor{r\zeta}}\leq y
    \big] 
    \big).
\end{align}
Here, at the last equality, we have used 
the following event equalities, which follow from straightforward reflection: 
\begin{align*}
\{X_{1:\flo{r\zeta}}>y, Y_{1:r}\le y\} 
&=
\{X_{1:\flo{r\zeta}}>y, Y_{1:\flo{r\zeta}}\le y, X_{\flo{r\zeta}+1:r} \le y\},\\
\{X_{\flo{r\zeta}+1:r}>y, Y_{1:r}\le y\} 
&=
\{X_{\flo{r\zeta}+1:r}>y, Y_{\flo{r\zeta}+1:r}\le y, X_{1:\flo{r\zeta}} \le y\}.
\end{align*}
We proceed by distinguishing the cases $\tilde x\leq y$ and $\tilde x>y$. 
First, if $\tilde x\leq y$, the second event inside the probability on the right-hand side of \eqref{eq:5.5d} is impossible. Hence,
\begin{align*}
A_{n,\zeta}(y, \tilde x)
&=
\Prob\big( 
    X_{1:\flo{r\zeta}}>y, Y_{1:\flo{r\zeta}}\leq y, X_{\floor{r\zeta}+1:r}\leq \tilde x.
\big)
\end{align*}
We may now use asymptotic independence to obtain that, for $\tilde x\leq y$,
\begin{align}
A_{n,\zeta}(y, \tilde x)
&=\label{eq:5.5e}
\Prob\big(Y_{1:\flo{r\zeta}}\leq y<X_{1:\flo{r\zeta}}\big) 
\Prob\big( X_{\floor{r\zeta}+1:r}\leq \tilde x\big) 
+ o(1).
\end{align}
Next, if $\tilde x>y$, \eqref{eq:5.5d} yields
\begin{align}
A_{n,\zeta}(y, \tilde x)
    &= \nonumber
    \Prob\big( 
    \big[ 
    X_{1:\flo{r\zeta}}>y, Y_{1:\flo{r\zeta}}\leq y, X_{\floor{r\zeta}+1:r}\leq  y
    \big] 
\\&\hspace{2cm}\nonumber
    \cup
    \big[ 
    \tilde x \ge X_{\flo{r\zeta}+1:r}>y, Y_{\flo{r\zeta}+1:r}\leq y, X_{1:\floor{r\zeta}}\leq y
    \big] 
    \big)
\\&= \nonumber
\Prob\big( 
    X_{1:\flo{r\zeta}}>y, Y_{1:\flo{r\zeta}}\leq y, X_{\floor{r\zeta}+1:r}\leq  y
    \big)
\\&\hspace{2cm}\nonumber
    + \Prob
    \big(
    \tilde x \ge X_{\flo{r\zeta}+1:r}>y, Y_{\flo{r\zeta}+1:r}\leq y, X_{1:\floor{r\zeta}}\leq y 
    \big)
\\&= \nonumber
\Prob\big( 
    Y_{1:\flo{r\zeta}}\leq y<X_{1:\flo{r\zeta}}\big) \cdot \Prob\big(X_{\floor{r\zeta}+1:r}\leq  y
    \big)
\\&\hspace{2cm}\label{eq:5.5g}
    + \Prob
    \big(
    Y_{\flo{r\zeta}+1:r}\leq y < X_{\flo{r\zeta}+1:r} \le \tilde x\big) \cdot \Prob\big( X_{1:\floor{r\zeta}}\leq y 
    \big) + o(1),
\end{align}
where we used asymptotic independence at the last equality, and the fact that the two events in question are disjoint at the second to last equality.

Inserting \eqref{eq:5.5c} and \eqref{eq:5.5e} into \eqref{eq:5.5b} and then into \eqref{eq:5.5a}, we obtain, for the case $\tilde x \le y$,
\begin{align}
&\phantom{{}={}}\nonumber
F_{n,\zeta, c}(\infty, y, \tilde x, \infty)
\\&=  \nonumber
\Prob\big( X_{1:r}\leq y,X_{\floor{r\zeta}+1:r+\floor{r\zeta}}\leq \tilde x\big) 
\\& \hspace{1cm}+
\Prob\big( X_{1:\floor{r \zeta}}\leq \tilde x \big) \cdot 
\Prob\big(Y_{1:\flo{r\zeta}}\leq y<X_{1:\flo{r\zeta}}\big) \cdot
\Prob\big( X_{\floor{r \zeta}+1:r}\leq \tilde x\big) 
+o(1). \label{eq:5.5f}
\end{align}
Likewise, using \eqref{eq:5.5g} instead of \eqref{eq:5.5e}, for the case $\tilde x > y$,
\begin{align}
&\phantom{{}={}}\nonumber
F_{n,\zeta, c}(\infty, y, \tilde x, \infty)
\\&=  \nonumber
\Prob\big( X_{1:r}\leq y,X_{\floor{r\zeta}+1:r+\floor{r\zeta}}\leq \tilde x\big) 
\\& \hspace{.6cm}+ \nonumber
\Prob\big( X_{1:\floor{r \zeta}}\leq \tilde x \big) \cdot 
\Prob\big( Y_{1:\flo{r\zeta}}\leq y < X_{1:\flo{r\zeta}}\big) \cdot \Prob\big(X_{\floor{r\zeta}+1:r}\leq  y\big)
\\& \hspace{.6cm}+
\Prob\big( X_{1:\floor{r \zeta}}\leq \tilde x \big) \cdot 
\Prob\big(Y_{\flo{r\zeta}+1:r}\leq y < X_{\flo{r\zeta}+1:r} \le \tilde x\big) \cdot 
\Prob\big( X_{1:\floor{r\zeta}}\leq y \big) 
+o(1). \label{eq:5.5h}
\end{align}

It remains to show convergence of the probabilities on the right-hand side of \eqref{eq:5.5f} and \eqref{eq:5.5h}, which follows from the domain-of-attraction Condition~\ref{cond:doa}. First, note that
$
\lim_{n\to 0} \sigma_{\floor{r \zeta}} / {\sigma_r}  = \zeta^{1/\alpha}
$
for any $\zeta>0$
by regular variation of $(\sigma_r)_r$. As a consequence, by Condition~\ref{cond:doa}, for any $x,y>0$ and as $n\to\infty$,
\begin{align}
\Prob\big(X_{1:\floor{r \zeta}}\leq x, Y_{1:\floor{r \zeta}}\leq y\big) 
&= \nonumber
\Prob\Big(
M_{1:\floor{r \zeta}}\leq \sigma_\floor{r \zeta}\Big(\frac{\sigma_r}{\sigma_\floor{r \zeta}}x\Big),
S_{1:\floor{r \zeta}}\leq \sigma_\floor{r \zeta}\Big(\frac{\sigma_r}{\sigma_\floor{r \zeta}}y\Big)
\Big) \\
&= \label{eq:xyjoint-left}
H\big(\zeta^{-1/\alpha}x,\zeta^{-1/\alpha}y\big)
+ o(1), 
\end{align}
where we write $H=H_{\rho,\alpha, 1}$ for simplicity.
Likewise, by stationarity,
\begin{align}
\Prob\big(X_{\floor{r \zeta}+1:r}\leq x, Y_{\floor{r \zeta}+1:r}\leq y\big) 
&= \label{eq:xyjoint-right}
H\big((1-\zeta)^{-1/\alpha}x,(1-\zeta)^{-1/\alpha}y\big)
+ o(1).
\end{align}
Recalling the marginal cdfs of $H=H_{\rho, \alpha, 1}$ from \eqref{eq:w_margCDF1} and \eqref{eq:w_margCDF2}, Equation \eqref{eq:xyjoint-left} implies 
\begin{align}
\Prob\big(Y_{1:\floor{r\zeta}}\leq y<X_{1:\floor{r\zeta}}\big)
&= \nonumber
\Prob\big( Y_{1:\floor{r\zeta}}\leq y\big) - \Prob\big(X_{1:\floor{r\zeta}} \leq y, X_{1:\floor{r\zeta}}\leq y\big)
\\&=\nonumber
\Prob\big( Y_{1:\floor{r\zeta}}\leq y\big) - \Prob\big(X_{1:\floor{r\zeta}}\leq y\big)
\\&=\nonumber
H^{(2)}(\zeta^{-1/\alpha} y) - H^{(1)}(\zeta^{-1/\alpha} y) +o(1)
\\&=\label{eq:xy-sandwich}  
\exp\big(-\zeta y^{-\alpha}\big) \rho_0 \zeta y^{-\alpha} +o(1).
\end{align}
Hence, using part [a] with $\tilde x \le y$, \eqref{eq:xyjoint-left},  \eqref{eq:xyjoint-right} and \eqref{eq:xy-sandwich},
the expression in \eqref{eq:5.5f} satisfies
\begin{align*}
&\phantom{{}={}} 
F_{n,\zeta, c}(\infty, y, \tilde x, \infty)
\\&= 
\exp\big(-\zeta y^{-\alpha}- \tilde x^{-\alpha} \big)
+
\exp\big(-\zeta \tilde x^{-\alpha}\big) \cdot \exp\big(-\zeta y^{-\alpha}\big) \rho_0 \zeta y^{-\alpha} \cdot \exp\big(-(1-\zeta) \tilde x^{-\alpha}\big) + o(1)
\\&= 
\exp\big(-\zeta y^{-\alpha}- \tilde x^{-\alpha} \big) \big(1+  \rho_0 \zeta y^{-\alpha} \big) +o(1),
\end{align*}
where we have used the marginal cdfs of $H$ from \eqref{eq:w_margCDF1} and \eqref{eq:w_margCDF2} again. This is exactly the claim in [b], for $\tilde x \le y$.

Regarding the case $\tilde x \ge y$, we start by noting that, in view of \eqref{eq:xyjoint-right},
\begin{align}
B_{n,\zeta}(y, \tilde x)
&\equiv \label{eq:bnzeta}
\Prob\big(Y_{\floor{r\zeta}+1:r}\leq y < X_{\floor{r\zeta}+1:r} \le \tilde x\big) 
\\&= \notag
\Prob\big(X_{\floor{r\zeta}+1:r}\leq \tilde x, Y_{\floor{r\zeta}+1:r}\leq y\big)
-
\Prob\big( X_{\floor{r\zeta}+1:r}\leq y, Y_{\floor{r\zeta}+1:r}\leq y\big) 
\\&=\notag 
\Prob\big(X_{\floor{r\zeta}+1:r}\leq \tilde x, Y_{\floor{r\zeta}+1:r}\leq y\big)
-
\Prob\big( X_{\floor{r\zeta}+1:r}\leq y\big), 
\\&=\notag
H\big((1-\zeta)^{-1/\alpha}\tilde x,(1-\zeta)^{-1/\alpha}y \big) 
    - 
    H^{(1)}((1-\zeta)^{-1/\alpha} y) + o(1)
\\&=
\exp\big(-(1-\zeta)y^{-\alpha}\big)(1-\zeta)y^{-\alpha}\rho\big((y/\tilde x)^\alpha\big) + o(1)\label{eq:5.5o}
\end{align}
by the definition of $H$ from \eqref{eq:H_Frech}.
Hence, using part [a] with $\tilde x \ge y$, \eqref{eq:xyjoint-left}, \eqref{eq:xyjoint-right}, \eqref{eq:xy-sandwich} and \eqref{eq:5.5o}, the expression in \eqref{eq:5.5h} satisfies
\begin{align}
&\phantom{{}={}}\nonumber
F_{n,\zeta, c}(\infty, y, \tilde x, \infty)
\\&=  \nonumber
\exp\big(-y^{-\alpha}-\zeta \tilde x^{-\alpha} \big)
\\& \hspace{.4cm}+ \nonumber
\exp\big(-\zeta \tilde x^{-\alpha}\big) \cdot \exp\big(-\zeta y^{-\alpha}\big) \rho_0 \zeta y^{-\alpha} \cdot \exp\big(-(1-\zeta) \tilde y^{-\alpha}\big)
\\& \hspace{.4cm}+ \nonumber
\exp\big(-\zeta \tilde x^{-\alpha}\big) \cdot
\exp\big(-(1-\zeta)y^{-\alpha}\big)(1-\zeta)y^{-\alpha}\rho\big((y/\tilde x)^\alpha\big) \cdot
\exp\big(-\zeta y^{-\alpha}\big) + o(1)
\\&= 
\exp\big(-y^{-\alpha}-\zeta \tilde x^{-\alpha} \big) 
\big\{ 1 + \zeta \rho_0  y^{-\alpha} + (1-\zeta)y^{-\alpha}\rho\big((y/\tilde x)^\alpha\big)\big\} + o(1),
\end{align}
which is the claim in [b], for $\tilde x \ge y$.

Part [c] follows from part [b] by stationarity and symmetry reasons.

Concerning part [d],  note that
\begin{align}
    F_{n,\zeta, c}(\infty, y, \infty,  \tilde y)
    &= \notag
    \P{Y_{1:r}\leq y,Y_{\floor{r\zeta}+1:\floor{r\zeta}+r}\leq \tilde y} 
    \\&= 
    \P{Y_{1:r}\leq y,X_{\floor{r\zeta}+1:\floor{r\zeta}+r}\leq \tilde y}\notag
    \\& \hspace{1cm} \notag
    +\P{Y_{1:r}\leq y,Y_{\floor{r\zeta}+1:\floor{r\zeta}+r}\leq \tilde y< X_{\floor{r\zeta}+1:\floor{r\zeta}+r}}
    \\&= 
    F_{n,\zeta, c}(\infty, y, \tilde y, \infty) + p_1 + p_2    
    \label{eq:fnc-case-d}
\end{align}
where $F_{n,\zeta, c}(\infty, y, \tilde y, \infty)$ has been calculated in part [b] and where
\begin{align*}
    p_1 &= 
    \P{X_{1:r}\leq y,Y_{\floor{r\zeta}+1:\floor{r\zeta}+r}\leq \tilde y<X_{\floor{r\zeta}+1:\floor{r\zeta}+r}}, \\
    p_2 &= \P{Y_{1:r}\leq y< X_{1:r},Y_{\floor{r\zeta}+1:\floor{r\zeta}+r}\leq \tilde y<X_{\floor{r\zeta}+1:\floor{r\zeta}+r}}.
\end{align*}
Regarding $p_1$, we have
\begin{align}
\label{eq:p1}
p_1 
&= \notag
\P{X_{1:r}\leq y, Y_{\floor{r\zeta}+1:\floor{r\zeta}+r}\leq \tilde y}-\P{X_{1:r}\leq y, X_{\floor{r\zeta}+1:\floor{r\zeta}+r}\leq \tilde y} 
\\&= \notag
F_{n,\zeta, c}(y,\infty, \infty,\tilde y)
-
F_{n,\zeta, c}(y,\infty,\tilde y,\infty)
\\&=
F_{\alpha, \zeta}(y, \tilde y)
        \big\{  \zeta \rho_0 \tilde y^{-\alpha} + \bm1( y \ge \tilde y) (1-\zeta) \tilde y^{-\alpha}\rho\big((\tilde y/y)^\alpha \big) \big\}
        +o(1).
\end{align}

The term $p_2$ is more difficult. First, note that the event 
$\{Y_{\flo{r\zeta}+1:\flo{r\zeta}+r}\leq \tilde y<X_{\flo{r\zeta}+1:\flo{r\zeta}+1}\}$
requires exactly one exceedance $\xi_{j_0} > \tilde y\sigma_r$,  for some unique $j_0\in\{\flo{r\zeta}+1, \dots , \flo{r\zeta}+r\}$, among all indices $j=\flo{r\zeta}+1, \dots, \flo{r\zeta}+r$. 
Distinguishing the cases $j_0 \le r$ or $j_0 > r$, we obtain that
the event 
$\{Y_{\flo{r\zeta}+1:\flo{r\zeta}+r}\leq \tilde y<X_{\flo{r\zeta}+1:\flo{r\zeta}+r}\}$ 
is the disjoint union of the two events 
$\{ Y_{\floor{r\zeta}+1:r}\leq \tilde y< X_{\floor{r\zeta}+1:r}, X_{r+1:\floor{r\zeta}+r}\leq \tilde y \}$ 
and 
$\{Y_{r+1: \floor{r\zeta}+r}\leq \tilde y<X_{r+1:\floor{r\zeta}+r},X_{\floor{r\zeta}+1:r}\leq \tilde y\}$.
Hence, by asymptotic independence, stationarity,  and \eqref{eq:xyjoint-left} and \eqref{eq:xy-sandwich},
\begin{align}
p_2 
&= \notag
\P{Y_{1:r}\leq y<X_{1:r},Y_{\floor{r\zeta}+1:\floor{r\zeta}+r}\leq \tilde y<X_{\floor{r\zeta}+1:\floor{r\zeta}+r}} 
\\&= \notag
\P{Y_{1:r}\leq y<X_{1:r},  Y_{\floor{r\zeta}+1:r}\leq \tilde y< X_{\floor{r\zeta}+1:r}, X_{r+1:\floor{r\zeta}+r}\leq \tilde y}
\\&\hspace{.6cm} + \notag
\P{Y_{1:r}\leq y<X_{1:r}, Y_{r+1: \floor{r\zeta}+r}\leq \tilde y<X_{r+1:\floor{r\zeta}+r},X_{\floor{r\zeta}+1:r}\leq \tilde y} 
\\& = \notag
\P{X_{r+1:\floor{r\zeta}+r}\leq \tilde y} \cdot
\P{ Y_{1:r}\leq y<X_{1:r},  Y_{\floor{r\zeta}+1:r}\leq \tilde y< X_{\floor{r\zeta}+1:r}}
\\&\hspace{.6cm} + \notag
\P{Y_{r+1: \floor{r\zeta}+r}\leq \tilde y<X_{r+1:\floor{r\zeta}+r}} \cdot
\P{Y_{1:r}\leq y<X_{1:r}, X_{\floor{r\zeta}+1:r}\leq \tilde y} 
\\&\hspace{.6cm} + \notag
o(1) 
\\&= \notag
\P{X_{1:\floor{r\zeta}}\leq \tilde y} \cdot p_{21} 
+ \P{Y_{1: \floor{r\zeta}}\leq \tilde y<X_{1:\floor{r\zeta}}} p_{22} + o(1)
\\&= \label{eq:p2-decomp}
\exp\big(-\zeta \tilde y^{-\alpha}\big)\cdot p_{21}
+
\exp\big(-\zeta \tilde y^{-\alpha}\big) \rho_0 \zeta \tilde y^{-\alpha} \cdot p_{22} +o(1),
\end{align}
where
\begin{align*}
p_{21} 
&= 
\P{ Y_{1:r}\leq y<X_{1:r},  Y_{\floor{r\zeta}+1:r}\leq \tilde y< X_{\floor{r\zeta}+1:r}} \\
p_{22} 
&= 
\P{Y_{1:r}\leq y<X_{1:r}, X_{\floor{r\zeta}+1:r}\leq \tilde y} 
\end{align*}
We start by treating the term $p_{22}$, which is exactly the term $ A_{n,\zeta}(y, \tilde y)$ from  \eqref{eq:anzeta}. Hence, in view of \eqref{eq:5.5e}, for the case $\tilde y \le y$
\begin{align}
p_{22} 
&= \notag
\Prob\big(Y_{1:\flo{r\zeta}}\leq y<X_{1:\flo{r\zeta}}\big) 
\cdot
\Prob\big( X_{\floor{r\zeta}+1:r}\leq \tilde y\big) +o(1)
\\&= \notag
\exp\big(-\zeta  y^{-\alpha}\big) \rho_0 \zeta  y^{-\alpha}
\cdot 
\exp\big(-(1-\zeta) \tilde y^{-\alpha}\big) + o(1)
\\&= \label{eq:p22-ytilde<y}
\exp\big(-\zeta  y^{-\alpha} -(1-\zeta) \tilde y^{-\alpha} \big) \rho_0 \zeta y^{-\alpha} +o(1)
\end{align}
by \eqref{eq:xy-sandwich} and \eqref{eq:xyjoint-right}. Likewise, for the case $\tilde y>y$, and in view of \eqref{eq:5.5g},
\begin{align}
p_{22} 
&= \notag
\Prob\big( 
    Y_{1:\flo{r\zeta}}\leq  y<X_{1:\flo{r\zeta}}\big) \cdot \Prob\big(X_{\floor{r\zeta}+1:r}\leq   y
    \big)
\\&\hspace{2cm}\nonumber
    + \Prob
    \big(
    Y_{\flo{r\zeta}+1:r}\leq y < X_{\flo{r\zeta}+1:r} \le \tilde y \big) \cdot \Prob\big( X_{1:\floor{r\zeta}}\leq  y 
    \big) + o(1),
\\&=\notag
\exp\big(-\zeta  y^{-\alpha}\big) \rho_0 \zeta  y^{-\alpha}
\cdot 
\exp\big(-(1-\zeta) y^{-\alpha}\big)
\\&\hspace{2cm}\nonumber
+
\exp\big(-(1-\zeta) y^{-\alpha}\big)(1-\zeta) y^{-\alpha}\rho\big(( y/\tilde y)^\alpha\big) 
\cdot 
\exp\big(-\zeta y^{-\alpha}\big) + o(1)
\\&= \label{eq:p22-ytilde>y}
\exp\big(-  y^{-\alpha} \big)  y^{-\alpha}
\big\{ \zeta \rho_0 + (1-\zeta) \rho\big(( y/\tilde y)^\alpha\big)  \big\} + o(1)
\end{align}
by \eqref{eq:xy-sandwich}, \eqref{eq:xyjoint-right}, \eqref{eq:5.5o} and \eqref{eq:xyjoint-left}. 

It remains to treat $p_{21}$, for which we use the fact that the event $\{Y_{1:r} \le y < X_{1:r}\}$ is the disjoint union of the two events $\{Y_{1:\flo{r\zeta}} \le y < X_{1:\flo{r\zeta}}, X_{\flo{r\zeta}+1:r}\le y \}$ and $\{Y_{\flo{r\zeta}+1:r} \le y < X_{\flo{r\zeta}+1:r}, X_{1:\flo{r\zeta}}\le y \}$. Hence,
\begin{align*}
p_{21} 
&= 
\P{ Y_{1:r}\leq y<X_{1:r},  Y_{\floor{r\zeta}+1:r}\leq \tilde y< X_{\floor{r\zeta}+1:r}} 
\\&=
\P{Y_{1:\flo{r\zeta}} \le y < X_{1:\flo{r\zeta}}, X_{\flo{r\zeta}+1:r}\le y, Y_{\floor{r\zeta}+1:r}\leq \tilde y< X_{\floor{r\zeta}+1:r}}
\\&\hspace{.6cm} + \notag
\P{Y_{\flo{r\zeta}+1:r} \le y < X_{\flo{r\zeta}+1:r}, X_{1:\flo{r\zeta}}\le y,  Y_{\floor{r\zeta}+1:r}\leq \tilde y< X_{\floor{r\zeta}+1:r}}
\\&=
\P{Y_{1:\flo{r\zeta}} \le y < X_{1:\flo{r\zeta}}} \cdot
\P{Y_{\floor{r\zeta}+1:r}\leq \tilde y< X_{\floor{r\zeta}+1:r} \le y}
\\&\hspace{.6cm} + \notag
\P{X_{1:\flo{r\zeta}}\le y} \cdot 
\P{Y_{\flo{r\zeta}+1:r} \le y < X_{\flo{r\zeta}+1:r},  Y_{\floor{r\zeta}+1:r}\leq \tilde y< X_{\floor{r\zeta}+1:r}}
+o(1)
\\&=\notag
\P{Y_{1:\flo{r\zeta}} \le y < X_{1:\flo{r\zeta}}} \cdot B_{n,\zeta}(\tilde y, y) 
+
\P{X_{1:\flo{r\zeta}}\le y} \cdot C_{n,\zeta}(\tilde y, y) +o(1),
\end{align*}
with $B_{n,\zeta}(\tilde y, y) $ from \eqref{eq:bnzeta} and with 
\begin{align*}
C_{n,\zeta}(\tilde y, y) 
&=
\P{Y_{\flo{r\zeta}+1:r} \le y \wedge \tilde y,  X_{\flo{r\zeta}+1:r}> y \vee \tilde y }
\\&=
\P{Y_{\flo{r\zeta}+1:r} \le y \wedge \tilde y} - \P{X_{\flo{r\zeta}+1:r} \le y \vee \tilde y , Y_{\flo{r\zeta}+1:r} \le y \wedge \tilde y} 
\\&=
H^{(2)}\big((1-\zeta)^{-1/\alpha}(y \wedge \tilde y)\big) - H\big((1-\zeta)^{-1/\alpha} (y \vee \tilde y), (1-\zeta)^{-1/\alpha} (y \wedge \tilde y)\big) + o(1)
\\&=
\exp\big( -(1-\zeta) (y \wedge \tilde y)^{-\alpha} \big)  (y \wedge \tilde y)^{-\alpha} (1-\zeta) \big\{ \rho_0 -  \rho\big( (\tfrac{y \wedge \tilde y}{y \vee \tilde y})^{\alpha} \big\} + o(1),
\end{align*}
by \eqref{eq:xyjoint-right} and the definition of $H=H_{\rho,\alpha,1}$ in \eqref{eq:H_Frech}.

Overall, if $\tilde y > y$, then $B_{n,\zeta}(\tilde y, y) $ from \eqref{eq:bnzeta} equals zero, and the previous two displays together with \eqref{eq:xyjoint-left} yield
\begin{align}
p_{21} 
&= \notag
\exp(-\zeta y^{-\alpha}) \cdot \exp( - (1-\zeta)y^{-\alpha}) y^{-\alpha} (1-\zeta) \big\{ \rho_0 - \rho( (y / \tilde y)^\alpha)\big\}+ o(1)
\\&=\label{eq:p21-ytilde>y}
\exp(-y^{-\alpha}) y^{-\alpha} (1-\zeta) \big\{ \rho_0 - \rho( (y / \tilde y)^\alpha)\big\}+ o(1).
\end{align}
Otherwise, if $\tilde y \le y$, then $B_{n,\zeta}(\tilde y, y) $ has been calculated in \eqref{eq:5.5o}, and we obtain, using \eqref{eq:xy-sandwich},
\begin{align}
p_{21}
&=\notag
\exp\big(-\zeta y^{-\alpha}\big) \rho_0 \zeta y^{-\alpha} \cdot 
\exp\big(-(1-\zeta)\tilde y^{-\alpha}\big)(1-\zeta)\tilde y^{-\alpha}\rho\big((\tilde y/y)^\alpha\big)
\\&\hspace{.6cm} + \notag
\exp(-\zeta y^{-\alpha}) \cdot
\exp\big( -(1-\zeta) \tilde y^{-\alpha} \big)  \tilde y^{-\alpha} (1-\zeta) \big\{ \rho_0 -  \rho\big( (\tilde y/y)^{\alpha}\big) \big\} + o(1),
\\&=\label{eq:p21-ytilde<y}
\exp\big( - \zeta y^{-\alpha}-(1-\zeta) \tilde y^{-\alpha} \big) \tilde y^{-\alpha} (1-\zeta)
\big\{ \zeta \rho_0 y^{-\alpha}\rho\big((\tilde y/y)^\alpha\big) + \rho_0 - \rho\big( (\tilde y/y)^{\alpha}\big)
\big\} + o(1).
\end{align}

Finally, we need to assemble terms. First, if $\tilde y \le y$, then, from \eqref{eq:p2-decomp}, \eqref{eq:p22-ytilde<y} and \eqref{eq:p21-ytilde<y},
\begin{align}
p_2
&= \notag
\exp\big(-\zeta \tilde y^{-\alpha}\big)\cdot 
\exp\big( - \zeta y^{-\alpha}-(1-\zeta) \tilde y^{-\alpha} \big) \tilde y^{-\alpha} (1-\zeta) 
\\&\hspace{6cm} \times \notag
\big\{ \zeta \rho_0 y^{-\alpha}\rho\big((\tilde y/y)^\alpha\big) + \rho_0 - \rho\big( (\tilde y/y)^{\alpha}\big)
\big\} 
\\&\hspace{1cm} + \notag
\exp\big(-\zeta \tilde y^{-\alpha}\big) \rho_0 \zeta \tilde y^{-\alpha} \cdot 
\exp\big(-\zeta  y^{-\alpha} -(1-\zeta) \tilde y^{-\alpha} \big) \rho_0 \zeta y^{-\alpha}
+o(1),
\\&= \notag
\exp\big( - \zeta y^{-\alpha}- \tilde y^{-\alpha} \big) \tilde y^{-\alpha} 
\\&\hspace{1cm} \times \notag
\Big\{ \rho_0 \zeta y^{-\alpha} \big\{ \zeta  \rho_0 + (1-\zeta)\rho\big( (\tilde y/y)^{\alpha} \big) \big\}  + (1-\zeta) \big\{ \rho_0 -  \rho\big( (\tilde y/y)^{\alpha} \big) \big\} \Big\} +o(1).
\end{align}
Likewise, if $\tilde y > y$, then, from \eqref{eq:p2-decomp}, \eqref{eq:p22-ytilde>y} and \eqref{eq:p21-ytilde>y},
\begin{align}
p_2
&= \notag
\exp\big(-\zeta \tilde y^{-\alpha}\big)
\cdot 
\exp(-y^{-\alpha}) y^{-\alpha} (1-\zeta) \big\{ \rho_0 - \rho( (y / \tilde y)^\alpha)\big\}
\\&\hspace{1cm} + \notag
\exp\big(-\zeta \tilde y^{-\alpha}\big) \rho_0 \zeta \tilde y^{-\alpha} 
\cdot 
\exp\big(-  y^{-\alpha} \big)  y^{-\alpha}
\big\{ \zeta \rho_0 + (1-\zeta) \rho\big(( y/\tilde y)^\alpha\big)  \big\} +o(1),
\\&= \notag
\exp\big( - \zeta \tilde y^{-\alpha}- y^{-\alpha} \big)  y^{-\alpha} 
\\&\hspace{1cm} \times \notag
\Big\{ \rho_0 \zeta \tilde y^{-\alpha} \big\{ \zeta  \rho_0 + (1-\zeta)\rho\big( (y/\tilde y)^{\alpha} \big) \big\}  + (1-\zeta) \big\{ \rho_0 -  \rho\big( (y/ \tilde y)^{\alpha} \big) \big\} \Big\} + o(1).
\end{align}
The expressions for the two cases $\tilde y \le y$ and $\tilde y > y$ can be unified in one formula as follows:
\begin{align}
p_2
&= \label{eq:p2}
F_{\alpha, \zeta}(y, \tilde y) (y \wedge \tilde y)^{-\alpha} 
\\&\hspace{.3cm} \times \notag
\Big\{ \rho_0 \zeta (y \vee \tilde y)^{-\alpha} \big\{ \zeta  \rho_0 + (1-\zeta)\rho\big( (\tfrac{y \wedge \tilde y}{y \vee \tilde y})^{\alpha} \big) \big\}  + (1-\zeta) \big\{ \rho_0 -  \rho\big( (\tfrac{y \wedge \tilde y}{y \vee \tilde y})^{\alpha} \big) \big\} \Big\} + o(1).
\end{align}
Finally, from \eqref{eq:fnc-case-d}, the convergence in part [b], \eqref{eq:p1}, and \eqref{eq:p2},
\begin{align*}
&\phantom{{}={}}
F_{n,\zeta, c}(\infty, y, \infty,  \tilde y)
\\&=
F_{n,\zeta, c}(\infty, y, \tilde y, \infty) + p_1 + p_2 
\\&=
F_{\alpha, \zeta}(y, \tilde y) \Big[ 1 + \zeta \rho_0 (y^{-\alpha} + \tilde y^{-\alpha}) + (1-\zeta) (y \wedge \tilde y)^{-\alpha}  \rho\big( (\tfrac{y \wedge \tilde y}{y \vee \tilde y})^{\alpha} \big)  
\\&\hspace{1cm}+
(y \wedge \tilde y)^{-\alpha}  \Big\{ \rho_0 \zeta (y \vee \tilde y)^{-\alpha} \big\{ \zeta  \rho_0 + (1-\zeta)\rho\big( (\tfrac{y \wedge \tilde y}{y \vee \tilde y})^{\alpha} \big) \big\}  + (1-\zeta) \big\{ \rho_0 -  \rho\big( (\tfrac{y \wedge \tilde y}{y \vee \tilde y})^{\alpha} \big) \big\} \Big\} \Big]
\\&=
F_{\alpha, \zeta}(y, \tilde y) \Big[ 1 + \zeta \rho_0 (y^{-\alpha} + \tilde y^{-\alpha}) + (1-\zeta) \rho_0 (y \wedge \tilde y)^{-\alpha}  
\\&\hspace{7cm} 
+\zeta \rho_0 y^{-\alpha} \tilde y^{-\alpha} \big\{ \zeta  \rho_0 + (1-\zeta)\rho\big( (\tfrac{y \wedge \tilde y}{y \vee \tilde y})^{\alpha} \big) \big\} \big],
\end{align*}
which is the asserted formula.
\end{proof}

\begin{lemma}[Asymptotic covariances of functions of sliding block maxima]\label{lem:asy_cov_slbm}
    Suppose Conditions \ref{cond:doa} and \ref{cond:mom} are met and that there exists an integer sequence $(\ell_n)_n$ such that $\ell_n = o(r_n)$ and $\alpha(\ell_n) = o(\ell_n / r_n)$ as $n \to \infty$. Then, for any $c > 0$, $\zeta \in [0, 1]$ and any pair of measurable functions $f, g$ on $(0, \infty)$ which are continuous almost everywhere and satisfy
    \begin{align*}
    (|f| \lor |g|)^2 \leq g_{\eta,\alpha_1}(x) = \{x^{-\alpha_1} \indic(x \leq \mathrm{e}) + \log(x) \indic(x > \mathrm{e})\}^{2 + \eta}
    \end{align*}
    for some $0 < \eta < \nu$, we have
    \begin{align*}
        \lim_{n \to \infty} \Cov(f(X_{1:r_n}), g(Y_{\lfloor r_n\zeta\rfloor+1:\lfloor r_n\zeta\rfloor + r_n})) &= \Cov(f(X_\zeta), g(\tilde Y_\zeta))
    \end{align*}
    where $(X_\zeta, \tilde Y_\zeta) \sim K_{\rho,\alpha,\zeta}(x,\infty,\infty,\tilde y)$ and
    \begin{align*}
        \lim_{n \to \infty} \Cov(f(Y_{1:r_n}), g(Y_{\lfloor r_n\zeta\rfloor+1:\lfloor r_n\zeta\rfloor + r_n})) &= \Cov(f(Y_\zeta), g(\tilde Y_\zeta))
    \end{align*}
    where $(Y_\zeta, \tilde Y_\zeta) \sim K_{\rho,\alpha,\zeta}(\infty,y, \infty, \tilde y)$ with $K_{\rho, \alpha, \zeta}$ from \eqref{eq:sl_distr}.
\end{lemma}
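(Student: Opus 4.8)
The plan is to write each covariance as
\[
\Cov\big(f(U_n), g(V_n)\big) = \Exp\big[f(U_n) g(V_n)\big] - \Exp\big[f(U_n)\big]\,\Exp\big[g(V_n)\big],
\]
where $(U_n, V_n) = (X_{1:r_n}, Y_{\lfloor r_n\zeta\rfloor+1:\lfloor r_n\zeta\rfloor+r_n})$ for the first assertion and $(U_n, V_n) = (Y_{1:r_n}, Y_{\lfloor r_n\zeta\rfloor+1:\lfloor r_n\zeta\rfloor+r_n})$ for the second, and then to obtain convergence of each of the three expectations on the right by combining weak convergence of $(U_n, V_n)$ with uniform integrability. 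First I would record the weak convergence: the joint cumulative distribution function of $(U_n, V_n)$ equals $F_{n,\zeta,c}(x,\infty,\infty,\tilde y)$ or $F_{n,\zeta,c}(\infty,y,\infty,\tilde y)$, which by Lemma~\ref{lem:JoiWeaKon}[c] and [d] converges pointwise to $K_{\rho,\alpha_0,\zeta}(x,\infty,\infty,\tilde y)$, respectively $K_{\rho,\alpha_0,\zeta}(\infty,y,\infty,\tilde y)$. Since $\rho$ is continuous on $[0,1]$ — being concave, non-increasing and squeezed between $\rho(0)(1-\eta)$ and $1-\eta$ — these limiting cumulative distribution functions are continuous, so the pointwise convergence upgrades to weak convergence $(U_n, V_n)\rightsquigarrow(U,V)$, with $(U,V)$ distributed according to the corresponding $K$. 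Letting one coordinate tend to infinity in the explicit formulas of Lemma~\ref{lem:JoiWeaKon} identifies the marginal of $U$ as $\Frechet(\alpha_0,1)$ (first assertion) or $H^{(2)}_{\rho,\alpha_0,1}$ (second assertion) and that of $V$ as $H^{(2)}_{\rho,\alpha_0,1}$; in particular both marginals are absolutely continuous, so for $f,g$ continuous Lebesgue-almost everywhere the maps $u\mapsto f(u)$, $v\mapsto g(v)$ and $(u,v)\mapsto f(u)g(v)$ are continuous at almost every point of the law of $(U,V)$, which is what is needed to pass the weak limit through an expectation.

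Next I would verify uniform integrability. From $(|f|\vee|g|)^2 \le g_{\eta,\alpha_1}$ one bounds $|f(U_n)g(V_n)| \le \tfrac12\{g_{\eta,\alpha_1}(U_n) + g_{\eta,\alpha_1}(V_n)\}$ and $|f(U_n)|,|g(V_n)| \le g_{\eta,\alpha_1}(\cdot)^{1/2}$. Choosing $\delta>0$ sufficiently small — possible since $\eta<\nu$ — the $(1+\delta)$-th powers of $|f(U_n)g(V_n)|$, $|f(U_n)|$ and $|g(V_n)|$ are all dominated by a constant multiple of envelope functions of $U_n$ and of $V_n$ of the type \eqref{eq:g_kappa} with exponent parameter strictly below $\nu$. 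Now $U_n$ equals $(M_{1:r_n}\vee c)/\sigma_{r_n}$ or $(S_{1:r_n}\vee c)/\sigma_{r_n}$ and, by stationarity, $V_n$ has the distribution of $(S_{1:r_n}\vee c)/\sigma_{r_n}$, so Lemma~\ref{lem:conv_of_mom_of_block} (valid under Conditions~\ref{cond:doa} and \ref{cond:mom}) yields convergence, hence uniform boundedness in $n$, of the expectations of these envelopes. Uniform boundedness of $(1+\delta)$-moments then gives uniform integrability of $\{f(U_n)g(V_n)\}_n$, $\{f(U_n)\}_n$ and $\{g(V_n)\}_n$. Finally, weak convergence, uniform integrability and almost-everywhere continuity of the test maps (e.g.\ via Example~2.21 in \cite{vdVaart}, as used in the proof of Lemma~\ref{lem:conv_of_mom_of_block}, together with the continuous mapping theorem) give $\Exp[f(U_n)g(V_n)]\to\Exp[f(U)g(V)]$, $\Exp[f(U_n)]\to\Exp[f(U)]$ and $\Exp[g(V_n)]\to\Exp[g(V)]$; subtracting yields the assertion, with $(X_\zeta,\tilde Y_\zeta)$, respectively $(Y_\zeta,\tilde Y_\zeta)$, playing the role of $(U,V)$.

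The step I expect to be most delicate is the uniform-integrability bookkeeping. The envelope $g_{\eta,\alpha_1}$ is two-sided — it blows up both as its argument tends to $0$, which is exactly why the truncation at level $c$ entering $U_n$ and $V_n$ is needed, and as its argument tends to $\infty$ — whereas Condition~\ref{cond:mom} only controls the right tail of $M_r$ and the left tail of $S_r$; reconciling these, and checking that the shifted block governing $V_n$ is covered by the same moment bound via stationarity, is where care is required, even though in the end it all reduces to an application of Lemma~\ref{lem:conv_of_mom_of_block}. The remaining ingredients — the explicit bivariate limits, the continuity of $\rho$, and the upgrade from pointwise convergence of cumulative distribution functions to weak convergence — are routine.
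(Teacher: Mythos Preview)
Your proposal is correct and follows essentially the same route as the paper: weak convergence of the bivariate pair via Lemma~\ref{lem:JoiWeaKon}, uniform integrability from the envelope bound and Condition~\ref{cond:mom} (via Lemma~\ref{lem:conv_of_mom_of_block}), and then Example~2.21 in \cite{vdVaart}. The paper's one-line proof mentions the Cauchy--Schwarz inequality where you use the elementary bound $|f(U_n)g(V_n)|\le\tfrac12\{g_{\eta,\alpha_1}(U_n)+g_{\eta,\alpha_1}(V_n)\}$, but these serve the same purpose of reducing the product moment to marginal moments, and your version is equally valid.
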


\begin{proof}
    The result follows from Lemma \ref{lem:JoiWeaKon} and the Cauchy–Schwarz inequality, together with Example 2.21 in \cite{vdVaart}.
\end{proof}

\begin{lemma}[Asymptotic covariances of sliding block maxima empirical process]
\label{lem:covs_emp_proc}
    Suppose Conditions \ref{eq:doa}, \ref{cond:alphamixing} 
    and \ref{cond:mom} are met. Then, for any pair of measurable functions $f, g$ on $(0,\infty)$ which are continuous almost everywhere and satisfy
    \begin{align*}
    (|f| \lor |g|)^2 \leq g_{\eta, \alpha_1}(x) = \{x^{-\alpha_1} \indic(x \leq \mathrm e) + \log(x)\indic(x > \mathrm e)\}^{2 + \eta}
    \end{align*}
    for some $0 < \eta < \nu$, we have,
    with $\Gb_n^{(\sbl)}$ as defined in the paragraph before Theorem~\ref{thm:sl_asy},
    \begin{align*}
    \lim_{n \to \infty} \Cov\Big(\G_n^{(\sbl)}\big[(x,y) \mapsto f(x)\big], \G_n^{(\sbl)}\big[(x,y) \mapsto g(y)\big]\Big) 
    = 2 \int_0^1 \Cov(f(X_\zeta), g(\tilde Y_\zeta)) \diff\zeta
    \end{align*}
    where $(X_\zeta, \tilde Y_\zeta) \sim K_{\rho,\alpha,\zeta}(x,\infty,\infty,\tilde y)$ and
    \begin{align*}
     \lim_{n \to \infty} \Cov\Big(\G_n^{(\sbl)}\big[(x,y) \mapsto f(y)\big], \G_n^{(\sbl)}\big[(x,y) \mapsto g(y)\big]\Big) 
     = 2 \int_0^1 \Cov(f(Y_\zeta), g(\tilde Y_\zeta)) \diff\zeta.
    \end{align*}
    where $(Y_\zeta, \tilde Y_\zeta) \sim K_{\rho,\alpha,\zeta}(\infty,y, \infty, \tilde y)$ with $K_{\rho, \alpha, \zeta}$ from \eqref{eq:sl_distr}
\end{lemma}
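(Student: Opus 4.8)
The plan is to follow the argument of the corresponding lemma for sliding block maxima in \cite{Bücher2018-sliding}. Recall that $\G_n^{(\sbl)}$ uses $v_n=\sqrt{n/r_n}$, $k_n=n-r_n+1$, $\sigma_n=\sigma_{r_n}$ and $Z_{n,i}=(M_{i:i+r_n-1}\vee c,S_{i:i+r_n-1}\vee c)$; write $X_{n,i}=(M_{i:i+r_n-1}\vee c)/\sigma_{r_n}$ and $Y_{n,i}=(S_{i:i+r_n-1}\vee c)/\sigma_{r_n}$. Since adding a deterministic quantity does not affect a covariance,
\[
\Cov\Big(\G_n^{(\sbl)}\big[(x,y)\mapsto f(x)\big],\ \G_n^{(\sbl)}\big[(x,y)\mapsto g(y)\big]\Big)=\frac{v_n^2}{k_n^2}\sum_{i,j=1}^{k_n}\Cov\big(f(X_{n,i}),g(Y_{n,j})\big).
\]
By stationarity the summand depends only on the lag $h=j-i$; writing $c_n(h):=\Cov(f(X_{n,1}),g(Y_{n,1+h}))$ and collecting the $k_n-|h|$ pairs with a given lag, the right-hand side equals $\frac{v_n^2}{k_n^2}\sum_{|h|<k_n}(k_n-|h|)c_n(h)$. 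Because $r_n=o(n)$, one has $\frac{v_n^2}{k_n^2}(k_n-|h|)=\frac1{r_n}(1+o(1))$ uniformly over $|h|<r_n$ and $\frac{v_n^2}{k_n^2}(k_n-|h|)\le\frac{C}{r_n}$ for all $h$; hence it suffices to show that $\frac1{r_n}\sum_{r_n\le|h|<k_n}|c_n(h)|\to0$ and that $\frac1{r_n}\sum_{|h|<r_n}c_n(h)\to2\int_0^1\Cov(f(X_\zeta),g(\tilde Y_\zeta))\diff\zeta$.

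For the large-lag part I would invoke the covariance inequality (Lemma 3.11 in \cite{Dehling2002}) with exponents $p=q$ chosen slightly above $2/\omega+2$; for $\eta$ small enough the growth bound $(|f|\vee|g|)^2\le g_{\eta,\alpha_1}$ together with Condition~\ref{cond:mom} (using $S_{i:i+r_n-1}\le M_{i:i+r_n-1}$ and $M_{i:i+r_n-1}\stackrel{d}{=}M_{r_n}$, $S_{i:i+r_n-1}\stackrel{d}{=}S_{r_n}$) yields $\sup_n\big(\|f(X_{n,1})\|_p\vee\|g(Y_{n,1})\|_p\big)<\infty$. For $|h|\ge r_n$ the two windows are disjoint and separated by a gap of size $|h|-r_n+1$, whence $|c_n(h)|\le C\,\alpha(|h|-r_n+1)^{1-2/p}$. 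Splitting at $|h|=2r_n$, the part $r_n\le|h|<2r_n$ contributes $\frac1{r_n}\sum_{g'=1}^{r_n-1}\alpha(g')^{1-2/p}\to0$ by Cesàro averaging (as $\alpha(\cdot)\to0$), while for $|h|\ge2r_n$ the gap exceeds $r_n$, so $|c_n(h)|\le C\alpha(r_n)^{1-2/p}$ and the contribution is bounded by $C\frac{k_n}{r_n}\alpha(r_n)^{1-2/p}\to0$ thanks to $(n/r_n)^{1+\omega}\alpha(r_n)\to0$ from Condition~\ref{cond:alphamixing} and $1-2/p>1/(1+\omega)$.

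For the short lags, split $\sum_{|h|<r_n}=c_n(0)+\sum_{h=1}^{r_n-1}\{c_n(h)+c_n(-h)\}$; the term $c_n(0)$ is negligible after division by $r_n$. Writing $h=\flo{r_n\zeta}$ turns $\frac1{r_n}\sum_{h=1}^{r_n-1}c_n(h)$ into $\int_0^1c_n(\flo{r_n\zeta})\diff\zeta+o(1)$, the integrand converges pointwise to $\Cov(f(X_\zeta),g(\tilde Y_\zeta))$ by Lemma~\ref{lem:asy_cov_slbm}, and the uniform $L^p$-bounds above combined with Cauchy--Schwarz supply an $(n,\zeta)$-uniform dominating constant, so dominated convergence gives $\int_0^1\Cov(f(X_\zeta),g(\tilde Y_\zeta))\diff\zeta$. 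For the backward lags, $c_n(-h)=\Cov(f(M_{1:r_n}),g(S_{1-h:r_n-h}))=\Cov(f(M_{\flo{r_n\zeta}+1:\flo{r_n\zeta}+r_n}),g(S_{1:r_n}))$ by stationarity when $h=\flo{r_n\zeta}$, which converges to the covariance under $K_{\rho,\alpha_0,\zeta}(\infty,y,\tilde x,\infty)$ (Lemma~\ref{lem:JoiWeaKon}[b]); since the explicit formulas in parts [b] and [c] of Lemma~\ref{lem:JoiWeaKon} satisfy $K_{\rho,\alpha_0,\zeta}(\infty,a,b,\infty)=K_{\rho,\alpha_0,\zeta}(b,\infty,\infty,a)$, the backward contribution equals the forward one, producing the factor $2$. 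The second displayed identity is obtained the same way, now relying on part [d] of Lemma~\ref{lem:JoiWeaKon}, which is already symmetric in its two arguments so that the forward and backward contributions coincide automatically.

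The main obstacle is the large-lag estimate: it requires the moment order $p$ admissible under the integrability Condition~\ref{cond:mom} (governed by $\nu$) to be compatible with the decay exponent $1-2/p$ needed to dominate the mixing rate of Condition~\ref{cond:alphamixing} (governed by $\omega$), and one must check that for suitably small $\eta$ both requirements can be met simultaneously --- exactly the bookkeeping carried out in \cite{Bücher2018-disjoint,Bücher2018-sliding}. The Riemann-sum/dominated-convergence step and the verification of the reflection symmetry of $K_{\rho,\alpha_0,\zeta}$ from the closed-form expressions in Lemma~\ref{lem:JoiWeaKon} are routine.
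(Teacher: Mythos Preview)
Your proposal is correct and follows exactly the strategy the paper invokes: the paper's own proof is omitted for brevity and simply states that it ``applies the same strategies as the proof of Lemma 5.3 in \cite{Bücher2018-sliding}'', which is precisely the decomposition into short and long lags, the covariance inequality for the long lags, and the Riemann-sum plus dominated-convergence argument (via Lemma~\ref{lem:asy_cov_slbm}) for the short lags that you outline. Your observation that the backward-lag contribution matches the forward one via the reflection identity $K_{\rho,\alpha_0,\zeta}(\infty,a,b,\infty)=K_{\rho,\alpha_0,\zeta}(b,\infty,\infty,a)$ from parts~[b] and~[c] of Lemma~\ref{lem:JoiWeaKon} is exactly the symmetry the paper relies on, and your identification of the $p$-vs-$\omega$ bookkeeping as the only delicate point is accurate.
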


\begin{proof}
    The proof applies the same strategies as the proof of Lemma 5.3 in \cite{Bücher2018-sliding}. It is omitted for the sake of brevity.
\end{proof}

\begin{proof}[Proof of Theorem \ref{thm:sl_asy}]
Throughout, we omit the upper index $\sbl$.
The result follows from an application of Theorem~\ref{thm:asymptotic}.
Recall $Z_{n,i}$ from \eqref{eq:trunc_toptwo2}, $k_n = n - r_n + 1$, $v_n=\sqrt{n/r_n}$  and define $\Pb_n$ and $P_n$ as in \eqref{eq:Pn}, such that $
\Gb_n f  = v_n \big(\mathbb P_n f - Pf\big)$.
Here and in the remaining parts of the proof, we may assume that $c=c_0$, as argued at the beginning of the proof of Theorem~\ref{thm:blocks}. 
For the application of Theorem~\ref{thm:asymptotic}, we need to show the following three properties:
\begin{compactenum}[(1)]
    \item $\lim_{n \to \infty} \Prob(Z_{n,1} = \cdots = Z_{n,n-r_n+1}) = 0$.
    \item There exist constants $0 < \alpha_- < \alpha_1 < \alpha_+ < \infty$ such that
    $\Pb_n f \rightsquigarrow P f$
    for all $f \in \mathcal F_2(\alpha_-, \alpha_+)$, where $\mathcal F_2(\alpha_-, \alpha_+)$ is as in \eqref{eq:F2}.
    \item We have $\W_n=(\Gb_n f_1, \dots, \Gb_n f_4)^\top \rightsquigarrow \mathcal{N}_4(\bm B,\Sigma_{\rho,\alpha_0}^{(\sbl)})$, where $\bm B$ and $\Sigma_{\rho,\alpha_0}^{(\sbl)}$ are as in Theorem \ref{thm:sl_asy}.
\end{compactenum}
The ``not-all-tied'' property in (1) follows immediately from Lemma \ref{lem:no_ties}.

For the proof of (2), choose $\eta \in (2/\omega, \nu)$ with $\omega$ and $\nu$ from Conditions \ref{cond:alphamixing} and \ref{cond:mom}, respectively. Define $\alpha_+ := 2\alpha_1$ and let $0 < \alpha_- < \alpha_1$ be arbitrary. Any $f \in \mathcal F_2(\alpha_-, \alpha_+)$ can then be bounded in absolute value by $g_{0,\alpha_1}$ from \eqref{eq:g_kappa}, whence $\lim_{n \to \infty} \Exp[\mathbb P_n f] = Pf$ by Lemma \ref{lem:conv_of_mom_of_block}.
Further,
$
    \mathbb P_n f - \Exp[\mathbb P_n f] = O_{\mathbb P}(v_n^{-1}) = o_{\mathbb P}(1),
$
as will be shown in the proof of (3).
These two facts imply (2). 

To show (3),  we start by decomposing
\begin{align*}
    \G_n = v_n (\mathbb P_n - P_n) + v_n (P_n - P) \equiv \widetilde{\G}_n + B_n.
\end{align*}
For $j = 1,...,4$, we have $B_n(f_j) \to B(f_j)$ by Condition \ref{cond:bias}. It remains to show that the finite-dimensional distributions of $\widetilde{\G}_n(f)$ for $f \in \mathcal F_2(\alpha_-, \alpha_+)$ converge weakly to those of a zero-mean Gaussian process $\Gb$ with covariance
\begin{align}
\label{eq:sl-asy-cov}
    \Cov(\G f, \G g) = 2 \int_0^1 \Cov_{K_{\rho,\alpha_0,\zeta}} (f(U_1), g(U_2)) \, d\zeta, \quad f, g \in \mathcal F_2(\alpha_-, \alpha_+),
\end{align}
with $K_{\rho,\alpha_0,\zeta}$ as defined in \eqref{eq:sl_distr}.
Indeed, this implies (3) and additionally closes the gap in the proof of (2).

The proof of the claimed weak convergence now follows analogously to the proof of Theorem 2.6 in \cite{Bücher2018-sliding}, page 117-119, with the asymptotic covariance in \eqref{eq:sl-asy-cov} arising from Lemma~\ref{lem:covs_emp_proc} (which replaces Lemma 5.3 in \cite{Bücher2018-sliding}). Details are omitted for the sake of brevity.
\end{proof}

\subsection{Proof of Example~\ref{ex:mori}}
\label{subsec:proof-of-mori-example}

\begin{proof}[Proof of Example~\ref{ex:mori}]
We will show below that, for any $x \ge 1$,
\begin{align}
\Prob(M_r \le  x) 
&=\label{eq:cdf-mr-mori}
\Big(1-\frac1{x^\alpha}\Big)^r  \Big( 1 - \frac{1-\rho_0}{x^\alpha}  \Big)
\\
\Prob(S_r \le x) 
&= \label{eq:cdf-sr-mori}
\Big(1-\frac{1}{x^\alpha}\Big)^{r-1} \Big(1-\frac{1-\rho_0}{x^\alpha}\Big)  \Big(1+\frac1{x^\alpha} + (r-2) \frac{\rho_0}{x^\alpha} \Big)
\end{align}
and that, for all $x,y \in \R$
\begin{align} \label{eq:joint-cdf-mr-sr}
\lim_{r \to \infty} \Prob(r^{-1/\alpha}M_r \le x, r^{-1/\alpha}S_r  \le y)  = H_{\rho,\alpha,1}(x,y).
\end{align}
The latter equation immediately yields Condition~\ref{cond:doa} with $\alpha_0=\alpha, \sigma_r=r$ and with the given~$\rho$.

Next, regarding Condition~\ref{cond:all_diverge}, it is sufficient to consider $c \ge 2^{1/\alpha}$. By the union bound and \eqref{eq:cdf-sr-mori}, we have
\begin{align*}
    \Prob(\min\{S_{r_n,1},\dots,S_{r_n,k_n} \} \le c)
    \le
    k_n \Prob(S_{r_n} \le c)
    \le 
    k_n r_n\Big(1-\frac 1{c^\alpha}\Big)^{r_n-1} \le 2k_n r_n2^{-r_n},
\end{align*}
where we have used that $c \ge 2^{1/\alpha}$ and $\rho_0 \in [0,1]$. The expression on the right-hand side approaches zero provided $\log(k_n) = o(r_n)$, which is easily met if $n = O(r_n^3)$.

Condition~\ref{cond:alphamixing} in fact holds for any $r_n \in [n]$ satisfying $r_n\to \infty, r_n=o(n)$ and for any $\omega>0$; this follows immediately from 1-dependence.

Regarding Condition~\ref{cond:mom}, it is sufficient to consider $\nu=2$ and $\omega=1$. Using \eqref{eq:cdf-mr-mori} and \eqref{eq:cdf-sr-mori} and a computer algebra system, one obtains
\begin{align*}
    \lim_{r\to\infty} \Exp\big[\log^4(r^{-1/\alpha}M_r)\big] &=\alpha^{-4}[8 \gamma  \zeta (3)+\gamma ^4+{3 \pi ^4}/{20}+\gamma ^2 \pi ^2]
    , \\
    \lim_{r \to \infty} \Exp\big[(r^{-1/\alpha}S_r)^{-4}\big] &= \alpha^{-1}(\alpha+4\rho_0) \Gamma \left(1+4/{\alpha}\right), 
\end{align*}
where $\zeta (3)$ is Ap\'ery's constant.
Using straightforward monotonicity arguments (note that $\alpha_1 \le 1$), it can be shown that these two limits are sufficient to deduce Condition \ref{cond:mom} with $\nu=2$ and $\omega=1$.

Finally, regarding Condition~\ref{cond:bias}, we fix $c_0=1$, and note that $M_r \vee 1 = M_r$ and $S_r \vee 1 = S_r$. 
The functions $f=f_j$ from \eqref{eq:fct_H} are given by
\begin{align*}
    (f_1,f_2,f_3,f_4)= \big((x,y)\mapsto y^{-\alpha_1}\log y,(x,y)\mapsto y^{-\alpha_1},(x,y)\mapsto \log y, (x,y) \mapsto \log x\big).
\end{align*}
Let
\[
B_r'(f) = \Exp\!\big[f\big( r^{-1/\alpha}M_{r} , r^{-1/\alpha}S_{r}\big)\big]-\int_{(0,\infty)^2} f(x,y)\diff H_{\rho,\alpha,1}(x,y).
\]
Using \eqref{eq:cdf-mr-mori} and \eqref{eq:cdf-sr-mori} and a computer algebra system (CAS), it can be shown that
\begin{align*}
B'(f) := \lim_{r \to \infty} r B'_r(f) 
\end{align*}
is given by 
\begin{align*}
B'(f_1) 
&= 
\alpha^{-1}\frac{\Gamma (\varpi_{\rho_0}+1)}{2} \Big[ \psi ^{(0)}({\varpi_{\rho_0}}+1)\Big(-2 (1-\rho_0)^2 {\varpi_{\rho_0}} ({\varpi_{\rho_0}}+1)
\\&\hspace{2cm} 
+(1-\rho_0) \big\{{\varpi_{\rho_0}} (12-\varpi_{\rho_0}^2 + 5 {\varpi_{\rho_0}})+8\big\}+{\varpi_{\rho_0}} ({\varpi_{\rho_0}}+1)^2\Big) 
\\&\hspace{2cm} 
+3 \rho_0 \varpi_{\rho_0}^2+2 (1-\rho_0) (3+2\rho_0) {\varpi_{\rho_0}} +2 (5+\rho_0)(1-\rho_0) +4 {\varpi_{\rho_0}}+1\Big]
\\
B'(f_2) 
&= 
\frac{1}{2} \varpi_{\rho_0} \Big[(1-\rho_0) \big\{ 2 (1-\rho_0) (\varpi_{\rho_0}+1)+(\varpi_{\rho_0}-1) \varpi_{\rho_0} \big\} \Gamma (\varpi_{\rho_0}+1)  
\\&\hspace{2cm} 
- (\varpi_{\rho_0}+1) \Gamma (\varpi_{\rho_0}+2) \Big]
\\
B'(f_3) 
&=
\alpha^{-1} \Big[\frac{1}{2}-(1-\rho_0)^2 \Big]
\\
B'(f_4) 
&=
\alpha^{-1}\Big[\frac{3}{2}-\rho_0 \Big].
\end{align*}
As a consequence, since $(n/r_n^3)^{1/2} \to \lambda_1 \ge 0$, we obtain that Condition 4.5 is met with
\begin{align} \label{eq:bias-mori}
B(f_j) 
= 
\lim_{n \to \infty} \sqrt{n/r_n} B'_{r_n}(f_j) 
=
\lim_{n \to \infty}
\sqrt{n/r_n^3} \cdot r_n B'_{r_n}(f_j)  
= \lambda_1 \cdot B'(f_j).
\end{align}

\noindent
\textit{Proof of \eqref{eq:cdf-mr-mori}}. This part of the proof, we only conduct for $\alpha=1$. The general case may be obtained by replacing $M_r$ by $M_r^{1/\alpha}$. We have 
\begin{align*}
    M_r&=\max\{Z_0,Z_1,\dots, Z_{r-1},\zeta_1Z_1,\dots,\zeta_rZ_r\}, 
\end{align*}
As a consequence, since $\zeta_t \le 1$,
\begin{align*}
    \Prob(M_r\leq x)
    &=
    \Prob(Z_0\leq x)\Prob(Z_1\leq x,\zeta_1 Z_1\leq x)^{r-1}\Prob(\zeta_rZ_r\leq x)\\
    &=\Prob(Z_0\leq x)^r\Prob(\zeta_1Z_1\leq x)=\Big(1-\frac{1}{x}\Big)^r\Prob(\zeta_1 Z_1\leq x).
\end{align*}
The last probability evaluates to
\begin{align*}
\Prob(\zeta_1 Z_1\leq x)
=
\int_1^\infty \Prob( \zeta_1 \le x/z ) \frac1{z^2} \diff z
&= 
\int_1^x \frac1{z^2} \diff z - \int_x^\infty \rho'(x/z) \frac1{z^2}\diff z 
\\&= 
1-\frac1x - \frac1x \int_0^1\rho'(u) \diff u
= 1-\frac{1-\rho_0}x,
\end{align*}
where we used the substitution $x/z=u$ and the fact that $\int_0^1 \rho'(u) \diff u = \rho(1) - \rho(0) = -\rho(0)=-\rho_0$ by the fundamental theorem of calculus for Lebesgue integrals. Equation \eqref{eq:cdf-mr-mori} follows.

\medskip\noindent
\textit{Proof of \eqref{eq:cdf-sr-mori}}. For $x \ge 1$, we have
\begin{align} \label{eq:sr-desomposition}
\Prob(S_r \le x) = \Prob(M_r \le x) + \Prob(S_r \le x< M_r).
\end{align}
Here, 
\begin{align} \label{eq:sr-mr-decomposition}
\Prob(S_r \le x< M_r)
=
\sum_{j=1}^{r} \Prob(\xi_j >x, \xi_i \le x \, \forall j \ne i).
\end{align}
For $j=1$, we have
\begin{align*}
&\phantom{{}={}}\Prob( \xi_1 >x, \xi_i \le x \,\forall i \ge 2)
\\&=
\Prob(\max (Z_0, \zeta_1Z_1)>x, Z_1 \le y, \dots, Z_{r-1} \le x, \zeta_2Z_2 \le x, \dots, \zeta_rZ_r \le x)
\\&=
\Prob(\max (Z_0, \zeta_1Z_1)>x \ge  Z_1, Z_2 \le x, \dots, Z_{r-1} \le x, \zeta_rZ_r \le x)
\\&=
\Prob(\max (Z_0, \zeta_1Z_1)>x\ge  Z_1) \Prob(Z_0 \le x)^{r-2} \Prob(\zeta_r Z_r \le x)
\\&=
\Prob(Z_0 >x) \Prob(Z_0 \le x)^{r-1} \Prob(\zeta_r Z_r\le x).
\end{align*}
For $j \in \{2, \dots, r-1\}$, 
\begin{align*}
&\phantom{{}={}}\Prob( \xi_j >x, \xi_i \le x \,\forall i \ne j)
\\&=
\Prob\big( \{ \max (Z_{j-1}, \zeta_jZ_j) >x \} 
\\ & \hspace{2cm} \cap \{ Z_i \le x \, \forall i\in\{0, \dots,r-1\} \setminus \{j-1\} \} \cap \{\zeta_iZ_i \le x \, \forall i\in\{1, \dots,r\} \setminus \{j\} \}\big)
\\&=
\Prob\big( \{ \max (Z_{j-1}, \zeta_jZ_j) >x , Z_{j} \le x, \zeta_{j-1}Z_{j-1} \le x\} 
\\ & \hspace{2cm} \cap \{ Z_i \le x \, \forall i\in\{0, \dots,r-1\} \setminus \{j-1, j\} \} \cap \{\zeta_rZ_r \le x \}\big)
\\&=
\Prob\big(\max (Z_{j-1}, \zeta_jZ_j) >x , Z_{j} \le x, \zeta_{j-1}Z_{j-1} \le x) \Prob(Z_0 \le x)^{r-2} \Prob(\zeta_rZ_r \le x)
\\&=
\Prob\big( \zeta_{j-1}Z_{j-1}\le x < Z_{j-1}) \Prob(Z_0 \le x)^{r-1} \Prob(\zeta_rZ_r \le x).
\end{align*}
Finally, for $j=r$, we have
\begin{align*}
&\phantom{{}={}}\Prob( \xi_r >x, \xi_i \le x \,\forall i \le r-1)
\\&=
\Prob(\max (Z_{r-1}, \zeta_rZ_r) >x, Z_0 \le x, \dots, Z_{r-2} \le x, \zeta_1Z_1\le x, \dots, \zeta_{r-1}Z_{r-1} \le x)
\\&=
\Prob(\zeta_{r-1}Z_{r-1}  \le x <\max (Z_{r-1}, \zeta_rZ_r))\Prob( Z_0 \le x)^{r-1}.
\end{align*}
All probabilities on the right-hand sides of the previous three displays have already been calculated explicitly, except for the following two: first,
\begin{align*}
\Prob\big( \zeta_{j-1}Z_{j-1}\le x < Z_{j-1})
= 
\int_{x}^\infty \Prob(\zeta \le x/z)\frac1{z^2} \diff z 
&= 
-\int_{x}^\infty \rho'(x/z) \frac1{z^2} \diff z
\\&=
-\frac1x \int_{0}^1 \rho'(u) \diff u
= \frac{\rho_0}{x} 
\end{align*}
where we used the substitution $y/z=u$ and the fundamental theorem of calculus again.
Second, 
\begin{align*}
\Prob(A) 
&\equiv
\Prob(\zeta_{r-1}Z_{r-1}  \le x <\max (Z_{r-1}, \zeta_rZ_r))
\\&=
\Prob(A\cap  \{\zeta_rZ_r\le x\})
+\Prob(A \cap \{ \zeta_rZ_r  >x\} ) 
\\&=
\Prob(\zeta_{r-1}Z_{r-1}  \le x < Z_{r-1}, \zeta_r Z_r \le x)
+
\Prob(\zeta_{r-1}Z_{r-1} \le x< \zeta_rZ_r)
\\&=
\Prob(\zeta_{r-1}Z_{r-1} \le x<Z_{r-1} ) \Prob(\zeta_rZ_r \le x) 
+
\Prob(\zeta_{r-1}Z_{r-1} \le x) \Prob(\zeta_rZ_r >x) 
\\&=
\frac{\rho_0}x \Big(1-\frac{1-\rho_0}x\Big) + \frac{1-\rho_0}x \Big(1-\frac{1-\rho_0}x\Big)
=
\frac1x \Big(1-\frac{1-\rho_0}x\Big),
\end{align*}
Overall, 
\begin{align*}
\Prob( \xi_1 >x, \xi_i \le x \,\forall i \ne 1)
&= 
\frac1x \Big(1-\frac1x\Big)^{r-1}\Big(1-\frac{1-\rho_0}x\Big) 
\\
\Prob( \xi_j >x, \xi_i \le x \,\forall i \ne j)
&= \frac{\rho_0}x \Big(1-\frac1x\Big)^{r-1}\Big(1-\frac{1-\rho_0}x\Big)
\\
\Prob( \xi_r >x, \xi_i \le x \,\forall i \ne r)
&=
\frac1x \Big(1-\frac1x\Big)^{r-1}\Big(1-\frac{1-\rho_0}x\Big).
\end{align*}
Hence, by \eqref{eq:sr-mr-decomposition},
\[
\Prob(S_r \le x<M_r) = \Big(1-\frac{1}x\Big)^{r-1} \Big(1-\frac{1-\rho_0}x\Big) \Big(\frac2x + (r-2) \frac{\rho_0}{x} \Big),
\]
which in turn implies
\begin{align*}
\Prob(S_r \le x) 
&= 
\Prob(M_r \le x) + \Prob(S_r \le x < M_r) 
\\&= 
\Big(1-\frac{1}x\Big)^{r-1} \Big(1-\frac{1-\rho_0}x\Big)  \Big(1+\frac1x + (r-2) \frac{\rho_0}{x} \Big)
\end{align*}
as asserted.

\medskip
\noindent
\textit{Proof of \eqref{eq:joint-cdf-mr-sr}}. The proof is similar to the one for \eqref{eq:cdf-sr-mori}, but due to fact that we are only interested in the limit, some complicated negligible terms do not need to be calculated explicitly. First,
for $1 \le x \le y$, we have
\[
\Prob(M_r \le x, S_r \le y) = \Prob(M_r\le x),
\]
which immediately yields \eqref{eq:joint-cdf-mr-sr} for $1 \le x \le y$ after using \eqref{eq:cdf-mr-mori}.
Next, for $x >y \ge 1$, we have
\begin{align} \label{eq:joint-cdf-dec}
\Prob(M_r \le x, S_r \le y) = \Prob(M_r \le y) + \Prob(S_r \le y< M_r \le x).
\end{align}
Here, by a similar decomposition as in \eqref{eq:sr-mr-decomposition},
\[
\Prob(S_r \le y< M_r \le x)
=
\sum_{j=1}^{r} \Prob(\xi_j \in (y,x], \xi_i \le y \, \forall j \ne i).
\]
We need these expressions with $x$ and $y$ replaced by $rx$ and $ry$, and then the summands with $j=1$ and $j=r$ are negligible. Indeed,
\begin{align*}
\Prob( \xi_1 \in (ry,rx], \xi_i \le ry \,\forall i \ne 1)
&\le
\Prob( \xi_1 > ry, \xi_2 \le ry) 
\\&= \Prob( \max(Z_0, \zeta_1 Z_1)>ry, \max(Z_1, \zeta_2Z_2) \le ry)
\\ &\le  
\Prob(Z_0>ry) = 1/(ry) = o(1)
\end{align*}
for $r\to\infty$. A similar calculation shows that
$\Prob( X_r \in (ry,rx], X_i \le ry \,\forall i \ne r) = o(1)$. It remains to consider
$j \in \{2, \dots, r-1\}$, where
\begin{align*}
&\phantom{{}={}}\Prob( \xi_j \in (y,x], \xi_i \le y \,\forall i \ne j)
\\&=
\Prob\big( \{ \max (Z_{j-1}, \zeta_jZ_j) \in (y,x] \} 
\\ & \hspace{2cm} \cap \{ Z_i \le y \, \forall i\in\{0, \dots,r-1\} \setminus \{j-1\} \} \cap \{\zeta_iZ_i \le y \, \forall i\in\{1, \dots,r\} \setminus \{j\} \}\big)
\\&=
\Prob\big( \{ \max (Z_{j-1}, \zeta_jZ_j) \in (y,x] , Z_{j} \le y, \zeta_{j-1}Z_{j-1} \le y\} 
\\ & \hspace{2cm} \cap \{ Z_i \le y \, \forall i\in\{0, \dots,r-1\} \setminus \{j-1, j\} \} \cap \{\zeta_rZ_r \le y \}\big)
\\&=
\Prob\big(\max (Z_{j-1}, \zeta_jZ_j) \in (y,x] , Z_{j} \le y, \zeta_{j-1}Z_{j-1} \le y) \Prob(Z_0 \le y)^{r-2} \Prob(\zeta_rZ_r \le y)
\\&=
\Prob\big( \zeta_{j-1}Z_{j-1}\le y < Z_{j-1} \le x) \Prob(Z_0 \le y)^{r-1} \Prob(\zeta_rZ_r \le y).
\end{align*}
The only unknown expression is
\begin{align*}
\Prob\big( \zeta_{j-1}Z_{j-1}\le y < Z_{j-1} \le x)
&= 
\int_{y}^x \Prob(\zeta \le y/z)\frac1{z^2} \diff z 
\\&= 
-\int_{y}^x \rho'(y/z) \frac1{z^2} \diff z
\\&=
-\frac1y \int_{y/x}^1 \rho'(u) \diff u
= \frac{\rho(y/x)}{y} ,
\end{align*}
where we used the substitution $y/z=u$ and the fundamental theorem of calculus again. 
Overall, for $1 \le y <x$, 
\begin{align*}
\Prob(S_r \le yr< M_r \le xr) 
&=
(r-2) \Prob( \xi_2 \in (y,x], \xi_i \le y \,\forall i \ne 2) + o(1)
\\& =
(r-2) \frac{\rho(y/x)}{yr} \Big(1-\frac1{yr}\Big)^{r-1} \Big(1-\frac{1-\rho_0}{yr}\Big) + o(1)
\\&=
\frac{\rho(y/x)}{y} \exp(-1/y) + o(1),
\end{align*}
which in turn implies, by \eqref{eq:joint-cdf-dec} and \eqref{eq:cdf-mr-mori},
\[
\Prob(M_r \le xr, S_r \le ry) =   \exp(-1/y) \Big\{ 1+ \frac{\rho(y/x)}{y}\Big\}+o(1)
=H_{\rho,1,1}(x,y) +o(1)
\]
as asserted in \eqref{eq:joint-cdf-mr-sr}.  

It remains to prove \eqref{eq:block-maxima-mle-bias-mori}. By Theorem 3.6 in \cite{Bücher2018-disjoint}, we need to calculate $\boldsymbol B_{\max} = M(\alpha) (B(f_5), B(f_6), B(f_4))^\top$ with
\[
M(\alpha) = \frac{6}{\pi^2}
    \begin{pmatrix}
         \alpha^2 & (1-\gamma)\alpha & -\alpha^2 \\ 
         \gamma-1 & -(\Gamma''(2)+1)/\alpha & 1-\gamma
     \end{pmatrix}
\]
from Formula (2.16) in \cite{Bücher2018-disjoint} and $B(f_j) = \lambda_1 B'(f_j)$ as in \eqref{eq:bias-mori}, with $(f_5,f_6)=\big((x,y)\mapsto x^{-\alpha}\log x,(x,y)\mapsto x^{-\alpha}\big)$.
Similar calculations as before yield
\begin{align*}
    B'(f_5)&= \alpha^{-1}\Big(\frac{9}{2}-2\gamma+(\gamma-2)\rho_0\Big), \qquad 
    B'(f_6)= \rho_0-2
\end{align*}
which yields \eqref{eq:block-maxima-mle-bias-mori} by straightforward calculations.
\end{proof}

\section{Proofs for Section \ref{sec:estimation-blockmaxima-iid}}
\label{sec:proofs-estimation-blockmaxima-iid}

\begin{proof}[Proof of Theorem \ref{thm:IID}] 
We start with the disjoint blocks estimator, $\mbl=\dbl$, for which the assertion follows from an application of Theorem \ref{thm:blocks}. Hence, we only have to verify its conditions.

\medskip
\noindent
(i) Proof of Condition \ref{cond:doa}.
Second-order regular variation from Condition \ref{cond:sorv} implies first-order regular variation in \eqref{eq:regvar}, which in turn is equivalent to weak convergence of block maxima as in \eqref{eq:iid_doa} with $a_r$ as in \eqref{eq:aniid}. We claim that Condition \ref{cond:doa} is met with $\sigma_r = a_r$ and $\rho=\rho_\indi$. First, $a_r$ is regularly varying with index $1/\alpha_0$ by Proposition 1.11 in \cite{Resnick1987}. Finally, the weak convergence in \eqref{eq:doa} follows for instance from Theorem 3.5 in \cite{Col01}.

\medskip
\noindent
(ii) Proof of Condition \ref{cond:all_diverge}.
Choose your favorite $c\in(0,\infty)$. Note that, for any $r\in\N$,
\begin{align} \nonumber
\Prob(S_{r} \le c) 
=
\Prob(S_{r} \le c, M_{r} > c) + \Prob(S_{r} \le c, M_{r} \le c)
&=
r F^{r-1}(c)(1-F(c)) + F^r(c) 
\\&\le \label{eq:sr-cdf-iid}
2rF^{r-1}(c).
\end{align}
Hence, 
since $\log F(c)<0$ and $\log k_n=o(r_n)$ by Remark 4.5 in \cite{Bücher2018-disjoint}, we have, by the union-bound,
\begin{align*}
    \Prob\big(\min\{S_{r_n,1},\dots,S_{r_n,k_n}\}\leq c\big)
    & \le
    2k_n r_n F^{r_n-1}(c)
    \\&=
    \exp\big\{ \log k_n + \log r_n + r_n \log F(c) \big\} =o(1), \qquad n \to \infty.
\end{align*}

\medskip
\noindent
(iii) Proof of Condition \ref{cond:alphamixing}.
This is trivial, as $\alpha(\ell)=0$ for integer $\ell\geq 1$.

\medskip
\noindent
(iv) Proof of Condition \ref{cond:mom}. Both bounds in \eqref{eq:intfinite} hold for arbitrary $\nu>0$
as a consequence of Lemma~\ref{lem:c1-disjoint}.

\medskip
\noindent
(v) Proof of Condition \ref{cond:bias}.
This condition, in particular the explicit computation of the bias vector, will take the majority of effort within this proof. For $x>0$ such that $F(x)>0$, write $L(x) = -\log F(x) x^{\alpha_0}$. Elementary calculations then allow to write \eqref{eq:secorderrv} as
\begin{align}\label{eq:secorderrv-L}
    \lim_{u\to\infty} \frac{1}{A(u)}\Big(\frac{L(ux)}{L(u)}-1\Big) = h_\tau(x), \qquad  x \in (0,\infty).
\end{align}
As argued in the proof of Theorem 4.2 in \cite{Bücher2018-disjoint} (beginning of the proof of Condition 3.5), we can find, for any fixed $\delta\in(0,\alpha_0)$ ,  constants $x(\delta)\ge 1$ and $c(\delta)>0$ such that, for all $u\ge x(\delta)$ and $x\ge x(\delta)/u$,
\begin{align} \label{eq:potter}
    \frac{L(u)}{L(ux)} \leq (1+\delta) \max\{x^{-\delta},x^\delta\}, \qquad     
    \Big|\frac{L(ux)-L(u)}{g(u)}\Big|\leq c(\delta)\max\{x^{\tau-\delta},x^{\tau+\delta}\},
\end{align}
where $g(u) = A(u) L(u)$. 
Moreover, by increasing $x(\delta)$ if necessary, we also have
\begin{align}
\label{eq:potter2}
    \frac{1-F(ux)}{1-F(u)}\leq (1+\delta) \max\big\{x^{-\alpha_0+\delta}, x^{-\alpha_0-\delta}\big\},
    \qquad
    \frac{L(ux)}{L(u)} \leq (1+\delta) \max\{x^{-\delta},x^\delta\}
\end{align}
for all $u\ge x(\delta)$ and $x \ge x(\delta)/u$ by the Potter bounds; see Theorem 1.5.7 in \cite{Bingham1987}.
We are going to show Condition \ref{cond:bias} for $c_0:=c := x(\delta)$ and $\sigma_{r_n}=a_{r_n}$.

Recall the definition of $Z_{n,i}=(X_{n,i}, Y_{n,i})$ from \eqref{eq:trunc_toptwo}, and let $P_n$ denote the distribution of $Z_{n,i}/a_{r_n}$, whose limit distribution $P$ is the iid \Frechet-Welsch distribution $P=\mathcal {W}_\indi(\alpha_0,1)$ by the proof of Condition~\ref{cond:doa} at the beginning of this proof. For $f=f_j$ from \eqref{eq:fct_H}, write $B_n(f)=\sqrt{k_n}(P_n f  - Pf)$. We need to show that, for $j\in\{1, 2,3,4\}$,
\begin{align}
\label{eq:b-limit}
B(f_j) = \lim_{n\to\infty} B_n(f_j) =  B_j(\alpha_0, \tau)
\end{align}
with $B(\alpha_0, \tau) \in \R^4$ from \eqref{eq:bias-iid}.

For $m\in\{1,2\}$, write $P^{(m)}$ and $P_n^{(m)}$ for the $m$th marginal of $P$ and $P_n$, respectively, and note that
\begin{align*}
B_n(f_1) &= \sqrt{k_n}(P_n^{(2)} - P^{(2)})[y^{-\alpha_0} \log y],
& 
B_n(f_2) &= \sqrt{k_n}(P_n^{(2)} - P^{(2)})[y^{-\alpha_0}] \\
B_n(f_3) &= \sqrt{k_n}(P_n^{(2)} - P^{(2)})[ \log y],
& 
B_n(f_4) &= \sqrt{k_n}(P_n^{(1)} - P^{(1)})[\log x].
\end{align*}
For the case $\lambda_3=0$, convergence of $B_n(f_4)$ to $B_4(\alpha_0, \tau)$ has been shown in \cite{Bücher2018-disjoint}, Formula (A.24). The more general case is treated in Lemma~\ref{lem:correction-bias}. It remains to treat $B_n(f_j)$ for $j\in\{1,2,3\}$. For that purpose, let $G_n$ and $G$ denote the cdf of $P_n^{(2)}$ and $P^{(2)}$, respectively, which are given by
\begin{align*}
    G_n(y)&= \Big\{F^{r_n}(a_{r_n}y)+r_n F^{r_n-1}(a_{r_n}y)\big(1-F(a_{r_n}y)\big)\Big\} \indic_{[c/a_{r_n},\infty)}(y) \\
    &= F^{r_n}(a_{r_n}y) \Big\{1 + r_n \Big(\frac{1}{F(a_{r_n}y)} - 1 \Big) \Big\} \indic_{[c/a_{r_n},\infty)}(y) \\
    G(y)&= \exp\big(-y^{-\alpha_0}\big)\big(1+ y^{-\alpha_0}\big)\indic_{(0,\infty)}(y).
\end{align*}
Here, the former follows from similar calculations as in \eqref{eq:sr-cdf-iid}, while the latter follows immediately from \eqref{eq:H_iid}. Now, by the display on top of page 1457 in \cite{Bücher2018-disjoint}, we have
\begin{align*}
    B_n (f_j)  
    = -\int_0^\infty \sqrt{k_n} \big\{ G_n(y)-G(y) \big \} f_j'(y)\diff y
\end{align*}
for $j\in\{1, 2,3\}$.

Let us rewrite
\[
F^{r_n}(a_{r_n}y)= \exp\big(r_n \log F(a_{r_n})\big)= \exp\Big(-y^{-\alpha_0} \big(-r_n \log F(a_{r_n}) \big)\frac{L(a_{r_n}y)}{L(a_{r_n})}\Big)
\]
As a consequence, $B_n(f) = J_{n,1}(f)+J_{n,2}(f)+J_{n,3}(f)$, where

\begin{align*}
    J_{n,1}(f) 
    &=
    \sqrt{k_n} \int_0^{c/a_{r_n}} \exp\big(-y^{-\alpha_0}\big)\big(1+y^{-\alpha_0}\big)f'(y)\diff y,
    \\
    J_{n,2}(f)
    &= 
    - \sqrt{k_n} \int_{c/a_{r_n}}^\infty \exp\Big(-y^{-\alpha_0}\big(-r_n \log F(a_{r_n})\big)\frac{L(a_{r_n}y)}{L(a_{r_n})}\Big)
    \\ & \hspace{1cm} \times
    \bigg[1+\frac{r_n(1-F(a_{r_n}y))}{F(a_{r_n}y)}-\Big\{ 1 + y^{-\alpha_0}\big(-r_n \log F(a_{r_n})\big)\frac{L(a_{r_n}y)}{L(a_{r_n})}\Big\}\bigg]f'(y)\diff y, 
    \\
    J_{n,3}(f)
    &= 
    -\sqrt{k_n} \int_{c/a_{r_n}}^\infty \bigg[\exp\Big(-y^{-\alpha_0}\big(-r_n \log F(a_{r_n})\big)\frac{L(a_{r_n}y)}{L(a_{r_n})}\Big)
    \\ & \hspace{1cm} \times
    \Big\{1+y^{-\alpha_0}\big(-r_n \log F(a_{r_n})\big)\frac{L(a_{r_n}y)}{L(a_{r_n})}\Big\} 
    -\exp\big(-y^{-\alpha_0}\big)\big(1+y^{-\alpha_0}\big)\bigg]f'(y)\diff y.
\end{align*}We start by showing that $J_{n,1}(f_j)$ 
converges to zero, for any $j\in\{1,2,3\}$. For that purpose, we decompose 
\begin{align*}
    J_{n,1}(f_j)&= \sqrt{k_n} \int_0^{c/a_{r_n}} \exp\big(-y^{-\alpha_0}\big)f_j'(y)\diff y+\sqrt{k_n} \int_0^{c/a_{r_n}} \exp\big(-y^{-\alpha_0}\big)y^{-\alpha_0}f_j'(y)\diff y.
\end{align*}
The first integral on the right-hand side has been treated similarly in \cite{Bücher2018-disjoint}, page 1457. The second integral can be treated analogously,
as the multiplication with $y^{-\alpha_0}$ does not change the decay of the integrand at zero being dominated by the exponential term.

Regarding $J_{n,2}(f_j)$, recall $L(x) = - \log F(x) x^{\alpha_0}$. 
We start by bounding
\begin{align*}
g_n(y) 
&:= 
r_n\frac{1-F(a_{r_n} y)}{F(a_{r_n}y)} - y^{-\alpha} \big(-r_n \log F(a_{r_n})\big)\frac{L(a_{r_n}y)}{L(a_{r_n})}
\\&=
r_n \Big[ \frac{1-F(a_{r_n} y)}{F(a_{r_n}y)} - (a_ry)^{-\alpha_0} L(a_{r_n}y) \Big]
\\&=
r_n \Big[ \frac{1-F(a_{r_n} y)}{F(a_{r_n}y)} - \log\Big( \frac{1}{F( a_ry)} \Big) \Big],
\end{align*}

A Taylor expansion of $x \mapsto  \log (x)$ 
around 1 allows to write
\begin{align*}
g_r(y) 
&= 
r_n \Big[ \frac{1-F(a_{r_n} y)}{F(a_{r_n}y)} - \Big(\frac1{F(a_{r_n}y)} - 1 -   \frac12 \Big\{\frac1{F(a_{r_n}y)} - 1 \Big\}^2  + R_n(y)\Big) \Big] \\
&=
r_n \Big[ \frac{\{F(a_{r_n}y)-1\}^2}{2F(a_{r_n}y)^2}  - R_n(y)\Big],
\end{align*}
where, for some $1 \le \xi_{n,y}  \le 1/F(a_{r_n}y)$,
\[
R_n(y)
= \frac1{3 \xi_{n,y}^2} \Big\{ \frac1{F(a_{r_n}y)}-1\Big\}^3.
\]
We have
\[
|R_n(y)|
\le 
\frac13 \Big\{ \frac{F(a_{r_n}y)-1}{F(a_{r_n}y)}\Big\}^3
=
O(r_n^{-3}),
\]
where the last bound follows from $F(a_{r_n}y)=1+o(1)$ and $r_n\{F(a_{r_n}y)-1\} = y^{-\alpha_0}+o(1)$.
As a consequence, since $\sqrt {k_n}/r_n=\lambda_1+o(1)$ by \eqref{eq:lambda1},
\begin{align*}
\sqrt{k_n} g_n(y) 
&= 
\frac{\sqrt{k_n}}{r_n} \Big[ \frac{r_n^2\{F(a_{r_n}y)-1\}^2}{2F(a_{r_n}y)^2}  + r_n^2R_n(y)\Big) \Big]
\\ &=
\{\lambda_1+o(1)\} \big[ y^{-2\alpha_0}/2 + o(1)\big] = \lambda_1 y^{-2\alpha_0} /2 + o(1).
\end{align*}
Consequently, the integrand of $J_{n,2}(f)$ converges pointwise to
\begin{align*}
    -(\lambda_1/2)\cdot\exp\big(-y^{-\alpha_0}\big)y^{-2\alpha_0}f'(y)
\end{align*}
If we now show that 
\begin{align*}
    h_{n,j}(y)= \sqrt{k_n}\exp\Big(-y^{-\alpha_0}\frac{L(a_{r_n}y)}{L(a_{r_n})}\Big)g_n(y)f_j'(y)\indic_{[c/a_{r_n},\infty)}(y)
\end{align*}
may be bounded by an integrable function on $(0,\infty)$, we would conclude 
\begin{align}
\label{eq:jn2-limit}
    \lim_{n\to\infty} J_{n,2}(f_j) 
    = 
    -(\lambda_1/2)\cdot\int_0^\infty \exp\big(-y^{-\alpha_0}\big)y^{-2\alpha_0}f_j'(y)\diff y
    =: -(\lambda_1/2)\cdot J_2(f_j)
\end{align}
where 
\begin{align*}
J_2(f_j)
=
\alpha_0^{-1}\Exp[Yf_j'(Y)]
= 
\begin{cases}
        (5-2\gamma)\alpha_0^{-1}, & f_1(y)=y^{-\alpha_0}\log y,\\
        -2,& f_2(y)=y^{-\alpha_0},\\
        \alpha_0^{-1}, & f_3(y)=\log y,
    \end{cases}
\end{align*}
with $Y\sim H_{\rho_\indi,\alpha_0,1}^{(2)}$, and where the last identity follows from Lemma~\ref{cor:mom}, using that $\Gamma(3)=2$ and $\Gamma'(3)=(3-2\gamma)$.

For that purpose, we start by deriving a majorant for $\sqrt{k_n} g_n(y)$ for $y \in [c/a_{r_n}, \infty)$.
By Taylor's theorem with Lagrange remainder applied to $x\mapsto \log x$, we have
\begin{align*}
    \sqrt{k_n}g_n(y) 
    &= 
    \sqrt{k_n}\cdot r_n\Big[ \frac{1-F(a_{r_n} y)}{F(a_{r_n}y)} - \log\Big(\frac1{F(a_{r_n}y)} \Big) \Big]\\
    &= 
    \sqrt{k_n}\cdot r_n\Big[ \frac{1-F(a_{r_n} y)}{F(a_{r_n}y)} - \Big(\frac{1}{F(a_{r_n}y)}-1\Big)+ \frac{1}{2\xi_{n,y}^2}\Big(\frac{1}{F(a_{r_n}y)}-1\Big)^2 \Big]\\
    &= 
    \frac{\sqrt{k_n}}{r_n}\cdot r_n^2\Big[\frac{1}{2\xi_{n,y}^2}\Big(\frac{1-F(a_{r_n}y)}{1-F(a_{r_n})}\Big)^2\cdot\frac{1}{F(a_{r_n}y)^2}\cdot \big(1-F(a_{r_n})\big)^2 \Big]
\end{align*}
for some $1 \le \xi_{n,y} \le 1/ F(a_{r_n}y)$. Using that $1/ F(a_{r_n}y)\le 1/F(c)$, we have $(\xi_{n,y}F(a_{r_n}y) ) ^{-2} \le F(c)^{-4}$. Further, for sufficiently large $n$, we have $r_n^2(1-F(a_{r_n}))^2<2$. 
Finally, by \eqref{eq:potter2} with $u=a_{r_n}$ and $x=y$, we have
\[
\frac{1-F(a_{r_n}y)}{1-F(a_{r_n})}
\leq (1+\delta)
\max\big\{y^{-\alpha_0+\delta}, y^{-\alpha_0-\delta}\big\}.
\]
Altogether, we have found a constant $C=C(\delta,\lambda_1)$ such that 
\begin{align}
\label{eq:bound-gn}
    \sqrt{k_n}g_n(y) \le C \max\big\{y^{-2\alpha_0+2\delta}, y^{-2\alpha_0-2\delta}\big\} \qquad \forall y \ge c/a_{r_n}
\end{align}
for all sufficiently large $n$.

We will now bound $h_{n,j}$ separately on $[c/a_{r_n},1)$ and $[1,\infty)$, respectively. First, for $y \in [c/a_{r_n},1)$ we have 
\begin{align*} 
    \exp\Big(-y^{-\alpha_0}\frac{L(a_{r_n}y)}{L(a_{r_n})}\Big)\leq \exp\big(-(1+\delta)^{-1}y^{-\alpha_0+\delta}\big)
\end{align*}
by \eqref{eq:potter}. 
Hence, in view of \eqref{eq:bound-gn} and the fact that there exists a constant $C'$ such that $f_j'(y) \le C'y^{-\alpha_0-\delta-1}$ for all $y \in (0,1)$, we obtain that
\begin{align*}
    h_{n,j}(y) \leq C \cdot C' \cdot y^{-3\alpha_0-3\delta-1}\exp\big(-(1+\delta)^{-1}y^{-\alpha_0+\delta}\big) \qquad \forall y \in (0,1)
\end{align*}
for all sufficiently large $n$
The upper bound is clearly integrable on $(0,1)$.

Second, for $y \in [1, \infty)$, we have
\begin{align*}
    \exp\Big(-y^{-\alpha_0}\frac{L(a_{r_n}y)}{L(a_{r_n})}\Big)\leq \exp\big(-(1+\delta)^{-1}y^{-\alpha_0-\delta}\big)
\end{align*}
by \eqref{eq:potter}. Hence, since $f'(y)$ is bounded by a multiple of $y^{-1}$ for $y\in[1,\infty)$, we have, again using \eqref{eq:bound-gn},
\begin{align*}
    h_{n,j}(y)\leq C''\cdot y^{-1-2\alpha_0+2\delta}\exp\big(-(1+\delta)^{-1}y^{-\alpha_0-\delta}\big) \qquad \forall y \ge 1
\end{align*}
for some constant $C''=C''(\delta, \lambda_1)$ and for all sufficiently large $n$. The upper bound is integrable on $[1,\infty)$ by our choice of $\delta< \alpha_0$.

It remains to treat $J_{n,3}(f_j)$. In view of the mean value theorem, applied to the function $z \mapsto \exp(-y^{-\alpha_0} z)(1+y^{-\alpha_0}z)$, there exists some $\xi_{n,y}$ between  $\big(-r_n \log F(a_{r_n})\big)L(a_{r_n}y)/L(a_{r_n})$ and $1$ such that
\begin{align*}
    J_{n,3}(f)
    &= 
    -\sqrt{k_n} \int_{c/a_{r_n}}^\infty \bigg[\exp\Big(-y^{-\alpha_0}\big(-r_n \log F(a_{r_n})\big)\frac{L(a_{r_n}y)}{L(a_{r_n})}\Big)
    \\ & \hspace{1cm} \times
    \Big\{1+y^{-\alpha_0}\big(-r_n \log F(a_{r_n})\big)\frac{L(a_{r_n}y)}{L(a_{r_n})}\Big\} 
    -\exp\big(-y^{-\alpha_0}\big)\big(1+y^{-\alpha_0}\big)\bigg]f'(y)\diff y
    \\&= 
    \sqrt{k_n} \int_{c/a_{r_n}}^\infty \Big[\big(-r_n \log F(a_{r_n})\big)\frac{L(a_{r_n}y)}{L(a_{r_n})}-1\Big]\exp\big(-\xi_{n,y} y^{-\alpha_0}\big)\xi_{n,y}y^{-2\alpha_0}f'(y)\diff y.
\end{align*}
Adding and subtracting $L(a_{r_n}y)/L(a_{r_n})$, we may write
\begin{align*}
&\phantom{{}={}}
\sqrt{k_n}
\Big[\big(-r_n \log F(a_{r_n})\big) \frac{L(a_ry)}{L(a_r)} - 1 \Big]
\\&=
\sqrt{k_n} \Big[\big(-r_n \log F(a_{r_n})\big) -1 \Big]\frac{L(a_{r_n}y)}{L(a_{r_n})}  + \sqrt{k_n} A(a_{r_n}) \cdot \frac1{A(a_{r_n}) }\Big[\frac{L(a_{r_n}y)}{L(a_{r_n})} - 1 \Big]
\\&=
(\lambda_3+o(1)(1+o(1)) + (\lambda_2+o(1)) (h_\tau(y)+o(1)) 
\\&=
\lambda_3 + \lambda_2 h_\tau(y) + o(1),
\end{align*}
where we have used  \eqref{eq:aniid}, \eqref{eq:lambda1} and Condition \ref{cond:sorv} at the second equality. 
As a consequence,
the integrand in the penultimate display converges pointwise in $y\in(0,\infty)$ to
\begin{align*}
    \{\lambda_3+\lambda_2h_\tau(y)\} y^{-2\alpha_0} \exp\big(-y^{-\alpha_0}\big)f'(y)
\end{align*}
Hence, in view of the dominated convergence theorem, we obtain that
\begin{align}
\label{eq:jn3-limit}
    \lim_{n\to\infty} J_{n,3}(f_j) 
    &= 
     \int_0^\infty \{\lambda_3+\lambda_2h_\tau(y)\} y^{-2\alpha_0} \exp\big(-y^{-\alpha_0}\big)f_j'(y)\diff y
    \notag\\&=:
     \lambda_3J_{31}(f_j)+\lambda_2J_{32}(f_j,\tau)
\end{align}
provided we show that

\begin{align*}
    f_n(y):= \sqrt{k_n}\Big[\big (-r_n \log F(a_{r_n})\big)\frac{L(a_{r_n}y)}{L(a_{r_n})}-1\Big]\exp\big(-\xi_{n,y} y^{-\alpha_0}\big)\xi_{n,y}y^{-2\alpha_0}f'(y) \indic_{[c/a_{r_n},\infty)}(y)
\end{align*}
can be bounded by an integrable function on $(0,\infty)$. The latter follows analogous to the argumentation on top of page 1459 in \cite{Bücher2018-disjoint}: first, by \eqref{eq:potter} and \eqref{eq:potter2}, we have, for sufficiently large $n$,
\begin{align*}
&\phantom{{}={}} \sqrt{k_n}\Big|\big(-r_n \log F(a_{r_n}))\frac{L(a_{r_n}y)}{L(a_{r_n})}-1\Big|
\\&\leq
\Big|\sqrt{k_n} \Big[ \big(-r_n \log F(a_{r_n})\big) - 1 \Big] \frac{L(a_{r_n}y)}{L(a_{r_n})}\Big|  + \Big|\sqrt{k_n} A(a_{r_n}) \cdot \frac1{A(a_{r_n}) }\Big(\frac{L(a_{r_n}y)}{L(a_{r_n})} - 1 \Big)\Big|
\\&\le 
(|\lambda_3|+\delta)(1+\delta)\max\{y^{-\delta}, y^\delta\} + (|\lambda_2| + \delta) 
c(\delta) \max\big\{ y^{\tau-\delta}, y^{\tau+\delta} \big\}
\end{align*}and
\begin{align*}
\xi_{n,y} 
&\geq 
\min\Big\{1,\frac{L(a_{r_n}y)}{L(a_{r_n})}\Big\}
\geq 
(1+\delta)^{-1} \min\big\{y^\delta,y^{-\delta}\big\},
\\
\xi_{n,y}
&\leq 
\max\Big\{1,\frac{L(a_{r_n}y)}{L(a_{r_n})}\Big\}
\leq (1+\delta)\max\big\{y^\delta,y^{-\delta}\big\}.
\end{align*}
Hence, in view of the bounds on $f_j'$, we conclude that there exists a finite constant $c'(\delta)$ such that, for $1\geq y\geq c/a_{r_n}$
\begin{align*}
    f_n(y)\leq c'(\delta)(1+y^{\tau})\exp\big\{-(1+\delta)^{-1}y^{-\alpha_0+\delta}\big\}y^{-3\alpha_0-3\delta-1}
\end{align*}
and the function is integrable since $\delta<\alpha_0$. On the other hand, for $y\geq 1$ we find the bound
\begin{align*}
    f_n(y)\leq c''(\delta)(1+y^{\tau})y^{2\delta-2\alpha_0-1}
\end{align*}
which is easily integrable on $[1,\infty)$.

It remains to calculate the limit on the right-hand side of \eqref{eq:jn3-limit}. Note that we may write

\begin{align*}
    J_{31}(f_j) = \alpha_0^{-1}\Exp\!\big[f_j'(Y)Y\big] = \begin{dcases}
        \Exp\!\Big[Y^{-\alpha_0}\big(\alpha_0^{-1}-\log Y\big)\Big], & f_1(y)=y^{-\alpha_0}\log y, \\
        -\Exp\!\Big[Y^{-\alpha_0}\Big], & f_2(y)=y^{-\alpha_0}, \\
        \alpha_0^{-1}, & f_3(y)=\log y,
    \end{dcases}
\end{align*}

\begin{align*}
    J_{32}(f_j,\tau) = \alpha_0^{-1}\Exp\!\big[h_\tau(Y)f_j'(Y)Y\big] = \begin{dcases}
        \Exp\!\Big[h_\tau(Y)Y^{-\alpha_0}\big(\alpha_0^{-1}-\log Y\big)\Big], & f_1(y)=y^{-\alpha_0}\log y, \\
        -\Exp\!\Big[h_\tau(Y)Y^{-\alpha_0}\Big], & f_2(y)=y^{-\alpha_0}, \\
        \alpha_0^{-1}\Exp\!\big[h_\tau(Y)\big], & f_3(y)=\log y,
    \end{dcases}
\end{align*}
where $Y\sim H_{\rho_\indi,\alpha_0,1}^{(2)}$. The expectations may again be calculated explicitly using Lemma \ref{cor:mom}. First,
\begin{align*}
    J_{31}(f_j)=\begin{cases}
        \frac{\Gamma(3)}{\alpha_0} + \frac{\Gamma'(3)} {\alpha_0} = \frac{5-2\gamma}{\alpha_0}, & f_1(y)=y^{-\alpha_0}\log y \\
        -\Gamma(3)=-2, & f_2(y)=y^{-\alpha_0} \\
        \frac1{\alpha_0}, & f_3(y)=\log y.
    \end{cases}
\end{align*}
Next, regarding $J_{32}(f_j,\tau)$, for $\tau =0$, we have $h_\tau(y)=\log y$, whence
\begin{align*}
    J_{32}(f_j, 0)=\begin{cases}
            -\frac{\Gamma'(3)}{\alpha_0^2}-\frac{\Gamma''(3)}{\alpha_0^2} =
            \frac{8\gamma-5-2\gamma^2-\pi^2/3}{\alpha_0^2}
        , & f_1(y)=y^{-\alpha_0}\log y \\
        \frac{\Gamma'(3)}{\alpha_0}=\frac{3-2\gamma}{\alpha_0}, & f_2(y)=y^{-\alpha_0} \\
        -\frac{\Gamma'(2)}{\alpha_0^2}=\frac{\gamma-1}{\alpha_0^2}, & f_3(y)=\log y.
    \end{cases}
\end{align*}
For $\tau <0$, we have $h_\tau(y)=(y^\tau-1)/\tau$, whence
\begin{align*}
    J_{32}(f_1, \tau)
    &=
    \frac{1}{\alpha_0\tau}\Exp\!\big[Y^{\tau-\alpha_0}-Y^{-\alpha_0}\big]+\frac{1}{\tau}\Exp\!\big[Y^{-\alpha_0}\log Y-Y^{\tau-\alpha_0}\log Y\big]  \\
    &=
    \frac{1}{\tau\alpha_0} \Big\{ \Gamma\Big(3+\frac{|\tau|}{\alpha_0}\Big)-\Gamma(3) \Big\} + \frac1{\tau}
    \Big\{ -\frac{\Gamma'(3)}{\alpha_0}+\frac{1}{\alpha_0}\Gamma'\Big(3+\frac{|\tau|}{\alpha_0}\Big)\Big\} \\
    &=
    \frac{1}{\tau\alpha_0}\Big\{ \Gamma\Big(3+\frac{|\tau|}{\alpha_0}\Big)-5+2\gamma+\Gamma'\Big(3+\frac{|\tau|}{\alpha_0}\Big)\Big\}, \\
    J_{32}(f_2, \tau)
    &=
    \frac{1}{\tau}\Exp\!\big[Y^{-\alpha_0}-Y^{\tau-\alpha_0}\big]
    =
    \frac{1}{\tau}\Big\{ 2 - \Gamma\Big(3+\frac{|\tau|}{\alpha_0}\Big) \Big\} \\
    J_{32}(f_3, \tau)
    &=
    \frac{1}{\tau\alpha_0}\Exp\!\big[Y^\tau - 1\big]
    =
    \frac{1}{\tau\alpha_0}\Big\{ \Gamma\Big(2+\frac{|\tau|}{\alpha_0}\Big)-1\Big\}.
\end{align*}

Overall, since $B_n(f)= J_{n,1}(f)+J_{n,2}(f)+J_{n,3}(f)$, we obtain from \eqref{eq:jn2-limit} and \eqref{eq:jn3-limit} and the subsequent calculations that Condition~\ref{cond:bias} is met with
\[
B(f_j) 
= 
\lim_{n \to \infty} B_n(f_j) 
=
-(\lambda_1/2) J_2(f_j)+\lambda_3 J_{31}(f_j)+ \lambda_2 J_{32}(f_j, \tau)
=
B_j(\alpha_0, \tau),
\]
with $B_j(\alpha_0, \tau)$ from \eqref{eq:bias-iid} (note that $|\tau|=-\tau$),
as claimed in \eqref{eq:b-limit}. Hence, the proof for the $\mbl=\dbl$ is finished. 

We next prove the claim regarding the sliding blocks maxima estimator, $\mbl=\sbl$, for which we apply Theorem \ref{thm:sl_asy}. In view of the proof for disjoint blocks, the only condition left to be validated is Condition~\ref{cond:all_diverge2}. For that purpose, we apply \eqref{eq:sr-cdf-iid} with $r=\tilde r_n$ to obtain that, for any $c>0$,
\begin{align*}
    \Prob\big(\min\{S_{1:\tilde r_n},\dots,S_{(k-1)\tilde r_n+1: \tilde r_n \tilde k_n}\}\leq c\big)
    & \le
    2\tilde k_n \tilde r_n F^{\tilde r_n-1}(c)
    \\&=
    \exp\big\{ \log \tilde k_n + \log \tilde r_n + \tilde r_n \log F(c) \big\}.
\end{align*}
The upper bound converges to zero since $\log k_n = o(r_n)$ (see the sentences after \eqref{eq:sr-cdf-iid}) implies $\log \tilde k_n = o(\tilde r_n)$.

Finally, the result regarding the bias-corrected estimators is an immediate consequence of Theorem~\ref{thm:bias-corrected-asymp}. 
\end{proof}

\begin{lemma}[Correction of the bias formula in Theorem 4.2 of \cite{Bücher2018-disjoint}] \label{lem:correction-bias}
Assume the notations and conditions from Theorem 4.2 in \cite{Bücher2018-disjoint}, and note that their $\lambda$ corresponds to our $\lambda_2$ from \eqref{eq:lambda2} and their $\rho$ corresponds to our $\tau$. Additionally, assume that   \eqref{eq:lambda3} is met. Then, the mean parameter of the limiting normal distribution in (4.10) of \cite{Bücher2018-disjoint} is given by $M(\alpha_0) B(\alpha_0, \tau)$, where $M(\alpha_0)$ is from their Equation (2.16) and where
\begin{align*}
B(\alpha,\tau)=
\frac{\lambda_2}{\bar\tau \alpha_0^2}\begin{pmatrix}
            2-\gamma-\Gamma(2+\bar\tau)-\Gamma'(2+\bar\tau) \\
            \alpha_0(\Gamma(2+\bar\tau)-1) \\
            1-\Gamma(1+\bar\tau)
        \end{pmatrix}+\frac{\lambda_3}{\alpha_0}\begin{pmatrix}
            2-\gamma\\
            -\alpha_0\\
            1
        \end{pmatrix},
    \end{align*}
for $\tau<0$ and
    \begin{align*}
        B(\alpha,0)=\frac{\lambda_2}{ \alpha_0^2}\begin{pmatrix}
            \gamma-(1-\gamma)^2-\pi^2/6 \\
            \alpha_0(1-\gamma) \\
            \gamma.
        \end{pmatrix}+\frac{\lambda_3}{\alpha_0}\begin{pmatrix}
            2-\gamma\\
            -\alpha_0\\
            1
        \end{pmatrix}.
    \end{align*}
    The expression is the same as the one in \cite{Bücher2018-sliding} if and only if $\lambda_3=0$.
\end{lemma}

\begin{proof}
A careful inspection of the proof of Theorem 4.2 in \cite{Bücher2018-disjoint} shows that the integrand in their $J_{n2}$ integral on page 1457 converges to $(\lambda_3+\lambda_2 h_\tau(y)) \exp(-y^{-\alpha_0})f'(y)$ rather than $\lambda_2 h_\tau(y) \exp(-y^{-\alpha_0})f'(y)$. This effectively implies that an additional bias term with coordinates $\lambda_3\alpha_0^{-1}\Exp[f_j'(Z)Z]$ appears, where $Z$ is Fréchet-distributed with parameter $(\alpha_0,1)$. Using their Lemma B.1, the three expectations $\Exp[f_j'(Z)Z]$ are $\Exp[Y^{-\alpha_0}(1-\alpha_0 \log Y)]=\Gamma(2)+\Gamma'(2)=2-\gamma$ for $f_j(y)=y^{-\alpha_0} \log(y)$, $\Exp[(-\alpha_0)Y^{-\alpha_0}]=-\alpha_0 \Gamma(2)=-\alpha_0$ for  $f_j(y)=y^{-\alpha_0}$ and $\Exp[1]=1$ for $f_3(y)=1/y$. The results follows by carefully assembling terms.
\end{proof}

\section{Further properties of the Fr\'echet-Welsch-distribution}
\label{sec:frechet-welsch-auxiliary}

\begin{lemma}[Existence of a Lebesgue-density]\label{lem:density-new3}
Suppose that $\rho\in\mathcal C$ is twice differentiable on $[0,1]$ at all but finitely many points.
Then $\mathcal W(\alpha, \rho, 1)$ has a Lebesgue density if and only if 
$\int_0^1 \rho'(z) + z \rho''(z) \diff z = -1$.
In that case, if $D$ denotes the finite set of points at which $\rho$ is not twice differentiable,
the density is given by
\[
h_{\rho, \alpha, 1}(x,y) 
=
-\alpha^2
\exp(-y^{-\alpha})\Big\{ (xy)^{-\alpha-1} \rho'\big( (y/x)^\alpha \big)+x^{-2\alpha-1}y^{\alpha-1}\rho''\big( (y/x)^\alpha \big)\Big\}
\]
for all $(x,y) \in S_{\rho,\alpha} = \{(x,y) \in (0,\infty)^2: x<y \text{ and } y \ne z^{1/\alpha}x \text{ for all } z \in D\}$ and $h_{\rho, \alpha, 1}(x,y)=0$ for all $(x,y) \notin S_{\rho,\alpha}$. 

Addendum: if $\rho$ is twice continuously differentiable on $[0,1]$, the condition $\int_0^1 \rho'(z) + z \rho''(z) \diff z = -1$ is equivalent to $\rho'(1)=-1$.
\end{lemma}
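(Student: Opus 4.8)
The plan is to identify the claimed function $h_{\rho,\alpha,1}$ as the classically computed mixed partial derivative of the joint cdf $H:=H_{\rho,\alpha,1}$ away from a Lebesgue-null set, to recognise it as the density of the absolutely continuous part of the probability measure $\mu:=\mathcal W(\rho,\alpha,1)$, and then to reduce absolute continuity of $\mu$ to the single scalar identity $\int_0^1(\rho'(z)+z\rho''(z))\,\diff z=-1$.

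First I would set $N:=\{y=x\}\cup\bigcup_{z\in D}\{y=z^{1/\alpha}x\}$, a finite union of rays through the origin in $(0,\infty)^2$, and $O:=(0,\infty)^2\setminus N$, so that $O$ is open, of full Lebesgue measure, and has only finitely many connected components (angular sectors). On $O\cap\{y>x\}$ one has $H(x,y)=\exp(-x^{-\alpha})$, independent of $y$, so $\partial_y H\equiv0$ there. On $O\cap\{x>y\}$ the argument $u:=(y/x)^\alpha$ lies in $(0,1)$ and $\rho$ is twice differentiable at $u$; differentiating $H(x,y)=\exp(-y^{-\alpha})\{1+\rho(u)y^{-\alpha}\}$ once in $x$ (only $\rho(u)$ depends on $x$) gives $\partial_xH=-\alpha\exp(-y^{-\alpha})x^{-\alpha-1}\rho'(u)$, and differentiating this in $y$ yields, after collecting terms, exactly the stated expression for $h_{\rho,\alpha,1}$; concavity and monotonicity of $\rho$ give $\rho'\le0$ everywhere and $\rho''\le0$ $\lambda$-a.e., whence $h_{\rho,\alpha,1}\ge0$ on $O$. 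Next I would evaluate $\iint_O h_{\rho,\alpha,1}\,\diff\lambda$: integrating first over $x\in(y,\infty)$ for fixed $y$ and substituting $u=(y/x)^\alpha$ (a decreasing bijection onto $(0,1)$), the two inner integrands collapse to $\rho'(u)$, resp.\ $u\,\rho''(u)$, times a pure power of $u$ that turns out to be $u^0$, resp.\ $u^1$, leaving $-\alpha\big[\big(\int_0^1\rho'(u)\,\diff u\big)\int_0^\infty e^{-y^{-\alpha}}y^{-2\alpha-1}\,\diff y+\big(\int_0^1 u\,\rho''(u)\,\diff u\big)\int_0^\infty e^{-y^{-\alpha}}y^{-\alpha-1}\,\diff y\big]$; the substitution $t=y^{-\alpha}$ shows both $y$-integrals equal $\Gamma(2)/\alpha=\Gamma(1)/\alpha=1/\alpha$, so $\iint_O h_{\rho,\alpha,1}\,\diff\lambda=-\int_0^1(\rho'(z)+z\rho''(z))\,\diff z$. (If $\int_0^1|\rho'(z)+z\rho''(z)|\,\diff z=\infty$ the stated condition fails and the argument below shows $\mu$ is not absolutely continuous, so we may assume finiteness.)

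The structural step is then: because $H$ is $C^2$ on the open full-measure set $O$, the rectangle increment of $H$ over any closed rectangle $R\subset O$ equals $\iint_R h_{\rho,\alpha,1}\,\diff\lambda$ by the iterated fundamental theorem of calculus, and a routine generation argument on each of the finitely many components of $O$ upgrades this to $\mu(A)=\int_A h_{\rho,\alpha,1}\,\diff\lambda$ for all Borel $A\subseteq O$. Thus the Lebesgue decomposition of $\mu$ is $\mu=\mu|_O+\mu|_N$ with $\mu|_O$ absolutely continuous of density $h_{\rho,\alpha,1}\indic_O$ and $\mu|_N$ singular, carried by the null set $N$ (collecting the possible atom on the diagonal and possible mass on the exceptional rays, the latter occurring precisely where $\rho'$ jumps; cf.\ Example~\ref{ex:models}). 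Since $1=\mu(\R^2)=\mu|_O(\R^2)+\mu|_N(\R^2)$ and $\mu|_O(\R^2)=-\int_0^1(\rho'+z\rho'')\,\diff z$, we conclude $\mu\ll\lambda$ iff $\mu|_N=0$ iff $-\int_0^1(\rho'(z)+z\rho''(z))\,\diff z=1$; and in that case a version of the density is $h_{\rho,\alpha,1}$ on $S_{\rho,\alpha}$ and $0$ on its complement, which is the asserted formula (one may cross-check it against the explicit density $p_{c,\alpha,\sigma}$ of the power model in Remark~\ref{rem:alternative_pseudoll}). Finally, if $\rho\in C^2([0,1])$ then $z\mapsto z\rho'(z)$ is $C^1$ on $[0,1]$ with derivative $\rho'(z)+z\rho''(z)$, so $\int_0^1(\rho'(z)+z\rho''(z))\,\diff z=[z\rho'(z)]_0^1=\rho'(1)$, and the condition reduces to $\rho'(1)=-1$.

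I expect the main obstacle to be the rigorous justification of the structural step, i.e.\ that the $\lambda$-a.e.\ computed mixed partial $h_{\rho,\alpha,1}$ really is the density of $\mu_{ac}$ and that no mass of $\mu$ escapes onto the null set $N$ undetected; the cleanest routes are either the generation/extension argument on each angular component of $O$ sketched above, or an appeal to the Lebesgue differentiation theorem, noting that $\lim_{r\to0}\mu(B_r(x))/\lambda(B_r(x))$ equals the density of $\mu_{ac}$ for $\lambda$-a.e.\ $x$ and equals $h_{\rho,\alpha,1}(x)$ for $x\in O$ by continuity of $h_{\rho,\alpha,1}$ there. The differentiations, the change of variables, and the gamma-integral evaluations in the earlier steps are routine.
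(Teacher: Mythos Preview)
Your proof is correct and follows essentially the same approach as the paper: compute the mixed partial $\partial_x\partial_y H$ on the open full-measure set, integrate it to obtain $-\int_0^1(\rho'(z)+z\rho''(z))\,\diff z$, and match the resulting measure with $\mathcal W(\rho,\alpha,1)$ on rectangles inside each angular sector. Your order of integration ($x$ first, then $y$) differs from the paper's double substitution, and your Lebesgue-decomposition framing makes the ``only if'' direction slightly more explicit than the paper's, but the core argument is the same.
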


\begin{proof}
Note that $h_{\rho, \alpha, 1}(x,y)\ge 0$ by non-increasingness and concavity of $\rho$, and that $ S_{\rho,\alpha}^c$ is a Lebesgue null set. Substituting $z=(y/x)^\alpha$ with $\diff z = \alpha y^{\alpha-1} x^{-\alpha}\diff y$ and then $u=x^{-\alpha}/z$ with $\diff u = - \alpha x^{-\alpha-1}/z \diff x$, we obtain that
\begin{align*}
\int_{\R^2} h_{\rho, \alpha, 1}(x,y)  \diff (x,y)
&=
\int_0^\infty \int_0^x h_{\rho, \alpha, 1}(x,y) \diff y \diff x
\\&=
- \alpha \int_0^\infty \int_0^1 \exp(-x^{-\alpha} z^{-1}) \Big\{ x^{-2\alpha-1} z^{-2} \rho'(z) + x^{-\alpha-1} \rho''(z) \Big\} \diff z \diff x
\\&=
- \int_0^\infty \int_0^1 \exp(-u) \big\{ u \rho'(z) + z \rho''(z)\big\} \diff z \diff u 
\\&=
- \int_0^1 \rho'(z) + z \rho''(z) \diff z \ge 0.
\end{align*}
Hence, $B \mapsto \mu(B) := \int_B h_{\rho, \alpha, 1}(x,y)  \diff (x,y)$ defines a finite Borel measure on $\R^2$. It is a probability measure if and only if $\int_0^1 \rho'(z) + z \rho''(z) \diff z = -1$. 

Now, elementary calculations show that, for all $(x,y) \in S_{\rho,\alpha}$, we have
$
 \frac{\partial^2}{\partial x\partial y} H_{\rho,\alpha,1}(x,y) = h_{\rho, \alpha, 1}(x,y). 
$
As a consequence, the measures $\mu$ and $\mathcal W(\rho, \alpha,1)$ assign the same measure to all rectangles in $(0,\infty)^2$ that are completely contained in $S_{\rho,\alpha}$. Since $(0,\infty)^2\setminus S_{\rho,\alpha}$ consists of finitely many straight lines intersecting at the origin, the two measures must coincide on $S_{\rho,\alpha}$. This implies the assertion.

The addendum follows straightforwardly from partial integration.
\end{proof}

Recall the gamma function $\Gamma(x)=\int_0^\infty t^{x-1}e^{-t}\diff t$ and let $\Gamma'$ denote its first derivative. Note  that $\Gamma'(1) = - \gamma$, with $\gamma\approx 0.5772$ the Euler-Mascheroni constant.

\begin{lemma}[Moments]\label{cor:mom} 
    Fix $\alpha_0\in(0,\infty)$ and let  
    $H^{(1)}_{\alpha_0,1}$ and $H^{(2)}_{\alpha_0,1}$ denote the marginal cdfs of the $\mathcal{W}_\indi(\alpha_0,1)$ distribution; see \eqref{eq:w_margCDF1} and \eqref{eq:w_margCDF2} with $\rho_0=1$, respectively. Then
    \begin{align*}
        \mathrm{(a)} && \int_0^\infty y^{-\alpha}\diff H^{(2)}_{\alpha_0,1}(y)&=\Gamma\Big(2+\frac{\alpha}{\alpha_0}\Big) && \alpha\in(-2\alpha_0,\infty),\\
        \mathrm{(b)} && \int_0^\infty y^{-\alpha}\log y\diff H^{(2)}_{\alpha_0,1}(y) &=\frac{-1}{\alpha_0}\Gamma'\Big(2+\frac{\alpha}{\alpha_0}\Big) && \alpha\in(-2\alpha_0,\infty),\\
        \mathrm{(c)} && \int_0^\infty y^{-\alpha}\log^2 y\diff H^{(2)}_{\alpha_0,1}(y) &=\frac{1}{\alpha_0^2}\Gamma''\Big(2+\frac{\alpha}{\alpha_0}\Big) && \alpha\in(-2\alpha_0,\infty),\\
        \mathrm{(d)} && \int_0^\infty \log x\diff H^{(1)}_{\alpha_0,1}(x)&=\frac{-1}{\alpha_0}\Gamma'(1)=\frac{\gamma}{\alpha_0} && \alpha\in(-\alpha_0,\infty).
    \end{align*}
\end{lemma}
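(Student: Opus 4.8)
\textbf{Proof proposal for Lemma~\ref{cor:mom}.}

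The plan is to reduce all four integrals to the gamma function and its derivatives via a single substitution, then differentiate under the integral sign to handle the logarithmic factors. First I would recall from \eqref{eq:w_margCDF2} with $\rho_0=1$ that the second marginal density of $\mathcal{SW}(\alpha_0,1)$ is $p^{(2)}(y) = \alpha_0 y^{-\alpha_0-1}\exp(-y^{-\alpha_0})[-1+1+y^{-\alpha_0}] \cdot$... more precisely, using \eqref{eq:general_dens} with $\rho_0=1$, $p^{(2)}(y) = \alpha_0 y^{-\alpha_0-1}\exp(-y^{-\alpha_0}) y^{-\alpha_0} = \alpha_0 y^{-2\alpha_0-1}\exp(-y^{-\alpha_0})$. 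Then for part (a), substituting $t = y^{-\alpha_0}$ (so $\mathrm dt = -\alpha_0 y^{-\alpha_0-1}\mathrm dy$, $y = t^{-1/\alpha_0}$) gives
\begin{align*}
\int_0^\infty y^{-\alpha}\diff H^{(2)}_{\alpha_0,1}(y)
= \int_0^\infty t^{1+\alpha/\alpha_0} e^{-t}\diff t = \Gamma\Big(2+\frac{\alpha}{\alpha_0}\Big),
\end{align*}
which converges precisely when $2+\alpha/\alpha_0 > 0$, i.e.\ $\alpha > -2\alpha_0$. For part (d), the first marginal is Fréchet$(\alpha_0,1)$ with density $p^{(1)}(x) = \alpha_0 x^{-\alpha_0-1}\exp(-x^{-\alpha_0})$; the same substitution $t=x^{-\alpha_0}$ turns $\int_0^\infty \log x \, \diff H^{(1)}_{\alpha_0,1}(x)$ into $-\alpha_0^{-1}\int_0^\infty (\log t) e^{-t}\diff t = -\alpha_0^{-1}\Gamma'(1) = \gamma/\alpha_0$, using the classical identity $\Gamma'(1)=-\gamma$.

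For parts (b) and (c), the cleanest route is to differentiate part (a) with respect to $\alpha$. Writing $G(\alpha) := \int_0^\infty y^{-\alpha}\diff H^{(2)}_{\alpha_0,1}(y) = \Gamma(2+\alpha/\alpha_0)$, formal differentiation under the integral sign gives $G'(\alpha) = -\int_0^\infty y^{-\alpha}\log y \diff H^{(2)}_{\alpha_0,1}(y)$ and $G''(\alpha) = \int_0^\infty y^{-\alpha}(\log y)^2 \diff H^{(2)}_{\alpha_0,1}(y)$, while on the gamma side $G'(\alpha) = \alpha_0^{-1}\Gamma'(2+\alpha/\alpha_0)$ and $G''(\alpha) = \alpha_0^{-2}\Gamma''(2+\alpha/\alpha_0)$. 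Matching these yields (b) and (c). To make the differentiation-under-the-integral step rigorous one invokes dominated convergence (or the standard theorem on analyticity of integrals depending on a parameter): on any compact subinterval $[\alpha_-,\alpha_+]\subset(-2\alpha_0,\infty)$ the integrand $y^{-\alpha}(\log y)^m e^{-y^{-\alpha_0}}$, after the substitution $t=y^{-\alpha_0}$, is dominated by an integrable function of $t$ uniformly in $\alpha$, since $t^{1+\alpha/\alpha_0}|\log t|^m e^{-t}$ is integrable for $1+\alpha_-/\alpha_0 > -1$.

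I do not anticipate a genuine obstacle here; the only care needed is bookkeeping the interchange of differentiation and integration and the convergence ranges. The mildly fiddly point is confirming that the density used for $H^{(2)}_{\alpha_0,1}$ is indeed $\alpha_0 y^{-2\alpha_0-1}e^{-y^{-\alpha_0}}$ rather than the general form in \eqref{eq:general_dens} with a surviving $(1-\rho_0)$ term — this is where setting $\rho_0=1$ causes the leading term to vanish and simplifies everything. Once that is in place, all four identities are immediate consequences of the substitution $t=y^{-\alpha_0}$ together with the integral representations $\Gamma(s)=\int_0^\infty t^{s-1}e^{-t}\diff t$, $\Gamma'(s)=\int_0^\infty t^{s-1}(\log t)e^{-t}\diff t$, and $\Gamma''(s)=\int_0^\infty t^{s-1}(\log t)^2 e^{-t}\diff t$.
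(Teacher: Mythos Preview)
Your proposal is correct and follows essentially the same approach as the paper: both use the substitution $t=y^{-\alpha_0}$ to reduce the integrals to the gamma function and its derivatives, and both identify the second-marginal density as $\alpha_0 y^{-2\alpha_0-1}e^{-y^{-\alpha_0}}$ when $\rho_0=1$. The only minor difference is that for parts (b) and (c) the paper carries out the substitution directly in each integral (picking up the $\log y = -\alpha_0^{-1}\log t$ factor explicitly), whereas you obtain them by differentiating the identity in (a) with respect to $\alpha$; both routes are equally valid and yield the same result with comparable effort.
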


\begin{proof}
    Define the substitution $z=y^{-\alpha_0}$. Then we have for part $\mathrm{(a)}$
    \begin{align*}
        \int_0^\infty y^{-\alpha}\diff H^{(2)}_{\alpha_0,1}(y)&=\int_0^\infty y^{-\alpha}\cdot \alpha_0 y^{-1-2\alpha_0}\exp\big(-y^{-\alpha_0}\big)\diff y \\&= \int_0^\infty z^{\alpha/\alpha_0}z^{(1+2\alpha_0)/\alpha_0}\exp(-z)z^{-(\alpha_0+1)/\alpha_0}\diff z\\
        &= \int_0^\infty z^{(\alpha+\alpha_0)/\alpha_0}\exp(-z)\diff z
        \\&= \Gamma\Big(2+\frac{\alpha}{\alpha_0}\Big).
    \intertext{With the same substitution for part $\mathrm{(b)}$,}
        \int_0^\infty y^{-\alpha}\log y\diff H^{(2)}_{\alpha_0,1}(y)&=\int_0^\infty y^{-\alpha}\log y\cdot \alpha_0 y^{-1-2\alpha_0}\exp\big(-y^{-\alpha_0}\big)\diff y \\
        &= \int_0^\infty z^{\alpha/\alpha_0}\log\big(z^{-1/\alpha}\big)z^{(1+2\alpha_0)/\alpha_0}\exp(-z)z^{-(\alpha_0+1)/\alpha_0}\diff z\\
        &= \frac{-1}{\alpha_0}\int_0^\infty z^{(\alpha+\alpha_0)/\alpha_0}\log z\cdot\exp(-z)\diff z
        \\&= \frac{-1}{\alpha_0}\Gamma'\Big(2+\frac{\alpha}{\alpha_0}\Big).
    \intertext{Similarly, we receive for part $\mathrm{(c)}$,}
        \int_0^\infty y^{-\alpha}\log^2 y\diff H^{(2)}_{\alpha_0,1}(y)&=\int_0^\infty y^{-\alpha}\log^2 y\cdot \alpha_0 y^{-1-2\alpha_0}\exp\big(-y^{-\alpha_0}\big)\diff y \\
        &= \int_0^\infty z^{\alpha/\alpha_0}\log^2\big(z^{-1/\alpha}\big)z^{(1+2\alpha_0)/\alpha_0}\exp(-z)z^{-(\alpha_0+1)/\alpha_0}\diff z\\
        &= \frac{1}{\alpha_0^2}\int_0^\infty z^{(\alpha+\alpha_0)/\alpha_0}\log z\cdot\exp(-z)\diff z
        \\&= \frac{1}{\alpha_0^2}\Gamma''\Big(2+\frac{\alpha}{\alpha_0}\Big).
    \end{align*}
    For part $\mathrm{(d)}$, we refer to \cite{Bücher2018-disjoint}, Lemma B.1.
\end{proof}

\begin{lemma}[Moments, more general] \label{lem:mom}
Fix $\alpha_0\in(0,\infty)$ and let $H_{\rho, \alpha_0, 1}^{(1)}$ and $H_{\rho, \alpha_0, 1}^{(2)}$ denote the marginal cdfs of the $\mathcal{W}(\rho,\alpha_0,1)$ distribution; see \eqref{eq:w_margCDF1} and \eqref{eq:w_margCDF2}, respectively. Then, for any $\alpha\in (-\alpha_0,\infty)$,
    \begin{align*}
        \mathrm{(a)} && \int_0^\infty y^{-\alpha}\diff H_{\rho, \alpha_0, 1}^{(2)}(y)&=\Upsilon_{\rho_0}(\alpha/\alpha_0), \\
        \mathrm{(b)} && \int_0^\infty y^{-\alpha}\log y\diff H_{\rho, \alpha_0, 1}^{(2)}(y)&=\frac{-1}{\alpha_0}\Upsilon_{\rho_0}'(\alpha/\alpha_0), \\
        \mathrm{(c)} && \int_0^\infty y^{-\alpha}\log^2 y\diff H_{\rho, \alpha_0, 1}^{(2)}(y)&=\frac{1}{\alpha_0^2}\Upsilon_{\rho_0}''(\alpha/\alpha_0), \\
        \mathrm{(d)} && \int_0^\infty \log x\diff H_{\rho, \alpha_0, 1}^{(1)}(x)&=\frac{-1}{\alpha_0}\Gamma'(1)=\frac{\gamma}{\alpha_0},
    \end{align*}
    where $\Upsilon_{\rho_0}(x):=\rho_0\Gamma(x+2)+(1-\rho_0)\Gamma(x+1)$ 
    and where $\rho_0:=\rho(0)$.
\end{lemma}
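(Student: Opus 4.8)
The plan is to reduce everything to the already-established iid case in Lemma~\ref{cor:mom} together with one elementary substitution. Part~(d) does not involve $\rho_0$ at all and is literally identical to Lemma~\ref{cor:mom}(d), so I would simply refer to the computation there (or to \cite{Bücher2018-disjoint}, Lemma~B.1). For parts~(a)--(c), recall from \eqref{eq:general_dens} that the second marginal of $\mathcal W(\rho,\alpha_0,1)$ has Lebesgue density $p^{(2)}(y)=\alpha_0 y^{-\alpha_0-1}\exp(-y^{-\alpha_0})\{1-\rho_0+\rho_0 y^{-\alpha_0}\}$ on $(0,\infty)$. I would split this as $p^{(2)}(y)=(1-\rho_0)\,q_1(y)+\rho_0\,q_2(y)$, where $q_1(y)=\alpha_0 y^{-\alpha_0-1}\exp(-y^{-\alpha_0})$ is exactly the density $p^{(1)}$ of the $\Frechet(\alpha_0,1)$-distribution and $q_2(y)=\alpha_0 y^{-2\alpha_0-1}\exp(-y^{-\alpha_0})$ is the density appearing in Lemma~\ref{cor:mom}. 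By linearity of the integral, the $q_2$-contribution to each of the three integrals is precisely $\rho_0$ times the corresponding right-hand side in Lemma~\ref{cor:mom}(a)--(c), i.e. $\rho_0\Gamma(2+\alpha/\alpha_0)$, $-\rho_0\alpha_0^{-1}\Gamma'(2+\alpha/\alpha_0)$ and $\rho_0\alpha_0^{-2}\Gamma''(2+\alpha/\alpha_0)$.

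For the $q_1$-contribution I would use the substitution $z=y^{-\alpha_0}$, under which $\alpha_0 y^{-\alpha_0-1}\diff y=-\diff z$, $y^{-\alpha}=z^{\alpha/\alpha_0}$ and $\log y=-\alpha_0^{-1}\log z$, so that $\int_0^\infty y^{-\alpha}q_1(y)\diff y=\int_0^\infty z^{\alpha/\alpha_0}e^{-z}\diff z=\Gamma(1+\alpha/\alpha_0)$, convergent exactly when $\alpha>-\alpha_0$; the extra $\log y$ and $\log^2 y$ factors produce $-\alpha_0^{-1}\Gamma'(1+\alpha/\alpha_0)$ and $\alpha_0^{-2}\Gamma''(1+\alpha/\alpha_0)$ via the classical formula $\Gamma^{(k)}(s)=\int_0^\infty z^{s-1}(\log z)^k e^{-z}\diff z$. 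Adding the $q_1$- and $q_2$-contributions and recalling $\Upsilon_{\rho_0}(x)=\rho_0\Gamma(x+2)+(1-\rho_0)\Gamma(x+1)$, hence $\Upsilon_{\rho_0}^{(k)}(x)=\rho_0\Gamma^{(k)}(x+2)+(1-\rho_0)\Gamma^{(k)}(x+1)$, yields (a)--(c) directly. (A slightly slicker variant for (b) and (c) is to prove (a) first and then differentiate under the integral sign in $\alpha$, using $\partial_\alpha y^{-\alpha}=-y^{-\alpha}\log y$ and $\partial_\alpha\Upsilon_{\rho_0}(\alpha/\alpha_0)=\alpha_0^{-1}\Upsilon_{\rho_0}'(\alpha/\alpha_0)$; I would present whichever is shorter to write.)

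There is no genuine obstacle here. The only points requiring a modicum of care are the convergence constraint: the $q_2$-piece only needs $\alpha>-2\alpha_0$, but the $q_1$-piece is the binding one and forces $\alpha>-\alpha_0$, which is why the statement is restricted to $\alpha\in(-\alpha_0,\infty)$; and, if one uses the differentiation-under-the-integral route, justifying the interchange by exhibiting, for $\alpha$ in a compact subinterval $[\alpha_-,\alpha_+]\subset(-\alpha_0,\infty)$, a fixed integrable majorant for $y^{-\alpha}|\log y|^{j}p^{(2)}(y)$, $j\in\{0,1,2\}$ — near $0$ the superexponential factor $\exp(-y^{-\alpha_0})$ dominates every power and logarithm, and near $\infty$ one bounds $y^{-\alpha}\le y^{-\alpha_-}$ with $\alpha_->-\alpha_0$ — which is entirely routine.
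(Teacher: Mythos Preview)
Your proposal is correct and follows essentially the same approach as the paper: both observe that the second marginal density decomposes as $p^{(2)}_{\rho,\alpha_0,1}(y)=(1-\rho_0)\,p^{(1)}_{\rho_\indi,\alpha_0,1}(y)+\rho_0\,p^{(2)}_{\rho_\indi,\alpha_0,1}(y)$ and then invoke the already-computed moments for each piece. The only cosmetic difference is that the paper handles the Fr\'echet ($q_1$) contribution by a direct reference to Lemma~B.1 in \cite{Bücher2018-disjoint}, whereas you spell out the substitution $z=y^{-\alpha_0}$; both amount to the same thing.
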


\begin{proof}
    Recall the marginal densities in \eqref{eq:general_dens}. One quickly notices that 
    \begin{align*}
        p_{\rho,\alpha_0,1}^{(2)}(y)&=\alpha_0  y^{-\alpha-1}\exp\big(-y^{-\alpha}\big)\big[1-\rho_0+\rho_0 (y)^{-\alpha}\Big]\\
        &=(1-\rho_0)p_{\indi,\alpha_0,1}^{(1)}(y)+\rho_0 p_{\indi,\alpha_0,1}^{(2)}(y).
    \end{align*}
    Consequently, if $(X,Y)\sim\mathcal{W}(\rho, \alpha_0,1)$ and $(X',Y')\sim\mathcal{W}_\indi(\alpha_0,1)$,
    \begin{align*}
    \Exp[f(Y)]=\rho_0 \Exp[f(Y')] + (1- \rho_0)  \Exp[f(X')]
\end{align*}
Now the claim directly follows from Lemma \ref{cor:mom} and Lemma B.1 in \cite{Bücher2018-disjoint}. 
\end{proof}

\section{Asymptotic covariance formulas}
\label{sec:asymptotic-covariances}

\begin{lemma}[Asymptotic covariance for the disjoint blocks top-two estimator]\label{lem:cov_disjoint_general}
    Suppose $(X, Y)$ is a random vector from the Fréchet-Welsch-distribution $\mathcal W(\alpha, \rho, 1)$ with joint cdf $H_{\rho, \alpha, 1}$ as in \eqref{eq:H_Frech}. 
    Let $\varpi = \varpi_{\rho_0}$ be as in \eqref{eq:varpi} and let $(f_1,f_2,f_3,f_4)$ be defined as in \eqref{eq:fct_H} with $\alpha_1=\varpi \alpha$, that is
    \[
    f_1(x,y) = y^{-\varpi \alpha}\log y,  \quad
    f_2(x,y) = y^{-\varpi \alpha}, \quad
    f_3(x,y) = \log y,  \quad
    f_4(x,y) = \log x.
    \]
    Then, for $i,j \in \{1, \dots, 4\}$,
    \[
    \sigma_{ij}^{(\dbl)}  
    := 
    \Cov_{(X,Y) \sim \mathcal W(\rho, \alpha,1)}(f_i(X,Y),f_j(X,Y))
    \]
    is given by {\small
\begin{align*}
    \sigma_{11}^{(\dbl)} 
    &= 
    \alpha^{-2}\Big\{\Gamma (2 \varpi+1) \Big((2 \rho_0 \varpi+1) \psi ^{(0)}(2 \varpi+1)^2+2 \rho_0 \psi ^{(0)}(2 \varpi+1)
    \\& \hspace{1.6cm}
    +(2 \rho_0 \varpi+1) \psi ^{(1)}(2 \varpi+1)\Big)-\Gamma (\varpi+1)^2 ((\rho_0 \varpi+1) \psi ^{(0)}(\varpi+1)+\rho_0)^2\Big\}
    \\ \sigma_{12}^{(\dbl)} 
    &=  
    \alpha^{-1}\Big\{(\rho_0 \varpi+1) \Gamma (\varpi+1)^2 ((\rho_0 \varpi+1) \psi ^{(0)}(\varpi+1)+\rho_0) 
    \\& \hspace{1.6cm}
    -\Gamma (2 \varpi+1) ((2 \rho_0 \varpi+1) \psi ^{(0)}(2 \varpi+1)+\rho_0)\Big\}
    \\ \sigma_{13}^{(\dbl)} 
    &= 
    \alpha^{-2}\Big\{(\gamma -\rho_0) \Gamma (\varpi+1) ((\rho_0 \varpi+1) \psi ^{(0)}(\varpi+1)+\rho_0)
    \\& \hspace{1.6cm}
    +\varpi \Gamma (\varpi) \Big((\rho_0 \varpi+1) \psi ^{(0)}(\varpi+1)^2+2 \rho_0 \psi ^{(0)}(\varpi+1)+(\rho_0 \varpi+1) \psi ^{(1)}(\varpi+1)\Big)\Big\} 
    \\ \sigma_{14}^{(\dbl)}
    &=
    \alpha^{-2}\Gamma (\varpi) \big\{\gamma+2\psi ^{(0)}(\varpi)+\varpi \psi ^{(0)}(\varpi)^2+\varpi\gamma \psi ^{(0)}(\varpi)+\varpi \psi ^{(1)}(\varpi)\big\} 
    \\& \hspace{1.6cm}
    + \alpha^{-2} \Gamma(\varpi+1) \Big[ [\rho_0\gamma+\rho_0\psi^{(0)}(\varpi+1)-\rho_1] \big\{ 1+\varpi \psi^{(0)}(\varpi+1)\big\} + \rho_0 \varpi\psi^{(1)}(\varpi+1) \Big]
    \\ \sigma_{22}^{(\dbl)} 
    &= 
    (2 \rho_0 \varpi+1) \Gamma (2 \varpi+1)-(\rho_0 \varpi+1)^2 \Gamma (\varpi+1)^2 
    \\ \sigma_{23}^{(\dbl)} 
    &= 
    -\alpha^{-1}\Big\{\Gamma (\varpi+1) \left(\rho_0^2 (-\varpi)+\gamma  \rho_0 \varpi+(\rho_0 \varpi+1) \psi ^{(0)}(\varpi+1)+\gamma \right)\Big\}
    \\ \sigma_{24}^{(\dbl)}
    &=
    - \alpha^{-1}\Gamma(\varpi) \Big[1 + \varpi(\gamma+\psi ^{(0)}(\varpi)) + \varpi^2[\rho_0\gamma+\rho_0\psi^{(0)}(\varpi+1)-\rho_1] \Big]
    \\ \sigma_{33}^{(\dbl)} 
    &= 
    \frac{\pi ^2-6 \rho_0^2}{6 \alpha^2}
    \\ \sigma_{34}^{(\dbl)}
    &= 
    \frac{\pi^2}{6\alpha^2}-\frac{\rho_1}{\alpha^2}
    \\ \sigma_{44}^{(\dbl)} 
    &=
    \frac{\pi^2}{6\alpha^2},
\end{align*}
}where $\rho_0=\rho(0)$ and  $\rho_1 = \int_0^1z^{-1}[\rho_0 - \rho(z)]\diff z \ge 0$.

Moreover, we have
\begin{align} \label{eq:bound_on_rho_1}
    \rho_0 + (1-\rho_0)\log(1-\rho_0) \le \rho_1 \le \rho_0
\end{align}
with equality on the left for $\rho(z) = \min(\rho_0, 1-z)$ and equality on the right for $\rho(z) = \rho_0 \cdot (1-z)$. Finally, if $\rho=\rho_\indi$, the matrix simplifies to
\begin{align*}
    \big(\sigma_{ij}^{(\dbl)} \big)_{i,j=1}^4 
    =\frac{1}{\alpha^2}
    \begin{pmatrix}
        3-10\gamma+2\gamma^2+\pi^2 & \alpha(2\gamma-5) & \frac{\pi^2}{3}-1-\gamma & \frac{\pi^2}{3}-1-\gamma\\
        \alpha(2\gamma-5) & 2\alpha^2 & -\alpha & -\alpha \\
        \frac{\pi^2}{3}-1-\gamma & -\alpha & \frac{\pi^2}{6}-1 & \frac{\pi^2}{6}-1\\
        \frac{\pi^2}{3}-1-\gamma & -\alpha & \frac{\pi^2}{6}-1 & \frac{\pi^2}{6}
    \end{pmatrix} .
\end{align*}
\end{lemma}

\begin{proof}[Proof of Lemma~\ref{lem:cov_disjoint_general}]
The claimed (in)equalities in \eqref{eq:bound_on_rho_1} are immediate. 
The assertion for $\rho=\rho_\indi$ follows from the general formulas by a straightforward calculation, noting that $\rho_{\indi,0}=\rho_{\indi,1}=1$.
Next, $\sigma_{44}^{(\dbl)}=\pi^2/(6\alpha^2)$ by \cite[Lemma B.2]{Bücher2018-disjoint}. All expressions with $i,j \in \{1,2,3\}$ only depend on the marginal distribution $H^{(2)}_{\rho, \alpha,1}$ from \eqref{eq:w_margCDF2} and can be calculated explicitly using a CAS. The remaining entries $\sigma_{i4}^{(\dbl)}$ with $i=1,2,3$ are more challenging and require some manipulation before they can be evaluated using a CAS. 

First, by Hoeffding's covariance formula for absolutely continuous functions, \cite[Theorem 3.1]{Lo2017},
\begin{align*}
    \sigma_{i4}^{(\dbl)} &= \int_0^\infty \Big\{ \int_0^x \Big( H_{\rho,\alpha,1}(x,y) -H_{\rho,\alpha,1}^{(1)}(x)H^{(2)}_{\rho,\alpha,1}(y)\Big) f_i'(y)\diff y \\
    &\hspace{2cm}+H_{\rho,\alpha,1}^{(1)}(x)\int_x^\infty\Big( 1-H_{\rho,\alpha,1}^{(2)}(y)\Big) f_i'(y)\diff y \Big\} \frac{1}{x}\diff x=:\int_0^\infty (I_{1i}+I_{2i})(x)\frac{\diff x}{x}. 
\end{align*}
We have
\begin{align*}
    I_{1i}(x) &= \int_0^x \Big[\exp(-y^{-\alpha})\big\{1+\rho\big((y/x)^\alpha\big)y^{-\alpha}\big\} -\exp(-x^{-\alpha}-y^{-\alpha})\big\{1+\rho_0y^{-\alpha}\big\}\Big] f_i'(y)\diff y 
    \\&= 
    (1-\exp(-x^{-\alpha}))\int_0^x \exp(-y^{-\alpha}) f_i'(y)\diff y 
    \\& \hspace{1cm}
    +\int_0^x \Big[\rho\big((y/x)^\alpha\big) -\rho_0\exp(-x^{-\alpha})\Big]\exp(-y^{-\alpha})y^{-\alpha} f_i'(y)\diff y =: J_{1i}(x)+J_{2i}(x)\\
    I_{2i}(x)
    &=
    \exp(-x^{-\alpha})\int_x^\infty\Big( 1-\exp(-y^{-\alpha})\big\{1+\rho_0y^{-\alpha}\big\}\Big) f_i'(y)\diff y
    \\&=
    \exp(-x^{-\alpha})\int_x^\infty\big( 1-\exp(-y^{-\alpha})\big) f_i'(y)\diff y 
    \\& \hspace{1cm}
    -\rho_0\exp(-x^{-\alpha})\int_x^\infty\exp(-y^{-\alpha})y^{-\alpha} f_i'(y)\diff y =:J_{3i}(x)+J_{4i}(x).
\end{align*}
Note that $J_{1i}$ and $J_{3i}$ do not depend on $\rho$, whereas $J_{2i}$ and $J_{4i}$ do. Thus, we split
\begin{align*}
    \sigma_{i4}^{(\dbl)} &= \int_0^\infty (J_{1i}+J_{3i})(x) \frac{\diff x}{x}+\int_0^\infty (J_{2i}+J_{4i})(x)\frac{\diff x}{x}. 
\end{align*}
and evaluate both summands separately.

\medskip
\noindent
\underline{\textit{The first summand}.}
Starting with the first summand, we have for $i=3$, using the CAS, 
\begin{align} \label{eq:J13-J33}
    \int_0^\infty (J_{13}+J_{33})(x)\frac{\diff x}{x} 
    \caseq 
    \frac{\pi ^2}{6 \alpha^2}.
\end{align}
The terms for $i=1,2$ turn out to be a bit trickier, as they involve $\alpha_1=\alpha\varpi$ appearing in $f_i$. First,
\begin{align*}
    \int_0^\infty (J_{12}+J_{32})(x)\frac{\diff x}{x}
    \caseq 
    -\alpha^{-1}\Gamma(\varpi)+\varpi\int_0^\infty \exp(-x^{-\alpha})   \Gamma (\varpi) -   \Gamma (\varpi,x^{-\alpha}) \frac{\diff x}{x},
\end{align*}
with $\Gamma(x,y)=\int_y^\infty t^{x-1}e^{-t} \diff t$ being the incomplete Gamma function. Substituting $u=x^{-\alpha}$ with $\diff x / x=-\alpha^{-1}\diff u/u$, we obtain
\begin{align} \label{eq:J12-J32}
    \int_0^\infty (J_{12}+J_{32})(x)\frac{\diff x}{x} 
    &= \nonumber
    -\alpha^{-1}\Gamma(\varpi)+\varpi\alpha^{-1}\int_0^\infty \exp(-u)   \Gamma (\varpi) - \Gamma (\varpi,u) \frac{\diff u}{u}
    \\&\caseq 
    -\alpha^{-1}\Gamma(\varpi)-\varpi\alpha^{-1}\Gamma (\varpi) (\gamma+\psi ^{(0)}(\varpi) )
\end{align}
with $\psi^{(0)}(z)=\Gamma'(z)/\Gamma(z)$ the digamma function.

The case $i=1$ remains challenging. Recalling  the definition of $f_i$ in \eqref{eq:fct_H}, we have
\begin{align*}
    f_1'(y)=y^{-\alpha\varpi-1}(1-\alpha\varpi \log y) = -\alpha^{-1}\varpi^{-1} f_2'(y)-\alpha\varpi y^{-\alpha\varpi-1}\log y,
\end{align*}
so that
\begin{align*}
    \int_0^\infty (J_{11}+J_{31})(x)\frac{\diff x}{x}
    &=
    -\alpha^{-1}\varpi^{-1}\int_0^\infty (J_{12}+J_{32})(x)\frac{\diff x}{x}
    \\& \hspace{1cm} 
    -\alpha\varpi\int_0^\infty \Big[ (1-\exp(-x^{-\alpha}))\int_0^x \exp(-y^{-\alpha}) y^{-\alpha\varpi-1}\log y\diff y
    \\&\hspace{2cm}
    +\exp(-x^{-\alpha})\int_x^\infty\big( 1-\exp(-y^{-\alpha})\big) y^{-\alpha\varpi-1}\log y\diff y \Big]\frac{\diff x}{x}.
\end{align*}
Let us begin with tackling the inner integrals. Write $\Gamma_1(x,y)=\partial\Gamma(x,y)/ \partial x$.
\begin{align*}
    \int_0^x \exp(-y^{-\alpha}) y^{-\alpha\varpi-1}\log y\diff y 
    &\caseq 
    -\alpha^{-2}\Gamma_1(\varpi,x^{-\alpha})\\
    \int_x^\infty\big( 1-\exp(-y^{-\alpha})\big) y^{-\alpha\varpi-1}\log y\diff y
    & \caseq 
    \alpha^{-2}\big[\varpi^{-2}x^{-\alpha \varpi} (\alpha \varpi \log (x)+1) \\
    & \hspace{2cm}+\Gamma (\varpi) \psi ^{(0)}(\varpi)-\Gamma_1(\varpi,x^{-\alpha}) \big].
\end{align*}
Let us split the outer integral into two parts, the first one being
\begin{align*}
    \int_0^\infty \exp(-x^{-\alpha})\alpha^{-2}\varpi^{-2}x^{-\alpha \varpi} (\alpha \varpi \log (x)+1)\frac{\diff x}{x}
    \caseq 
    \alpha^{-3}\varpi^{-2}\Gamma (\varpi) [1-\varpi \psi ^{(0)}(\varpi)].
\end{align*}
It remains to calculate
\begin{align*}
    &\phantom{{}={}}\alpha^{-2}\int_0^\infty \exp(-x^{-\alpha}) \big[\Gamma (\varpi) \psi ^{(0)}(\varpi)-\Gamma_1(\varpi,x^{-\alpha})\big]-\Gamma_1(\varpi,x^{-\alpha})(1-\exp(-x^{-\alpha}))\frac{\diff x}{x}\\
    &=\alpha^{-2}\int_0^\infty \exp(-x^{-\alpha})\Gamma (\varpi) \psi ^{(0)}(\varpi)-\Gamma_1(\varpi,x^{-\alpha})\frac{\diff x}{x}\\
    &=\alpha^{-3}\int_0^\infty \exp(-u)\Gamma (\varpi) \psi ^{(0)}(\varpi)-\Gamma_1(\varpi,u)\frac{\diff u}{u}\\
    &\caseq 
    -\alpha^{-3}\Gamma (\varpi) \{\psi ^{(0)}(\varpi) (\psi ^{(0)}(\varpi)+\gamma )+\psi ^{(1)}(\varpi)\},
\end{align*}
where we used the substitution $u=x^{-\alpha}$ with $\diff x / x=-\alpha^{-1}\diff u/u$ again.
Assembling terms,
\begin{align}
    \int_0^\infty (J_{11}+J_{31})(x) \frac{\diff x}{x}
    &= \nonumber
    \alpha^{-1}\varpi^{-1}\{\alpha^{-1}\Gamma(\varpi)+\varpi\alpha^{-1}\Gamma (\varpi) (\gamma+\psi ^{(0)}(\varpi) )\}
    \\& \hspace{1cm}\nonumber  
    - \alpha^{-2}\varpi^{-1}\Gamma (\varpi) [1-\varpi \psi ^{(0)}(\varpi)]
    \\& \hspace{1cm} \nonumber
    +\varpi\alpha^{-2}\Gamma (\varpi) \{\psi ^{(0)}(\varpi) (\psi ^{(0)}(\varpi)+\gamma )+\psi ^{(1)}(\varpi)\}\\
    &= \label{eq:J11-J31}
    \alpha^{-2}\Gamma (\varpi) \big\{\gamma+2\psi ^{(0)}(\varpi)+\varpi \psi ^{(0)}(\varpi)^2+\varpi\gamma \psi ^{(0)}(\varpi)+\varpi \psi ^{(1)}(\varpi)\big\}.
\end{align}

\medskip
\noindent\underline{\textit{The second summand}.}
We now consider the second summand, which is given by
\begin{align}
    (J_{2i}+J_{4i})(x) 
    &= \nonumber
    \int_0^x \rho\big((y/x)^\alpha\big) \exp(-y^{-\alpha})y^{-\alpha} f_i'(y)\diff y
    \\&\hspace{2cm} \nonumber
    -\rho_0 \exp(-x^{-\alpha})\int_0^\infty \exp(-y^{-\alpha})y^{-\alpha} f_i'(y)\diff y
    \\&= \label{eq:j2j4-j5j6}
    J_{5i}(x) - \rho_0 \exp(-x^{-\alpha}) J_{6i}(x),
\end{align}
where, substituting $z=(y/x)^\alpha$,
\begin{align}
    J_{5i}(x) 
    &:= \nonumber
    \int_0^x \rho\big((y/x)^\alpha\big) \exp(-y^{-\alpha})y^{-\alpha} f_i'(y)\diff y
    \\&\phantom{:}= \label{eq:J5}
    \alpha^{-1}\int_0^1 \rho(z) \exp(-x^{-\alpha}z^{-1})f_i'(xz^{1/\alpha})x^{1-\alpha}z^{1/\alpha-2}\diff z
\end{align}
and where 
\begin{align}\label{eq:J6}
    J_{6i}(x) := \int_0^\infty \exp(-y^{-\alpha})y^{-\alpha} f_i'(y)\diff y 
    \caseq 
    \begin{cases*}
        \alpha^{-1}\Gamma(1+\varpi) \{1+\varpi \psi ^{(0)}(1+\varpi)\}
        , & $i=1$, \\
        -\varpi \Gamma(1+\varpi), & $i=2$, \\
        \alpha^{-1}, & $i=3$.
    \end{cases*}
\end{align}

\medskip
\noindent\underline{\textit{The second summand for $i=3$.}}
Since $f_3'(y)=1/y$, the previous three displays yield
\begin{align*}
    \int_0^\infty (J_{23}+J_{43})(x) \frac{\diff x}{x} &=\alpha^{-1}\int_0^\infty \int_0^1 \rho(z) \exp(-x^{-\alpha}z^{-1})x^{-\alpha} z^{-2}\diff z-\rho_0\exp(-x^{-\alpha}) \frac{\diff x}{x}
\end{align*}
The inner integral can be expressed as follows
\begin{align*}
     \int_0^1 \rho(z) \exp(-x^{-\alpha}z^{-1})x^{-\alpha} z^{-2}\diff z
     &=
     \int_0^1 \rho(z) \frac{\partial}{\partial z} \exp(-x^{-\alpha}z^{-1})\diff z\\
      &= \exp(-x^{-\alpha}z^{-1})\rho(z)\bigg|_{z=0}^1-\int_0^1  \exp(-x^{-\alpha}z^{-1})\rho'(z)\diff z \\
      &=
      -\int_0^1  \exp(-x^{-\alpha}z^{-1})\rho'(z)\diff z.
\end{align*}
It follows that
\begin{align*}
    \int_0^\infty (J_{23}+J_{43})(x) \frac{\diff x}{x} &=-\alpha^{-1}\int_0^\infty \int_0^1 \exp(-x^{-\alpha}z^{-1}) \rho'(z) \diff z+ \rho_0\exp(-x^{-\alpha})\frac{\diff x}{x}\\
    &\caseq 
    -\alpha^{-1}\int_0^\infty \int_0^1 \exp(-x^{-\alpha}z^{-1})\big( \rho'(z)+\rho_0\big)\diff z+ \rho_0x^{-\alpha}\mathrm{Ei}(-x^{-\alpha})\frac{\diff x}{x}\\
    &\caseq 
    -\alpha^{-1}\int_0^\infty \int_0^1 \exp(-x^{-\alpha}z^{-1})\big( \rho'(z)+\rho_0\big)\diff z\frac{\diff x}{x}-\frac{\rho_0}{\alpha^2},
\end{align*}
where, for $z>0$, $\mathrm{Ei}(-z)= -E_1(z)=-\int_1^\infty e^{-tz}/t \diff t$ denotes the exponential integral. In view of the fact that $\int_0^1 \big(\rho'(z)+\rho_0 \big)\diff z=0$, we have
\begin{align*}
    \int_0^\infty \int_0^1 \exp(-x^{-\alpha})\big(\rho'(z)+\rho_0 \big)\diff z\frac{\diff x}{x}=0
\end{align*}
and we may add this as a `productive zero' to interchange the order of integration to see
\begin{align}
    &\phantom{{}={}}\nonumber
    \int_0^\infty \int_0^1 \exp(-x^{-\alpha}/z)\big(\rho'(z)+\rho_0 \big)\diff z\frac{\diff x}{x}
    \\&=\nonumber
    \int_0^\infty \int_0^1 [\exp(-x^{-\alpha}/z)-\exp(-x^{-\alpha})]\big(\rho'(z)+\rho_0 \big)\diff z\frac{\diff x}{x}\\
    &=\nonumber
    \int_0^1 \int_0^\infty [\exp(-x^{-\alpha}/z)-\exp(-x^{-\alpha})]\frac{\diff x}{x}\big(\rho'(z)+\rho_0 \big)\diff z\\
    & \caseq  \nonumber
    \alpha^{-1}\int_0^1\log(z)\big(\rho'(z)+\rho_0 \big)\diff z
    \\&=\nonumber
    -\rho_0\alpha^{-1}+\alpha^{-1}\int_0^1\log(z)\rho'(z)\diff z\\
    &=\nonumber
    -\rho_0\alpha^{-1}+\alpha^{-1}\Big[\log(z)\big(\rho(z)-\rho_0\big)\Big]_0^1
    -\rho_0\alpha^{-1}\int_0^1\frac{\rho(z)-\rho_0}{ z}\diff z\\
    &=\label{eq:rho1-calc}
    -\rho_0\alpha^{-1}-\alpha^{-1}\int_0^1\frac{\rho(z)-\rho_0}{z}\diff z.
\end{align}
Here, the last identity holds because $\lim_{z\downarrow0}\log(z)(\rho_0-\rho(z))=0$ as a consequence of the fact that $\rho_0(1-z) \le \rho(z) \le \rho_0$. In total, we get
\begin{align*}
    \int_0^\infty (J_{23}+J_{43})(x) \frac{\diff x}{x}=\frac{1}{\alpha^2}\int_0^1\frac{\rho(z)-\rho_0}{ z}\diff z= -\frac{\rho_1}{\alpha^2}.
\end{align*}
Together with \eqref{eq:J13-J33} this implies the claimed formula for $\sigma_{34}^{(\dbl)}$.

\medskip
\noindent\underline{\textit{The second summand for $i=2$.}}
Since $f'_2(y)=-\varpi\alpha y^{-\varpi\alpha-1}$, the term $J_{52}$ from \eqref{eq:J5} can be written as
\begin{align}
    J_{52}(x)
    &= \label{eq:J52}
    \alpha^{-1}\int_0^1 \rho(z) \exp(-x^{-\alpha}z^{-1})f_2'(xz^{1/\alpha})x^{1-\alpha}z^{1/\alpha-2}\diff z
    \\&= \nonumber
    -\varpi\int_0^1 \rho(z) \exp(-x^{-\alpha}z^{-1})x^{-(1+\varpi)\alpha}z^{-2-\varpi}\diff z
    \\& \caseq \nonumber
    -\varpi\Big[\rho(z) \Gamma(\varpi+1,x^{-\alpha}/z)\Big]_{z=0}^1 
    +\varpi \int_0^1  \Gamma(\varpi+1,x^{-\alpha}/z) \rho'(z)\diff z
    \\& \caseq \nonumber
    \varpi \int_0^1  \Gamma(\varpi+1,x^{-\alpha}/z)\rho'(z)\diff z.
\end{align}
Thus, by \eqref{eq:J6}, 
\begin{align} \label{eq:int_J2J4}
    \int_0^\infty (J_{22}+J_{42})(x)\frac{\diff x}{x} 
    &= \nonumber
    \int_0^\infty J_{52}(x) - \rho_0 \exp(-x^{-\alpha}) J_{62}(x)\frac{\diff x}{x} 
    \\&=\nonumber
    \varpi\int_0^\infty  \int_0^1 \Gamma(\varpi+1,x^{-\alpha}/z) \rho'(z) \diff z+ \rho_0 \Gamma(\varpi+1)\exp(-x^{-\alpha}) \frac{\diff x}{x}
    \\&=
    \varpi \int_0^\infty (J_{72} + J_{82})(x) \frac{\diff x}{x},
\end{align}
where
\begin{align*}
    J_{72}(x) &= \int_0^1 \Gamma(\varpi+1,x^{-\alpha}/z) \big(\rho'(z)+\rho_0\big) \diff z, \\
    J_{82}(x) &= \rho_0\Big[ \Gamma(\varpi+1)\exp(-x^{-\alpha}) - \int_0^1 \Gamma(\varpi+1,x^{-\alpha}/z) \diff z \Big].
\end{align*}
The second integral can be calculated explicitly using a CAS: first,
\begin{align*}
    J_{82}(x)
    & \caseq 
    \rho_0 \Big[\Gamma(\varpi+1)\exp(-x^{-\alpha}) - \varpi^{-1}\exp(-x^{-\alpha})x^{-(1+\varpi)\alpha} 
    \\ &\hspace{5cm}
    - \varpi^{-1}(\varpi-x^{-\alpha})\Gamma(\varpi+1,x^{-\alpha}) \Big],
\end{align*}
which implies
\begin{align} \label{eq:int_J7}
    \int_0^\infty J_{82}(x) \frac{\diff x}{x}
    \caseq 
    \rho_0\alpha^{-1}\varpi [1-\gamma-\psi^{(0)}(\varpi+1)] \Gamma (\varpi).
\end{align}
Next, regarding the integral over $J_{72}$, we aim at applying Fubini's theorem, which requires some preparation. First, recall that $\int_0^1 \ell(x)\big(\rho'(z) +\rho_0 \big)\diff z=0$ for any expressions $\ell(x)$ not depending on $z$. Choosing 
\begin{align*}
\ell(x)
&= 
\int_0^1 \Gamma(\varpi+1,x^{-\alpha}/z)\diff z
\\& \caseq 
   \varpi^{-1}\exp(-x^{-\alpha})x^{-(1+\varpi)\alpha} 
    + \varpi^{-1}(\varpi-x^{-\alpha})\Gamma(\varpi+1,x^{-\alpha})
\end{align*}
and adding this as a productive zero, we obtain that
\begin{align*}
    \int_0^\infty J_{72}(x) \frac{\diff x}{x}
    &=
    \int_0^\infty \int_0^1 \big[ \Gamma(\varpi+1,x^{-\alpha}/z) - \ell(x)\big] \big(\rho'(z)+\rho_0\big) \diff z  \frac{\diff x}{x}
    \\&=
     \int_0^1 \int_0^\infty\big[ \Gamma(\varpi+1,x^{-\alpha}/z) - \ell(x)\big]\frac{\diff x}{x} \big(\rho'(z)+\rho_0\big) \diff z  
\end{align*}
by Fubini's theorem.
Treating the remaining inner integral, we start with substituting $u=x^{-\alpha}$
\begin{align*}
    &\phantom{{}={}}
    \int_0^\infty\big[ \Gamma(\varpi+1,x^{-\alpha}/z) - \ell(x)\big]\frac{\diff x}{x}
    \\&=
    \int_0^\infty \Gamma(\varpi+1,u/z)-\varpi^{-1}\{e^{-u}u^{1+\varpi}+(\varpi-u)\Gamma(\varpi+1,u)\}\frac{\diff u}{\alpha u}
    \\&=
    \alpha^{-1}\varpi^{-1} \int_0^\infty \Gamma(\varpi+1,u)\diff u+\int_0^\infty \Gamma(\varpi+1,u/z)-\Gamma(\varpi+1,u)\frac{\diff u}{\alpha u} -\alpha^{-1}\Gamma(\varpi)\\
    & \caseq 
    \alpha^{-1}\{\varpi^{-1} \Gamma(\varpi+2)+\Gamma(\varpi+1)\log(z) -\Gamma(\varpi)\}.
\end{align*}
As a consequence,
\begin{align*}
    \int_0^\infty J_{72}(x) \frac{\diff x}{x}
    &=
    \alpha^{-1} \int_0^1\{\varpi^{-1} \Gamma(\varpi+2)+\Gamma(\varpi+1)\log(z) -\Gamma(\varpi)\}\big(\rho'(z)+\rho_0\big) \diff z
    \\&=
    \alpha^{-1} \Gamma(\varpi+1)\int_0^1\log(z) \big(\rho'(z)+\rho_0\big) \diff z
    \\&=
    -\alpha^{-1}\Gamma(\varpi+1) (\rho_0- \rho_1) = -\alpha^{-1}\varpi\Gamma(\varpi) (\rho_0- \rho_1)
\end{align*}
where the last line follows as in \eqref{eq:rho1-calc}. Together with \eqref{eq:int_J2J4} and \eqref{eq:int_J7}, we obtain that
\begin{align} \label{eq:j2j4-res}
    \int_0^\infty (J_{22}+J_{42})(x)\frac{\diff x}{x} 
    &= \nonumber
    \rho_0\alpha^{-1}\varpi^2 [1-\gamma-\psi^{(0)}(\varpi+1)] \Gamma (\varpi) - \alpha^{-1}\varpi^2\Gamma(\varpi)(\rho_0-\rho_1)
    \\&=
    -\alpha^{-1}\varpi^2 [\rho_0\gamma+\rho_0\psi^{(0)}(\varpi+1)-\rho_1] \Gamma (\varpi).
\end{align}
Together with \eqref{eq:J12-J32} this implies the assertion about  $\sigma_{24}^{(\dbl)}$.

\medskip
\noindent\underline{\textit{The second summand for $i=1$.}}
Since $f'_1(y)=y^{-\varpi\alpha-1} ( 1-\varpi\alpha \log y)$, the term $J_{51}$ from \eqref{eq:J5} can be written as
\begin{align*}
    J_{51}(x)
    &=
    \alpha^{-1}\int_0^1 \rho(z) \exp(-x^{-\alpha}z^{-1})f_1'(xz^{1/\alpha})x^{1-\alpha}z^{1/\alpha-2}\diff z
    \\&= 
    \alpha^{-1}\int_0^1 \rho(z)\exp(-x^{-\alpha}z^{-1})x^{-(1+\varpi)\alpha}z^{-2-\varpi}\{1+\varpi\log (x^{-\alpha}z^{-1})\}\diff z
    \\&=
    -\alpha^{-1}\varpi^{-1} J_{52}(x) + \alpha^{-1} \varpi\int_0^1 \rho(z)\exp(-x^{-\alpha}z^{-1})x^{-(1+\varpi)\alpha}z^{-2-\varpi}\log (x^{-\alpha}z^{-1})\diff z,
\end{align*}
with $J_{52}(x)$ from \eqref{eq:J52}. Next, using partial integration and properties of the incomplete gamma function, 
\begin{align*}
    &\phantom{{}={}}
    \int_0^1 \rho(z)\exp(-x^{-\alpha}z^{-1})x^{-(1+\varpi)\alpha}z^{-2-\varpi}\log (x^{-\alpha}z^{-1})\diff z
    \\ & \caseq 
    \Big[\rho(z)\partial_\varpi \Gamma(\varpi+1,x^{-\alpha}/z)\Big]_{z=0}^1-\int_0^1\partial_\varpi \Gamma(\varpi+1,x^{-\alpha}/z) \rho'(z)\diff z \\
    &\caseq 
    -\int_0^1\partial_\varpi\Gamma(\varpi+1,x^{-\alpha}/z)\rho'(z)\diff z.
\end{align*}
Overall, by \eqref{eq:j2j4-j5j6} and \eqref{eq:J6},
\begin{align*}
    &\phantom{{}={}} 
    \int_0^\infty (J_{21}+J_{41})(x)  \frac{\diff x}x
    \\&=
    - \alpha^{-1}\int_0^\infty \Big[ \varpi^{-1} J_{52}(x) + \varpi\int_0^1\partial_\varpi\Gamma(\varpi+1,x^{-\alpha}/z)\rho'(z)\diff z \\
    &\hspace{3cm} + \rho_0  \exp(-x^{\alpha})\Gamma(1+\varpi) \big\{1+\varpi \psi^{(0)}(1+\varpi) \big\} \Big] \frac{\diff x}x
    \\&=
    - \alpha^{-1}\varpi^{-1}\int_0^\infty \Big[  J_{52}(x) - \rho_0 \exp(-x^{-\alpha})J_{62}(x) \Big] \frac{\diff x}x
    \\&\hspace{.5cm}
    - \alpha^{-1}\varpi \int_0^\infty\Big[ \int_0^1\partial_\varpi\Gamma(\varpi+1,x^{-\alpha}/z)\rho'(z)\diff z + \rho_0  \exp(-x^{-\alpha})\Gamma(1+\varpi) \psi^{(0)}(1+\varpi) \Big] \frac{\diff x}x
\end{align*}
where we have used that $J_{62}(x)=-\varpi \Gamma(1+\varpi)$ by \eqref{eq:J6}. We have seen before, see \eqref{eq:int_J2J4} and \eqref{eq:j2j4-res}, that 
\begin{align*}
    \int_0^\infty \Big[  J_{52}(x) - \rho_0 \exp(-x^{-\alpha})J_{62}(x) \Big] \frac{\diff x}x
    &=
    \int_0^\infty (J_{22}+J_{42})(x)  \frac{\diff x}x
    \\&=
    -\alpha^{-1}\varpi^2 [\rho_0\gamma+\rho_0\psi^{(0)}(\varpi+1)-\rho_1] \Gamma (\varpi)
    \\&=
    -\alpha^{-1}\varpi[\rho_0\gamma+\rho_0\psi^{(0)}(\varpi+1)-\rho_1] \Gamma (\varpi+1),
\end{align*} 
where the last identity follows from $\varpi \Gamma(\varpi)=\Gamma(\varpi+1)$.
It remains to calculate the second integral in the penultimate display; in view of $\Gamma(\varpi+1)\psi^{(0)}(\varpi+1)=\Gamma'(1+\varpi)$, \eqref{eq:int_J2J4} and \eqref{eq:j2j4-res}, it can be written as
\begin{align*}
    &\phantom{{}={}}\int_0^\infty \int_0^1\partial_\varpi\Gamma(\varpi+1,x^{-\alpha}/z) \rho'(z) \diff z + \rho_0 \exp(-x^{-\alpha})\Gamma(\varpi+1)\psi^{(0)}(\varpi+1)\frac{\diff x}{x}\\
    &=\frac{\partial}{\partial \varpi} \int_0^\infty \int_0^1\Gamma(\varpi+1,x^{-\alpha}/z)\rho'(z)\diff z + \rho_0\Gamma(\varpi+1)\exp(-x^{-\alpha})\frac{\diff x}{x}
    \\&= 
    \frac{\partial}{\partial \varpi} \int_0^\infty\varpi^{-1} (J_{22}+J_{42})(x)  \frac{\diff x}x
    \\&= 
    -\frac{\partial}{\partial \varpi} \Big[ \alpha^{-1} [\rho_0\gamma+\rho_0\psi^{(0)}(\varpi+1)-\rho_1] \Gamma (\varpi+1) \Big]
    \\&=
    -\alpha^{-1} \Big[ \rho_0 \psi^{(1)}(\varpi+1)\Gamma(\varpi+1) + [\rho_0\gamma+\rho_0\psi^{(0)}(\varpi+1)-\rho_1] \Gamma'(\varpi+1) \Big]
    \\&=
    -\alpha^{-1}\Gamma(\varpi+1) \Big[ \rho_0 \psi^{(1)}(\varpi+1) + [\rho_0\gamma+\rho_0\psi^{(0)}(\varpi+1)-\rho_1] \psi^{(0)}(\varpi+1) \Big]
\end{align*}
Assembling terms, we get
\begin{align*}
    &\phantom{{}={}}
    \int_0^\infty (J_{21}+J_{41})(x) \frac{\diff x}{x} 
    \\&=
    \alpha^{-2}[\rho_0\gamma+\rho_0\psi^{(0)}(\varpi+1)-\rho_1] \Gamma (\varpi+1) 
    \\&\hspace{1cm}
    +\alpha^{-2} \varpi\Gamma(\varpi+1) \Big[ \rho_0 \psi^{(1)}(\varpi+1) 
    + [\rho_0\gamma+\rho_0\psi^{(0)}(\varpi+1)-\rho_1] \psi^{(0)}(\varpi+1) \Big] 
    \\&=
    \alpha^{-2} \Gamma(\varpi+1) \Big[ [\rho_0\gamma+\rho_0\psi^{(0)}(\varpi+1)-\rho_1] \big\{ 1+\varpi \psi^{(0)}(\varpi+1)\big\} + \rho_0 \varpi\psi^{(1)}(\varpi+1) \Big]
\end{align*}
Together with \eqref{eq:J11-J31}, this implies the claimed formula for $\sigma_{14}^{(\dbl)}$.

\end{proof}

\begin{lemma}[Asymptotic covariance for the sliding block top-two estimator under independence]\label{lem:cov_sl}
    Suppose $(X, Y, \tilde X, \tilde Y)$ is a random vector whose bivariate cdfs needed for evaluating the following covariances are given by $K_{\rho,\alpha,\zeta}$ from \eqref{eq:sl_distr}.
    Let $(f_1,f_2,f_3,f_4)$ be defined as in \eqref{eq:fct_H} with $\alpha_1=\alpha\varpi$, that is, 
    \[
    f_1(x,y) = y^{-\alpha\varpi}\log y,  \quad
    f_2(x,y) = y^{-\alpha\varpi}, \quad
    f_3(x,y) = \log y,  \quad
    f_4(x,y) = \log x.
    \]
    Then, for $i,j \in \{1, \dots, 4\}$,
    \[
    s_{ij}  
    := s_{ij}(\alpha) 
    := 
    \int_0^1 \Cov_{K_{\rho,\alpha,\zeta}}\big(f_i(X,Y), f_j(\tilde X, \tilde Y)\big) \diff\zeta,
    \]
    may be evaluated explicitly. Precise formulas are provided in the proof and a Mathematica notebook. For $\rho=\rho_{\indi}$, the formulas simplify to {\small
\begin{align*}
s_{11} &=
    \frac{-126 \zeta (3)-174+\pi ^2 (11+24 \log (2))-12 \gamma  \left(\pi ^2-23+(11-4 \log (2)) \log (8)\right)}{12\alpha^2} 
    \\ &\hspace{4.2cm}
    + \frac{6 \log (2) (46+\log (2) (\log (256)-33))+18 \gamma ^2 (\log (256)-5)}{12\alpha^2} \\
s_{12} &= 
    \frac{11.5-\pi^2/2 + 6\log(2)^2-16.5\log(2) + 1.5 \, \gamma \, (8\log(2) - 5)}{ \alpha} \\
s_{13} &= \frac{ 4-3.5 \zeta(3) + 3.5\log(2)^2 -9\log(2)  + 7 \pi^2/12 -  \gamma \left(\pi^2/3 + 2 - 7 \log(2)\right)}{\alpha^2} \\
s_{14} &=
    \frac{- 7 \zeta (3) / {4}+\pi ^2/{3}+4+\log (2) \{ \log (8)-8 \} +\gamma  \{ -{\pi ^2}/{6}-3+\log (64)\} }{\alpha^2} \\
s_{22} &= 12\log(2) - 15/2 \\
s_{23} &= -\frac{\pi^2/3 + 2 - 7 \log (2)}{ \alpha} \\
s_{24} &= -\frac{\pi^2/6 + 3 - 6 \log(2)}{ \alpha} \\
s_{33} &= -\frac{\pi^2/6 + 5 - 10 \log (2)}{\alpha^2} \\
s_{34} &= -\frac{\pi^2/12 + 3 - 6 \log(2)}{\alpha^2} \\
s_{44} &= 4\log(2)-2 
\end{align*}}
\end{lemma}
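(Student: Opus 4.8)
The plan is to reduce everything to the explicit limiting bivariate laws of Lemma~\ref{lem:JoiWeaKon}, Hoeffding's covariance identity, and the Gamma‑function moment formulas of Lemma~\ref{cor:mom}. First I would specialize Lemma~\ref{lem:JoiWeaKon} to the i.i.d.\ case by inserting $\rho=\rho_\indi$, i.e.\ $\rho_0=1$ and $\rho(t)=1-t$. Since $f_1,f_2,f_3$ from \eqref{eq:fct_H} depend only on the second coordinate and $f_4$ only on the first, each covariance $\Cov(f_i(X,Y),f_j(\tilde X,\tilde Y))$ needs only one bivariate marginal of $K_{\rho_\indi,\alpha,\zeta}$ from \eqref{eq:sl_distr}: the law of $(Y_\zeta,\tilde Y_\zeta)$ for $i,j\in\{1,2,3\}$ (part~[d]), the law of $(Y_\zeta,\tilde X_\zeta)$ for $i\in\{1,2,3\}$, $j=4$ (part~[b]), and the law of $(X_\zeta,\tilde X_\zeta)$ for $i=j=4$ (part~[a], which is the function $F_{\alpha,\zeta}$ appearing in Lemma~\ref{lem:JoiWeaKon}). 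After inserting $\rho=\rho_\indi$, all of these are $F_{\alpha,\zeta}(u,v)$ times a polynomial in $u^{-\alpha},v^{-\alpha}$ whose coefficients are polynomials in $\zeta$ and involve the kink terms $(u\wedge v)^{-\alpha}$ and $\big(\tfrac{u\wedge v}{u\vee v}\big)^{\alpha}$.

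For each relevant pair $(U,V)$ I would apply Hoeffding's covariance identity
\begin{align*}
\Cov\big(g(U),h(V)\big)
=\int_{(0,\infty)^2} g'(u)\,h'(v)\,\big\{\Prob(U\le u,V\le v)-\Prob(U\le u)\,\Prob(V\le v)\big\}\,\diff u\,\diff v,
\end{align*}
which is legitimate here because $g,h\in\{y\mapsto y^{-\alpha},\,y\mapsto y^{-\alpha}\log y,\,y\mapsto\log y,\,x\mapsto\log x\}$ are absolutely continuous of locally bounded variation and the relevant moments are finite by Lemma~\ref{cor:mom}; the product term uses the marginal cdfs $H^{(1)}_{\rho_\indi,\alpha,1}$ and $H^{(2)}_{\rho_\indi,\alpha,1}$ from \eqref{eq:w_margCDF1}--\eqref{eq:w_margCDF2}. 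The point of this representation is that it bypasses the singular component that $K_{\rho_\indi,\alpha,\zeta}$ carries on the diagonal $\{u=v\}$ (inherited from $(u\wedge v)^{-\alpha}$). By Fubini I would then pull $\int_0^1\diff\zeta$ inside, so that
\begin{align*}
s_{ij}=\int_{(0,\infty)^2} g_i'(u)\,h_j'(v)\,\Delta_{ij}(u,v)\,\diff u\,\diff v,\qquad
\Delta_{ij}(u,v)=\int_0^1\big\{K^{(ij)}_\zeta(u,v)-M^{(i)}(u)M^{(j)}(v)\big\}\,\diff\zeta,
\end{align*}
with $K^{(ij)}_\zeta$ the appropriate bivariate marginal of $K_{\rho_\indi,\alpha,\zeta}$ and $M^{(\cdot)}$ the matching univariate marginal cdf.

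The two integrations are then carried out in turn. For the inner $\zeta$‑integral one writes the exponent of $F_{\alpha,\zeta}$ as $(u\wedge v)^{-\alpha}+\zeta\{u^{-\alpha}+v^{-\alpha}-2(u\wedge v)^{-\alpha}\}$ on each of the two regions $u<v$ and $u>v$ separately, reducing $\Delta_{ij}$ to an elementary combination of integrals $\int_0^1\zeta^m e^{-\zeta s}\diff\zeta$ minus the marginal‑product contribution. For the outer integral I would substitute $u=y^{-\alpha}$, $v=\tilde y^{-\alpha}$ (and likewise the $x$‑variable), turning every integral into one over $(0,\infty)^2$ with integrand a product of powers, logarithms and exponentials; these evaluate in closed form through $\Gamma,\Gamma',\Gamma''$ at $1,2,3$, producing $\gamma$, $\Gamma'(2)=1-\gamma$, $\Gamma'(3)=3-2\gamma$, $\Gamma''(1)=\gamma^2+\pi^2/6$, and so on. The genuinely non‑rational constants then come from the remaining one‑dimensional $\zeta$‑integrals: integrals of the form $\int_0^1\diff\zeta/(a-\zeta)$ yield $\log 2$, dilogarithmic integrals $\int_0^1(\log\zeta)\,\diff\zeta/(a-\zeta)$ yield $\pi^2$ (via $\mathrm{Li}_2(1/2)=\pi^2/12-\tfrac12\log^2 2$), and — only for $s_{11},s_{13},s_{14}$, which pair two $\log$‑weighted second‑maximum functionals — trilogarithmic integrals $\int_0^1(\log^2\zeta)\,\diff\zeta/(a-\zeta)$ produce Apéry's constant $\zeta(3)$; the other six entries carry at most one $\log$‑weighted factor and hence no $\zeta(3)$, matching the table. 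As a sanity check, the integrand of $s_{ij}$ at $\zeta=0$ reduces to the disjoint‑block covariance $s_{ij}(\alpha)$ of Lemma~\ref{lem:db-cov} (fully overlapping windows) and vanishes at $\zeta=1$ (independent windows).

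The main obstacle is purely the volume and delicacy of the algebra: the region split $u\lessgtr v$ must be tracked through every integral, the kinks in $(u\wedge v)^{-\alpha}$ and in $\rho\big((\tfrac{u\wedge v}{u\vee v})^{\alpha}\big)=1-(\tfrac{u\wedge v}{u\vee v})^{\alpha}$ handled carefully, and the special constants $\log 2$, $\pi^2$, $\zeta(3)$, $\gamma$ attributed to the correct $\zeta$‑integrals. The structural steps (specialization of Lemma~\ref{lem:JoiWeaKon}, Hoeffding's identity, Fubini, the substitution $u=y^{-\alpha}$) are routine; it is the ensuing bookkeeping — most efficiently done with computer algebra, consistent with the implementation in \cite{haufs_xtremes_24} and with the analogous covariance computation in \cite{Bücher2018-sliding} — that makes the proof "tedious but straightforward".
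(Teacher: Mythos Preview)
Your proposal is correct and follows essentially the same approach as the paper: the paper's proof likewise applies a functional version of Hoeffding's covariance identity (citing Theorem~3.1 of \cite{Lo2017}) to the explicit bivariate limit cdfs from Lemma~\ref{lem:JoiWeaKon}, and then relegates the resulting integral evaluations to a Wolfram Mathematica notebook. Your write-up supplies more detail on how the symbolic computation would be organized (region split, the $\zeta$-integral first, the substitution $u=y^{-\alpha}$, and the provenance of $\log 2$, $\pi^2$, $\zeta(3)$), but the underlying method is identical.
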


\begin{proof}[Proof of Lemma~\ref{lem:cov_sl}]
Throughout, ${}_pF_q(a_1, \dots, a_p; b_1, \dots, b_q;z)$ denotes the generalized hypergeometric function. Its regularized version is denoted by ${}_p\tilde F_q(a_1, \dots, a_p; b_1, \dots, b_q;z) ={}_pF_q(a_1, \dots, a_p; b_1, \dots, b_q;z) / [ \Gamma(b_1) \cdots \Gamma(b_q) ]$.

Moreover, we write $f_j(y) = f_j(x,y)$ for $j \in \{1,2,3\}$ and $f_4(x)=f_4(x,y)$.

The entry $s_{44}$ is known from \cite{Bücher2018-sliding}. For the others, let us start by calculating the entries with $i,j\neq 4$. 

\medskip
\noindent\underline{\textit{The entries $s_{ij}$ with $i,j \in \{1,2,3\}$.}}
Unlike in the disjoint case, these are no longer just moments of a univariate distribution, so we need to apply Hoeffding's covariance formula here already. We have 
\begin{align*}
    s_{ij}
    &= \int_0^\infty \Big\{ \int_0^y \Big( K_{\rho,\alpha,\zeta}(\infty,x,\infty,y) -H_{\rho,\alpha,1}^{(2)}(x)H^{(2)}_{\rho,\alpha,1}(y)\Big) f_i'(x)\diff x 
    \\ & \hspace{2cm}
    +\int_y^\infty\Big( K_{\rho,\alpha,\zeta}(\infty,x,\infty,y)-H_{\rho,\alpha,1}^{(2)}(x)H_{\rho,\alpha,1}^{(2)}(y)\Big) f_i'(x)\diff x \Big\} f_j'(y)\diff y
    \\&=\int_0^\infty (I_{1i}(y)+I_{2i}(y))f_j'(y)\diff y, 
\end{align*}
where, by Lemma~\ref{lem:JoiWeaKon},
\begin{align*}
    I_{1i}(y) 
    &= 
    \int_0^y \Big(\exp(-x^{-\alpha}-\zeta y^{-\alpha})\Big\{1+\zeta \rho_0 y^{-\alpha}+\rho_0 x^{-\alpha}+ \\
    &\hspace{2cm}\zeta\rho_0 x^{-\alpha}y^{-\alpha}\big[\zeta \rho_0+(1-\zeta)\rho\big((x/y)^\alpha\big)\big]\Big\}\\
    &\hspace{3cm} -\exp(-x^{-\alpha}- y^{-\alpha})\big\{1+\rho_0 x^{-\alpha}+\rho_0 y^{-\alpha}+\rho_0^2 x^{-\alpha}y^{-\alpha}\big\}\Big)f_i'(x) \diff x, \\
    I_{2i}(y)
    &=
    \int_y^\infty \Big(\exp(-\zeta x^{-\alpha}- y^{-\alpha})\Big\{1+ \rho_0 y^{-\alpha}+\rho_0\zeta x^{-\alpha}+ \\
    &\hspace{2cm} \zeta\rho_0 x^{-\alpha}y^{-\alpha}\big[\zeta \rho_0+(1-\zeta)\rho\big((y/x)^\alpha\big)\big]\Big\}\\
    &\hspace{3cm}-\exp(-x^{-\alpha}- y^{-\alpha})\big\{1+\rho_0 x^{-\alpha}+\rho_0 y^{-\alpha}+\rho_0^2 x^{-\alpha}y^{-\alpha}\big\}\Big)f_i'(x).
\end{align*}

For the evaluation of $I_{1i}$, let us substitute $z=x^\alpha y^{-\alpha}$ with $\diff x = \alpha^{-1}yz^{1/\alpha-1}\diff z$; and for $I_{2i}$, we will substitute $z=x^{-\alpha} y^{\alpha}$ with $\diff x = -\alpha^{-1}yz^{-1/\alpha-1}\diff z$. Further, put $u=y^{-\alpha}$.
\begin{align*}
    I_{1i}(u^{-1/\alpha})&=\alpha^{-1}\int_0^1 \Big[\exp(-u/z-\zeta u)\Big\{1+\zeta \rho_0 u+\rho_0 u/z+\zeta\rho_0 y^{-2\alpha}/z\big[\zeta \rho_0+(1-\zeta)\rho(z)\big]\Big\}\\
    &\hspace{1cm}-\exp(-u/z- u)\big\{1+\rho_0 u/z+\rho_0 u+\rho_0^2 u^2/z\big\}\Big]f_i'((z/u)^{1/\alpha})(z/u)^{1/\alpha}\frac{\diff z}{z} \\
    I_{2i}(u^{-1/\alpha})&=\alpha^{-1}\int_0^1 \Big[\exp(-uz\zeta- u)\Big\{1+ \rho_0 u+\rho_0 uz\zeta+\zeta\rho_0 y^{-2\alpha}z\big[\zeta \rho_0+(1-\zeta)\rho(z)\big]\Big\}\\
    &\hspace{1cm}-\exp(-uz- u)\big\{1+\rho_0 uz+\rho_0 u+\rho_0^2 u^2z\big\}\Big]f_i'((uz)^{-1/\alpha}) (uz)^{-1/\alpha}\frac{\diff z}{z} 
\end{align*}
With the substitution $u=y^{-\alpha}$, $\displaystyle f_j'(y)\diff y = -f_j'(u^{-1/\alpha})\frac{\diff u }{\alpha u^{1+1/\alpha}}$, we can write
\begin{align*}
    s_{ij} &= \alpha^{-2}\int_0^1\int_0^1\int_0^\infty \Big[\exp(-u/z-\zeta u)\Big\{1+\zeta \rho_0 u+\rho_0 u/z+\zeta\rho_0 u^2/z\big[\zeta \rho_0+(1-\zeta)\rho(z)\big]\Big\}\\
    &\hspace{.6cm}-\exp(-u/z- u)\big\{1+\rho_0 u/z+\rho_0 u+\rho_0^2 u^2/z\big\}\Big]f_i'((z/u)^{1/\alpha})(z/u)^{1/\alpha}\\
    &\hspace{.6cm}+\Big[\exp(-uz\zeta- u)\Big\{1+\rho_0 u+\rho_0 uz\zeta +\zeta\rho_0 u^2z\big[\zeta \rho_0+(1-\zeta)\rho(z)\big]\Big\}\\
    &\hspace{.6cm}-\exp(-uz- u)\big\{1+\rho_0 uz+\rho_0 u+\rho_0^2 u^2z\big\}\Big]f_i'((zu)^{-1/\alpha})(zu)^{-1/\alpha}\frac{f_j'(u^{-1/\alpha})\diff u }{u^{1+1/\alpha}}\diff z\diff \zeta.
\end{align*}
Let us split this integral into four parts, one independent from $\rho$, one linear / quadratic in $\rho_0$ and one being an integral over $\rho(z)$. More precisely, let us write $\alpha^2s_{ij}=J_{ij1}+J_{ij2}+J_{ij3}+J_{ij4}$, where
\begin{align*}
    J_{ij1}&= 
    \int_0^1\int_0^1\int_0^\infty \Big[\exp(-u/z-\zeta u)-\exp(-u/z- u)\Big]f_i'((z/u)^{1/\alpha})(z/u)^{1/\alpha}\\
    &\hspace{1cm}+\Big[\exp(-uz\zeta- u)-\exp(-uz- u)\Big]f_i'((zu)^{-1/\alpha})(zu)^{-1/\alpha}\frac{f_j'(u^{-1/\alpha})\diff u }{u^{1+1/\alpha}}\diff z\diff \zeta \\
    J_{ij2}&= \rho_0
    \int_0^1\int_0^1\int_0^\infty \Big[\exp(-u/z-\zeta u)\{\zeta  u+ u/z\}-\exp(-u/z- u)\{u/z+ u\}\Big] \\
    & \hspace{1cm} \times f_i'((z/u)^{1/\alpha})(z/u)^{1/\alpha}
    +\Big[\exp(-uz\zeta- u)\{u+uz\zeta \}-\exp(-uz- u)\{uz+ u\big\}\Big] \\
    & \hspace{1cm} \times f_i'((zu)^{-1/\alpha})(zu)^{-1/\alpha}\frac{f_j'(u^{-1/\alpha})\diff u }{u^{1+1/\alpha}}\diff z\diff \zeta\\
    J_{ij3}&= \rho_0^2
    \int_0^1\int_0^1\int_0^\infty \Big[\exp(-u/z-\zeta u)\zeta^2 u^2/z-\exp(-u/z- u) u^2/z\Big] \\
     &\hspace{1cm} \times f_i'((z/u)^{1/\alpha})(z/u)^{1/\alpha} +\Big[\exp(-uz\zeta- u)\zeta^2 u^2z-\exp(-uz- u) u^2z\Big] \\
      &\hspace{1cm} \times f_i'((zu)^{-1/\alpha})(zu)^{-1/\alpha}\frac{f_j'(u^{-1/\alpha})\diff u }{u^{1+1/\alpha}}\diff z\diff \zeta\\
    J_{ij4}&=\rho_0
    \int_0^1\int_0^1\int_0^\infty \big[\exp(-u/z-\zeta u)\zeta u^2/z(1-\zeta)\rho(z)\big]f_i'((z/u)^{1/\alpha})(z/u)^{1/\alpha}\\
    &\hspace{1cm}+\big[\exp(-uz\zeta- u)\zeta u^2z(1-\zeta)\rho(z)\big]f_i'((zu)^{-1/\alpha})(zu)^{-1/\alpha}\frac{f_j'(u^{-1/\alpha})\diff u }{u^{1+1/\alpha}}\diff z\diff \zeta.
\end{align*}
It is elementary to integrate with respect to $\zeta$ first, which yields
\begin{align*}
    J_{ij1}
    &= \int_0^1\int_0^\infty \Big[\frac{\e^u-1}{u}-1\Big]\e^{-u-u/z}f_i'((z/u)^{1/\alpha})(z/u)^{1/\alpha}\\
    &\hspace{1cm}+\Big[\frac{\e^{uz}-1}{uz}-1\Big]\e^{-uz- u}f_i'((zu)^{-1/\alpha})(zu)^{-1/\alpha}\frac{f_j'(u^{-1/\alpha})\diff u }{u^{1+1/\alpha}}\diff z
    \\
    J_{ij2}
    &= \rho_0\int_0^1\int_0^\infty \frac{\e^u (u+z)-u (u+1) (z+1)-z}{u z}\e^{-u/z- u}f_i'((z/u)^{1/\alpha})(z/u)^{1/\alpha}\\
    &\hspace{.5cm}+\frac{(u+1) e^{u z}-u (z+1) (u z+1)-1}{u z}\e^{-uz- u}f_i'((zu)^{-1/\alpha})(zu)^{-1/\alpha}\frac{f_j'(u^{-1/\alpha})\diff u }{u^{1+1/\alpha}}\diff z\\
    J_{ij3}
    &= \rho_0^2\int_0^1\int_0^\infty \frac{2 \e^u-(u+1) (u^2+2) }{u z}\e^{-u/z-u}f_i'((z/u)^{1/\alpha})(z/u)^{1/\alpha}\\
    &\hspace{1cm}+\frac{2 \e^{u z}-(u z+1) (u^2 z^2+2)}{u z^2}\e^{-uz-u} f_i'((zu)^{-1/\alpha})(zu)^{-1/\alpha}\frac{f_j'(u^{-1/\alpha})\diff u }{u^{1+1/\alpha}}\diff z\\
    J_{ij4}
    &=\rho_0\int_0^1\int_0^\infty\frac{e^u (u-2)+u+2}{u z}\e^{-u/z-u}\rho(z)f_i'((z/u)^{1/\alpha})(z/u)^{1/\alpha}\\
    &+\frac{u z+e^{u z} (u z-2)+2}{u z^2}\e^{-uz-u}\rho(z) f_i'((zu)^{-1/\alpha})(zu)^{-1/\alpha}\frac{f_j'(u^{-1/\alpha})\diff u }{u^{1+1/\alpha}}\diff z.
\end{align*}
 Next, we want to integrate with respect to $u$, for which we need to insert the concrete forms of $f_i$. More precisely, we have
 \begin{align} \label{eq:concrete-fis}
 \begin{aligned}
    f_1'((z/u)^{1/\alpha})(z/u)^{1/\alpha}&=  (u/z)^\varpi[1+\log((u/z)^\varpi)], \\
    f_1'((zu)^{-1/\alpha})(zu)^{-1/\alpha} &=(uz)^\varpi[1+\log((uz)^\varpi)], \\
    f_1'(u^{-1/\alpha}){u^{-(1+1/\alpha)}} & =[1+\varpi\log u]u^{\varpi-1} ,\\
    f_2'((z/u)^{1/\alpha})(z/u)^{1/\alpha} &= -\alpha\varpi (u/z)^\varpi, \\ f_2'((zu)^{-1/\alpha})(zu)^{-1/\alpha} & =-\alpha\varpi(uz)^\varpi, \\
    f_2'(u^{-1/\alpha}){u^{-(1+1/\alpha)}} &=-\varpi\alpha u^{\varpi-1}, \\
    f_3'((z/u)^{1/\alpha})(z/u)^{1/\alpha} & = 1, \\
    f_3'((zu)^{-1/\alpha})(zu)^{-1/\alpha} &=1,   \\
    f_3'(u^{-1/\alpha}){u^{-(1+1/\alpha)}} &=1/u.
\end{aligned}
 \end{align}

\medskip
\noindent\underline{\textit{The term $s_{33}$.}}
For $i=j=3$, the formulas in \eqref{eq:concrete-fis} yield 
\begin{align*}
    J_{331}&=\int_0^1\int_0^\infty \Big[\frac{\e^u-1}{u}-1\Big]\e^{-u-u/z}+\Big[\frac{\e^{uz}-1}{uz}-1\Big]\e^{-uz- u}\frac{\diff u}{u}\frac{\diff z}{z}=\log(16)-2 
\\
    J_{332}&= 
    \rho_0\int_0^1\int_0^\infty \frac{\e^u (u+z)-u (u+1) (z+1)-z}{u z}\e^{-u/z- u}\\
    &\hspace{2cm}+\frac{(u+1) e^{u z}-u (z+1) (u z+1)-1}{u z}\e^{-uz- u}\frac{\diff u }{u}\frac{\diff z}{z}=0
\\
    J_{333}&= \rho_0^2\int_0^1\int_0^\infty\frac{2 \e^u-(u+1) (u^2+2) }{u z}\e^{-u/z-u}+\frac{2 \e^{u z}-(u z+1) (u^2 z^2+2)}{u z^2}\e^{-uz-u}\frac{\diff u }{u}\frac{\diff z}{z}\\
    &=\log(4)-2
\\
    J_{334}&=
    \rho_0\int_0^1\rho(z)\int_0^\infty\frac{e^u (u-2)+u+2}{u z}\e^{-u/z-u}+\frac{u z+e^{u z} (u z-2)+2}{u z^2}\e^{-uz-u}\frac{\diff u }{u}\frac{\diff z}{z}. \\
    &=\rho_0\int_0^1\rho(z)\frac{2 (z+2) \log (z+1)-4 z}{z^3}\diff z =: \rho_0\rho_{33}
\end{align*}
In total, $\alpha^2 s_{33}=\log(16)-2+(\log(4)-2)\rho_0^2+ \rho_0\rho_{33}$.

\medskip
\noindent\underline{\textit{The term $s_{32}$.}}
For $i=2,j=3$, the formulas in \eqref{eq:concrete-fis} yield
\begin{align*}
&\hspace{-.3cm}-\varpi^{-1}\alpha^{-1} J_{321}
    \\&=\int_0^1\int_0^\infty \Big[\frac{\e^u-1}{u}-1\Big]\e^{-u-u/z}\frac{u^\varpi}{z^\varpi}+\Big[\frac{\e^{uz}-1}{uz}-1\Big]\e^{-uz- u}(uz)^\varpi\frac{\diff u}{u}\frac{\diff z}{z}\\
    &= 2^{-\varpi} \big[-2 (\varpi-1) \, _2F_1(1,1;\varpi+1;-1)+2 \varpi+2^\varpi ((\varpi-3) \varpi+3)-4\big] \frac{\Gamma (\varpi-1)}{\varpi-1}\\
    &=:  T_{321}(\varpi) \\
&\hspace{-.3cm}-\rho_0^{-1}\varpi^{-1}\alpha^{-1} J_{322}
    \\&= 
    \int_0^1\int_0^\infty \frac{\e^u (u+z)-u (u+1) (z+1)-z}{u z}\e^{-u/z- u}\frac{u^\varpi}{z^\varpi}\\
    &\hspace{2cm}+\frac{(u+1) e^{u z}-u (z+1) (u z+1)-1}{u z}\e^{-uz- u}(uz)^\varpi\frac{\diff u }{u}\frac{\diff z}{z}\\
    &=\frac{2^{-\varpi} \left(-2 (\varpi-1) \, _2F_1(1,1;\varpi+1;-1)+2 \varpi+2^\varpi ((\varpi-3) \varpi+3)-4\right) \Gamma (\varpi+1)}{(\varpi-1)^2}\\
    &=: T_{322}(\varpi) \\
&\hspace{-.3cm}-\rho_0^{-2}\varpi^{-1}\alpha^{-1} J_{323}
    \\&= \int_0^1\int_0^\infty\frac{2 \e^u-(u+1) (u^2+2) }{u z}\e^{-u/z-u}\frac{u^\varpi}{z^\varpi}\\
    &\hspace{2cm}+\frac{2 \e^{u z}-(u z+1) (u^2 z^2+2)}{u z^2}\e^{-uz-u}(uz)^\varpi\frac{\diff u }{u}\frac{\diff z}{z}\\
    &=2^{-\varpi-1} [2^\varpi (8-\varpi ((\varpi-3) \varpi+4))-4 (\varpi+2)] \frac{\Gamma (\varpi-1)}{\varpi-2}=:  T_{323}(\varpi) \\
    &\hspace{-.3cm}-\rho_0^{-1}\varpi^{-1}\alpha^{-1}J_{324}
    \\&=
    \int_0^1\rho(z)\int_0^\infty\frac{e^u (u-2)+u+2}{u z}\e^{-u/z-u}\frac{u^\varpi}{z^\varpi}+\frac{u z+e^{u z} (u z-2)+2}{u z^2}\e^{-uz-u}(uz)^\varpi\frac{\diff u }{u}\frac{\diff z}{z}. \\
    &= \int_0^1 \frac{(z+1)^{-\varpi} \left(z^\varpi+1\right) \left(((\varpi-1) z-2) (z+1)^\varpi+\varpi z+z+2\right) \Gamma (\varpi-1)}{z^3}\rho(z) \diff z\\
    &=:\rho_{32}(\varpi).
\end{align*}

In total, $-\alpha \varpi^{-1} s_{32}= T_{321}(\varpi)+ T_{322}(\varpi)\rho_0+ T_{323}(\varpi)\rho_0^2+\rho_0\rho_{32}(\varpi)$.

\medskip
\noindent\underline{\textit{The term $s_{31}$.}}
For $i=1,j=3$, the formulas in \eqref{eq:concrete-fis} yield
\begin{align*}
    -\varpi^{-1}\alpha^{-2} J_{311}&=\int_0^1\int_0^\infty \Big[\frac{\e^u-1}{u}-1\Big]\e^{-u-u/z}\frac{u^\varpi}{z^\varpi}[1+\log((u/z)^\varpi)]\\
    &\hspace{2cm}+\Big[\frac{\e^{uz}-1}{uz}-1\Big]\e^{-uz- u}(uz)^\varpi[1+\log((uz)^\varpi)]\frac{\diff u}{u}\frac{\diff z}{z}\\
    &=T_{321}(\varpi)+\varpi T_{321}'(\varpi)
\end{align*}
In analogy, we can conclude 
\begin{align*}
    \alpha^{-2}\rho_0^{-1} J_{312}&=T_{322}(\varpi)+\varpi T_{322}'(\varpi) \\
    \alpha^{-2}\rho_0^{-2} J_{313}&=T_{323}(\varpi)+\varpi T_{323}'(\varpi) \\
    \alpha^{-2}\rho_0^{-1} J_{314}&= \rho_{32}(\varpi)+ \varpi \rho_{32}'(\varpi).
\end{align*}
In total, $\alpha^2  s_{31}= T_{321}(\varpi)+\varpi T_{321}'(\varpi)+ [T_{322}(\varpi)+\varpi T_{322}'(\varpi)]\rho_0+ [T_{323}(\varpi)+\varpi T_{323}'(\varpi)]\rho_0^2+[\rho_{32}(\varpi)+ \varpi \rho_{32}'(\varpi)]\rho_0$.

\medskip
\noindent\underline{\textit{The term $s_{22}$.}}
For $i=j=2$, the formulas in \eqref{eq:concrete-fis} yield
\begin{align*}
    &\hspace{-.5cm}\alpha^{-2}\varpi^{-2} J_{221}
    \\&=\int_0^1\int_0^\infty \Big[\frac{\e^u-1}{u}-1\Big]\e^{-u-u/z}\frac{u^\varpi}{z^\varpi}+\Big[\frac{\e^{uz}-1}{uz}-1\Big]\e^{-uz- u}(uz)^\varpi\frac{\diff u}{u^{1-\varpi}}\frac{\diff z}{z}\\
    &=-\frac{2 (\, _2F_1(\varpi-1,2 \varpi;\varpi;-1)+2 (\varpi-1) \, _2F_1(\varpi,2 \varpi;\varpi+1;-1)-1) \Gamma (2 \varpi-1)}{\varpi-1}\\
    &=:  T_{221}(\varpi) \\
    &\hspace{-.5cm}\alpha^{-2}\rho_0^{-1}\varpi^{-2} J_{222} 
    \\&= 
    \int_0^1\int_0^\infty \frac{\e^u (u+z)-u (u+1) (z+1)-z}{u z}\e^{-u/z- u}\frac{u^\varpi}{z^\varpi}\\
    &\hspace{2cm}+\frac{(u+1) e^{u z}-u (z+1) (u z+1)-1}{u z}\e^{-uz- u}(uz)^\varpi\frac{\diff u }{u^{1-\varpi}}\frac{\diff z}{z}\\
    &=\frac{4 \Gamma (\varpi+1)^2}{1-2 \varpi}-8 \varpi (\, _2F_1(\varpi-1,2 \varpi;\varpi;-1)-1) \Gamma (2 \varpi-2)\\
    &=: T_{222}(\varpi) \\
    &\hspace{-.5cm}\alpha^{-2}\rho_0^{-2}\varpi^{-2} J_{223}
    \\&= \int_0^1\int_0^\infty\frac{2 \e^u-(u+1) (u^2+2) }{u z}\e^{-u/z-u}\frac{u^\varpi}{z^\varpi}\\
    &+\frac{2 \e^{u z}-(u z+1) (u^2 z^2+2)}{u z^2}\e^{-uz-u}(uz)^\varpi\frac{\diff u }{u^{1-\varpi}}\frac{\diff z}{z}\\
    &=-4 \Big[\varpi \left(\varpi^2-1\right) \, _2F_1(\varpi-2,2 (\varpi+1);\varpi-1;-1)\\
    &\qquad+(\varpi-1) \Big(\left(2 \varpi^2-3 \varpi-2\right) (\varpi+1)^2 \, _2F_1(\varpi,2 (\varpi+1);\varpi+1;-1)\\
    &\qquad+\varpi \left(2 \left(2 \varpi^2-3 \varpi-2\right) \varpi^2 \, _2F_1(\varpi+1,2 (\varpi+1);\varpi+2;-1)-\varpi-1\right)\Big)\\
    &\qquad+2 (\varpi-2) \varpi (\varpi+1)^2 \, _2F_1(\varpi-1,2 (\varpi+1);\varpi;-1)\Big]\frac{ \Gamma (2 \varpi-1)}{(\varpi-2) (\varpi-1) \varpi (\varpi+1)}\\
    &=:  T_{223}(\varpi) \\
    &\hspace{-.5cm}\alpha^{-2}\rho_0^{-1}\varpi^{-2}J_{224}
    \\&=
    \int_0^1\rho(z)\int_0^\infty\frac{e^u (u-2)+u+2}{u z}\e^{-u/z-u}\frac{u^\varpi}{z^\varpi}\\
    &+\frac{u z+e^{u z} (u z-2)+2}{u z^2}\e^{-uz-u}(uz)^\varpi\frac{\diff u }{u^{1-\varpi}}\frac{\diff z}{z}. \\
    &= \int_0^1 2 z^{\varpi-3} (z+1)^{-2 \varpi} \left(((2 \varpi-1) z-2) (z+1)^{2 \varpi}+2 \varpi z+z+2\right) \Gamma (2 \varpi-1)\rho(z) \diff z\\
    &=:\rho_{22}(\varpi)
\end{align*}
In total, $\alpha^2 \varpi^{-2} s_{22}= T_{221}(\varpi)+ T_{222}(\varpi)\rho_0+ T_{223}(\varpi)\rho_0^2+\rho_0\rho_{22}(\varpi)$.

\medskip
\noindent\underline{\textit{The term $s_{12}$.}}
For $i=1,j=2$, the formulas in \eqref{eq:concrete-fis} yield
\begin{align*}
    -\alpha^{-1}\varpi^{-1} J_{121}&=\int_0^1\int_0^\infty \Big[\frac{\e^u-1}{u}-1\Big]\e^{-u-u/z}\frac{u^\varpi}{z^\varpi}[1+\log(u/z)]\\
    &\hspace{2cm}+\Big[\frac{\e^{uz}-1}{uz}-1\Big]\e^{-uz- u}(uz)^\varpi[1+\log(uz)]\frac{\diff u}{u^{1-\varpi}}\frac{\diff z}{z}\\
    &=T_{221}(\varpi)+\int_0^1\int_0^\infty \Big[\frac{\e^u-1}{u}-1\Big]\e^{-u-u/z}\frac{u^\varpi}{z^\varpi}\log(u/z)\\
    &\hspace{4cm}+\Big[\frac{\e^{uz}-1}{uz}-1\Big]\e^{-uz- u}(uz)^\varpi\log(uz)\frac{\diff u}{u^{1-\varpi}}\frac{\diff z}{z}\\
    &=:T_{221}(\varpi)+T_{121}(\varpi)
\end{align*}
Instead of evaluating $T_{121}(\varpi)$ directly, let us phrase this expression as the special case $T_{121}(\varpi)=\partial_\vartheta\mathfrak{T}(\varpi,\vartheta)|_{\vartheta=\varpi}$, where
\begin{align*}
    \frac{\partial}{\partial\vartheta}\mathfrak{T}(\varpi,\vartheta) &:=\int_0^1\int_0^\infty \Big[\frac{\e^u-1}{u}-1\Big]\e^{-u-u/z}\frac{u^\vartheta}{z^\vartheta}\log(u/z)
    \\& \hspace{3cm}
    +\Big[\frac{\e^{uz}-1}{uz}-1\Big]\e^{-uz- u}(uz)^\vartheta\log(uz)\frac{\diff u}{u^{1-\varpi}}\frac{\diff z}{z}.
\end{align*}
Then we conclude
\begin{align*}
    \mathfrak{T}(\varpi,\vartheta)&=
    \int_0^1\int_0^\infty \Big[\frac{\e^u-1}{u}-1\Big]\e^{-u-u/z}\frac{u^\vartheta}{z^\vartheta}+\Big[\frac{\e^{uz}-1}{uz}-1\Big]\e^{-uz- u}(uz)^\vartheta\frac{\diff u}{u^{1-\varpi}}\frac{\diff z}{z}\\
    &=
    \Big[\frac{1}{\vartheta-1}+\frac{1}{\varpi-1}-\frac{\, _2F_1(\vartheta-1,\vartheta+\varpi;\vartheta;-1)}{\vartheta-1}-\frac{(\vartheta+\varpi) \, _2F_1(\vartheta,\vartheta+\varpi;\vartheta+1;-1)}{\vartheta}\\
    &\hspace{.5cm}-\frac{\, _2F_1(\varpi-1,\vartheta+\varpi;\varpi;-1)}{\varpi-1}-\frac{(\vartheta+\varpi) \, _2F_1(\varpi,\vartheta+\varpi;\varpi+1;-1)}{\varpi}\Big] \Gamma (\vartheta+\varpi-1)
\end{align*}
Consequently, 

\begin{align*}
    &\phantom{{}={}}
    T_{121}(\varpi)
    \\&=\lim_{\vartheta\to\varpi}\frac{\partial}{\partial\vartheta} \mathfrak{T}(\varpi,\vartheta)
    \\&
    =\Gamma (2 \varpi-1) 
    \Big(
    -\frac{1}{(\varpi-1)^2}
    +\frac{2 \psi ^{(0)}(2 \varpi-1)}{\varpi-1} -2 \, _2F_1^{(0,0,1,0)}(\varpi,2 \varpi,\varpi+1,-1)
    \\&\hspace{1.4cm} 
    -4 \, _2F_1^{(0,1,0,0)}(\varpi,2 \varpi,\varpi+1,-1)
    -2 \, _2F_1^{(1,0,0,0)}(\varpi,2 \varpi,\varpi+1,-1)
    \\&\hspace{1.4cm}
    -\frac{\, _2F_1^{(0,0,1,0)}(\varpi-1,2 \varpi,\varpi,-1)}{\varpi-1}
    -\frac{2 \, _2F_1^{(0,1,0,0)}(\varpi-1,2 \varpi,\varpi,-1)}{\varpi-1}
    \\&\hspace{1.4cm} 
    -\frac{\, _2F_1^{(1,0,0,0)}(\varpi-1,2 \varpi,\varpi,-1)}{\varpi-1}
    +\frac{\, _2F_1(\varpi-1,2 \varpi;\varpi;-1)}{(\varpi-1)^2}
    \\&\hspace{1.4cm}
    -\frac{2 \, _2F_1(\varpi-1,2 \varpi;\varpi;-1) \psi ^{(0)}(2 \varpi-1)}{\varpi-1}
    -4 \, _2F_1(\varpi,2 \varpi;\varpi+1;-1) \psi ^{(0)}(2 \varpi-1)
    \Big)
\end{align*}
With the same trick, we obtain expressions for $J_{122},J_{123},J_{124}$, which we omit for brevity.

\medskip
\noindent\underline{\textit{The term $s_{11}$.}}
For $i=1,j=1$, the formulas in \eqref{eq:concrete-fis} yield
\begin{align*}
    J_{111}&=\int_0^1\int_0^\infty \Big[\frac{\e^u-1}{u}-1\Big]\e^{-u-u/z}\frac{u^\varpi}{z^\varpi}[1+\log (u/z)]\\
    &\hspace{2cm}+\Big[\frac{\e^{uz}-1}{uz}-1\Big]\e^{-uz- u}(uz)^\varpi[1+\log(uz)]\frac{[1+\varpi\log u]\diff u}{u^{1-\varpi}}\frac{\diff z}{z}\\
    &=T_{121}(\varpi)+T_{221}(\varpi)
    \\& \qquad +\int_0^1\int_0^\infty \Big[\frac{\e^u-1}{u}-1\Big]\e^{-u-u/z}\frac{u^\varpi}{z^\varpi}
    +\Big[\frac{\e^{uz}-1}{uz}-1\Big]\e^{-uz- u}(uz)^\varpi\frac{\varpi\log u\diff u}{u^{1-\varpi}}\frac{\diff z}{z}\\
    & \qquad +\int_0^1\int_0^\infty \Big[\frac{\e^u-1}{u}-1\Big]\e^{-u-u/z}\frac{u^\varpi}{z^\varpi}\log (u/z)
    \\&\hspace{3cm}
    +\Big[\frac{\e^{uz}-1}{uz}-1\Big]\e^{-uz- u}(uz)^\varpi\log(uz)\frac{\varpi\log u\diff u}{u^{1-\varpi}}\frac{\diff z}{z}\\
    &=: T_{121}(\varpi)+T_{221}(\varpi)+T_{111}(\varpi)+\Tilde T_{111}(\varpi). 
\end{align*}
$T_{111}(\varpi)$ may be directly evaluated by a CAS to
\begin{align*}
    T_{111}(\varpi)&=\frac{\Gamma (2 \varpi-1) }{\varpi}\Big(2 \varpi^5 \Gamma (\varpi)^2 \, _3\tilde{F}_2(2 \varpi,\varpi+1,\varpi+1;\varpi+2,\varpi+2;-1)\\
    & \qquad +\, _3F_2(\varpi,\varpi,2 \varpi;\varpi+1,\varpi+1;-1)-1\Big)
    \\& \qquad +\varpi \, _2F_1(2 \varpi,\varpi+1;\varpi+2;-1) (2 \varpi \psi ^{(0)}(2 \varpi-1)+1))\\
    &\qquad-\frac{2 \Gamma (2 \varpi) }{2 \varpi^2+\varpi-1}((\varpi+1) (\, _2F_1(\varpi,2 \varpi;\varpi+1;-1)-1) \psi ^{(0)}(2 \varpi-1)\\
    &\qquad+\int_0^1 \frac{\varpi z^{\varpi-1} (z+1)^{-2 \varpi} \Gamma (2 \varpi) ((4 \varpi z+2) \log (z+1))}{2 \varpi-1} \diff z
\end{align*}
One can additionally check that 
\begin{multline*}
    \int_0^1 \frac{\varpi z^{\varpi-1} (z+1)^{-2 \varpi} \Gamma (2 \varpi) ((4 \varpi z+2) \log (z+1))}{2 \varpi-1} \diff z  \\ =
    -2 \varpi \Gamma (2 \varpi-1) \frac{\partial }{\partial q}\left((2 \varpi-1) B_{\frac{1}{2}}(\varpi+1,q)+B_{\frac{1}{2}}(\varpi,q)\right)\Big|_{q=\varpi-1}.
\end{multline*}
Concerning $\Tilde T_{111}(\varpi)$, borrow the ideas from entry $s_{12}$ and write $ \Tilde T_{111}(\varpi)=\mathfrak{K}(\varpi,\varpi)$, where
\begin{align*}
    \mathfrak{K}(\varpi,\vartheta)&:= \int_0^1\int_0^\infty \Big[\frac{\e^u-1}{u}-1\Big]\e^{-u-u/z}\frac{u^\vartheta}{z^\vartheta}\log (u/z)
    \\&\hspace{2cm}
    +\Big[\frac{\e^{uz}-1}{uz}-1\Big]\e^{-uz- u}(uz)^\vartheta\log(uz)\frac{\varpi\log u\diff u}{u^{1-\varpi}}\frac{\diff z}{z} \\
    &= \frac{\partial}{\partial\vartheta}\int_0^1\int_0^\infty \Big[\frac{\e^u-1}{u}-1\Big]\e^{-u-u/z}\frac{u^\vartheta}{z^\vartheta}+\Big[\frac{\e^{uz}-1}{uz}-1\Big]\e^{-uz- u}(uz)^\vartheta\frac{\varpi\log u\diff u}{u^{1-\varpi}}\frac{\diff z}{z}\\
    &= \varpi\frac{\partial^2}{\partial\vartheta\partial\varpi}\int_0^1\int_0^\infty \Big[\frac{\e^u-1}{u}-1\Big]\e^{-u-u/z}\frac{u^\vartheta}{z^\vartheta}+\Big[\frac{\e^{uz}-1}{uz}-1\Big]\e^{-uz- u}(uz)^\vartheta\frac{\diff u}{u^{1-\varpi}}\frac{\diff z}{z}\\
    &= \varpi \frac{\partial}{\partial\varpi} \mathfrak{T}(\varpi,\vartheta).
\end{align*}
Again, the same trick helps to derive expressions for $J_{112},J_{113},J_{114}$, which are omitted for brevity and may be found in supplementary notebooks.

\medskip
\noindent\underline{\textit{The entries $s_{i4}$.}}
In the case $j=4$, we need a different bivariate margin of $K$. Lemma~\ref{lem:JoiWeaKon} yields
\begin{align*}
    s_{i4}
    &= 
    \int_0^\infty \Big\{ \int_0^y \Big( K_{\rho,\alpha,\zeta}(\infty,x,y,\infty) -H_{\rho,\alpha,1}^{(2)}(x)H^{(1)}_{\rho,\alpha,1}(y)\Big) f_i'(x)\diff x \\
    & \qquad +\int_y^\infty\Big( K_{\rho,\alpha,\zeta}(\infty,x,y,\infty)-H_{\rho,\alpha,1}^{(2)}(x)H_{\rho,\alpha,1}^{(1)}(y)\Big) f_i'(x)\diff x \Big\} \frac{\diff y}{y} 
    \\&=: \int_0^\infty (I_{1i}(y)+I_{2i}(y))\frac{\diff y}{y}, 
\end{align*}
where, by Lemma~\ref{lem:JoiWeaKon},
\begin{align*}
    I_{1i}(y)&=\int_0^y \Big(\exp(-x^{-\alpha}-\zeta y^{-\alpha})\Big\{1+\zeta\rho_0 x^{-\alpha}+(1-\zeta)x^{-\alpha}\rho\big((x/y)^\alpha)\big)\Big\}\\
    &\hspace{6.5cm}-\exp(-x^{-\alpha}-y^{-\alpha})\big\{1+\rho_0 x^{-\alpha}\big\}\Big)f_i'(x)\diff x, \\
    I_{2i}(y)&=\int_y^\infty  \exp(-\zeta x^{-\alpha}- y^{-\alpha})\big\{1+\zeta\rho_0 x^{-\alpha}\big\}-\exp(-x^{-\alpha}-y^{-\alpha})\big\{1+\rho_0 x^{-\alpha}\big\} f_i'(x)\diff x.
\end{align*}

Apply again the substitutions  $z=x^\alpha y^{-\alpha}$ with $\diff x = \alpha^{-1}yz^{1/\alpha-1}\diff z$ to $I_{1i}$; and for $I_{2i}$, we will substitute $z=x^{-\alpha} y^{\alpha}$ with $\diff x = -\alpha^{-1}yz^{-1/\alpha-1}\diff z$. Additionally, write $u=y^{-\alpha}$. Then 
\begin{align*}
    I_{1i}(u^{-1/\alpha})&=\alpha^{-1}\int_0^1\Big[\exp(-u/z-\zeta u)\Big\{1+\zeta\rho_0 u/z+(1-\zeta)u/z\rho(z)\Big\} 
    \\ &\hspace{2cm} 
    -\exp(-u/z-u)\big\{1+\rho_0 u/z\big\}\Big]
    \cdot f_i'((z/u)^{1/\alpha})(z/u)^{1/\alpha}\frac{\diff z}{z} \\
    I_{2i}(u^{-1/\alpha})&=\alpha^{-1}\int_0^1\Big[\exp(-\zeta uz- u)\big\{1+\zeta\rho_0 uz\big\}-\exp(-uz-u)\big\{1+\rho_0 uz\big\}\Big]
    \\ &\hspace{2cm}
    f_i'((uz)^{-1/\alpha})(uz)^{-1/\alpha}\frac{\diff z}{z} 
\end{align*}
It follows that we may write $s_{i4}$ as $\alpha^2 s_{i4} = J_{1i}+J_{2i}+J_{3i}$, where, applying the substitution $u=y^{-\alpha}$, $-\alpha/u\diff u=1/y\diff y$,
\begin{align*}
    J_{i41}&= \int_0^1 \int_0^1 \int_0^\infty [\e^{-u/z-\zeta u}-\e^{-u/z-u}]f_i'((z/u)^{1/\alpha})(z/u)^{1/\alpha}\\
    &\hspace{2cm}+[\e^{-\zeta uz- u}-\e^{-uz-u}]f_i'((uz)^{-1/\alpha})(uz)^{-1/\alpha}\frac{\diff u}{u} \frac{\diff z}{z} \diff \zeta \\
    J_{i42}&= \rho_0\int_0^1 \int_0^1 \int_0^\infty [\e^{-u/z-\zeta u}\zeta-\e^{-u/z-u}]f_i'((z/u)^{1/\alpha})(z/u)^{1/\alpha-1}\\
    &\hspace{2cm}+[\e^{-\zeta uz- u}\zeta-\e^{-uz-u}]f_i'((uz)^{-1/\alpha})(uz)^{1-1/\alpha}\frac{\diff u}{u} \frac{\diff z}{z} \diff \zeta \\
    J_{i43}&= \int_0^1 \int_0^1 \int_0^\infty \e^{-u/z-\zeta u}(1-\zeta)\rho(z)f_i'((z/u)^{1/\alpha})(z/u)^{1/\alpha-1}\frac{\diff u}{u} \frac{\diff z}{z} \diff \zeta 
\end{align*}
It is elementary to integrate with respect to $\zeta$ first. We are left with three double integrals, namely
\begin{align*}
    J_{i41}&= \int_0^1 \int_0^\infty \frac{\e^u-u-1}{u}\exp(-u(1+z)/z)f_i'((z/u)^{1/\alpha})(z/u)^{1/\alpha}\\
    &\hspace{2cm}+\frac{\e^{uz}-uz-1}{uz}\exp(-u(1+z)) f_i'((uz)^{-1/\alpha})(uz)^{-1/\alpha}\frac{\diff u}{u} \frac{\diff z}{z} \\
    J_{i42}&= \rho_0\int_0^1 \int_0^\infty -\frac{1-\e^u+u+u^2}{u^2}\exp(-u(1+z)/z)f_i'((z/u)^{1/\alpha})(z/u)^{1/\alpha-1}\\
    &\hspace{2cm}+\frac{\e^{uz}-1-uz(1+uz)}{u^2z^2}\exp(-u(1+z)) f_i'((uz)^{-1/\alpha})(uz)^{1-1/\alpha}\frac{\diff u}{u} \frac{\diff z}{z} \\
    J_{i43}&= \int_0^1 \int_0^\infty \frac{1+u\e^u-\e^u}{u^2}\exp(-u(1+z)/z)\rho(z)f_i'((z/u)^{1/\alpha})(z/u)^{1/\alpha-1}\frac{\diff u}{u} \frac{\diff z}{z}.
\end{align*}
Next, we want to integrate with respect to $u$, for which we need to insert the concrete forms of $f_i$.

\medskip
\noindent\underline{\textit{The term $s_{34}$.}}
The formulas in \eqref{eq:concrete-fis} with $i=3$ yields
\begin{align*}
    J_{341}&= \int_0^1 \int_0^\infty \frac{\e^u-u-1}{u}\exp(-u(1+z)/z)+\frac{\e^{uz}-uz-1}{uz}\exp(-u(1+z)) \frac{\diff u}{u} \frac{\diff z}{z}\\ &=\log (16)-2 \\
    J_{342}&= \rho_0\int_0^1 \int_0^\infty -\frac{1-\e^u+u+u^2}{u^2}\e^{-u(1+z)/z}(z/u)^{-1} 
    \\& \hspace{2cm} +\frac{\e^{uz}-1-uz(1+uz)}{u^2z^2}\e^{-u(1+z)} (uz)\frac{\diff u}{u} \frac{\diff z}{z} \\
    &=\rho_0\int_0^1 \frac{z-(z+1) \log (z+1)}{z^2} \frac{\diff z}{z} \\
    J_{343}&= \int_0^1 \int_0^\infty \frac{1+u\e^u-\e^u}{u^2}\exp(-u(1+z)/z)\rho(z)(z/u)^{-1}\frac{\diff u}{u} \frac{\diff z}{z}\\
    &=\int_0^1 \frac{(z+1) \log (z+1)-z}{z^2}\rho(z)\frac{\diff z}{z}
\end{align*}
Together,
\begin{align*}
    J_{342}+J_{343}=\int_0^1 \frac{(z+1) \log (z+1)-z}{z^3}[\rho(z)-\rho_0]\diff z=: \rho_{34},
\end{align*}
which in summary yields $\alpha s_{34}=\log (16)-2+\rho_{34}$.

\medskip
\noindent\underline{\textit{The term $s_{24}$.}}
The formulas in \eqref{eq:concrete-fis} with $i=2$ yields
\begin{align*}
    -\alpha^{-1} \varpi^{-1} J_{241}&= \int_0^1 \int_0^\infty \frac{\e^u-u-1}{u}\e^{-u(1+z)/z}\frac{u^\varpi}{z^\varpi}+\frac{\e^{uz}-uz-1}{uz}\e^{-u(1+z)}(uz)^\varpi
    \frac{\diff u}{u} \frac{\diff z}{z}\\ 
    &= T_{241}(\varpi),
\end{align*}
where 
\begin{align*}
    T_{241}(\varpi)&=\frac{2^{-w} \left(-2 (w-1) \, _2F_1(1,1;w+1;-1)+2 w+2^w ((w-3) w+3)-4\right) \Gamma (w-1)}{w-1}.
\end{align*}
Regarding the second and third integral, note that
\begin{align*}
    \int_0^\infty \frac{(\e^u (u-1)+1) \e^{-\frac{u (z+1)}{z}}}{u^2} \frac{u^{1+\varpi}}{z^{1+\varpi}}\frac{\diff u}{u}&=\frac{((z+1)^{1-\varpi}+(\varpi-1) z-1) \Gamma (\varpi-1)}{z^2},
\end{align*}
which yields
\begin{align*}
    J_{242}+J_{342}&= \int_0^1 \frac{((z+1)^{1-\varpi}+(\varpi-1) z-1) \Gamma (\varpi-1)}{z^2} [\rho(z)-\rho_0]\frac{\diff z}{z}\\
    &\qquad+\rho_0 \int_0^1 \frac{((z+1)^{1-\varpi}+(\varpi-1) z-1) \Gamma (\varpi-1)}{z^2} \\
    &\qquad+\int_0^\infty \frac{-u z (u z+1)+\e^{u z}-1 }{u^2 z^2}\e^{-u (z+1)}(u z)^{\varpi+1}
    \\&\hspace{2cm}
    -\frac{u^2+u-\e^u+1 }{u^2}\e^{-\frac{u (z+1)}{z}}\frac{u^{\varpi+1}}{z^{\varpi+1}}\frac{\diff u}{u}\frac{\diff z}{z}\\
    &=\rho_{24}(\varpi)+\rho_0 T_{242}(\varpi),
\end{align*}
where
\begin{align*}
    \rho_{24}(\varpi) &:= \int_0^1 \frac{((z+1)^{1-\varpi}+(\varpi-1) z-1) \Gamma (\varpi-1)}{z^2} [\rho(z)-\rho_0]\frac{\diff z}{z},\\
    T_{242}(\varpi) &:= \Big((\varpi-1) \varpi-\frac{2^{-\varpi} \varpi \left(2 (\varpi-1) \, _2F_1(1,1;\varpi+1;-1)+\left(2^\varpi-2\right) (\varpi-2)\right)}{\varpi-1}\Big) \\
    &\hspace{11cm}\Gamma (\varpi-1).
\end{align*}
Together, $-\alpha \varpi^{-1}s_{24}= T_{241}(\varpi)+\rho_0 T_{242}(\varpi)+\rho_{24}(\varpi)$.

\medskip
\noindent\underline{\textit{The term $s_{14}$.}}
Again, we approach the case $i=1$ fundamentally different. In view of $f_1'((z/u)^{1/\alpha})(z/u)^{1/\alpha}= (u/z)^\varpi(1+\log(u/z))$ by \eqref{eq:concrete-fis}, we may write $J_{14\ell}=\mathcal{T}(J_{24\ell})$ for an operator $\mathcal{T}(f)=f+\partial_\varpi f$. We directly conclude $\alpha^{-2}\sigma_{14}=T_{241}(\varpi)+\rho_0 T_{242}(\varpi)+\partial_\varpi(T_{241}(\varpi)+\rho_0 T_{242}(\varpi))+\rho_{14}(\varpi)+\rho_{24}(\varpi)$, where
\begin{align*}
    \rho_{14}(\varpi)=\int_0^1 \frac{\partial}{\partial \varpi} \frac{((z+1)^{1-\varpi}+(\varpi-1) z-1) \Gamma (\varpi-1)}{z^2} [\rho(z)-\rho_0]\frac{\diff z}{z}.
\end{align*}
\end{proof}

\section{Finite moments of top two order statistics}

\begin{lemma}[Lemma C.1 in \cite{Bücher2018-disjoint} revisited]
\label{lem:c1-disjoint}
    Let $\xi_1,\xi_2,...\sim F$ be iid random variables satisfying \eqref{eq:iid_doa}. Let $M_r:=\xi_{r:r},S_n:=\xi_{r-1:r}$. For every $\beta \in (-\infty,\alpha_0)$ and any constant $c>0$, we have
    \begin{align*}
    \limsup_{r\to\infty} \Exp\big[\big((M_r\vee c)/a_r\big)^\beta\big]<\infty,
    \qquad
        \limsup_{r\to\infty} \Exp\big[\big((S_r\vee c)/a_r\big)^\beta\big]<\infty.
    \end{align*}
\end{lemma}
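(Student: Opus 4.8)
The bound for $M_r$ is exactly Lemma~C.1 in \cite{Bücher2018-disjoint}, so the plan is to prove only the bound for $S_r$, working under \eqref{eq:iid_doa} alone. First I would dispose of the case $\beta\in[0,\alpha_0)$: since $x\mapsto x^\beta$ is non-decreasing and $S_r\le M_r$, we have $(S_r\vee c)^\beta\le(M_r\vee c)^\beta$, so the $S_r$-bound reduces to the (already established) $M_r$-bound. For $\beta<0$, set $p:=-\beta>0$, $\overline F:=1-F$ and $T_r:=(S_r\vee c)/a_r\ge c/a_r$. Here I would imitate the layer-cake computation used for the maximum in \cite{Bücher2018-disjoint}, the only change being that the exact law of the second largest order statistic, $\Prob(S_r<x)=F(x)^{r-1}\{1+(r-1)\overline F(x)\}$ (valid since $F$ is continuous), replaces $\Prob(M_r<x)=F(x)^r$. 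Using $\Prob(T_r<s)=\Prob(S_r<sa_r)$ for $s>c/a_r$ and $=0$ otherwise, together with the substitution $t=s^{-p}$ in $\Exp[T_r^{-p}]=\int_0^\infty\Prob(T_r^{-p}>t)\diff t$, gives
\[
\Exp[T_r^{-p}]=p\int_{c/a_r}^{\infty}\Prob(S_r<sa_r)\,s^{-p-1}\diff s,
\qquad
\Prob(S_r<x)\le\exp\!\big(-(r-1)\overline F(x)\big)\{1+(r-1)\overline F(x)\}.
\]

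The second step is the regularly varying input. Recall that \eqref{eq:iid_doa} is equivalent to $\overline F$ being regularly varying of index $-\alpha_0$, that \eqref{eq:aniid} (with $-\log F\sim\overline F$) gives $r\overline F(a_r)\to1$, and that $a_r\to\infty$ is regularly varying of index $1/\alpha_0$. Fixing any $\delta\in(0,\alpha_0)$, the uniform Potter bounds (Theorem~1.5.6 in \cite{Bingham1987}) supply $C_\delta\ge1$ and $x_\delta\ge\max(c,1)$ such that $C_\delta^{-1}\min\{s^{-\alpha_0+\delta},s^{-\alpha_0-\delta}\}\le\overline F(sa_r)/\overline F(a_r)\le C_\delta\max\{s^{-\alpha_0+\delta},s^{-\alpha_0-\delta}\}$ whenever $a_r\ge x_\delta$ and $s\ge x_\delta/a_r$. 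Combined with $(r-1)\overline F(a_r)\le2$ for large $r$, and writing $g_\delta$, $G_\delta$ for the above $\min$ and $\max$, this yields the pointwise bound $\Prob(S_r<sa_r)\le\exp(-C_\delta^{-1}g_\delta(s))\{1+2C_\delta G_\delta(s)\}$ for $s\ge x_\delta/a_r$ and $r$ large. I would then split $\int_{c/a_r}^{\infty}=\int_{c/a_r}^{x_\delta/a_r}+\int_{x_\delta/a_r}^{1}+\int_{1}^{\infty}$. The outer piece is at most $(1+2C_\delta)\int_1^\infty s^{-p-1}\diff s=(1+2C_\delta)/p$, using $\exp(-C_\delta^{-1}s^{-\alpha_0-\delta})\le1$ and $s^{-\alpha_0+\delta}\le1$ for $s\ge1$; the middle piece is bounded uniformly in $r$ by $\int_0^1\exp(-C_\delta^{-1}s^{-\alpha_0+\delta})\{1+2C_\delta s^{-\alpha_0-\delta}\}s^{-p-1}\diff s<\infty$, because $-\alpha_0+\delta<0$ makes the exponential factor decay faster than any power of $s$ as $s\downarrow0$.

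The piece over $[c/a_r,x_\delta/a_r]$ is where I expect the real difficulty, and the main obstacle: there $sa_r$ ranges over the \emph{bounded} set $[c,x_\delta]$, so no regularly varying estimate of $\overline F(sa_r)$ is available. The remedy is to use monotonicity of $x\mapsto\Prob(S_r<x)$, namely $\Prob(S_r<sa_r)\le\Prob(S_r<x_\delta)=F(x_\delta)^{r-1}\{1+(r-1)\overline F(x_\delta)\}$, which decays geometrically in $r$ since $F(x_\delta)<1$ (the right endpoint of $F$ is infinite, $F$ being in the Fréchet domain), whereas $\int_{c/a_r}^{x_\delta/a_r}s^{-p-1}\diff s\le(a_r/c)^p/p$ grows only polynomially in $r$ by regular variation of $a_r$; hence this contribution tends to $0$. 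Summing the three pieces gives $\limsup_{r\to\infty}\Exp[T_r^{-p}]<\infty$, as required. In short, the only genuine obstacle is recognizing that the Potter estimates break down near the truncation level $c$, and that there one must weigh the geometric decay of $\Prob(S_r\le x_\delta)$ against the merely polynomial growth of $a_r^p$.
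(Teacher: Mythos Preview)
Your argument is correct and follows the same strategy as the paper's proof: layer-cake representation of $\Exp[T_r^{-p}]$, Potter bounds on $\overline F(sa_r)/\overline F(a_r)$, and a separate treatment of $s$ close to $c/a_r$. The paper's only organizational differences are that it splits $\Prob(S_r<a_ry)=F^r(a_ry)+r\,\overline F(a_ry)F^{r-1}(a_ry)$ (reusing the already-established $M_r$-bound for the first summand) and handles the short interval $[c/a_r,x(\delta)/a_r]$ by extending the Potter-type estimate via monotonicity of $\overline F$ rather than by your geometric-versus-polynomial comparison; both routes are equally direct. One minor slip in your write-up: to obtain the exponential factor $\exp(-C_\delta^{-1}g_\delta(s))$ you also need a \emph{lower} bound such as $(r-1)\overline F(a_r)\ge 1/2$, not just the upper bound $\le 2$ that you quote; both inequalities are immediate from $r\,\overline F(a_r)\to 1$.
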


\begin{proof} 
The claim regarding $M_r$ is Lemma C.1 in \cite{Bücher2018-disjoint}. Regarding $S_r$, we
    distinguish the three cases $\beta=0,\beta>0,\beta<0$. The first case is trivial. The second case follows from the assertion regarding $M_r$, observing that $((S_r\vee c)/a_r)^\beta \le ((M_r\vee c)/a_r)^\beta$.
We are only left with the case $\beta<0$. Let $Z_r=(S_r\vee c)/a_r$ and note that
    \begin{align*}
        \Exp[Z_r^\beta] &= \int_0^\infty \Prob(Z_r^\beta>x)\diff x =\int_0^\infty \Prob(Z_r<x^{1/\beta})\diff x = \int_0^\infty \Prob(Z_r<y)|\beta|y^{\beta-1}\diff y 
        \\&= 
        \int_0^1 \Prob(Z_r<y)|\beta|y^{\beta-1}\diff y + 
\int_1^\infty \Prob(Z_r<y)|\beta|y^{\beta-1}\diff y.
    \end{align*}
    Using the bound $\Prob(Z_r<y) \le 1$, the second integral is bounded by $ \int_1^\infty |\beta|y^{\beta-1} \diff y=1$.
Regarding the first integral, note that
    \begin{align*}
        \Prob(Z_r<y)=\Prob(S_r\vee c < a_ry) = [r(1-F(a_ry))F^{r-1}(a_ry)+F^r(a_ry)]\indic_{(c/a_r,\infty)}(y)
    \end{align*}
    by similar arguments as in \eqref{eq:sr-cdf-iid}. 
As a consequence,
        \begin{multline*}
         \int_0^1 \Prob(Z_r<y)|\beta|y^{\beta-1}\diff y 
         \\=
        \int_{c/a_r}^1 F^r(a_ry)|\beta|y^{\beta-1}\diff y  + \int_{c/a_r}^1 r(1-F(a_ry))F^{r-1}(a_ry)|\beta|y^{\beta-1}\diff y.
    \end{multline*}
    The limes superior of the left integral has been shown to be finite in the proof of Lemma C.1 in \cite{Bücher2018-disjoint}. For the right integral, fix $\delta\in(0,\alpha_0)$. As in the proof of Lemma C.1 in \cite{Bücher2018-disjoint}, there exists a constant $c(\delta)>0$ such that 
    \begin{align*}
        F^{r-1}(a_ry)\leq \exp\big(-c(\delta)y^{-\alpha_0+\delta}\big)
    \end{align*}
    for all sufficiently $r$ and all $y\in (c/a_r,1]$.
    We proceed by bounding $r(1-F(a_ry))$. Observing that $1-F$ is regularly varying of index $\alpha_0$, we may apply Potter's theorem (Theorem 1.5.6 in \cite{Bingham1987}) to deduce that there exists a constant $x(\delta)>0$ such that, for all $r$ such that $a_r\ge x(\delta)$ and all $y \in (x(\delta)/a_r,1]$, 
    \[
    \frac{1-F(a_ry)}{1-F(a_r)} \le (1+\delta) y^{-\alpha_0+\delta}.
    \]
    Without loss of generality, we may choose $x(\delta)>c$.
    For $y \in (c/a_r, x(\delta)/a_r]$, we have, writing $ L_{c,\delta} = \{1-F(c)\}/\{1-F(x(\delta))\}$,
    \[
    \frac{1-F(a_ry)}{1-F(a_r)} 
    \le 
    \frac{1-F(c)}{1-F(a_r)}
    =
     L_{c,\delta} \frac{1-F(x(\delta))}{1-F(a_r)}
    \le
     L_{c,\delta} (1+\delta) (x(\delta)/a_r)^{-\alpha_0+\delta}
     \le L_{c,\delta} (1+\delta) y^{-\alpha_0+\delta}.
    \]
    Combing the previous two displays, and observing 
  that $\sup_{r\in\N} r\{1-F(a_r)\}<\infty$ as argued in the proof of Lemma C.1 in \cite{Bücher2018-disjoint},
    we find that, for sufficiently large $r$ and all $y \in(c/a_r,1]$, 
    \begin{align*}
        r(1-F(a_ry))= r(1-F(a_r))\frac{1-F(a_ry)}{1-F(a_r)}\leq K_{c,\delta}y^{-\alpha_0+\delta},
    \end{align*}
    where $K_{c, \delta}$ is a positive constant.
    Altogether we now have, for sufficiently large $r$,
     \begin{align*}
        \int_{c/a_r}^1 r(1-F(a_ry))F^{r-1}(a_ry)|\beta|y^{\beta-1}\diff y
         &\leq 
        K_{c,\delta} |\beta|\int_0^1  y^{-\alpha_0+\delta+\beta-1} \exp\big(-c(\delta)y^{-\alpha_0+\delta}\big)\diff y, 
    \end{align*}
which is finite.
\end{proof}

\section{Additional simulation results}

\label{sec:sim-additional}
\subsection{Illustrating theoretical bias and variance formulas}
\label{subsec:bias-variance-expansions}

In this section, we compare the theoretical asymptotic expansions for the bias and the variance obtained in Remark~\ref{rem:iid-mse} (iid case) and Example~\ref{ex:mori} (time series case) to the observed counterparts in Monte Carlo simulations.

\subsubsection{The IID Case}
\label{subsec:bias-variance-expansions-iid}

We consider the situation underlying the iid case in Figure \ref{fig:fixed_bs_shape_and_rl}. To apply the formulas derived in Remark~\ref{rem:iid-mse}, we need to derive an explicit second order expansion as required in Condition~\ref{cond:sorv} for the Pareto distribution. This is straightforward: first, since $\bar F(x) = x^{-\alpha}$, we have $-\log F(x) = - \log(1-x^{-\alpha})$. Hence, in view of the fact that $-\log(1-u)=u+u^2/2+\mathcal{O}(u^{3})$ as $u \to 0$, we obtain that
\begin{align*}
    \frac{-\log F(tx)}{-\log F(t)}
    &=
    \frac{(tx)^{-\alpha} + (tx)^{-2\alpha}/2 + \mathcal{O}(t^{-3\alpha})}{t^{-\alpha}+ t^{-2\alpha}/2 +\mathcal{O}(t^{-3\alpha})}
    \\&=
    x^{-\alpha} + x^{-\alpha} \Big\{  \frac{1 + (tx)^{-\alpha}/2 + \mathcal{O}(t^{-2\alpha})}{1+t^{-\alpha}/2 + \mathcal{O}(t^{-2\alpha})} - 1 \Big\}
    \\&=
    x^{-\alpha} + x^{-\alpha} \frac{t^{-\alpha}}2 \frac{x^{-\alpha}-1 + \mathcal{O}(t^{-\alpha})}{1+t^{-\alpha}/2 + \mathcal{O}(t^{-2\alpha})}
    \\&=
    x^{-\alpha} + x^{-\alpha} \frac{t^{-\alpha}}{2} (x^{-\alpha}-1) 
    \\ & \hspace{2cm} + x^{-\alpha} \frac{t^{-\alpha}}{2} (x^{-\alpha}-1) \Big\{ \frac{1}{1+t^{-\alpha}/2 + \mathcal{O}(t^{-2\alpha})} - 1 \Big\} + \mathcal{O}(t^{-2\alpha})
    \\&=
    x^{-\alpha} + x^{-\alpha} \frac{t^{-\alpha}}{2} (x^{-\alpha}-1)  + \mathcal{O}(t^{-2\alpha}).
\end{align*}
As a consequence, $A(t) = - \alpha t^{-\alpha}/2$, which means that $\bar \tau = 1$ and $c=-1/2$ in the notation of Remark~\ref{rem:iid-mse}. 

In Figures \ref{fig:iid_simu_mse} and \ref{fig:iid_simu_mse_bs}, we compare the theoretical expansions from Remark~\ref{rem:iid-mse} to their observed counterparts in the simulation experiments.  The respective curves align remarkably well with each other.

\begin{figure}[!thp]
    \centering
    \includegraphics[width=0.995\linewidth]{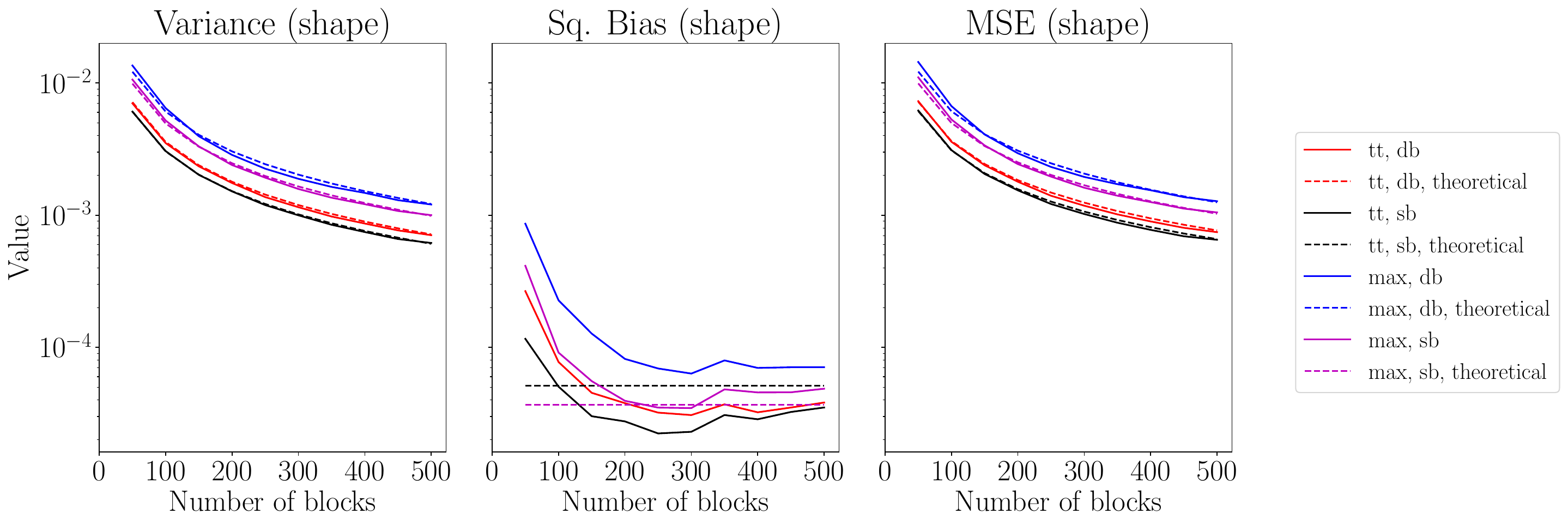}
    \caption{Revisiting Figure \ref{fig:fixed_bs_shape_and_rl} (iid case, standard Pareto, shape estimation). For a fixed block size of $r=100$, the simulated bias, variance and MSE of the shape estimators are compared to the respective asymptotic expansions from Remark~\ref{rem:iid-mse} (dashed lines).}
    \label{fig:iid_simu_mse}
\end{figure}

\begin{figure}[!thp]
    \centering
    \includegraphics[width=0.995\linewidth]{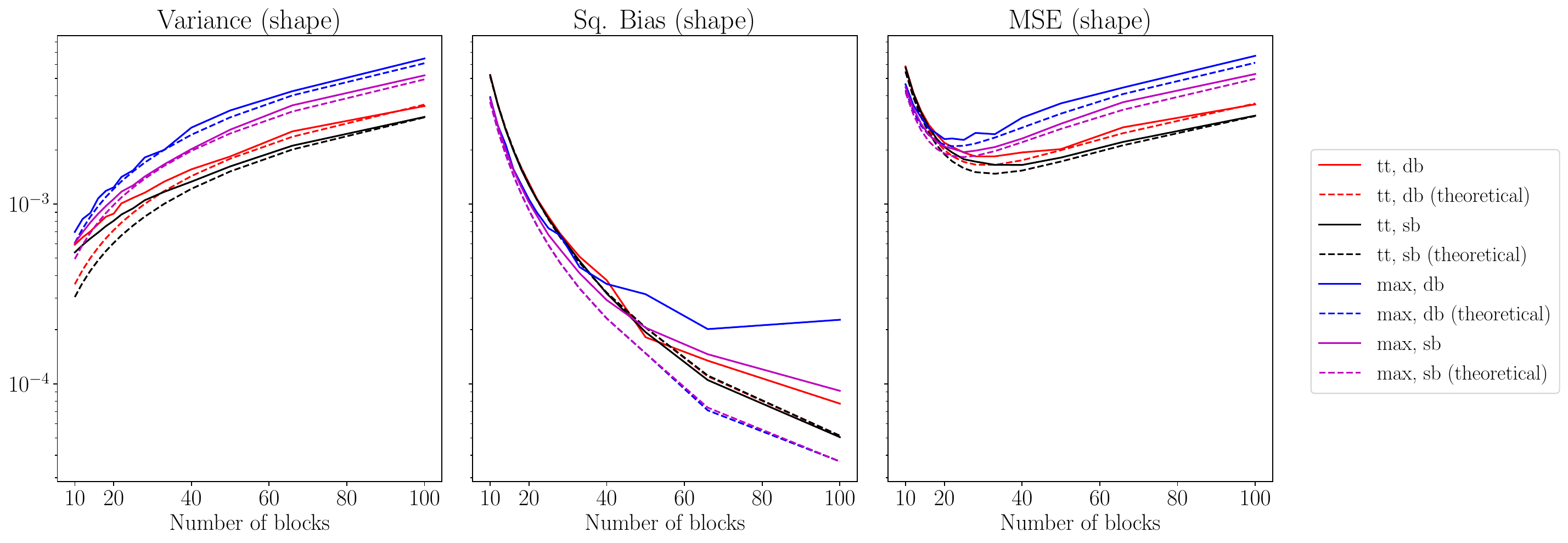}
    \caption{Revisiting the left-hand side of Figure \ref{fig:fixed_n} (iid case, standard Pareto). For a fixed total sample size of $n=10,000$, the simulated bias, variance and MSE of the shape estimators are compared to the respective asymptotic expansions from Remark~\ref{rem:iid-mse} (dashed lines).}
    \label{fig:iid_simu_mse_bs}
\end{figure}

\subsubsection{A Time Series Model}
\label{subsec:bias-variance-expansions-mori}

We consider the model from Example~\ref{ex:mori} with $\rho(\eta) = c(1-\eta)$ and $\alpha=1$. The stochastic construction simplifies: we have $(Z_t)_t$ iid standard $\Pareto(1)$,  $(\zeta_t)_t$ iid $\Bernoulli(1-c)$ and
\begin{align*}
    \xi_t = \max\{Z_{t-1},\zeta_tZ_t\}, \qquad t\in\N.
\end{align*}
Note that $\rho_0 = c$. Asymptotic expansions for the bias, variance and MSE have been derived in Example~\ref{ex:mori}. We illustrate them in Figures
\ref{fig:mori_amse_n} and \ref{fig:mori_amse_r} for the case of a fixed block size and fixed total sample size, respectively. Regarding the first setting, it is found that the minimum of the curves for the top-two estimators is consistently below that for the max-only estimators. Regarding the second setting,  the top-two shape estimator has a globally smaller MSE than the max-only estimator; for the scale estimation, both estimators show a comparable MSE.

\begin{figure}[!thp]
    \centering
    \includegraphics[width=0.98\linewidth]{Figures/amse_mori_vs_r_n_1e3_1e5.pdf}
    \includegraphics[width=0.98\linewidth]{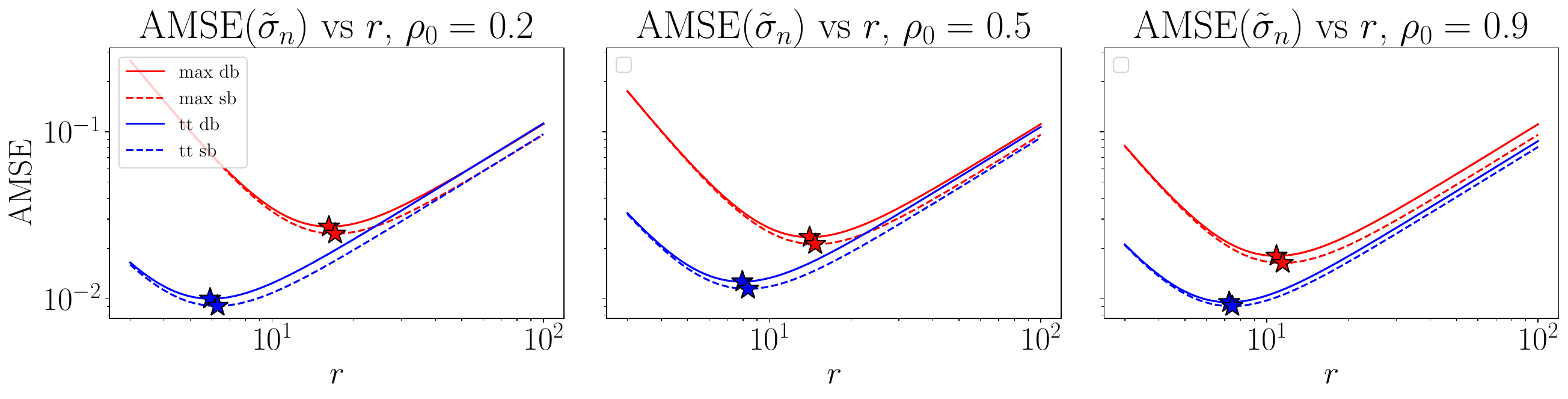}
    \caption{Asymptotic MSE for different choices of $\rho_0$, as a function of the block size $r$ for a fixed total sample size $n=1000$. Top: $\mathrm{AMSE}(\tilde\alpha_n)$, bottom: $\mathrm{AMSE}(\tilde\sigma_n)$.}
    \label{fig:mori_amse_n}
\end{figure}

\begin{figure}[!thp]
    \centering
    \includegraphics[width=0.98\linewidth]{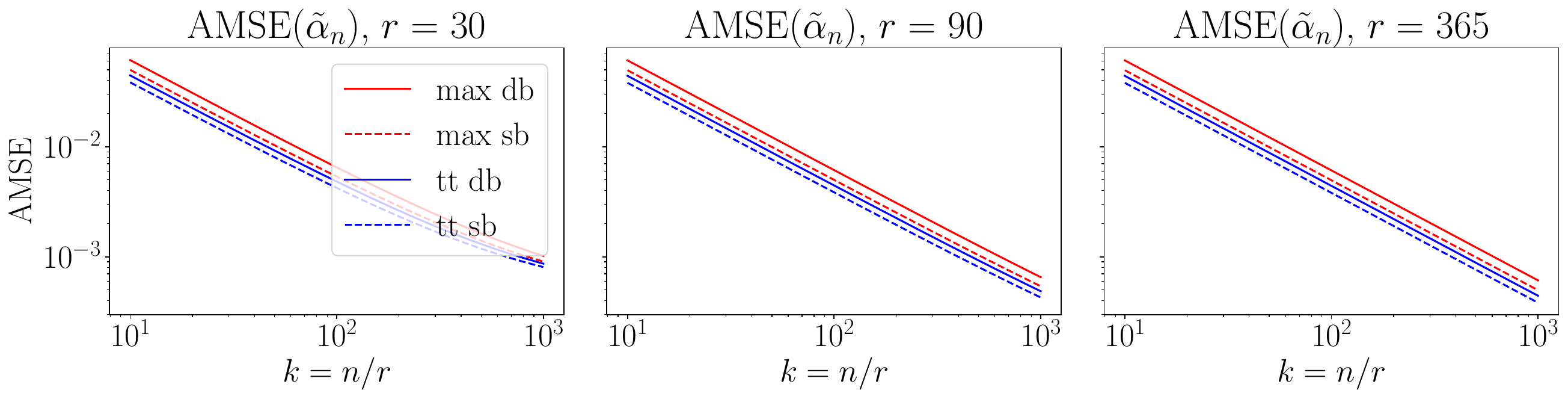}
    \includegraphics[width=0.98\linewidth]{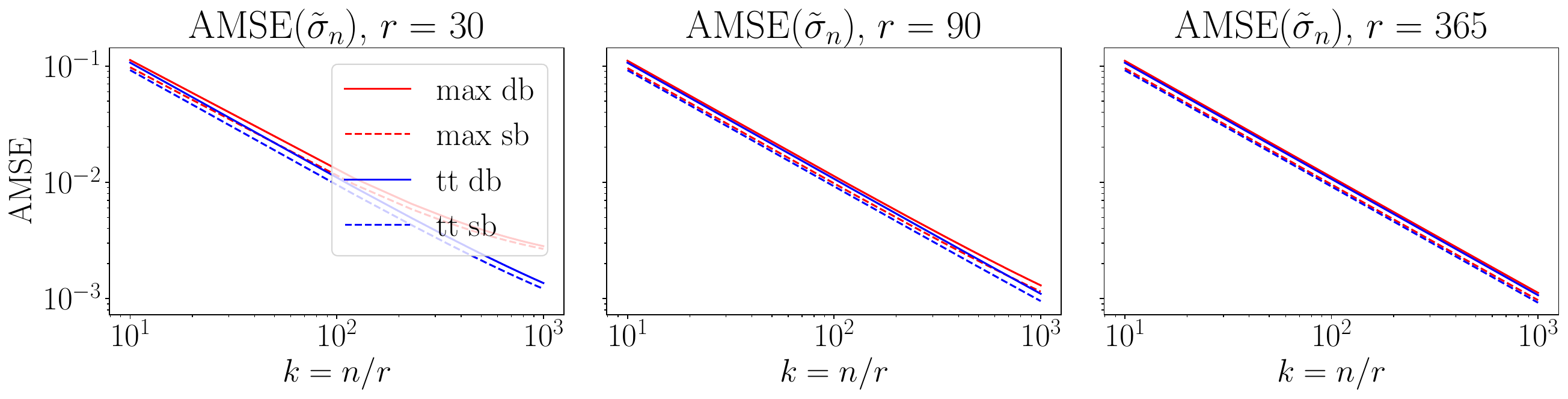}
    \caption{Asymptotic MSE for different choices of (fixed) block size $r=30,90,365$, as a function of the effective sample size $k$, fixed $\rho_0=0.5$. Top: $\mathrm{AMSE}(\tilde\alpha_n)$, bottom: $\mathrm{AMSE}(\tilde\sigma_n)$. }
    \label{fig:mori_amse_r}
\end{figure}

Next, we compare the asymptotic expansions to the observed values in simulation experiments. For simplicity, we only consider the disjoint blocks estimators with block size $r=100$ and effective sample size $k=200$. The results are presented in Figures~\ref{fig:mori_simu_var} (variance) and \ref{fig:mori_simu_bias} (bias).

\begin{figure}[!thp]
    \centering
    \includegraphics[width=0.95\linewidth]{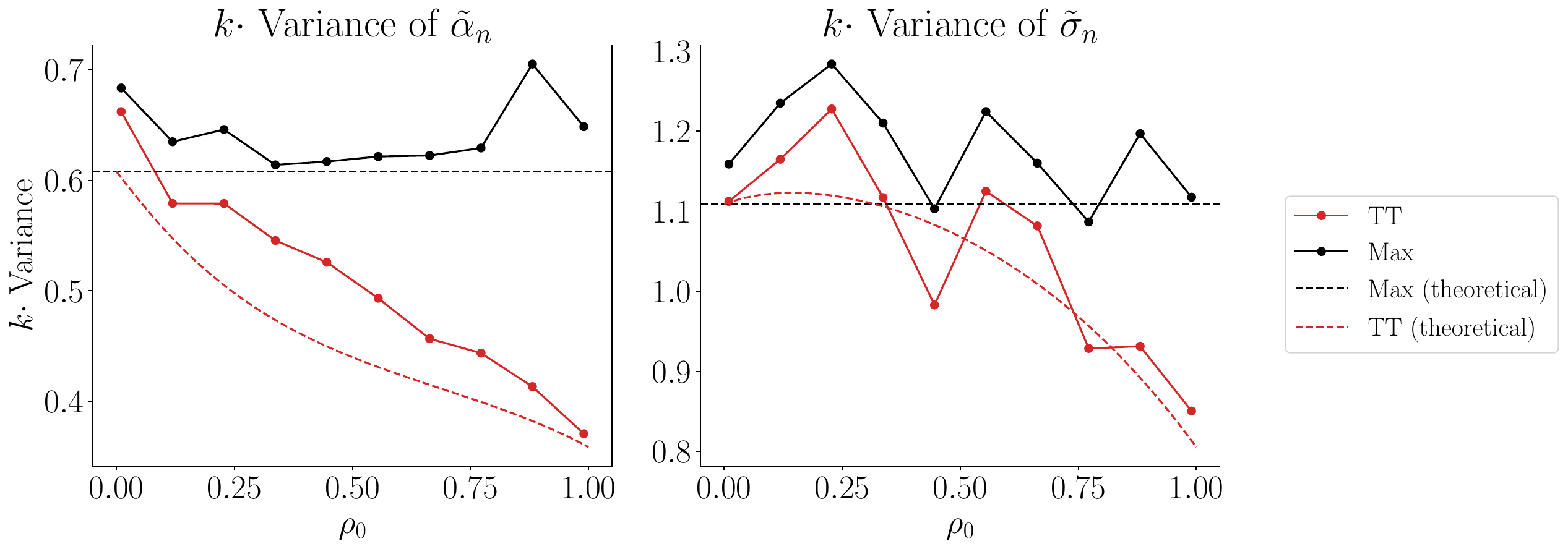}
    \caption{Simulated (1,000 repetitions) vs.\ theoretical rescaled variance of the disjoint blocks top-two and max-only estimators for the shape and scale parameter. The effective sample size is $k=200$ and the block size is $r=100$.}
    \label{fig:mori_simu_var}
\end{figure}

\begin{figure}[!thp]
    \centering
    \includegraphics[width=0.95\linewidth]{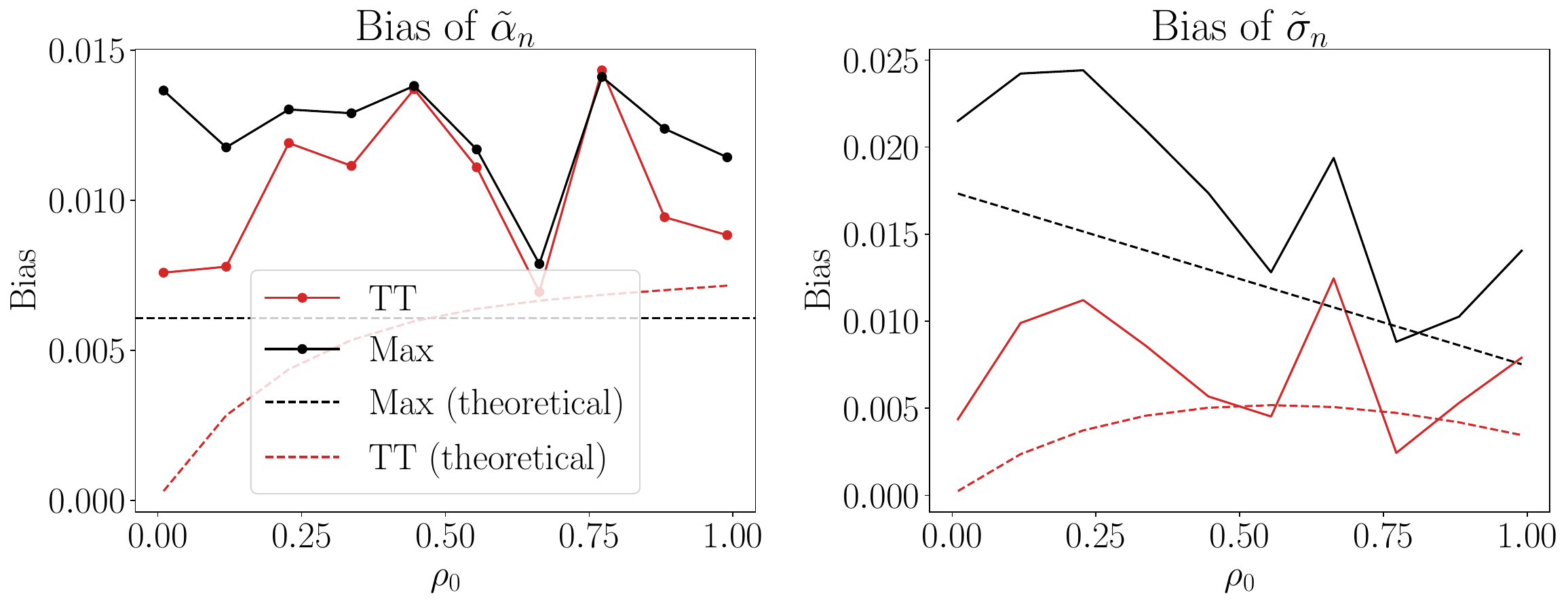}
    \caption{Simulated (1,000 repetitions) vs.\ theoretical rescaled bias of the disjoint blocks top-two and max-only estimators for the shape and scale parameter. The effective sample size is $k=200$ and the block size is $r=100$.}
    \label{fig:mori_simu_bias}
\end{figure}

\subsection{Bias correction}

In this section we study the effect of the additional estimation step needed for the bias-correction. We only consider the iid model and the ARMAX model, for which we know the true value of $\rho_0=1-\beta$ (with $\beta=0$ corresponding to the iid case). We can hence define an `oracle bias correction' by considering the estimator from \eqref{eq:bias-corrected-estimator} with the true value of $\rho_0$ and $\varpi_{\rho_0}$ instead of $\hat \rho_{0,n}$ and $\hat \varpi_{n}$. 

The difference between the estimated bias correction and the oracle  bias correction is illustrated in Figure~\ref{fig_supp:biascor}, where we consider shape estimation for fixed block sizes $r=50$ and $r=100$. The estimated bias correction is performed with respective block size parameter $r'=25$ and $r'=50$, respectively; see Section~\ref{sec:bias-corr-block-maxima} for the definition of $r'$. It can be seen that the oracle and the estimator perform quite similar, with small advantages for the estimated bias correction in some of the models.

\begin{figure}[t]
    \centering
    \includegraphics[width=.85\textwidth]{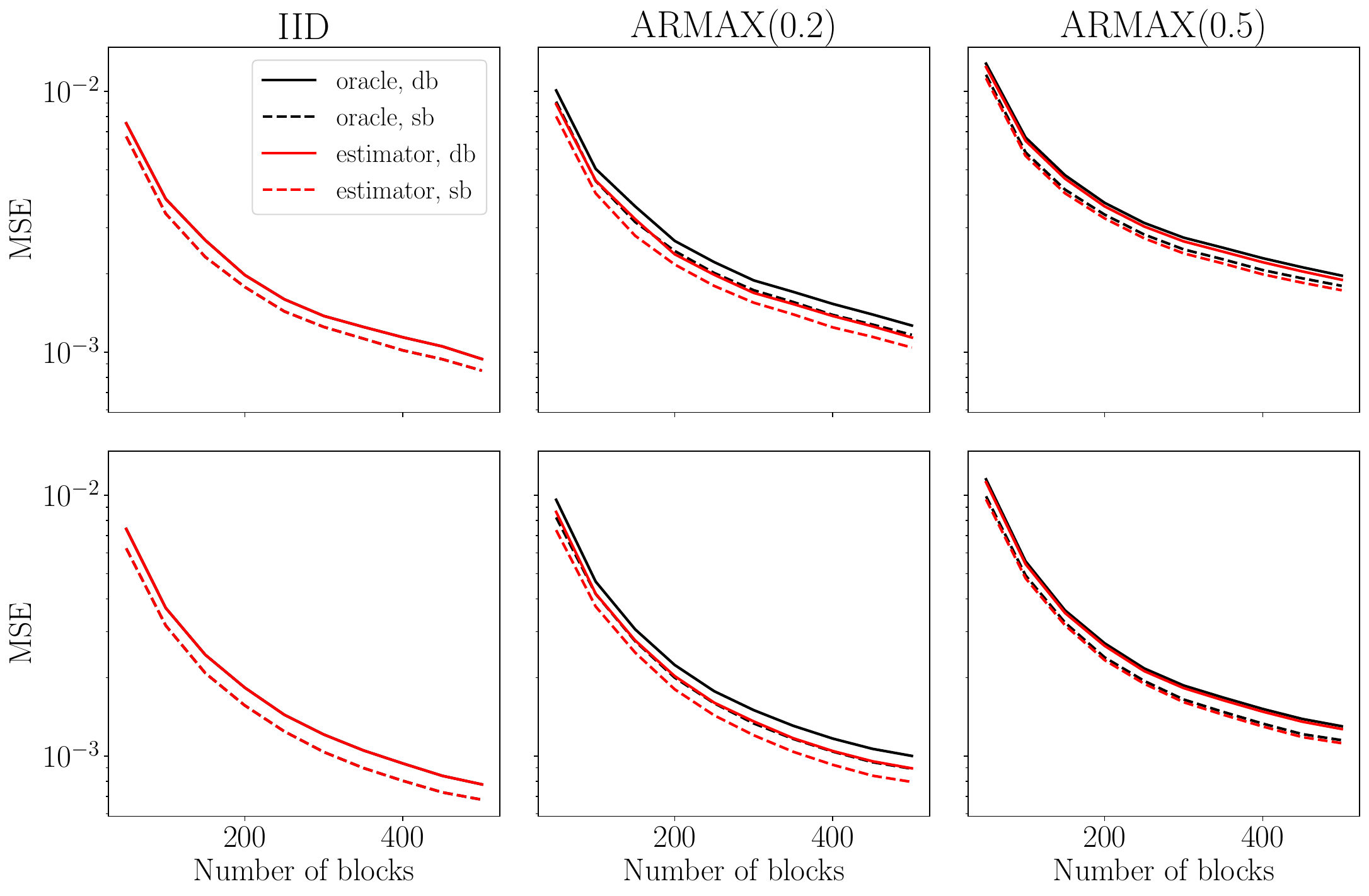}
    \caption{Shape estimation based on estimated bias correction (black) and oracle bias correction (red). Top row:  $r=50$ and $r'=25$. Bottom row: $r=100$ and $r'=50$.}
    \label{fig_supp:biascor}
\end{figure}

\subsection{Further results for fixed block sizes}
\label{subsec:further-results-fixed-block-size}

We present further details on the simulation results for the situation where the block size is fixed. In all the following results, the block size $r'$ for the bias correction from Section~\ref{sec:bias-corr-block-maxima} is chosen as $r'=25$ for $r=50$ and $r'=50$ for $r\in\{100, 200\}$. 

\paragraph{Estimating the scale parameter.}

We briefly present results for the estimation of the scale parameter. In view of the fact that the scale parameter is an asymptotic parameter that is not uniquely identifiable from the block size, we can only study the performance in terms of the estimation variance. The results are summarized in Figure~\ref{fig_supp:sigma}, where we restrict attention to the AR-model with block size $r=100$.  The results reveal that the sliding max-only estimator exhibits a smaller estimation variance than the top-two counterpart, which ultimately motivates the botw-estimator for the return level from \eqref{eq:botw}. While the ABM estimator shows a minimal variance in the iid case, it appears less competitive to the other estimators under temporal dependence.

\begin{figure}[th]
    \centering
    \includegraphics[width=0.85\linewidth]{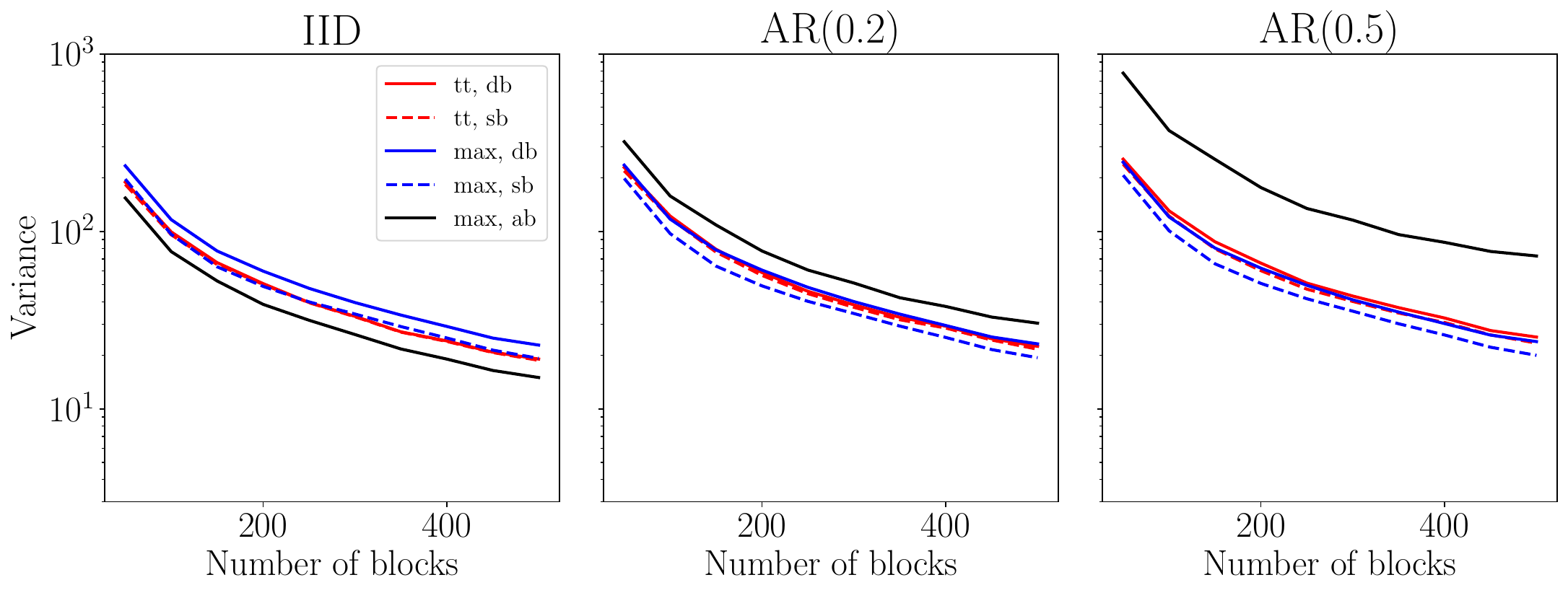}
    \caption{Scale estimation for fixed block size $r=100$. The estimation variance is shown here.}
    \label{fig_supp:sigma}
\end{figure}

\paragraph{Further block sizes.} 
We consider different block sizes, namely $r\in\{50,100,200\}$. 
The results are illustrated in  Figure~\ref{fig_supp:iid_fixedbs} (iid case) and Figure \ref{fig_supp:ar_fixedbs} (AR(0.5)-case). Overall, the results are consistent with those presented in Section~\ref{sec:sim-fixed-r}.

\begin{figure}[t]
    \centering
   \includegraphics[width=0.85\linewidth]{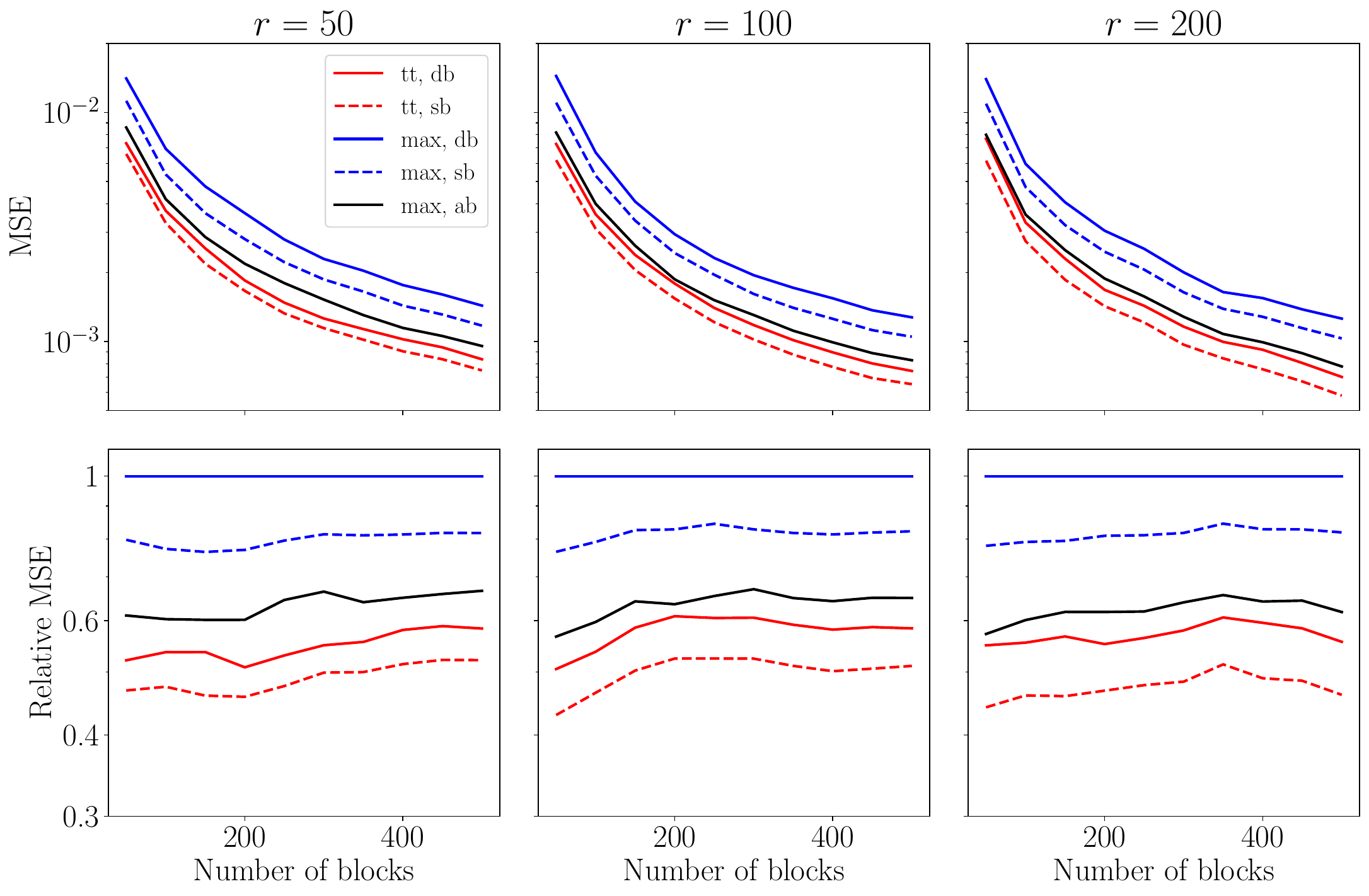}
    \caption{Shape estimation for the iid model with fixed block size.
    Top row: mean squared error. Bottom row: relative mean squared error with respect to the disjoint block maxima estimator, $\mathrm{MSE}(\,\cdot\,)/\mathrm{MSE}(\hat \alpha_{\max}^{(\dbl)})$.}
    \label{fig_supp:iid_fixedbs}
\end{figure}

\begin{figure}
    \centering
\includegraphics[width=0.85\linewidth]{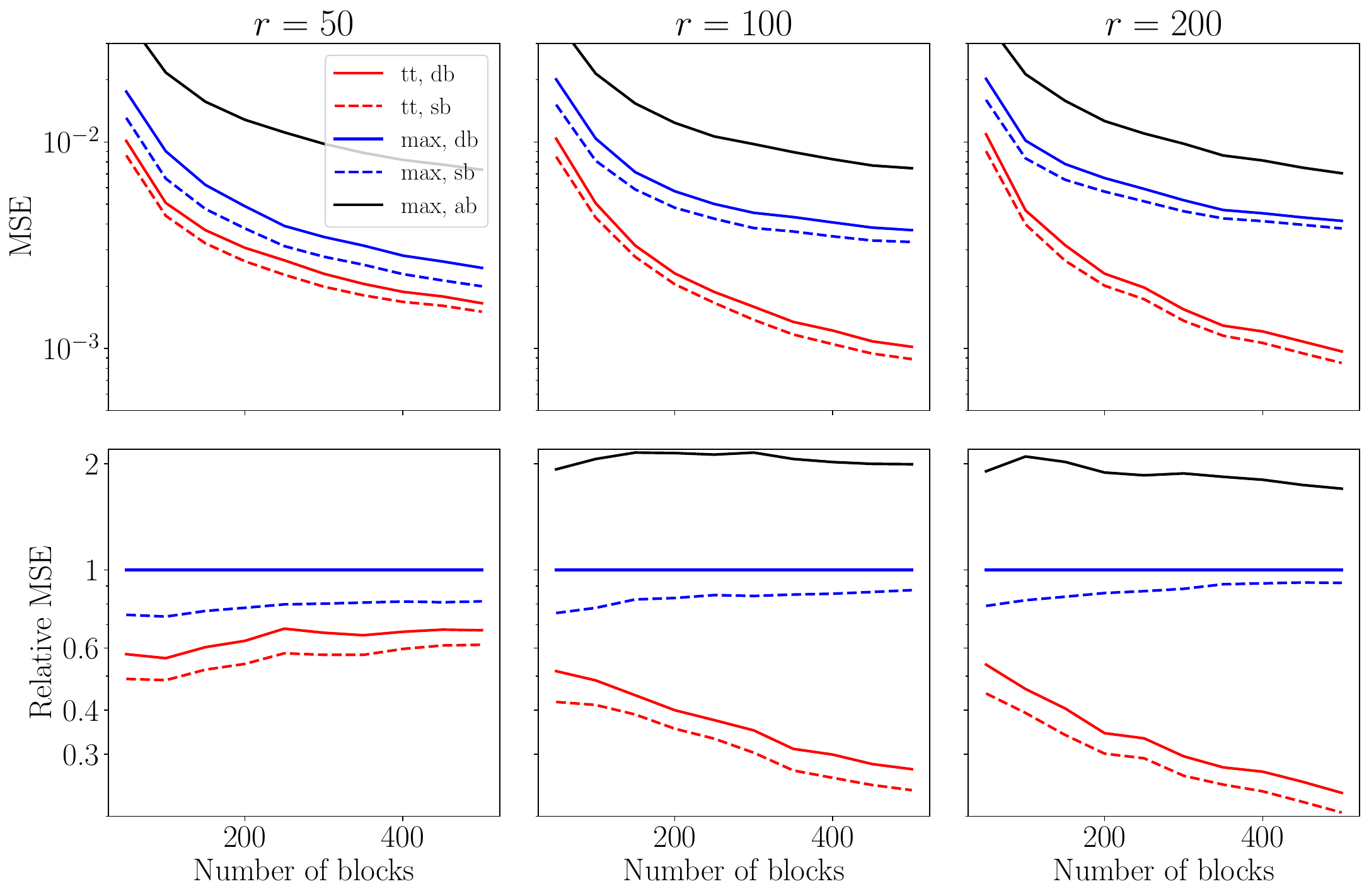}
\caption{Shape estimation for the AR(0.5)-model with fixed block size. 
Top row: mean squared error. Bottom row: relative mean squared error with respect to the disjoint block maxima estimator, $\mathrm{MSE}(\,\cdot\,)/\mathrm{MSE}(\hat \alpha_{\max}^{(\dbl)})$.}
    \label{fig_supp:ar_fixedbs}
\end{figure}

\paragraph{Further time series models.} 
We consider the remaining time series models that have been omitted in the presentation in Section~\ref{sec:sim-fixed-r}, namely, the AR-model with $\beta \ne 0.5$ and the ARMAX-model, both with fixed block size $r=100$. The results are presented in Figure~\ref{fig_supp:othermodels_shape} (shape estimation) and Figure~\ref{fig_supp:othermodels_rl} (return level estimation with $T=100$). The results are mostly consistent with those presented in Section~\ref{sec:sim-fixed-r}: unless the serial dependence is very strong, the top-two sliding estimator is best for shape estimation and the botw-estimator is best for return level estimation. For very strong serial dependence, the sliding max-only estimator wins. This can be explained by the fact that strong serial dependence decreases the effective block size and thus induces a comparably large bias for the top-two methods.

The ABM estimator exhibits a substantially larger MSE than all competing methods. Under moderate temporal dependence ($\beta = 0.2$), its MSE is approximately $50\%$ higher. For stronger dependence ($\beta = 0.2, 0.5$), the corresponding values lie outside the plotting range, indicating that the MSE exceeds that of the other estimators by more than a factor of two.

\begin{figure}
    \centering
\includegraphics[width=0.85\linewidth]{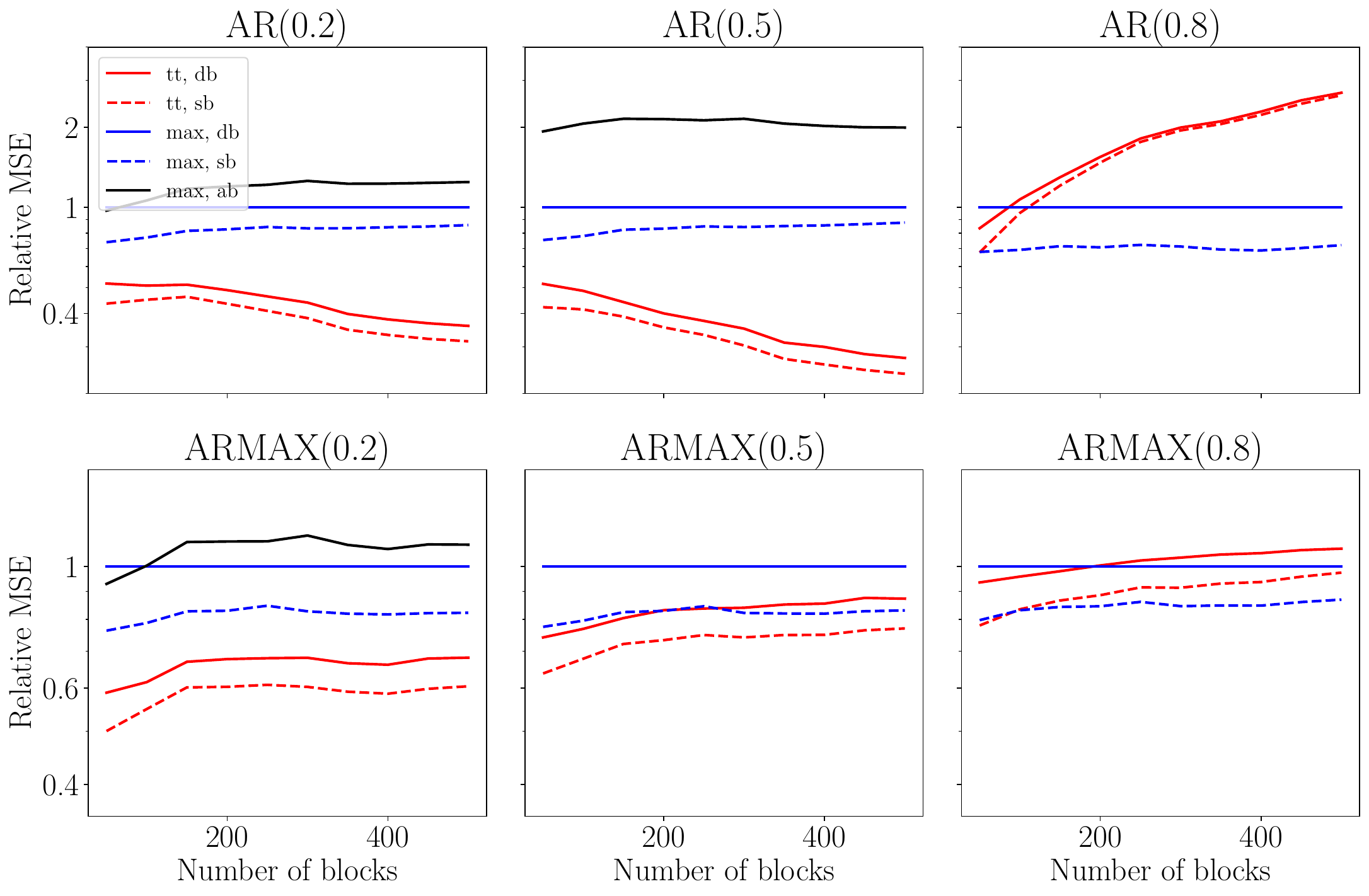}
    \caption{Shape estimation for fixed block size $r=100$. The curves represent the relative mean squared error with respect to the disjoint block maxima estimator, $\mathrm{MSE}(\,\cdot\,)/\mathrm{MSE}(\hat \alpha_{\max}^{(\dbl)})$. Top row: AR-models. Bottom row: ARMAX-models. The ABM estimator is only depicted on the left, as it is otherwise outside the plotting range.}
    \label{fig_supp:othermodels_shape}
\end{figure}

\begin{figure}
    \centering
\includegraphics[width=0.85\linewidth]{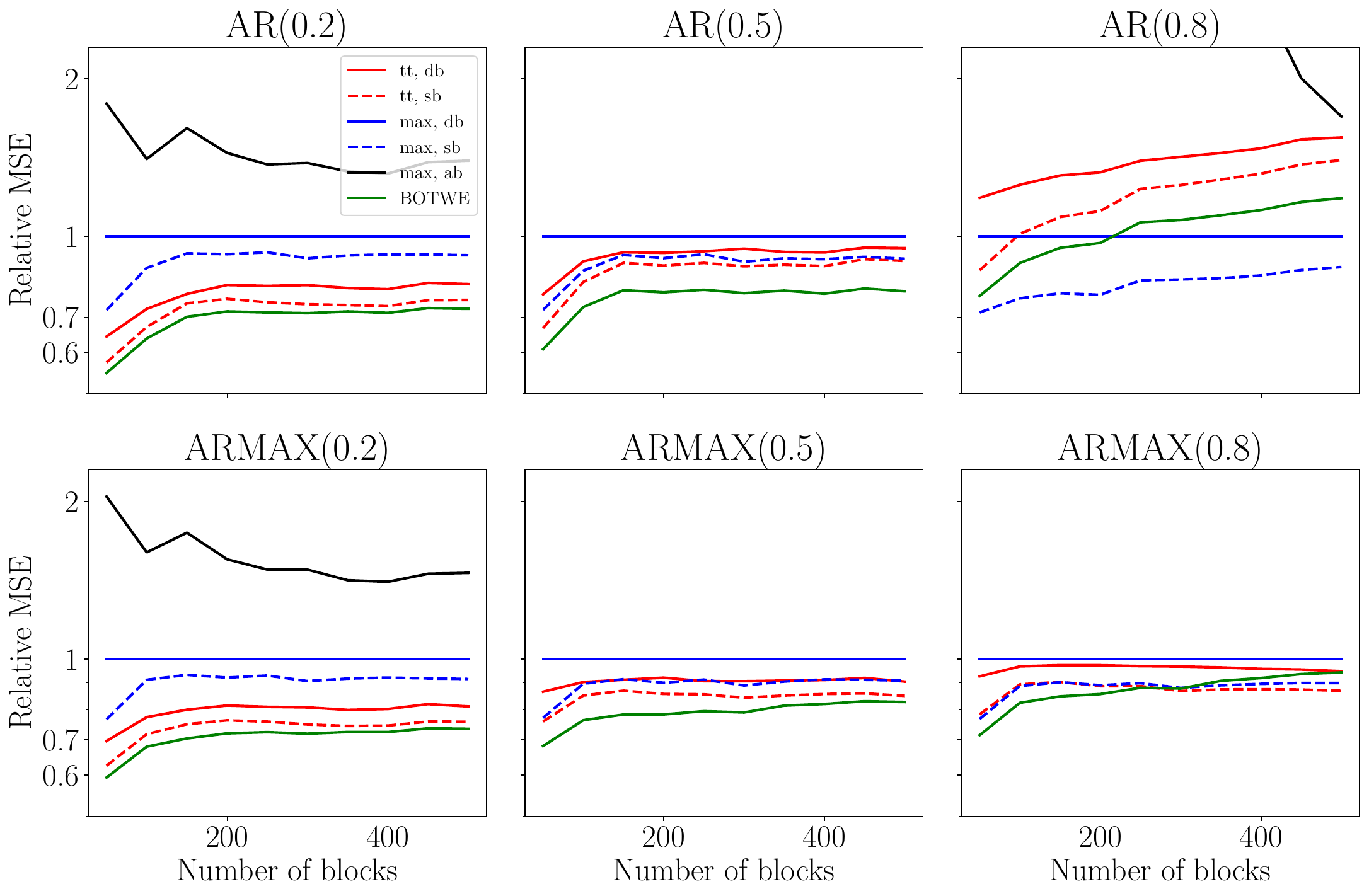}
    \caption{Return level estimation for fixed block size $r=100$ and for $T=100$. The curves represent the relative mean squared error with respect to the disjoint block maxima estimator, $\mathrm{MSE}(\,\cdot\,)/\mathrm{MSE}({\widehat {\RL}}\!\,_{\max}^{(\dbl)})$. Top row: AR-models. Bottom row: ARMAX-models. The ABM estimator is not shown when it falls outside the plotting range (with values roughly between 2 and 10 in the middle and between 2 and 200 on the right). 
    }
    \label{fig_supp:othermodels_rl}
\end{figure}

\paragraph{Further return levels.}
We finally consider the estimation of return levels with fixed block size $r=100$ and varying `annuality' $T\in\{50,100,200\}$. 
The results are summarized in Figure~\ref{fig_supp:RLs}, where we we restrict attention to the AR(0.5)-model for the sake of brevity. The botw-estimator is best in all scenarios under consideration.

\begin{figure}
    \centering
    \includegraphics[width=0.85\linewidth]{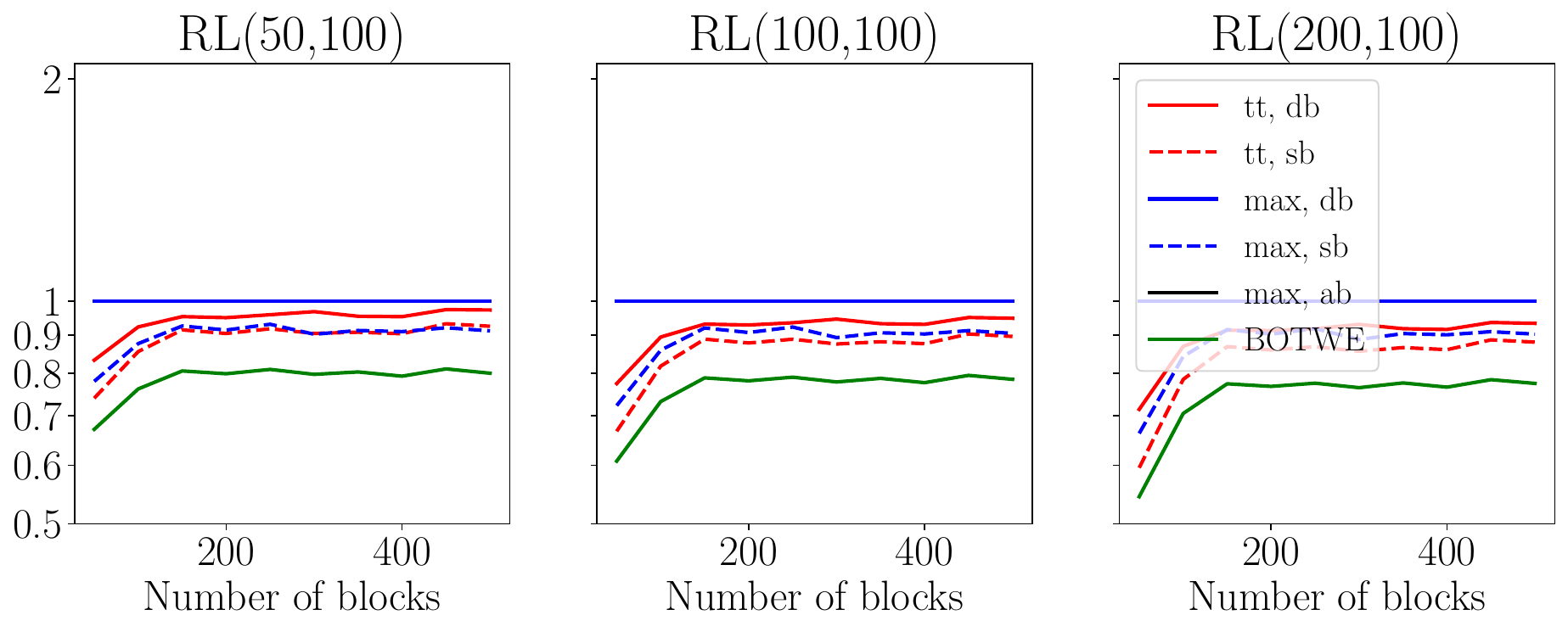}
    \caption{Return level estimation for the AR(0.5)-model with fixed block size $r=100$ and $T \in \{50,100,200\}$. The ABM estimator is not shown as it falls outside the plotting range (with values consistently between 2 and 15).}
    \label{fig_supp:RLs}
\end{figure}

\subsection{Further results for fixed total sample size}

Section~\ref{subsec:simulation-fixed-sample-size}, in particular Figure~\ref{fig:fixed_n}, shows that for fixed $n=10,000$ the minimum MSE is achieved by the sliding top-two estimator in the iid model, and by the all block maxima estimator in the AR model. The bias-variance decomposition in Figure~\ref{fig:fixed_n_decomp} clarifies the mechanism underlying these observations. In the AR model, the bias of the ABM estimator crosses zero at a relatively small block size (approximately $r \approx 20$), leading to an early MSE minimum at small $r$. This behavior is absent in the iid and ARMAX model, where no such zero crossing occurs. Instead, in these models the lower variance of the max-only and top-two estimators dominates, resulting in the overall minimal MSE at larger block sizes ($r\approx 50$).

\begin{figure}[!htp]
    \centering
    \includegraphics[width=0.95\linewidth]{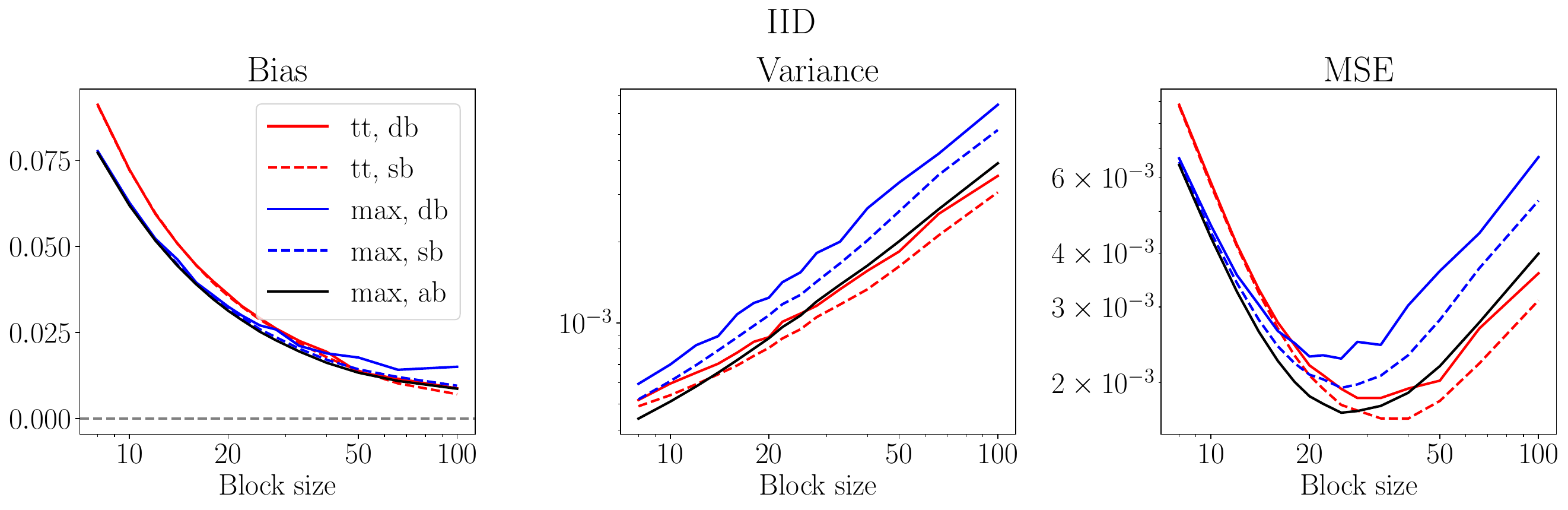}
    \includegraphics[width=0.95\linewidth]{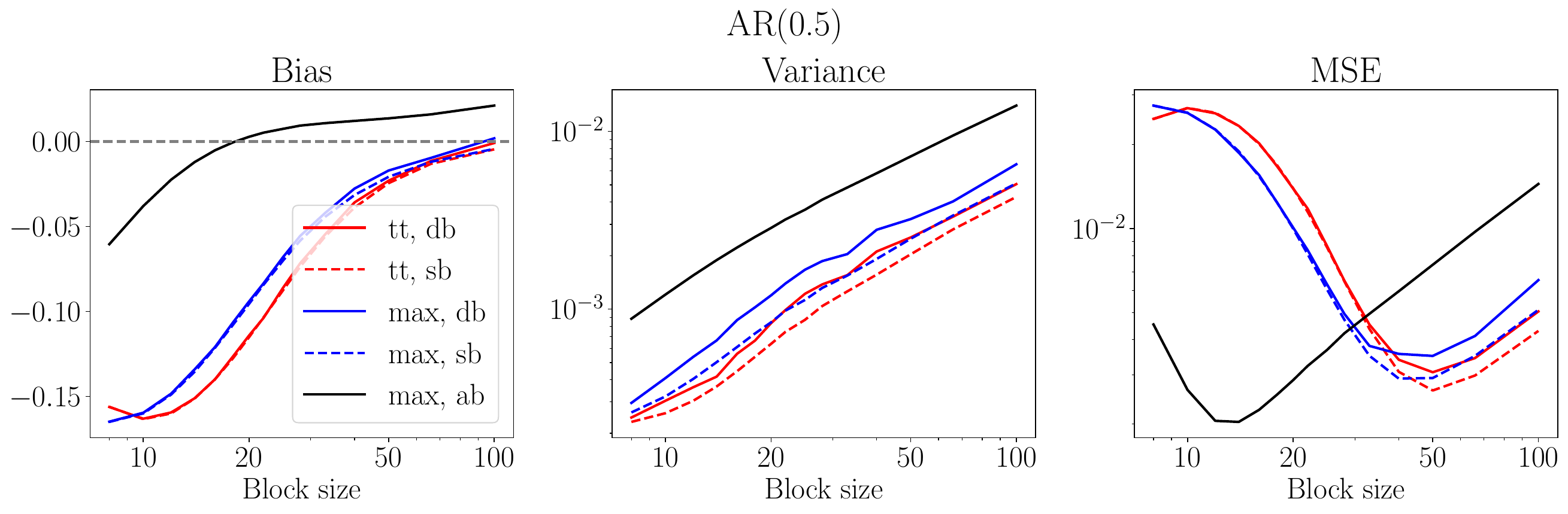}
    \includegraphics[width=0.95\linewidth]{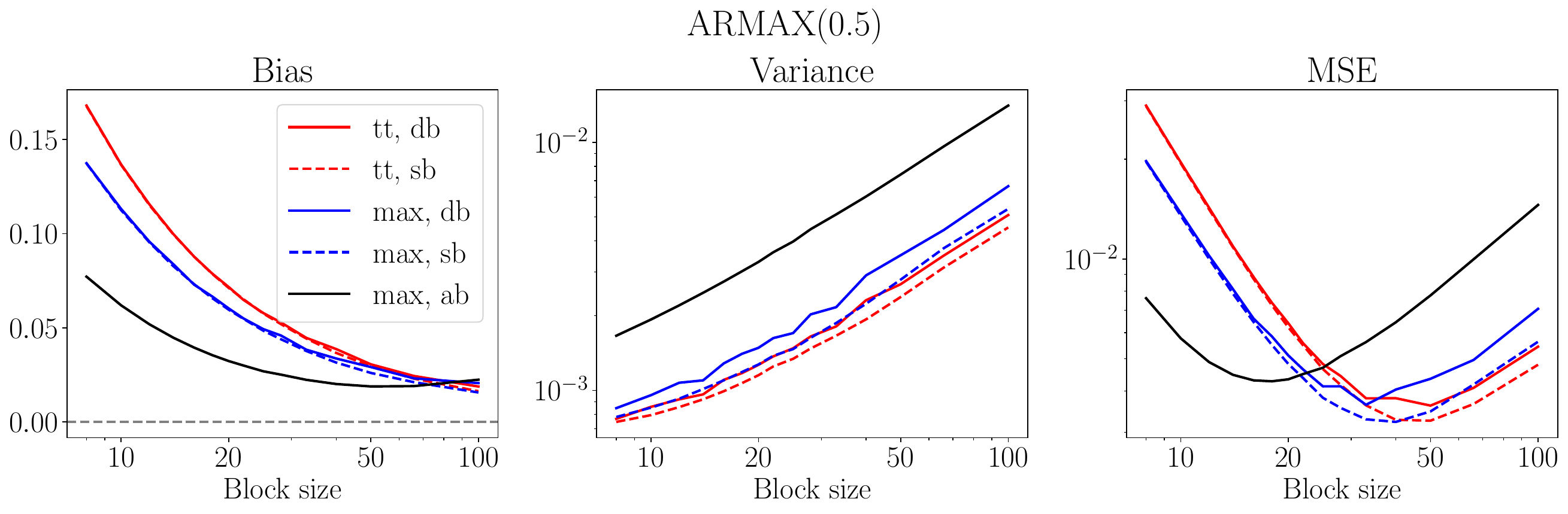}
    \caption{Estimation of $\alpha_0$ for fixed $n=10,000$. Left: bias, middle: variance, right: MSE (see also Figure~\ref{fig:fixed_n})}.
    \label{fig:fixed_n_decomp}
\end{figure}

\subsection{Comprehensive results for different block sizes and different numbers of blocks}

We finally present results for a more comprehensive range of block sizes and number of blocks, both ranging from $25$ to $500$. For the sake of brevity, we only report results for the iid-model (Figure \ref{fig_supp:heat_iid}) and the AR(0.5)-model (Figure \ref{fig_supp:heat_ar05}); results for the other models are qualitatively similar. 

The results are consistent with previous findings: the sliding blocks top-two estimator is the best estimator in most scenarios under consideration, except for very small block sizes, where the all block maxima method wins. The latter is not competitive in the case of serial dependence for $r\ge 50$.

\begin{figure}
    \centering
    \includegraphics[width=.95\textwidth]{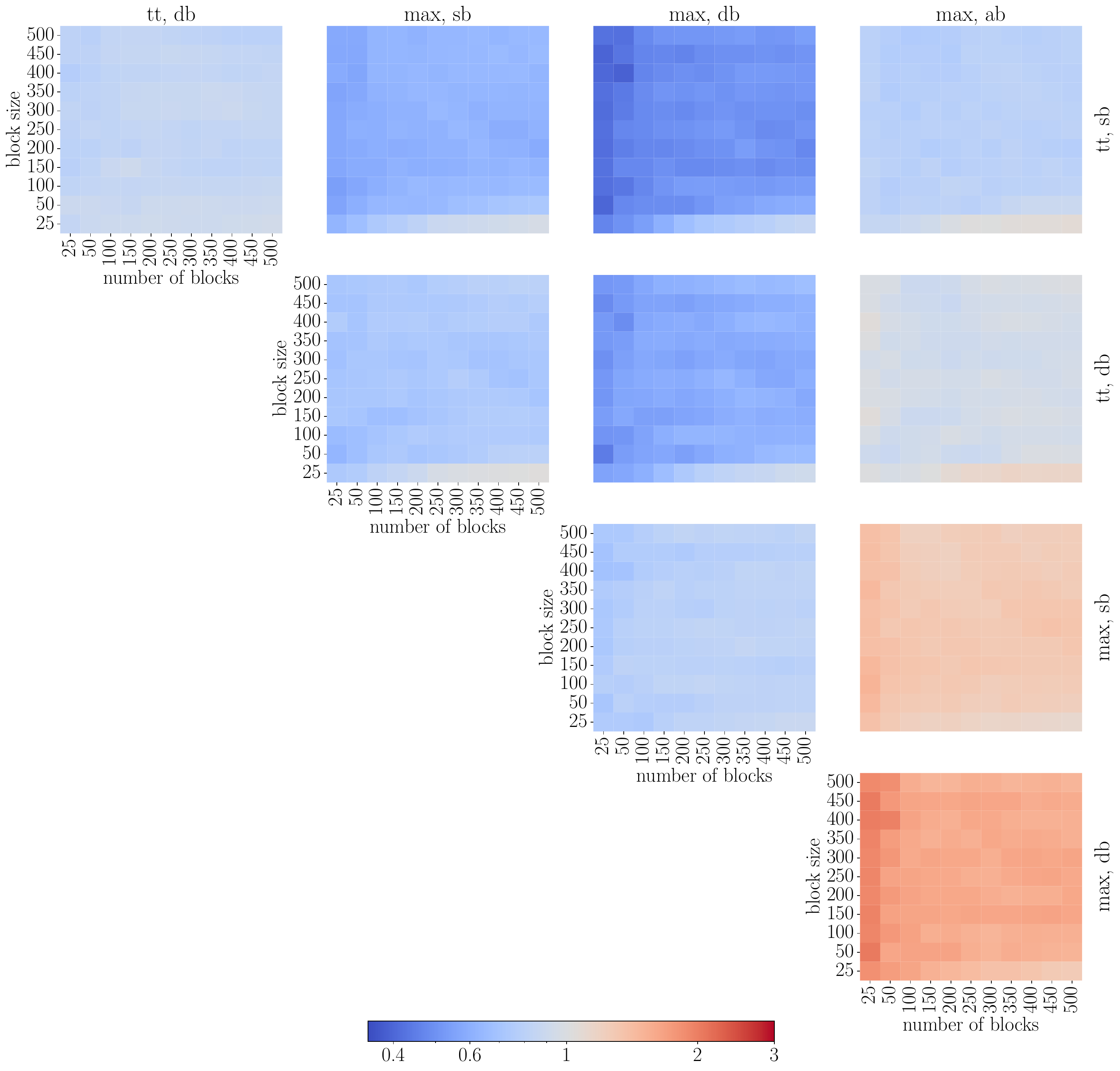}
    \caption{Shape estimation in the iid model for various combinations of the block size and the number of blocks ranging from 25 to 500. Depicted is the relative MSE, i.e., the MSE of the estimator indicated on right divided by the MSE of the estimator indicated at the top. Red color means that the top estimator performs better.}
    \label{fig_supp:heat_iid}
\end{figure}

\begin{figure}
    \centering
    \includegraphics[width=.95\textwidth]{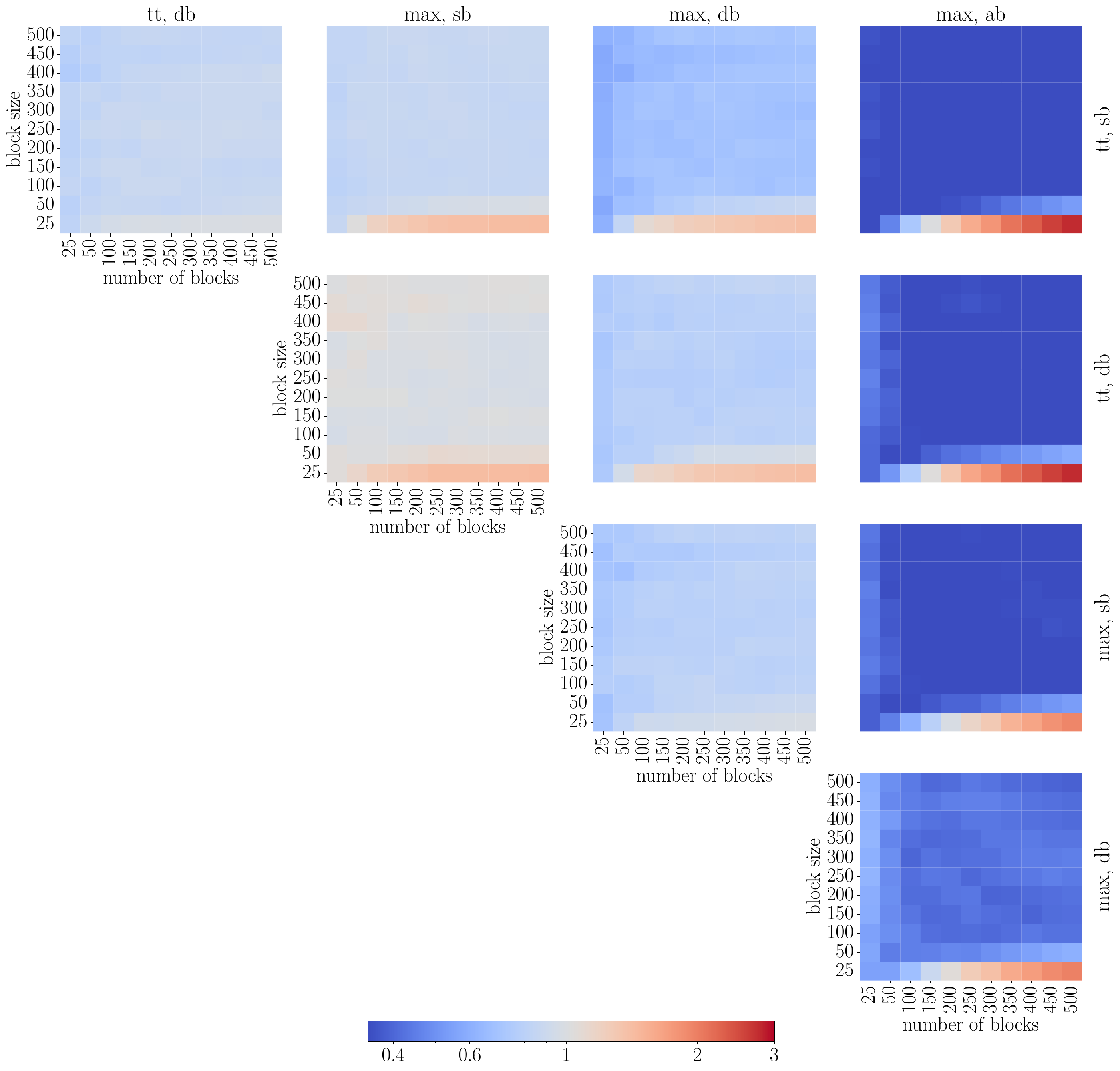}
    \caption{Shape estimation in the AR(0.5)-model for various combinations of the block size  and the number of blocks ranging from 25 to 500. Depicted is the relative MSE, i.e., the MSE of the estimator indicated on right divided by the MSE of the estimator indicated at the top. Red color means that the top estimator performs better.}
    \label{fig_supp:heat_ar05}
\end{figure}

\clearpage

\clearpage

\printbibliography

\end{document}